\newcommand*{\mailto}[1]{\href{mailto:#1}{\nolinkurl{#1}}}
\newcommand{\arxiv}[1]{\href{http://arxiv.org/abs/#1}{arXiv:#1}}
\newtheorem{theorem}{Theorem}[section]
\newtheorem{lemma}[theorem]{Lemma}
\newtheorem{corollary}[theorem]{Corollary}
\newtheorem{hypothesis}[theorem]{Hypothesis}
\theoremstyle{definition}
\newtheorem{definition}[theorem]{Definition}
\newtheorem{remark}[theorem]{Remark}
\newtheorem{example}[theorem]{Example}
\newcommand{\R}{{\mathbb R}}
\newcommand{\N}{{\mathbb N}}
\newcommand{\C}{{\mathbb C}}
\newcommand{\bbC}{{\mathbb{C}}}
\newcommand{\bbN}{{\mathbb{N}}}
\newcommand{\bbR}{{\mathbb{R}}}
\newcommand{\bbT}{{\mathbb{T}}}
\newcommand{\cB}{{\mathcal B}}
\newcommand{\cD}{{\mathcal D}}
\newcommand{\cH}{{\mathcal H}}
\newcommand{\cM}{{\mathcal M}}
\newcommand{\cS}{{\mathcal S}}
\newcommand{\gq}{\mathfrak q}
\newcommand{\be}{\begin{equation}}
\newcommand{\ee}{\end{equation}} 
\newcommand{\spr}[2]{\langle #1 , #2 \rangle}
\newcommand{\E}{\mathrm{e}}
\newcommand{\I}{\mathrm{i}}
\newcommand{\sgn}{\mathrm{sgn}}
\newcommand{\tr}{\mathrm{tr}}
\newcommand{\im}{\mathrm{Im}}
\newcommand{\re}{\mathrm{Re}}
\newcommand{\dom}[1]{\mathrm{dom}\left(#1\right)}
\newcommand{\no}{\notag}
\newcommand{\lb}{\label}
\newcommand{\f}{\frac}
\newcommand{\ol}{\overline}
\newcommand{\wti}{\widetilde}
\newcommand{\oh}{o}
\newcommand{\bi}{\bibitem}
\newcommand{\hatt}{\widehat}
\newcommand{\prodm}{\mu \otimes \mu} 
\DeclareMathOperator{\ran}{ran}
\DeclareMathOperator{\rang}{rank}
\DeclareMathOperator{\reg}{r}
\DeclareMathOperator{\AC}{AC}
\DeclareMathOperator{\SL}{SL}
\DeclareMathOperator{\loc}{loc}
\newcommand{\sigdis}{\sig_d}
\newcommand{\sigess}{\sig_e}
\DeclareMathOperator{\linspan}{span}
\DeclareMathOperator{\supp}{supp}
\newcommand{\foco}{s}
\newcommand{\qd}{{[1]}}
\newcommand{\Tpre}{T_0}
\newcommand{\Tmin}{T_{\mathrm{min}}}
\newcommand{\Tmax}{T_{\mathrm{max}}}
\newcommand{\Llocr}{L^1_{\mathrm{loc}}((a,b);r(x)dx)}
\newcommand{\Lr}{L^2((a,b);r(x)dx)}
\newcommand{\Lrmu}{L^2(\R;d\mu)}
\newcommand{\Deftau}{\mathfrak{D}_\tau}
\newcommand{\BCa}{BC_{a}}
\newcommand{\BCb}{BC_{b}}
\newcommand{\indik}{\chi}
\newcommand{\M}{\mathrm{M}}
\renewcommand{\Re}{\text{\rm Re}}
\renewcommand{\Im}{\text{\rm Im}}
\renewcommand{\max}{\text{\rm max}}
\renewcommand{\min}{\text{\rm min}}
\renewcommand{\le}{\leqslant}
\newcommand{\eps}{\varepsilon}
\newcommand{\vphi}{\varphi}
\newcommand{\sig}{\sigma}
\newcommand{\lam}{\lambda}
\numberwithin{equation}{section}
\begin{document}

\title[Weyl--Titchmarsh Theory]{Weyl--Titchmarsh Theory for Sturm--Liouville Operators with 
Distributional Potentials}

\author[J.\ Eckhardt]{Jonathan Eckhardt}
\address{Faculty of Mathematics\\ University of Vienna\\
Nordbergstrasse 15\\ 1090 Wien\\ Austria}
\email{\mailto{jonathan.eckhardt@univie.ac.at}}
\urladdr{\url{http://homepage.univie.ac.at/jonathan.eckhardt/}}

\author[F.\ Gesztesy]{Fritz Gesztesy}
\address{Department of Mathematics,
University of Missouri,
Columbia, MO 65211, USA}
\email{\mailto{gesztesyf@missouri.edu}}
\urladdr{\url{http://www.math.missouri.edu/personnel/faculty/gesztesyf.html}}

\author[R.\ Nichols]{Roger Nichols}
\address{Mathematics Department, The University of Tennessee at Chattanooga, 
415 EMCS Building, Dept. 6956, 615 McCallie Ave, Chattanooga, TN 37403, USA}
\email{\mailto{Roger-Nichols@utc.edu}}

\author[G.\ Teschl]{Gerald Teschl}
\address{Faculty of Mathematics\\ University of Vienna\\
Nordbergstrasse 15\\ 1090 Wien\\ Austria\\ and International
Erwin Schr\"odinger
Institute for Mathematical Physics\\ Boltzmanngasse 9\\ 1090 Wien\\ Austria}
\email{\mailto{Gerald.Teschl@univie.ac.at}}
\urladdr{\url{http://www.mat.univie.ac.at/~gerald/}}

\thanks{{\it Opuscula Math.} {\bf 33}, 467--563 (2013).}
\thanks{{\it Research supported by the Austrian Science Fund (FWF) 
under Grant No.\ Y330}} 

\keywords{Sturm--Liouville operators, distributional coefficients, Weyl--Titchmarsh theory, Friedrichs and Krein extensions, positivity preserving and improving semigroups.}
\subjclass[2010]{Primary 34B20, 34B24, 34L05; Secondary 34B27, 34L10, 34L40.}

\begin{abstract}
We systematically develop Weyl--Titchmarsh theory for singular differential operators on arbitrary 
intervals $(a,b) \subseteq \mathbb{R}$ associated with rather general differential expressions of the type 
\[
 \tau f = \frac{1}{r} \left( - \big(p[f' + s f]\big)' + s p[f' + s f] + qf\right),    
\]
where the coefficients $p$, $q$, $r$, $s$ are real-valued and Lebesgue measurable on $(a,b)$, with 
$p\neq 0$, $r>0$ a.e.\ on $(a,b)$, and $p^{-1}$, $q$, $r$, $s \in L^1_{\text{loc}}((a,b); dx)$,  
and $f$ is supposed to satisfy
\[
f \in AC_{\text{loc}}((a,b)), \; p[f' + s f] \in AC_{\text{loc}}((a,b)).  
\]
In particular, this setup implies that $\tau$ permits a distributional potential coefficient, 
including potentials in $H^{-1}_{\text{loc}}((a,b))$. 

We study maximal and minimal Sturm--Liouville operators, all  self-adjoint restrictions of the maximal operator $T_{\text{max}}$, or equivalently, all self-adjoint extensions of the minimal operator $T_{\text{min}}$, 
all self-adjoint boundary conditions (separated and coupled ones), and describe the resolvent of any 
self-adjoint extension of $T_{\text{min}}$. In addition, we characterize the principal object of this paper, the singular Weyl--Titchmarsh--Kodaira $m$-function corresponding to any self-adjoint extension with 
separated boundary conditions and derive the corresponding spectral transformation, including a characterization of spectral multiplicities and minimal supports of standard subsets of the spectrum. 
We also deal with principal solutions and characterize the Friedrichs extension of $T_{\text{min}}$.

Finally, in the special case where $\tau$ is regular, we characterize the Krein--von Neumann extension 
of $T_{\text{min}}$ and also characterize all boundary conditions that lead to positivity preserving,    
equivalently, improving, resolvents (and hence semigroups).  
\end{abstract}

\maketitle

{\scriptsize \tableofcontents}

\section{Introduction}
\label{s1}

The prime motivation behind this paper is to develop Weyl--Titchmarsh theory for singular Sturm--Liouville operators on an arbitrary interval $(a,b) \subseteq \bbR$ associated with rather general differential expressions of the type 
\be
 \tau f = \frac{1}{r} \left( - \big(p[f' + \foco f]\big)' + \foco p[f' + \foco f] + qf\right).    \lb{1.1}
\ee
Here the coefficients $p$, $q$, $r$, $\foco$ are real-valued and Lebesgue measurable on $(a,b)$, with 
$p\not=0$, $r>0$ a.e.\ on $(a,b)$, and $p^{-1}$, $q$, $r$, $\foco \in L^1_{\loc}((a,b); dx)$,  
and $f$ is supposed to satisfy
\begin{equation} 
f \in AC_{\text{loc}}((a,b)), \; p[f' + \foco f] \in AC_{\text{loc}}((a,b)), 
\end{equation}
with $AC_{\loc}((a,b))$ denoting the set of locally absolutely continuous functions on $(a,b)$. (The 
expression $f^{[1]} = p[f'+\foco f]$ will subsequently be called the {\it first quasi-derivative} of $f$.)

One notes that in the general case \eqref{1.1}, the differential expression is formally given by
\be
\tau f = \frac{1}{r} \left( - \big(pf'\big)' + \big[- (p \foco)' + p \foco^2 + q\big]f \right).
\ee
Moreover, in the special case $\foco\equiv 0$ this approach reduces to the standard one, that is, 
one obtains,
\be
\tau f = \frac{1}{r} \left( - \big(pf'\big)' + q f \right).
\ee
 
In particular, in the case $p=r=1$ our approach is sufficiently general to include arbitrary distributional potential coefficients from $H^{-1}_{\loc}((a,b)) = W^{-1,2}_{\loc}((a,b))$ (as the term 
$\foco^2$ can be absorbed in $q$), and thus even in this special case our setup is slightly more general than the approach pioneered by Savchuk and Shkalikov \cite{SS99}, who defined the differential expression as
\be
\tau f = - \big([f' + \foco f]\big)' + \foco [f' + \foco f] - \foco^2 f, \quad f, [f' + \foco f] \in \AC_{\loc}((a,b)). 
\ee
One observes that in this case $q$ can be absorbed in $\foco$ by virtue of the transformation 
$\foco \to \foco - \int^x q$. Their approach requires the additional condition $\foco^2 \in L^1_{\loc}((a,b); dx)$.  Moreover, since there are distributions in $H^{-1}_{\loc}((a,b))$ which are not measures, the operators discussed here are not a special case of Sturm--Liouville operators with measure-valued coefficients as discussed, for instance, in \cite{ET12}.

We emphasize that similar differential expressions have already been studied by 
Bennewitz and Everitt \cite{BE83} in 1983 (see also \cite[Sect.\ I.2]{EM99}). While some of their discussion is more general, they restrict their considerations to compact intervals and focus on the special case of a left-definite setting. An extremely thorough and systematic investigation, including even and odd higher-order operators defined in terms of appropriate quasi-derivatives, and in the general case of matrix-valued coefficients (including distributional potential coefficients in the context of Schr\"odinger-type operators) was presented by Weidmann \cite{We87} in 1987. In fact, the general approach in \cite{BE83} and \cite{We87} draws on earlier discussions of quasi-derivatives in Shin \cite{Sh38}--\cite{Sh43}, 
Naimark \cite[Ch.\ V]{Na68}, and Zettl \cite{Ze75}. 
Still, it appears that the distributional coefficients treated in \cite{BE83} did not catch on and subsequent authors referring to this paper mostly focused on the various left and right-definite aspects developed therein. Similarly, it seems likely that the extraordinary generality exerted by Weidmann \cite{We87} in his treatment of higher-order differential operators obscured the fact that he already dealt with distributional potential coefficients back in 1987.   

There were actually earlier papers dealing with Schr\"odinger operators involving strongly singular and oscillating potentials which should be mentioned in this context, such as, 
Baeteman and Chadan \cite{BC75}, \cite{BC76}, Combescure \cite{Co80}, 
Combescure and Ginibre \cite{CG76}, Pearson \cite{Pe79}, 
Rofe-Beketov and Hristov \cite{RH66}, \cite{RH69}, and a more recent contribution treating distributional potentials by Herczy\'nski \cite{He89}. 

In addition, the case of point interactions as particular distributional potential coefficients in Schr\"odinger operators received enormous attention, too numerous to be mentioned here in detail. Hence, we only refer to the standard monographs by Albeverio, Gesztesy, H{\o}egh-Krohn, and Holden \cite{AGHKH05} and  
Albeverio and Kurasov \cite{AK01}, and some of the more recent developments in 
Albeverio, Kostenko, and Malamud \cite{AKM10}, Kostenko and Malamud \cite{KM10}, \cite{KM10a}.
We also mention the case of discontinuous Schr\"odinger operators originally considered by Hald \cite{Ha84},
motivated by the inverse problem for the torsional modes of the earth. For recent development in this direction we refer to Shahriari, Jodayree Akbarfam, and Teschl \cite{SJT12}. 

It was not until 1999 that Savchuk and Shkalikov \cite{SS99} started a new development for Sturm--Liouville (resp., Schr\"odinger) operators with distributional potential coefficients
in connection with 
areas such as, self-adjointness proofs, spectral and inverse spectral theory, oscillation properties, spectral properties in the non-self-adjoint context, etc. In addition to the important series of papers by Savchuk and Shkalikov \cite{SS99}--\cite{SS10}, we also mention other groups such as Albeverio, Hryniv, and Mykytyuk \cite{AHM08}, Bak and Shkalikov \cite{BS02},  Ben Amara and Shkalikov \cite{BS09}, Ben Amor and Remling \cite{BR05}, Davies \cite{Da13}, 
Djakov and Mityagin \cite{DM09}--\cite{DM12}, Eckhardt and Teschl \cite{ET12}, 
Frayer, Hryniv, Mykytyuk, and Perry \cite{FHMP09}, Gesztesy and Weikard \cite{GW12}, 
Goriunov and Mikhailets \cite{GM10}, \cite{GM10a}, Goriunov, Mikhailets, and Pankrashkin \cite{GMP12}, Hryniv \cite{Hr11}, 
Kappeler and M\"ohr \cite{KM01}, Kappeler, Perry, Shubin, and Topalov \cite{KPST05}, 
Kappeler and Topalov \cite{KT04}, 
Hryniv and Mykytyuk \cite{HM01}--\cite{HM12}, Hryniv, Mykytyuk, and Perry \cite{HMP11}--\cite{HMP11a}, 
Kato \cite{Ka10}, Korotyaev \cite{Ko03}, \cite{Ko12}, Maz'ya and Shaposhnikova \cite[Ch.\ 11]{MS09}, 
Maz'ya and Verbitsky \cite{MV02}--\cite{MV06}, Mikhailets and Molyboga \cite{MM04}--\cite{MM09}, 
Mirzoev and Safanova \cite{MS11}, Mykytyuk and Trush \cite{MT10}, 
Sadovnichaya \cite{Sa10}, \cite{Sa11}.  

It should be mentioned that some of the attraction in connection with distributional potential coefficients 
in the Schr\"odinger operator clearly stems from the low-regularity investigations of solutions of the 
Korteweg--de Vries (KdV) equation. We mention, for instance, Buckmaster and Koch \cite{BK12}, 
Grudsky and Rybkin \cite{GR12}, Kappeler and M\"ohr \cite{KM01}, Kappeler and Topalov \cite{KT05}, 
\cite{KT06}, and Rybkin \cite{Ry10}. 

The case of strongly singular potentials at an endpoint and the associated Weyl--Titchmarsh--Kodaira theory 
for Schr\"odinger operators can already be found in the seminal paper by Kodaira \cite{Ko49}. A gap in
Kodaira's approach was later circumvented by Kac \cite{Ka67}. The theory did not receive much further attention
until it was independently rediscovered and further developed by Gesztesy and Zinchenko \cite{GZ06}.
This soon led to a systematic development of Weyl--Titchmarsh theory for strongly singular potentials and we mention, for instance, Eckhardt \cite{E12}, Eckhardt and Teschl \cite{ET12a}, Fulton \cite{Fu08}, Fulton and Langer \cite{FL10}, 
Fulton, Langer, and Luger \cite{FLL12}, 
Kostenko, Sakhnovich, and Teschl \cite{KST12}, \cite{KST12a}, \cite{KT11}, 
\cite{KT12}, and Kurasov and Luger \cite{KL11}. 

In contrast, Weyl--Titchmarsh theory in the presence of distributional potential coefficients, especially, in connection with \eqref{1.1} (resp., \eqref{2.2}) has not yet been developed in the literature, and it is precisely the purpose of this paper to accomplish just that under the full generality of Hypothesis \ref{h2.1}. Applications to inverse spectral theory will be given in \cite{EGNT12a}.

It remains to briefly describe the content of this paper: Section \ref{s2} develops the basics of 
Sturm--Liouville equations under our general hypotheses on $p$, $q$, $r$, $\foco$, including the Lagrange identity and unique solvability of initial value problems. Maximal and minimal Sturm--Liouville operators are introduced in Section \ref{s3}, and Weyl's alternative is described in Section \ref{s4}. Self-adjoint restrictions of the maximal operator, or equivalently, self-adjoint extensions of the minimal operator, are the principal subject of 
Section \ref{s5}, and all self-adjoint boundary conditions (separated and coupled ones) are described in 
Section \ref{s6}. The resolvent of all self-adjoint extensions and some of their spectral properties are 
discussed in Section \ref{s7}. The singular Weyl--Titchmarsh--Kodaira $m$-function corresponding to 
any self-adjoint extension with separated boundary conditions is introduced and studied in 
Section \ref{s8}, and the corresponding spectral transformation is derived in Section \ref{s9}. Classical  spectral multiplicity results for Schr\"odinger operators due to Kac \cite{Ka62}, \cite{Ka63} (see also Gilbert \cite{Gi98} and Simon \cite{Si05}) are extended to our general situation in Section \ref{s10}. 
Section \ref{s11} deals with various applications of the abstract theory developed in this paper.  More specifically, we prove a simple analogue of the classic Sturm separation theorem on the separation of zeros of two real-valued solutions to the distributional Sturm--Liouville equation $(\tau - \lambda)u=0$, $\lambda \in \bbR$, and show the existence of {\it principal solutions} under certain sign-definiteness assumptions on the coefficient $p$ near an endpoint of the basic interval $(a,b)$.\ When $\tau-\lambda$ is non-oscillatory at an endpoint, we present a sufficient criterion on $r$ and $p$ for $\tau$ to be in the limit-point case at that endpoint. This condition dates back to Hartman \cite{Ha48} (in the special case $p=r=1$, $\foco=0$), and was subsequently studied by Rellich \cite{Re51} (in the case $\foco=0$). This section concludes with a detailed characterization of the Friedrichs extension of $T_0$ in terms of (non-)principal solutions, closely following a seminal paper by Kalf \cite{Ka78} (also in the case $\foco=0$). In Section \ref{s12}  we characterize the 
Krein--von Neumann self-adjoint extension of $\Tmin$ by explicitly determining the boundary conditions associated to it.  In our final Section \ref{s13}, we derive the quadratic form associated to each self-adjoint extension of $\Tmin$, assuming $\tau$ is regular on $(a,b)$. We then combine this with the 
Beurling--Deny criterion to present a characterization of all positivity preserving resolvents (and hence semigroups) associated with self-adjoint extensions of $\Tmin$ in the regular case. In particular, this result confirms that the Krein--von Neumann extension does not generate a positivity preserving resolvent or semigroup. We actually go a step further and prove that the notions of positivity preserving and positivity improving are equivalent in the regular case. 

We also mention that an entirely different approach to Schr\"odinger operators (assumed to be bounded from below) with matrix-valued distributional  potentials, based on supersymmetric considerations, has been developed simultaneously in \cite{EGNT12}.

Finally, we briefly summarize some of the notation used in this paper: The Hilbert spaces used in this 
paper  are typically of the form $\Lr$ with scalar product denoted by $\spr{\cdot\,}{\cdot}_{r}$ (linear 
in the first factor), associated norm $\|\cdot\|_{2,r}$, and corresponding identity operator denoted 
by $I_r$. Moreover, $L^2_c((a,b);r(x)dx)$ denotes the space of square integrable functions with compact support. In addition, we use the Hilbert space $\Lrmu$ for an appropriate Borel 
measure $\mu$ on $\bbR$ with scalar product and norm abbreviated by $\spr{\cdot\,}{\cdot}_\mu$ 
and $\|\cdot\|_{2,\mu}$, respectively. 

Next, let $T$ be a linear operator mapping (a subspace of) a
Hilbert space into another, with $\dom{T}$, $\ran(T)$, and $\ker(T)$ denoting the
domain, range, and kernel (i.e., null space) of $T$. The closure of a closable 
operator $S$ is denoted by $\ol S$. The spectrum, essential spectrum, point spectrum, discrete spectrum, absolutely continuous spectrum, and resolvent set of a closed linear operator in the underlying Hilbert 
space  will be denoted by $\sigma(\cdot)$, $\sigma_{ess}(\cdot)$, $\sigma_{p}(\cdot)$, 
$\sigma_{d}(\cdot)$, $\sigma_{ac}(\cdot)$, 
and $\rho(\cdot)$, respectively. The Banach spaces of linear bounded, compact, and Hilbert--Schmidt  operators in a separable complex Hilbert space are denoted by $\cB(\cdot)$, $\cB_\infty(\cdot)$, and 
$\cB_2(\cdot)$, respectively.  The orthogonal complement of a subspace $\cS$ of the Hilbert space $\cH$ will be denoted by $\cS^{\perp}$.  

The symbol $\SL_2(\bbR)$ will be used to denote the special linear group of order two over $\bbR$, that is, the set of all $2\times 2$ matrices with real entries and determinant equal to one.

At last, we will use the abbreviations ``iff'' for ``if and only if'', ``a.e.'' for ``almost everywhere'', and ``$\supp$'' for the support of functions throughout this paper.

\section{The Basics on Sturm--Liouville Equations} \lb{s2}

In this section we provide the basics of Sturm--Liouville equations with distributional potential coefficients.

Throughout this paper we make the following set of assumptions:

\begin{hypothesis} \lb{h2.1}
Suppose $(a,b) \subseteq \bbR$ and assume that 
$p$, $q$, $r$, $\foco$ are Lebesgue measurable on $(a,b)$ with $p^{-1}$, $q$, $r$, $s\in L^1_{\loc}((a,b); dx)$ and real-valued a.e.\ on $(a,b)$ with $r>0$ and $p \neq 0$  a.e.\ on $(a,b)$.
\end{hypothesis}

Assuming Hypothesis \ref{h2.1} and introducing the set, 
\begin{equation}
\Deftau=\big\{g\in AC_{\text{loc}}((a,b))\, \big|\, g^{[1]}=p [g' + \foco g] \in AC_{\text{loc}}((a,b))\big\}, 
\end{equation}
the differential expression $\tau$ considered in this paper is of the type,
\be
 \tau f = \frac{1}{r} \left( - \big(f^\qd\big)' + \foco f^\qd + qf\right) \in\Llocr,\quad f\in \Deftau.  \lb{2.2}
\ee
The expression
\begin{equation}
f^{[1]}=p [f' + \foco f], \quad f \in \Deftau,    \lb{2.3}
\end{equation}
will be called the {\it first quasi-derivative} of $f$.

Given some $g\in\Llocr$, the equation $(\tau-z) f=g$ is equivalent to the system of ordinary differential equations
\be\label{eqn:system}
 \begin{pmatrix} f \\ f^\qd \end{pmatrix}' 
  = \begin{pmatrix}  - \foco & p^{-1} \\ q - zr & \foco  \end{pmatrix}  \begin{pmatrix} f \\ f^\qd  \end{pmatrix} - \begin{pmatrix} 0 \\ rg \end{pmatrix}.
\ee
From this, we immediately get the following existence and uniqueness result.

\begin{theorem}\label{thm:exisuniq}
 For each $g\in \Llocr$, $z\in\C$, $c\in(a,b)$, and $d_1$, $d_2\in\C$ there is a unique solution  $f\in\Deftau$ of $(\tau - z) f = g$ with $f(c)=d_1$ and $f^\qd(c)=d_2$. If, in addition, $g$, $d_1$, $d_2$, and $z$ are real-valued, then the solution $f$ is real-valued.
\end{theorem}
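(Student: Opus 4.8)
The plan is to exploit the equivalence, already recorded in \eqref{eqn:system}, between the scalar problem $(\tau-z)f=g$ with the prescribed initial data and an initial value problem for a first-order linear system $Y'=A(\dott)Y+B(\dott)$, $Y(c)=(d_1,d_2)^\top$, where $Y=(f,f^\qd)^\top$, the coefficient matrix is
\[
A(x)=\begin{pmatrix} -\foco(x) & p(x)^{-1} \\ q(x)-zr(x) & \foco(x) \end{pmatrix},
\]
and the inhomogeneity is $B=(0,-rg)^\top$. Under Hypothesis \ref{h2.1} every entry of $A$ lies in $L^1_{\loc}((a,b);dx)$, and since $g\in\Llocr$ one has $rg\in L^1_{\loc}((a,b);dx)$, so $B$ is locally integrable as well. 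Thus the problem falls precisely into the scope of Carathéodory existence and uniqueness theory for linear systems with locally integrable coefficients. Decoding the system confirms the equivalence: the first row reads $pf'=-p\foco f+Y_2$, i.e.\ $Y_2=p[f'+\foco f]=f^\qd$, so a solution automatically satisfies $f^\qd=p[f'+\foco f]$, while the second row encodes $(\tau-z)f=g$.

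The steps are as follows. First I would rewrite the initial value problem as the Volterra-type integral equation
\[
Y(x)=\begin{pmatrix} d_1 \\ d_2 \end{pmatrix}+\int_c^x\big[A(t)Y(t)+B(t)\big]\,dt,
\]
noting that any $\AC_{\loc}$ solution of the system satisfies this, and conversely any continuous solution of the integral equation is automatically in $\AC_{\loc}$ (being an integral of an $L^1_{\loc}$ function) and solves the system a.e. On a fixed compact subinterval $[\alpha,\beta]\subset(a,b)$ containing $c$, the map defined by the right-hand side is a contraction on $C([\alpha,\beta];\C^2)$ after finitely many iterations (the Picard iterates being controlled by powers of $\int_c^x\|A(t)\|\,dt$ rather than by $\|A\|_\infty\,|x-c|$), which yields a unique solution on $[\alpha,\beta]$. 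Exhausting $(a,b)$ by such compacts and using uniqueness to patch the local pieces together produces a unique solution on all of $(a,b)$, the linearity guaranteeing via a Gronwall estimate $\|Y(x)\|\le\big(\|Y(c)\|+\int_c^x\|B(t)\|\,dt\big)\exp\big(\int_c^x\|A(t)\|\,dt\big)$ that no finite-time blow-up obstructs global continuation. Translating back, the first component $f$ of $Y$ lies in $\AC_{\loc}((a,b))$ and its quasi-derivative $f^\qd$ equals the second component, hence also lies in $\AC_{\loc}((a,b))$, so that $f\in\Deftau$ solves $(\tau-z)f=g$ with the required initial values.

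For the reality assertion, suppose $g$, $d_1$, $d_2$, and $z$ are real. Then $A$ and $B$ have real entries, so the complex conjugate $\overline{Y}$ solves the same integral equation with the same real initial data; by the uniqueness just established, $\overline{Y}=Y$, whence $f$ is real-valued. The only genuinely technical point is the contraction/continuation step: unlike the classical Picard--Lindelöf setting, the coefficients are merely $L^1_{\loc}$ rather than continuous, so the iteration estimates must be carried out with the integrated norm $\int_c^x\|A(t)\|\,dt$ throughout, and one must verify that the Carathéodory solution concept (local absolute continuity together with the equation holding almost everywhere) matches the definition of $\Deftau$ exactly. Once this integrability bookkeeping is in place, the argument is the standard one for linear systems.
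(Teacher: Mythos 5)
Your proposal is correct and follows exactly the route the paper takes: the paper derives the theorem as an immediate consequence of the equivalence with the first-order system \eqref{eqn:system} and the standard Carath\'eodory existence--uniqueness theory for linear systems with locally integrable coefficients, which is precisely the argument you have written out in detail (Volterra integral equation, iterated contraction with the integrated norm, Gronwall continuation, and reality by conjugation and uniqueness).
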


For each $f, g\in\Deftau$ we define the modified Wronski determinant
\be
 W(f,g)(x) = f(x)g^\qd(x) - f^\qd(x)g(x), \quad x\in (a,b).
\ee
The Wronskian is locally absolutely continuous with derivative
\be
  W(f,g)'(x) = \left[g(x) (\tau f)(x) - f(x) (\tau g)(x)\right] r(x), \quad x\in(a,b). 
\ee
Indeed, this is a consequence of the following Lagrange identity, which is readily proved using integration by parts.

\begin{lemma}\label{propLagrange}
 For each $f$, $g\in\Deftau$ and $\alpha, \beta\in(a,b)$ we have 
\be\label{eqn:lagrange}
 \int_\alpha^\beta \left[g(x) (\tau f)(x)  - f(x) (\tau g)(x)\right] \, r(x) dx = W(f,g)(\beta) - W(f,g)(\alpha).
\ee
\end{lemma}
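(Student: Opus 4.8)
The plan is to reduce the integral identity to the fundamental theorem of calculus by first establishing the pointwise derivative formula $W(f,g)'(x) = \big[g(x)\,(\tau f)(x) - f(x)\,(\tau g)(x)\big]\,r(x)$ for almost every $x\in(a,b)$, and then integrating it over $[\alpha,\beta]$. The first thing to record is that $W(f,g)$ is itself locally absolutely continuous. By definition $W(f,g) = f g^\qd - f^\qd g$, and since $f$, $g$, $f^\qd$, $g^\qd$ all lie in $\AC_{\loc}((a,b))$ by virtue of $f,g\in\Deftau$, each summand is a product of two locally absolutely continuous functions. On any compact subinterval such functions are bounded and absolutely continuous, hence their product is absolutely continuous there; thus $W(f,g)\in \AC_{\loc}((a,b))$ and the ordinary Leibniz rule applies almost everywhere, giving $W(f,g)' = f' g^\qd + f (g^\qd)' - (f^\qd)' g - f^\qd g'$.

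The next step is to eliminate $f'$ and $g'$ in favour of the quasi-derivatives and the coefficients. From $f^\qd = p[f' + \foco f]$ one has $f' = p^{-1} f^\qd - \foco f$ a.e., and likewise $g' = p^{-1} g^\qd - \foco g$. Substituting these into the two terms $f' g^\qd - f^\qd g'$ makes the symmetric contribution $p^{-1} f^\qd g^\qd$ cancel, leaving $f' g^\qd - f^\qd g' = \foco\,(f^\qd g - f g^\qd) = -\foco\,W(f,g)$. For the remaining two terms I would use the definition \eqref{2.2} of $\tau$ in the form $(f^\qd)' = \foco f^\qd + q f - r\,(\tau f)$ (and similarly for $g$); inserting this into $f (g^\qd)' - (f^\qd)' g$ cancels the two $q f g$ contributions and produces $\foco\,(f g^\qd - f^\qd g) + r\big[g\,(\tau f) - f\,(\tau g)\big] = \foco\,W(f,g) + r\big[g\,(\tau f) - f\,(\tau g)\big]$. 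Adding the two pieces, the $\pm\foco\,W(f,g)$ terms cancel and one obtains exactly $W(f,g)' = \big[g\,(\tau f) - f\,(\tau g)\big]\,r$ almost everywhere, which incidentally also justifies the formula for $W(f,g)'$ stated just before the lemma.

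Finally, since $W(f,g)\in\AC_{\loc}((a,b))$, the fundamental theorem of calculus for absolutely continuous functions yields $\int_\alpha^\beta W(f,g)'(x)\,dx = W(f,g)(\beta) - W(f,g)(\alpha)$, which combined with the pointwise formula is precisely the claimed identity \eqref{eqn:lagrange}; the integrand $g\,(\tau f) - f\,(\tau g)$ belongs to $\Llocr$ because $\tau f,\tau g\in\Llocr$ and $f,g$ are continuous, hence locally bounded, so the left-hand side is well defined. The only point genuinely requiring care is the absolute-continuity and product-rule justification in the first two steps; I expect this, rather than the subsequent algebraic cancellation of the $\foco$- and $q$-terms, to be the main (though mild) obstacle, and I would handle it by explicitly invoking that products of locally absolutely continuous functions are again locally absolutely continuous, valid here precisely because such functions are locally bounded.
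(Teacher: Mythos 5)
Your proof is correct and amounts to the standard argument the paper merely indicates with the phrase ``readily proved using integration by parts'': differentiating $W(f,g)=fg^{\qd}-f^{\qd}g$ a.e.\ via the Leibniz rule for products of locally absolutely continuous functions, substituting $f'=p^{-1}f^{\qd}-\foco f$ and $(f^{\qd})'=\foco f^{\qd}+qf-r\,\tau f$ so that the $p^{-1}f^{\qd}g^{\qd}$, $qfg$, and $\pm\foco W(f,g)$ terms cancel, and then invoking the fundamental theorem of calculus for $AC_{\loc}$ functions is exactly the intended computation, read in the reverse direction. All the details you supply (local boundedness justifying the product rule, and $g\,\tau f-f\,\tau g\in\Llocr$ making the left-hand side well defined) are the right ones, so the proposal is complete.
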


As a consequence, one verifies that the Wronskian $W(u_1,u_2)$ of two 
solutions $u_1$, $u_2\in\Deftau$ of $(\tau-z)u=0$ is constant.
Furthermore, $W(u_1,u_2)  \not=0$ if and only if  $u_1$,  $u_2$ are linearly independent. In fact, the Wronskian of two linearly dependent solutions vanishes obviously. 
Conversely, $W(u_1,u_2) = 0$ means that for $c\in(a,b)$ there is a $K\in\C$ such that 
\be
 K u_1(c) = u_2(c) \, \text{ and } \, K u_1^\qd(c) = u_2^\qd(c),
\ee
where we assume, without loss of generality, that $u_1$ is a nontrivial solution (i.e., not vanishing 
identically). Now by uniqueness of solutions this implies the linear dependence of $u_1$ and $u_2$.

\begin{lemma}\label{prop:repsol}
 Let $z\in\C$, $u_1$, $u_2$ be two linearly independent solutions of $(\tau-z)u =0$ and $c\in(a,b)$, $d_1, d_2\in\C$, $g\in \Llocr$.
 Then there exist $c_1$, $c_2\in\C$ such that the solution $u$ of $(\tau-z)f = g$  with $f(c)=d_1$ and $f^\qd(c)=d_2$,
 is given for each $x\in(a,b)$ by
\begin{align}
 \begin{split}
 f(x) &= c_1 u_1(x) + c_2 u_2(x) + \frac{u_1(x)}{W(u_1,u_2)} \int_c^x u_2(t) g(t) \, r(t) dt   \\
 & \quad - \frac{u_2(x)}{W(u_1,u_2)} \int_c^x u_1(t) g(t) \, r(t) dt,  \end{split} \\
 \begin{split}
 f^\qd(x) & = c_1 u_1^\qd(x) + c_2 u_2^\qd(x) + \frac{u_1^\qd(x)}{W(u_1,u_2)} \int_c^x u_2(t) g(t) \, r(t) dt \\
 & \quad - \frac{u_2^\qd(x)}{W(u_1,u_2)} \int_c^x u_1(t) g(t) \, r(t) dt. \end{split}
\end{align}
If $u_1$, $u_2$ is the fundamental system of solutions of $(\tau -z) u =0$ satisfying $u_1(c)=u_2^\qd(c) = 1$ and $u_1^\qd(c)=u_2(c)=0$, then $c_1=d_1$ and $c_2=d_2$.
\end{lemma}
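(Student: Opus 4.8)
The plan is to prove the lemma by the classical variation-of-parameters argument, adapted to the quasi-derivative setting. First I would exhibit a particular solution $f_p$ of $(\tau - z)f = g$ built from $u_1$, $u_2$, then add the general homogeneous solution $c_1 u_1 + c_2 u_2$ and determine $c_1$, $c_2$ from the initial data. Since $u_1$, $u_2$ are linearly independent, they form a fundamental system, so every solution of the homogeneous equation is such a combination; hence every solution of the inhomogeneous equation has the form $c_1 u_1 + c_2 u_2 + f_p$, and Theorem \ref{thm:exisuniq} guarantees that matching the two initial conditions singles out the desired solution uniquely.

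For the particular solution, abbreviate $I_j(x) = \int_c^x u_j(t) g(t)\, r(t)\, dt$ for $j = 1,2$, noting $I_j \in AC_{\loc}((a,b))$ with $I_j' = u_j g r$ a.e., and set
\[
 f_p = \frac{1}{W(u_1,u_2)}\big(u_1 I_2 - u_2 I_1\big).
\]
Upon differentiating, the identity $u_1 I_2' - u_2 I_1' = (u_1 u_2 - u_2 u_1) g r = 0$ shows that the integral terms cancel at first order, giving $f_p' = W(u_1,u_2)^{-1}(u_1' I_2 - u_2' I_1)$ and hence
\[
 f_p^\qd = p\big[f_p' + \foco f_p\big] = \frac{1}{W(u_1,u_2)}\big(u_1^\qd I_2 - u_2^\qd I_1\big),
\]
which is exactly the claimed expression (with $c_1 = c_2 = 0$) and which lies in $AC_{\loc}((a,b))$; in particular $f_p \in \Deftau$.

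Differentiating $f_p^\qd$ once more produces the cross term $W(u_1,u_2)^{-1}\big(u_1^\qd u_2 - u_2^\qd u_1\big) g r$, which equals $-g r$ by the Wronskian relation $u_1^\qd u_2 - u_2^\qd u_1 = -W(u_1,u_2)$. Combining this with the homogeneous relations $(u_j^\qd)' = \foco u_j^\qd + (q - zr) u_j$ (a rewriting of $(\tau - z)u_j = 0$) and substituting into $\tau f_p = r^{-1}(-(f_p^\qd)' + \foco f_p^\qd + q f_p)$, all terms proportional to $I_1$, $I_2$ reassemble into $z f_p$, leaving $(\tau - z) f_p = g$. Finally, since $I_1(c) = I_2(c) = 0$ one has $f_p(c) = f_p^\qd(c) = 0$, so the initial conditions reduce to the linear system $c_1 u_1(c) + c_2 u_2(c) = d_1$ and $c_1 u_1^\qd(c) + c_2 u_2^\qd(c) = d_2$, whose coefficient determinant is $W(u_1,u_2)(c) = W(u_1,u_2) \neq 0$; this yields unique $c_1$, $c_2$, and in the normalized case $u_1(c) = u_2^\qd(c) = 1$, $u_1^\qd(c) = u_2(c) = 0$ the matrix is the identity, forcing $c_1 = d_1$ and $c_2 = d_2$.

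The only genuine bookkeeping, and thus the main point requiring care, is the two-stage quasi-derivative computation: verifying that the first-order cancellation delivers membership in $\Deftau$ together with the stated formula for $f_p^\qd$, and that the second-order cross term reproduces precisely the inhomogeneity $g$ with the correct sign supplied by the Wronskian relation. Everything else is the elementary linear algebra of solving a $2\times 2$ system with nonvanishing determinant, followed by an appeal to the uniqueness already established in Theorem \ref{thm:exisuniq}.
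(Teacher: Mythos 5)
Your variation-of-parameters argument is correct in every detail: the first-order cancellation, the formula for $f_p^\qd$, the sign of the Wronskian cross term $u_1^\qd u_2 - u_2^\qd u_1 = -W(u_1,u_2)$, and the $2\times 2$ system for $c_1$, $c_2$ all check out against the system form \eqref{eqn:system}. The paper explicitly omits its proof as ``straightforward calculations,'' and what you have written is precisely the computation the authors had in mind, so there is nothing to compare beyond noting that you have filled in the omitted details correctly.
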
 

We omit the straightforward calculations underlying the proof of Lemma \ref{prop:repsol}.
 Another important identity for the Wronskian is the well-known Pl\"{u}cker identity:

\begin{lemma}
 For all $f_1, f_2, f_3, f_4\in\Deftau$ one has
 \begin{equation}
  0 = W(f_1,f_2)W(f_3,f_4) + W(f_1,f_3)W(f_4,f_2) + W(f_1,f_4)W(f_2,f_3).  \lb{2.12}
 \end{equation}
\end{lemma}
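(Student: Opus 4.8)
The Plücker identity \eqref{2.12} is a purely algebraic identity in the four pairs of values $(f_i, f_i^{[1]})$ evaluated at any fixed point $x \in (a,b)$, so my plan is to prove it pointwise and treat it as an identity among the $8$ scalars $f_i(x)$, $f_i^{[1]}(x)$, $i=1,\dots,4$. Fix $x$ and write $a_i = f_i(x)$, $b_i = f_i^{[1]}(x)$, so that $W(f_i,f_j)(x) = a_i b_j - a_j b_i$. The claim \eqref{2.12} then becomes the vanishing of
\begin{equation}
(a_1 b_2 - a_2 b_1)(a_3 b_4 - a_4 b_3) + (a_1 b_3 - a_3 b_1)(a_4 b_2 - a_2 b_4) + (a_1 b_4 - a_4 b_1)(a_2 b_3 - a_3 b_2), \notag
\end{equation}
which is a single polynomial identity in eight indeterminates that one can in principle verify by direct expansion.

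First I would recognize this as the classical Plücker relation for $2 \times 2$ minors and give a conceptual (rather than brute-force) argument. The cleanest route is to observe that each Wronskian $a_i b_j - a_j b_i$ is the determinant of the $2\times 2$ matrix with columns $(a_i,b_i)^\top$ and $(a_j,b_j)^\top$. The desired identity is then equivalent to the vanishing of the determinant of a suitable $4 \times 4$ matrix built from these data, or more transparently, it follows from Laplace-type expansion. Concretely, consider the $4\times 4$ matrix
\begin{equation}
M = \begin{pmatrix} a_1 & b_1 & a_1 & b_1 \\ a_2 & b_2 & a_2 & b_2 \\ a_3 & b_3 & a_3 & b_3 \\ a_4 & b_4 & a_4 & b_4 \end{pmatrix}, \notag
\end{equation}
whose determinant vanishes since it has two pairs of identical columns. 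Expanding $\det(M)$ by the Laplace expansion along the first two columns versus the last two columns produces exactly the sum of products of complementary $2\times 2$ minors, and after accounting for the sign conventions of complementary minors one recovers precisely the three-term combination in \eqref{2.12}, with the correct signs.

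The main step, then, is simply to match the signs in the Laplace expansion to the cyclic arrangement $W(f_1,f_2)W(f_3,f_4) + W(f_1,f_3)W(f_4,f_2) + W(f_1,f_4)W(f_2,f_3)$, since the three partitions of $\{1,2,3,4\}$ into two pairs correspond exactly to the three terms. I do not anticipate any genuine obstacle here: there is no analytic content whatsoever, as nothing about the differential expression $\tau$, the solvability theory of Theorem~\ref{thm:exisuniq}, or local absolute continuity is used. The only thing requiring care is bookkeeping of signs, and should the Laplace-expansion argument prove awkward to typeset, the fallback is the wholly mechanical expansion of all six products above, which cancel in pairs to give $0$. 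Since the identity holds at every $x \in (a,b)$, it holds as an identity of functions, completing the proof.
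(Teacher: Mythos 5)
Your proposal is correct. The paper offers no proof of this lemma at all --- it simply labels \eqref{2.12} ``the well-known Pl\"ucker identity'' and moves on --- so there is no argument of the authors' to compare against; your write-up supplies the standard verification that the paper leaves to the reader. You are right that the statement has no analytic content and reduces, at each fixed $x\in(a,b)$, to a polynomial identity in the eight scalars $a_i=f_i(x)$, $b_i=f_i^{[1]}(x)$; the direct expansion of the six products does cancel pairwise to give $0$, and the determinant route also works. One small bookkeeping remark on the latter: the Laplace expansion of $\det(M)$ along the first two columns runs over all six ordered pairs $i<j$ of rows, and each unordered partition of $\{1,2,3,4\}$ into two pairs contributes twice with the same sign, so what you obtain is $\det(M)=2\big[W(f_1,f_2)W(f_3,f_4)+W(f_1,f_3)W(f_4,f_2)+W(f_1,f_4)W(f_2,f_3)\big]=0$; the factor $2$ is of course harmless. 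Either version is a complete proof.
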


We say $\tau$ is {\it regular} at $a$, if $p^{-1}$, $q$, $r$, and $\foco$ are integrable near $a$.
Similarly, we say $\tau$ is regular at $b$ if these functions are integrable near $b$.
Furthermore, we say $\tau$ is regular on $(a,b)$ if $\tau$ is regular at both endpoints $a$ and $b$.

\begin{theorem}\label{thm:EEreg}
 Let $\tau$ be regular at $a$, $z\in\C$, and $g\in L^1((a,c);r(x)dx)$ for each $c\in(a,b)$. 
 Then for every solution $f$ of $(\tau-z)f=g$ the limits
\begin{align}
 f(a) = \lim_{x\downarrow a} f(x) \, \text{ and } \, f^\qd(a) = \lim_{x\downarrow a} f^\qd(x)
\end{align}
exist and are finite.
For each $d_1$, $d_2\in\C$ there is a unique solution of $(\tau-z)f=g$ with $f(a)=d_1$ and $f^\qd(a)=d_2$. 
 Furthermore, if $g$, $d_1$, $d_2$, and $z$ are real, then the solution is real.
Similar results hold for the right endpoint $b$.
\end{theorem}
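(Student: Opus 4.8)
The plan is to pass to the first-order system \eqref{eqn:system}. Writing $Y = (f, f^\qd)^\top$, the equation $(\tau-z)f = g$ reads $Y' = A Y - G$ with $A = \left(\begin{smallmatrix} -\foco & p^{-1} \\ q-zr & \foco \end{smallmatrix}\right)$ and $G = (0, rg)^\top$. The content of regularity at $a$ is precisely that each entry of $A$—namely $\foco$, $p^{-1}$, $q$, $r$—lies in $L^1((a,c);dx)$ for every $c \in (a,b)$, while $rg \in L^1((a,c);dx)$ is exactly the hypothesis on $g$. Hence $A$ and $G$ are integrable up to the endpoint $a$, and $a$ behaves like a regular interior point for the system.

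First I would establish the finite limits. Fix $c_0 \in (a,b)$; every solution satisfies $Y(x) = Y(c_0) - \int_x^{c_0}(AY - G)\,dt$ for $x \in (a,c_0)$. Estimating in the Euclidean norm and setting $K := |Y(c_0)| + \int_a^{c_0}|G|\,dt < \infty$ gives $|Y(x)| \le K + \int_x^{c_0}\|A(t)\|\,|Y(t)|\,dt$, whence the (backward) Gronwall inequality yields the uniform bound $|Y(x)| \le K\exp\!\big(\int_a^{c_0}\|A\|\,dt\big) =: C$ on $(a,c_0)$. With this bound secured, for $a < x_1 < x_2 < c_0$ one has $|Y(x_1)-Y(x_2)| \le \int_{x_1}^{x_2}(C\|A(t)\| + |G(t)|)\,dt$, and the right-hand side tends to $0$ as $x_1, x_2 \downarrow a$ since $\|A\|$ and $|G|$ are integrable near $a$. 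The Cauchy criterion then forces $\lim_{x\downarrow a}Y(x)$ to exist and be finite, which is the assertion for $f(a)$ and $f^\qd(a)$.

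For the boundary initial value problem I would argue as follows. Let $u_1, u_2$ be the fundamental system of $(\tau-z)u=0$ based at some interior $c$ with $W(u_1,u_2)=1$; by the limit result just established, each of $u_1, u_2$ (and the variation-of-parameters particular solution from Lemma \ref{prop:repsol}) has finite limits at $a$. By the constancy of the Wronskian noted after Lemma \ref{propLagrange} (which here reflects $\tr A \equiv -\foco+\foco = 0$), one has $W(u_1,u_2)(a)=1$, so the matrix $\left(\begin{smallmatrix} u_1(a) & u_2(a) \\ u_1^\qd(a) & u_2^\qd(a) \end{smallmatrix}\right)$ is invertible; solving the resulting $2\times 2$ linear system determines the coefficients $c_1, c_2$ uniquely so that the general solution attains $f(a)=d_1$, $f^\qd(a)=d_2$. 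This proves existence. Uniqueness follows by applying the Gronwall estimate of the previous paragraph to the difference of two such solutions, which solves the homogeneous equation with vanishing data at $a$ and hence vanishes identically. Finally, if $g, d_1, d_2, z$ are real, then $\overline{f}$ solves the same real problem, so $f = \overline{f}$ by uniqueness.

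I expect the existence of the finite limits to be the essential point, since this is where regularity at $a$ is genuinely exploited: the Gronwall bound followed by the integrable-tail Cauchy estimate. Once the limits are in hand, well-posedness of the boundary value problem is a routine adaptation of the interior theory in Theorem \ref{thm:exisuniq}, the only structural input being the non-vanishing constancy of the Wronskian. The corresponding statements at $b$ follow by an entirely analogous argument at the right endpoint.
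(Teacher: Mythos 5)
Your proof is correct and follows the same route as the paper: the paper's proof consists of the single remark that the theorem is an immediate consequence of the corresponding result for the equivalent first-order system \eqref{eqn:system}, and your argument is precisely a detailed verification of that system result (Gronwall bound, Cauchy criterion from integrability of the coefficients up to $a$, and solvability of the boundary data via the non-vanishing Wronskian). Nothing is missing; you have simply written out what the paper leaves to standard ODE theory.
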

\begin{proof} 
This theorem is an immediate consequence of the corresponding result for the equivalent system \eqref{eqn:system}.
\end{proof}

Under the assumptions of Theorem \ref{thm:EEreg} one also infers that Lemma \ref{prop:repsol} remains 
 valid even in the case when $c=a$ (resp., $c=b$).

We now turn to analytic dependence of solutions on the spectral parameter $z\in\C$.

\begin{theorem}\label{thmMSLEanaly}
  Let $g\in \Llocr$, $c\in(a,b)$, $d_1, d_2\in\C$ and for each $z\in\C$ let $f_z$ be the unique solution of  $(\tau - z) f = g$  with $f(c)=d_1$ and  $f^\qd(c)=d_2$.
  Then $f_z(x)$ and $f_z^\qd(x)$ are entire
  functions of order $\nicefrac{1}{2}$ in $z$ for each $x\in(a,b)$.
  Moreover, for each $\alpha, \beta\in(a,b)$ with $\alpha<\beta$ we have
  \begin{align}
   |f_z(x)| + |f_z^\qd(x) | \leq C \E^{B\sqrt{|z|}}, \quad x\in[\alpha,\beta],~z\in\C,
  \end{align}
  for some constants $C$, $B\in\R$.
\end{theorem}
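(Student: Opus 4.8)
The plan is to recast the problem as the first-order system \eqref{eqn:system} and to solve it by successive approximation (a Volterra series), reading off both analyticity and the growth bound from the structure of the iterates. Writing $F = (f,f^\qd)^\top$, $A(x) = \begin{pmatrix} -\foco & p^{-1} \\ q & \foco \end{pmatrix}$, $B(x) = \begin{pmatrix} 0 & 0 \\ -r & 0 \end{pmatrix}$, and $G = (0,rg)^\top$, the initial value problem is equivalent to the integral equation $F(x) = \tilde F(x) + \int_c^x (A(t)+zB(t))F(t)\,dt$ with the $z$-independent forcing term $\tilde F(x) = F(c) - \int_c^x G(t)\,dt$; note $\tilde F$ is continuous, hence bounded on $[\alpha,\beta]$. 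Setting $F_0 = \tilde F$ and $F_{n+1}(x) = \int_c^x (A+zB)F_n$, one obtains $F = \sum_{n\ge 0} F_n$ with
\[
 F_n(x) = \int_{c \le t_1 \le \cdots \le t_n \le x} (A+zB)(t_n)\cdots(A+zB)(t_1)\,\tilde F(t_1)\, dt_1\cdots dt_n .
\]
Since $p^{-1},q,r,\foco \in L^1_{\loc}$, each $F_n$ is a finite iterated integral of a polynomial in $z$, hence a polynomial in $z$; once the series is shown to converge uniformly in $z$ on compact sets (and in $x$ on $[\alpha,\beta]$), it follows that $f_z(x)$ and $f_z^\qd(x)$ are entire in $z$.

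The decisive structural observation is that $z$ enters only through $B$, and $B(t)B(s) = r(t)r(s)\begin{pmatrix} 0 & 0 \\ -1 & 0 \end{pmatrix}^2 = 0$. Expanding the product $(A+zB)(t_n)\cdots(A+zB)(t_1)$ into $2^n$ words in the letters $A$ and $zB$, every word in which two $B$-letters occupy adjacent positions vanishes. Hence a surviving word of length $n$ carries at most $\lceil n/2\rceil$ factors $B$, so $F_n$ is a polynomial in $z$ of degree at most $\lceil n/2\rceil$ rather than $n$. This is precisely the mechanism converting the expected $\E^{B|z|}$ growth into $\E^{B\sqrt{|z|}}$: each power of $z$ must be paid for by at least two factors, i.e.\ by two integrations.

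To make this quantitative I would estimate the contribution of $|z|^j$ to $\|F_n\|$. Bounding each matrix factor pointwise by $G(t) := \|A(t)\| + r(t) \in L^1_{\loc}$, invoking the simplex identity $\int_{c\le t_1\le\cdots\le t_n\le x}\prod_i G(t_i)\,dt = \frac{1}{n!}\big(\int_c^x G\big)^n$, and using the count $\binom{n-j+1}{j}$ of length-$n$ words with $j$ pairwise non-adjacent $B$'s, one gets for $x\in[\alpha,\beta]$ and $0\le j\le \lceil n/2\rceil$ (with $K = \int_\alpha^\beta G$ and $\|\tilde F\|_\infty$ absorbed into $C$)
\[
 \text{coefficient of } |z|^j \text{ in } \|F_n(x)\| \;\le\; C\,\binom{n-j+1}{j}\,\frac{K^n}{n!}.
\]
Summing first over $n \ge 2j-1$ and then over $j$, the crucial point is that $|z|^j$ ends up multiplied by a factorial of type $1/(2j-1)!$ inherited from the simplex (not $1/j!$); a routine reorganization of the double sum identifies the majorant with $\sinh$ and $\cosh$ of $K\sqrt{|z|}$, yielding $\|F(x)\| \le C\,\E^{B\sqrt{|z|}}$ on $[\alpha,\beta]$ for suitable $C,B\in\R$. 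This is the asserted bound, and together with local uniform convergence it shows that $f_z(x)$ and $f_z^\qd(x)$ are entire of order at most $\tfrac12$.

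I expect the only genuine obstacle to lie in the bookkeeping of this last step: the naive grouping of the iterates into consecutive pairs already exposes the cancellation $B^2=0$, but it trades the factor $1/(2j)!$ for $1/j!$ and thereby reproduces only order $1$; one must instead retain the full simplex factor $1/n!$ while simultaneously tracking the admissible word count, and resum with care. The analyticity of the limit, the real-valuedness of $f_z$ for real $g$, $d_1$, $d_2$, $z$ (inherited term-by-term from \eqref{eqn:system}), and the extension of the representation to a regular endpoint via Theorem \ref{thm:EEreg} are then routine.
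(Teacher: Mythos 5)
Your proof is correct, but it follows a genuinely different route from the paper's. The paper first reduces to $g\equiv 0$ via Lemma \ref{prop:repsol}, then introduces the weighted quadratic form $v_z(x)=|z|\,|f_z(x)|^2+|f_z^{[1]}(x)|^2$, computes $v_z(x)-v_z(c)$ by integration by parts, applies the elementary inequality $2|f_zf_z^{[1]}|\le v_z/\sqrt{|z|}$, and closes with Gronwall to get $v_z(x)\le v_z(c)\exp\big(2\sqrt{|z|}\,\big|\int_c^x\omega\big|\big)$; the $\sqrt{|z|}$ is produced entirely by the choice of weight $|z|$ in $v_z$. Analyticity is simply asserted as the ``corresponding result for the equivalent system.'' You instead run the Volterra/Neumann series for the system, and your key observation --- that $z$ enters only through the nilpotent factor $B$ with $B(t)B(s)=0$, so the $n$-th iterate has $z$-degree at most $\lceil n/2\rceil$ --- is exactly the structural reason the order is $\tfrac12$; your warning about the resummation is apt, but the bookkeeping does close: with $n=2j-1+m$ one has $\binom{n-j+1}{j}=\binom{j+m}{m}\le 2^{j+m}$ and $(2j-1+m)!\ge (2j-1)!\,m!$, which collapses the double sum to a constant times $\sqrt{|z|}\,\sinh\big(\mathrm{const}\cdot\sqrt{|z|}\big)$, i.e.\ $C\E^{B\sqrt{|z|}}$. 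Your approach buys a self-contained proof of analyticity (locally uniform limit of polynomials in $z$) together with the growth bound in one stroke, and it handles the inhomogeneity directly by absorbing $\int_c^x rg$ into the $z$-independent forcing rather than invoking variation of constants; the paper's energy/Gronwall argument is shorter and avoids all combinatorics. One cosmetic remark: like the paper, you only establish order at most $\tfrac12$, which is all that is ever used.
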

\begin{proof}
 The analyticity part follows from the corresponding result for the equivalent system. 
 For the remaining part, first note that because of Lemma \ref{prop:repsol} it suffices to consider the case when $g$ vanishes identically.
 Now if we set for each $z\in\C$ with $|z|\geq 1$  
 \begin{align} 
  v_z(x) = |z| |f_z(x)|^2 + |f_z^\qd(x)|^2, \quad x\in(a,b),
 \end{align}
 an integration by parts shows that for each $x\in(a,b)$
 \begin{align}
 \begin{split}
  v_z(x) = v_z(c) & - 
      \int_c^x 2 \big[|z| |f_z(t)|^2 -  |f_z^\qd(t)|^2 \big] \foco(t) \, dt \\
         & + \int_c^x 2\,\re\Big(f_z(t) \ol{f_z^\qd(t)}\Big) \big[|z| p(t)^{-1} + q(t) \big] dt \\
         & - \int_c^x  2\,\re\Big(zf_z(t) \ol{f_z^\qd(t)}\Big) \, r(t) dt.
 \end{split}
 \end{align}
Employing the elementary estimate
 \begin{align}
  2| f_z(x) f_z^\qd(x)| \leq \frac{ |z| |f_z(x)|^2 + |f_z^\qd(x)|^2}{\sqrt{|z|}} = \frac{v_z(x)}{\sqrt{|z|}}, \quad x\in(a,b),
 \end{align}
 we obtain an upper bound for $v_z$:
 \begin{align}
  v_z(x) & \leq v_z(c) + 2 \left| \int_c^x v_z(t) \sqrt{|z|} \omega(t) dt\right|, \quad x\in(a,b), 
 \end{align}
 where $\omega=|p^{-1}|+ |q| + |r| + |\foco|$.
 Now an application of the Gronwall lemma yields
 \begin{align}
  v_z(x) \leq v_z(c) \E^{2 \sqrt{|z|} \left|\int_c^x \, \omega(t) dt\right|}, \quad x\in(a,b).
 \end{align}
\end{proof}

If, in addition to the assumptions of Theorem \ref{thmMSLEanaly},
 $\tau$ is regular at $a$ and $g$ is integrable near $a$, then the limits $f_z(a)$ and $f_z^\qd(a)$ are entire 
 functions of order $\nicefrac{1}{2}$ and the bound in Theorem \ref{thmMSLEanaly} holds for all $x\in[a,\beta]$. 
 Indeed, this follows since the entire functions $f_z(x)$ and $f_z^\qd(x)$, $x\in(a,c)$ are locally bounded, uniformly in $x\in(a,c)$. 
Moreover, in this case the assertions of Theorem \ref{thmMSLEanaly} are valid even if we take $c=a$ and/or $\alpha=a$.

\section{Sturm--Liouville Operators}  \lb{s3}

In this section, we will introduce operators associated with our differential expression $\tau$ in the Hilbert space $\Lr$ with scalar product
\be
 \spr{f}{g}_{r} = \int_{a}^b  f(x) \ol{g(x)} \, r(x) dx, \quad f,\, g\in\Lr.
\ee
First, we define the maximal operator $\Tmax$ in $\Lr$ by
\begin{align}
& \Tmax f = \tau f,   \\
& f \in \dom{\Tmax} = \left\lbrace g\in\Lr \,\big|\, g\in\Deftau,~ \tau g\in\Lr \right\rbrace.  \no
\end{align}
In order to obtain a symmetric operator, we restrict the maximal operator $\Tmax$ to functions with compact support by
\begin{align}
\begin{split}
& \Tpre f = \tau f,    \\
& f \in \dom{\Tpre} = \left\lbrace g\in\dom{\Tmax} \,|\, g \text{ has compact support in }(a,b) \right\rbrace. 
\end{split}\lb{3.3}
\end{align} 
 Since $\tau$ is a real differential expression, the operators $\Tpre$ and $\Tmax$ are real with respect 
 to the natural conjugation in $\Lr$.

We say some measurable function $f$ lies in $\Lr$ near $a$ (resp., near $b$) if $f$ lies in $L^2((a,c);r(x)dx)$ (resp., in $L^2((c,b);r(x)dx)$) for each $c\in(a,b)$. 
 Furthermore, we say some $f\in\Deftau$ lies in $\dom{\Tmax}$ near $a$ (resp., near $b$) if $f$ and $\tau f$ both 
 lie in $\Lr$ near $a$ (resp., near $b$).
One readily verifies that some $f\in\Deftau$ lies in $\dom{\Tmax}$ near $a$ (resp., $b$) if and only if
$\ol{f}$ lies in $\dom{\Tmax}$ near $a$ (resp., $b$).

\begin{lemma}
 If $\tau$ is regular at $a$ and $f$ lies in $\dom{\Tmax}$ near $a$, then the limits
\begin{align}
 f(a) = \lim_{x\downarrow a} f(x) \, \text{ and } \, f^\qd(a) = \lim_{x\downarrow a} f^\qd(x)
\end{align}
exist and are finite. Similar results hold at $b$.
\end{lemma}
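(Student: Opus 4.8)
The plan is to reduce the assertion to the already-established Theorem \ref{thm:EEreg}, which settles the analogous boundary-value question for solutions of an inhomogeneous equation whose right-hand side lies in $L^1((a,c);r(x)dx)$ for each $c\in(a,b)$. To set this up, I would put $g=\tau f$. Since $f\in\Deftau$, it is by construction a solution of $(\tau-0)f=g$ on $(a,b)$, so the whole matter comes down to checking that $g$ meets the integrability hypothesis of Theorem \ref{thm:EEreg} near $a$, after which I simply invoke that theorem with $z=0$.

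First I would record what the hypotheses supply. Because $f$ lies in $\dom{\Tmax}$ near $a$, both $f$ and $g=\tau f$ lie in $\Lr$ near $a$ by definition; in particular $g\in L^2((a,c);r(x)dx)$ for every $c\in(a,b)$. Regularity of $\tau$ at $a$ means $r$ is integrable near $a$, and combined with $r\in L^1_{\loc}((a,b);dx)$ from Hypothesis \ref{h2.1} this gives $\int_a^c r(x)\,dx<\infty$ for every $c\in(a,b)$. The key step is then the passage from square-integrability to integrability: for each fixed $c\in(a,b)$, the Cauchy--Schwarz inequality yields
\[
\int_a^c |g(x)|\, r(x)\,dx \le \left(\int_a^c |g(x)|^2\, r(x)\,dx\right)^{1/2}\left(\int_a^c r(x)\,dx\right)^{1/2} < \infty,
\]
so that $g\in L^1((a,c);r(x)dx)$ for every $c\in(a,b)$. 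Applying Theorem \ref{thm:EEreg} (with $z=0$ and this $g$) to the solution $f$ of $(\tau-0)f=g$ shows that $f(a)=\lim_{x\downarrow a}f(x)$ and $f^\qd(a)=\lim_{x\downarrow a}f^\qd(x)$ exist and are finite. The statement at $b$ follows by the symmetric argument, with $(a,c)$ replaced by $(c,b)$ and regularity invoked at $b$.

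I do not expect a genuine obstacle here: the entire content is the observation that finiteness of $\int_a^c r\,dx$—which is exactly the extra mileage that regularity at $a$ provides, beyond its role inside Theorem \ref{thm:EEreg}—upgrades the $L^2$ information on $\tau f$ to the $L^1$ information needed to quote the existence-of-boundary-values result. The only point demanding a little care is that square-integrability is assumed for every $c\in(a,b)$, not merely for $c$ close to $a$; together with $r\in L^1_{\loc}((a,b);dx)$ this guarantees the $L^1$ conclusion over the full range of $c$ required by Theorem \ref{thm:EEreg}.
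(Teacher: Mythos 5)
Your proof is correct and follows essentially the same route as the paper: both reduce the claim to Theorem \ref{thm:EEreg} by observing that regularity at $a$ makes $r(x)dx$ a finite measure near $a$, so that $\tau f\in\Lr$ near $a$ upgrades via Cauchy--Schwarz to $\tau f\in L^1((a,c);r(x)dx)$. The paper merely leaves the Cauchy--Schwarz step implicit.
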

\begin{proof}
 Under the assumptions of the lemma, $\tau f$ lies in $\Lr$ near $a$ and since $r(x)dx$ is
 a finite measure near $a$ we have $\tau f\in L^1((a,c);r(x)dx)$ for each $c\in(a,b)$. 
 Hence, the claim follows from Theorem \ref{thm:EEreg}.
\end{proof}

The following lemma is a consequence of the Lagrange identity.

\begin{lemma}\label{lem:l2lagrange}
If $f$ and $g$ lie in $\dom{\Tmax}$ near $a$, then the limit
\be
 W(f,\ol{g})(a) = \lim_{\alpha\downarrow a} W(f,\ol{g})(\alpha)
\ee
exists and is finite. A similar result holds at the endpoint $b$.
If $f$, $g\in\dom{\Tmax}$, then
\be
 \spr{\tau f}{g}_{r} - \spr{f}{\tau g}_{r} = W(f,\ol{g})(b) - W(f,\ol{g})(a) =: W_a^b(f,\ol{g}).
\ee
\end{lemma}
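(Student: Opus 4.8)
The plan is to deduce the existence of the limit $W(f,\ol{g})(a)$ directly from the Lagrange identity (Lemma~\ref{propLagrange}) together with the hypothesis that both $f$ and $g$ lie in $\dom{\Tmax}$ near $a$. First I would observe that if $f$ and $g$ lie in $\dom{\Tmax}$ near $a$, then the same is true of $\ol{g}$, and moreover $f\,\tau\ol{g}$ and $\ol{g}\,\tau f$ are each products of an $\Lr$-function (near $a$) with another $\Lr$-function (near $a$); hence, by the Cauchy--Schwarz inequality, each product is integrable against $r(x)dx$ on $(a,c)$ for every $c\in(a,b)$. Applying Lemma~\ref{propLagrange} to the pair $f$, $\ol{g}$ on an interval $[\alpha,\beta]\subseteq(a,b)$ gives
\be
 W(f,\ol{g})(\beta) - W(f,\ol{g})(\alpha) = \int_\alpha^\beta \left[\ol{g}(x) (\tau f)(x) - f(x) (\tau \ol{g})(x)\right] r(x)\,dx. \no
\ee
Since the integrand is in $L^1((a,\beta);r(x)dx)$, the right-hand side converges as $\alpha\downarrow a$; consequently the Cauchy criterion shows that $\lim_{\alpha\downarrow a} W(f,\ol{g})(\alpha)$ exists and is finite. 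The argument at $b$ is completely symmetric.

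For the second assertion, I would take $f,g\in\dom{\Tmax}$, so that $f$, $\ol{g}$, $\tau f$, $\tau\ol{g}\in\Lr$ on the full interval $(a,b)$. Then the limits $W(f,\ol{g})(a)$ and $W(f,\ol{g})(b)$ both exist by the first part, and applying Lemma~\ref{propLagrange} once more on $[\alpha,\beta]$ and letting $\alpha\downarrow a$, $\beta\uparrow b$ yields
\be
 \int_a^b \left[\ol{g}(x)(\tau f)(x) - f(x)(\tau\ol{g})(x)\right] r(x)\,dx = W(f,\ol{g})(b) - W(f,\ol{g})(a). \no
\ee
Recognizing the left-hand side as $\spr{\tau f}{g}_r - \spr{f}{\tau g}_r$ (using that $\tau g$ is real-linear conjugated correctly, i.e.\ $\ol{\tau g} = \tau\ol{g}$ since $\tau$ has real coefficients) gives the stated identity, with $W_a^b(f,\ol{g})$ introduced as notation for the difference of boundary terms.

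The only point requiring genuine care is the integrability of the products $\ol{g}\,\tau f$ and $f\,\tau\ol{g}$ against $r(x)dx$ near the endpoint: one must invoke the definition that ``$f$ lies in $\dom{\Tmax}$ near $a$'' means precisely that $f$ and $\tau f$ both lie in $\Lr$ near $a$, and then pair $f$ with $\tau\ol{g}$ and $\ol{g}$ with $\tau f$ via Cauchy--Schwarz to land in $L^1((a,c);r(x)dx)$. I expect this to be routine rather than an obstacle, since it is exactly the finiteness of the relevant $\Lr$-norms near $a$ that the hypothesis supplies; the Lagrange identity then does all the work and no limit-point/limit-circle distinction is needed at this stage.
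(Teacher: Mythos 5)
Your proof is correct and follows essentially the same route as the paper: the paper's own (much terser) proof likewise observes that the left-hand side of the Lagrange identity \eqref{eqn:lagrange} converges as $\alpha\downarrow a$ because the integrand is integrable near $a$, and then obtains the second identity by letting $\alpha\downarrow a$ and $\beta\uparrow b$. Your explicit Cauchy--Schwarz justification of the integrability of $\ol{g}\,\tau f$ and $f\,\tau\ol{g}$, and the remark that $\tau\ol{g}=\ol{\tau g}$ by reality of the coefficients, simply make precise the steps the paper leaves implicit.
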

\begin{proof}
 If $f$ and $g$ lie in $\dom{\Tmax}$ near $a$, the limit $\alpha\downarrow a$ of the left-hand side in equation \eqref{eqn:lagrange} exists.
 Hence, the limit in the claim exists as well.
 Now the remaining part follows by taking the limits $\alpha\downarrow a$ and $\beta\uparrow b$.
\end{proof}

If $\tau$ is regular at $a$ and $f$ and $g$ lie in $\dom{\Tmax}$ near $a$, then we clearly have
\be
 W(f,\ol{g})(a) = f(a) \ol{g^\qd(a)} - f^\qd(a) \ol{g(a)}.
\ee
In order to determine the adjoint of $\Tpre$ we will rely on the following lemma (see, e.g., 
\cite[Lemma\ 9.3]{Te09} or \cite[Theorem\ 4.1]{We80}).

\begin{lemma}\label{lem:hilfslemmaadjoint}
 Let $V$ be a vector space over $\C$ and $F_1,\ldots,F_n,F$ linear functionals defined on $V$. Then
\be
 F\in\linspan\left\lbrace F_1,\ldots,F_n\right\rbrace \, \text{ iff } \,\,  
 \bigcap_{j=1}^n \ker (F_j) \subseteq \ker (F).
\ee
\end{lemma}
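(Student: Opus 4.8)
The plan is to treat the two implications separately, with the forward direction being immediate and the reverse carrying the substance. For the ``only if'' direction, suppose $F = \sum_{j=1}^n c_j F_j$ for some scalars $c_j \in \C$. If $v \in \bigcap_{j=1}^n \ker(F_j)$, then $F_j(v) = 0$ for every $j$, hence $F(v) = \sum_{j=1}^n c_j F_j(v) = 0$, so $v \in \ker(F)$. This yields $\bigcap_{j=1}^n \ker(F_j) \subseteq \ker(F)$ with no real work.

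For the ``if'' direction, the key idea is to package the functionals $F_1, \ldots, F_n$ into a single linear map and factor $F$ through it. I would define $\Phi \colon V \to \C^n$ by $\Phi(v) = (F_1(v), \ldots, F_n(v))$, so that $\ker(\Phi) = \bigcap_{j=1}^n \ker(F_j)$ and the hypothesis becomes $\ker(\Phi) \subseteq \ker(F)$. Using this inclusion, I would define a functional $\widehat{F}$ on the subspace $\ran(\Phi) \subseteq \C^n$ by setting $\widehat{F}(\Phi(v)) = F(v)$. This is well defined precisely because $\Phi(v) = \Phi(v')$ forces $v - v' \in \ker(\Phi) \subseteq \ker(F)$, whence $F(v) = F(v')$; this is the single point at which the hypothesis is used.

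It then remains to extend $\widehat{F}$ from $\ran(\Phi)$ to a linear functional on all of $\C^n$, which is routine in finite dimensions (for instance, by choosing a complementary subspace and declaring $\widehat{F}$ to vanish on it). Since every linear functional on $\C^n$ is represented by a coefficient vector, there exist $c_1, \ldots, c_n \in \C$ with $\widehat{F}(w) = \sum_{j=1}^n c_j w_j$ for all $w = (w_1, \ldots, w_n) \in \C^n$. Evaluating at $w = \Phi(v)$ gives $F(v) = \widehat{F}(\Phi(v)) = \sum_{j=1}^n c_j F_j(v)$ for every $v \in V$, which is exactly $F = \sum_{j=1}^n c_j F_j \in \linspan\{F_1, \ldots, F_n\}$.

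The only step requiring any care is the well-definedness of $\widehat{F}$, where the hypothesis enters; everything else is formal linear algebra. An alternative route, avoiding the extension step, would be induction on $n$, peeling off one functional at a time, but the factorization argument seems cleaner and makes the role of the kernel inclusion most transparent, so I expect no genuine obstacle beyond bookkeeping.
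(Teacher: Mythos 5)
Your proof is correct and complete: the forward direction is immediate, and the factorization of $F$ through $\Phi\colon V\to\C^n$ (with well-definedness of $\widehat F$ being the one place the kernel inclusion is used) is the standard argument for this classical fact. The paper itself gives no proof of this lemma, deferring instead to the cited references (\cite[Lemma\ 9.3]{Te09}, \cite[Theorem\ 4.1]{We80}), where essentially the same linear-algebraic argument appears, so there is nothing to compare beyond noting that your route is the expected one.
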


\begin{theorem}
 The operator $\Tpre$ is densely defined and $T^*_0 =\Tmax$.
\end{theorem}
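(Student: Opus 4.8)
The plan is to prove the two assertions together, the common engine being the variation-of-parameters formula (Lemma \ref{prop:repsol}), the Lagrange identity (Lemmas \ref{propLagrange} and \ref{lem:l2lagrange}), and the functional-analytic Lemma \ref{lem:hilfslemmaadjoint}. The first step I would carry out is an explicit description of $\dom{\Tpre}$. Fix a real fundamental system $u_1,u_2\in\Deftau$ of $\tau u=0$ (which exists and may be chosen real by Theorem \ref{thm:exisuniq}) and a compact subinterval $[\alpha,\beta]\subset(a,b)$. I claim that the functions $h\in\Lr$ with $\supp(h)\subseteq[\alpha,\beta]$ arising as $h=\tau g$ for some $g\in\dom{\Tpre}$ with $\supp(g)\subseteq[\alpha,\beta]$ are exactly those satisfying the two constraints $\int_\alpha^\beta u_j(x)h(x)r(x)\,dx=0$, $j=1,2$. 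The forward direction follows from Lemma \ref{propLagrange} applied to $g$ and $u_j$, together with $\tau u_j=0$ and the vanishing of the Wronskian outside $\supp(g)$; the converse is the variation-of-parameters solution of $\tau g=h$ with vanishing initial data at $\alpha$, the two constraints being precisely what forces this solution to vanish for $x\ge\beta$ as well.

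Next I would establish density. Suppose $f\in\Lr$ is orthogonal to $\dom{\Tpre}$. Writing $g\in\dom{\Tpre}$ (supported in $[\alpha,\beta]$) through the variation-of-parameters formula in terms of $h=\tau g$ and interchanging the order of integration, the condition $\spr{g}{f}_r=0$ becomes $\int_\alpha^\beta h(x)K(x)r(x)\,dx=0$, where $K\in\Deftau$ is the variation-of-parameters solution of $\tau K=\ol f$ with $K(\beta)=K^\qd(\beta)=0$. Since this holds for every admissible $h$, i.e.\ every $h$ annihilated by the two functionals $h\mapsto\int_\alpha^\beta u_j h\, r\,dx$, Lemma \ref{lem:hilfslemmaadjoint} forces $K\in\spn\{u_1,u_2\}$ on $[\alpha,\beta]$. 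But then $\ol f=\tau K=0$ on $[\alpha,\beta]$, and as $[\alpha,\beta]$ was arbitrary we conclude $f=0$; hence $\dom{\Tpre}$ is dense and $\Tpre^*$ is well defined.

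The inclusion $\Tmax\subseteq\Tpre^*$ is then immediate: for $f\in\dom{\Tmax}$ and $g\in\dom{\Tpre}$ the boundary terms $W(g,\ol f)(a)$ and $W(g,\ol f)(b)$ vanish because $g$ has compact support, so Lemma \ref{lem:l2lagrange} gives $\spr{\tau g}{f}_r=\spr{g}{\tau f}_r$ with $\tau f\in\Lr$, whence $f\in\dom{\Tpre^*}$ and $\Tpre^* f=\tau f=\Tmax f$.

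The reverse inclusion $\Tpre^*\subseteq\Tmax$ is the main point. Given $f\in\dom{\Tpre^*}$, set $\tilde f=\Tpre^* f$ and pick any solution $u\in\Deftau$ of $\tau u=\tilde f$ (Lemma \ref{prop:repsol}). Applying Lemma \ref{propLagrange} to $g\in\dom{\Tpre}$ and $\ol u$, with $\tau\ol u=\ol{\tilde f}$ and vanishing boundary terms, rewrites the defining identity $\spr{\tau g}{f}_r=\spr{g}{\tilde f}_r$ as $\int_a^b(\tau g)\,\ol{(f-u)}\,r\,dx=0$ for all $g\in\dom{\Tpre}$. Thus, with $h=\tau g$ ranging over the admissible functions on each $[\alpha,\beta]$, the functional $h\mapsto\int h\,\ol{(f-u)}\,r\,dx$ vanishes on all $h$ with $\int_\alpha^\beta u_j h\,r\,dx=0$ ($j=1,2$), so Lemma \ref{lem:hilfslemmaadjoint} yields $f-u\in\spn\{u_1,u_2\}$ on $[\alpha,\beta]$; uniqueness of solutions (Theorem \ref{thm:exisuniq}) lets the local constants be patched into a single solution on $(a,b)$. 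Therefore $f=u+(f-u)\in\Deftau$ and $\tau f=\tau u=\tilde f\in\Lr$, i.e.\ $f\in\dom{\Tmax}$ with $\Tmax f=\tilde f$. I expect the genuine obstacle to lie in the density step rather than in the adjoint computation: because the coefficients are merely distributional, $C_c^\infty((a,b))$ need not be contained in $\dom{\Tpre}$, so one cannot fall back on smooth test functions and must instead identify $\tau(\dom{\Tpre})$ as an explicit codimension-two subspace and invoke Lemma \ref{lem:hilfslemmaadjoint}; the accompanying bookkeeping (interchanging integrals, tracking complex conjugates, and patching the local spans) is where care is required.
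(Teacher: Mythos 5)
Your proof is correct, and its core — identifying the range of $\tau$ on compactly supported functions as the annihilator of $\spn\{u_1,u_2\}$ via variation of parameters and then invoking Lemma \ref{lem:hilfslemmaadjoint} — is exactly the mechanism the paper uses. The genuine difference lies in how density is obtained. You prove it head-on: you test $f\perp\dom{\Tpre}$ against all $g\in\dom{\Tpre}$, rewrite $\spr{g}{f}_r$ by Fubini as $\int h K\,r\,dx$ with $K$ solving $\tau K=\ol f$, and run Lemma \ref{lem:hilfslemmaadjoint} a second time to force $f=0$. The paper instead works throughout with the adjoint \emph{relation} $\wti\Tpre^\ast=\{(f_1,f_2)\st \spr{f_1}{\Tpre g}_r=\spr{f_2}{g}_r \text{ for all } g\in\dom{\Tpre}\}$, shows in one pass that this relation coincides with the graph of $\Tmax$, and then gets density for free: if $\dom{\Tpre}$ were not dense, one could add any nonzero $h\perp\dom{\Tpre}$ to the second component of a pair in $\wti\Tpre^\ast$, contradicting single-valuedness. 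That trick saves the entire separate density computation, at the cost of having to be careful that the adjoint is only a relation until density is established. A second, cosmetic difference: you localize to compact subintervals $[\alpha,\beta]$ and patch the local constants using linear independence of $u_1,u_2$ on subintervals, whereas the paper applies Lemma \ref{lem:hilfslemmaadjoint} once, globally, to functionals on $L^2_c((a,b);r(x)dx)$, constructing for each $g\in\ker(\ell_1)\cap\ker(\ell_2)$ a compactly supported solution of $\tau u=g$ directly; this avoids the patching step. Both routes are sound; yours is somewhat longer but makes the density mechanism explicit, and your closing observation — that the absence of $C_c^\infty$ test functions in $\dom{\Tpre}$ is what forces the detour through Lemma \ref{lem:hilfslemmaadjoint} — correctly identifies the crux.
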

\begin{proof}
 If we set
 \be
 \wti \Tpre^\ast = \big\{ (f_1,f_2)\in \Lr^2 \,\big|\, \forall g\in\dom{\Tpre}: \spr{f_1}{\Tpre g}_{r} = \spr{f_2}{g}_{r} \big\},
 \ee
 then from Lemma \ref{lem:l2lagrange} one immediately sees that the graph of $\Tmax$ is contained in $\wti \Tpre^\ast$. Indeed, for each 
  $f\in\dom{\Tmax}$ and $g\in\dom{\Tpre}$ we infer
  \begin{align}
    \spr{\tau f}{g}_{r} - \spr{f}{\tau g}_{r} & = \lim_{\beta\uparrow b} W(f,\ol{g})(\beta) - \lim_{\alpha\downarrow a}W(f,\ol{g})(\alpha) = 0,  
  \end{align}
  since $W(f,\ol{g})$ has compact support.
 Conversely, let $f_1$, $f_2\in\Lr$ such that $\spr{f_1}{\Tpre g}_{r}=\spr{f_2}{g}_{r}$ for each $g\in\dom{\Tpre}$ and $f$ be a solution of $\tau f = f_2$.
 In order to prove that $f_1-f$ is a solution of $\tau u = 0$, we will invoke Lemma \ref{lem:hilfslemmaadjoint}.
 Therefore, consider the linear functionals 
\begin{align}
 \ell(g)   & = \int_{a}^b \ol{\left(f_1(x)-f(x)\right)} g(x) \, r(x) dx, \quad g\in L^2_c((a,b);r(x)dx), \\
 \ell_j(g) & = \int_{a}^b \ol{u_j(x)} g(x) \, r(x) dx, \quad g\in L^2_c((a,b);r(x)dx), \; j=1,2,
\end{align}
where $u_j$ are two solutions of $\tau u=0$ with $W(u_1,u_2)=1$ and $L^2_c((a,b);r(x)dx)$ is the space of square integrable functions with compact support. 
For these functionals we have $\ker (\ell_1) \cap \ker (\ell_2) \subseteq \ker (\ell)$.
Indeed, let $g\in\ker(\ell_1) \cap \ker (\ell_2)$, then the function
\begin{align}
 u(x) = u_1(x)\int_a^x u_2(t) g(t) \, r(t) dt + u_2(x) \int_x^b u_1(t) g(t) \, r(t) dt, \quad x\in(a,b),
\end{align}
is a solution of $\tau u = g$ by Lemma \ref{prop:repsol} and has compact support since $g$ lies in 
the kernels of $\ell_1$ and $\ell_2$, hence $u\in\dom{\Tpre}$. 
Then the Lagrange identity and the property of $(f_1, f_2)$ yield 
\begin{align}
\begin{split}
 \int_a^b \ol{[f_1(x) - f(x)]} \tau u(x) \, r(x) dx & 
 = \spr{\tau u}{f_1}_{r} - \int_a^b \ol{f(x)} \tau u(x) \, r(x) dx \\
   & = \spr{u}{f_2}_{r} - \int_a^b \tau \ol{f(x)} u(x) \, r(x) dx = 0,
\end{split}
\end{align}
hence $g=\tau u\in\ker (\ell)$. 
Now applying Lemma \ref{lem:hilfslemmaadjoint} there are $c_1$, $c_2\in\C$ such that
\begin{align}\label{eqn:ftildfsol}
 \int_a^b \ol{[f_1(x)- f(x) + c_1 u_1(x) + c_2 u_2(x)]} g(x) \, r(x) dx = 0, 
\end{align}
for each $g\in L^2_c((a,b);r(x)dx)$. 
Hence, obviously $f_1\in\Deftau$ and $\tau f_1 = \tau f = f_2$, that is, $f_1\in\dom{\Tmax}$ and $\Tmax f_1 = f_2$.
But this shows that $\wti \Tpre^\ast$ actually is the graph of $\Tmax$, which shows that $\Tpre$ is densely defined with adjoint $\Tmax$.  Indeed, if $T_0$ were not densely defined, there would exist $0\neq h\in L^2((a,b);r(x)dx)\cap (\dom {T_0})^{\perp}$.  Consequently, if $(f_1,f_2)\in \wti \Tpre^\ast$, then $(f_1,f_2+h)\in \wti \Tpre^\ast$, contradicting the fact that $\wti \Tpre^\ast$ is the graph of an operator.
\end{proof}

The operator $\Tpre$ is symmetric by the preceding theorem. The closure $\Tmin$ of $\Tpre$ is called the minimal operator,
\begin{align}
 \Tmin = \overline{\Tpre} = \Tpre^{\ast\ast} = \Tmax^\ast.
\end{align}
In order to determine $\Tmin$ we need the following lemma on functions in $\dom{\Tmax}$.

\begin{lemma}\label{lem:funcdomtmax}
 If $f_a$ lies in $\dom{\Tmax}$ near $a$ and $f_b$ lies in $\dom{\Tmax}$ near $b$, 
 then there exists an $f\in\dom{\Tmax}$ such that $f=f_a$ near $a$ and $f=f_b$ near $b$.
\end{lemma}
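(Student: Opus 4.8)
The plan is to build the desired global function $f$ by gluing $f_a$ near $a$, $f_b$ near $b$, and interpolating with a smooth patch on a compact middle region. First I would fix three points $a<\alpha<\beta<b$ such that $f_a$ lies in $\dom{\Tmax}$ on $(a,\beta)$ and $f_b$ lies in $\dom{\Tmax}$ on $(\alpha,b)$. On the overlap I want to replace the two local functions by a single genuine solution-type object on $[\alpha,\beta]$ that matches $f_a$ and its quasi-derivative at $\alpha$ and matches $f_b$ and its quasi-derivative at $\beta$. The natural device is Theorem \ref{thm:exisuniq} (existence and uniqueness of initial value problems): there is plenty of freedom to prescribe both $f(\alpha),f^\qd(\alpha)$ and $f(\beta),f^\qd(\beta)$ if I allow $\tau f$ to be an arbitrary element of $\Llocr$ on the middle interval, since I am not constraining the values of $\tau f$ there.

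Concretely, I would define
\begin{align}
\begin{split}
 f(x) = \begin{cases} f_a(x), & x\in(a,\alpha], \\ h(x), & x\in[\alpha,\beta], \\ f_b(x), & x\in[\beta,b), \end{cases}
\end{split}
\end{align}
where $h\in\Deftau$ on $[\alpha,\beta]$ is chosen so that $h(\alpha)=f_a(\alpha)$, $h^\qd(\alpha)=f_a^\qd(\alpha)$, $h(\beta)=f_b(\beta)$, and $h^\qd(\beta)=f_b^\qd(\beta)$. To produce such an $h$ I would, for instance, take any two real points and invoke the representation in Lemma \ref{prop:repsol}: pick the fundamental system $u_1,u_2$ at $\alpha$ for $(\tau-z)u=0$ with some fixed $z$, and solve $(\tau-z)h=g$ with a suitable $g\in\Llocr$ supported in $[\alpha,\beta]$ whose role is to steer the endpoint data at $\beta$ to the prescribed four values; equivalently, one may simply glue two solutions of $\tau u = 0$ via a smooth cutoff. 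The key point is that matching $f(\alpha)=h(\alpha)$, $f^\qd(\alpha)=h^\qd(\alpha)$ guarantees $f\in AC_{\loc}$ and $f^\qd\in AC_{\loc}$ across $\alpha$, and likewise across $\beta$; hence $f\in\Deftau$, and $\tau f = \tau f_a$ near $a$, $\tau f = \tau f_b$ near $b$, with $\tau f = \tau h\in\Llocr$ in between.

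The square-integrability is then immediate: $f=f_a\in\Lr$ near $a$ and $f=f_b\in\Lr$ near $b$ by hypothesis, while on the compact interval $[\alpha,\beta]$ both $f$ and $\tau f$ are in $\Lr$ because $r(x)dx$ is finite there and $f,\tau f$ are continuous (resp.\ locally integrable). Therefore $f\in\Lr$ and $\tau f\in\Lr$, so $f\in\dom{\Tmax}$, with $f=f_a$ near $a$ and $f=f_b$ near $b$ as required. I expect the only delicate step to be the explicit construction of the interpolating $h$ matching all four boundary data simultaneously; this is where one must be careful, but it is a finite-dimensional matching problem that is solvable precisely because prescribing $g$ on $[\alpha,\beta]$ gives enough degrees of freedom (alternatively, a cutoff-based gluing avoids the matching algebra entirely at the cost of a short computation verifying the quasi-derivative remains absolutely continuous).
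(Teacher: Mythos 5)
Your main construction---gluing $f_a$, an interpolant on $[\alpha,\beta]$ obtained by solving $\tau h=g$ with the initial data of $f_a$ at $\alpha$ and with $g$ chosen via Lemma~\ref{prop:repsol} to steer the data at $\beta$ to those of $f_b$---is exactly the paper's proof; the paper pins down the solvability of the four-point matching problem that you leave as "a finite-dimensional problem with enough degrees of freedom" by choosing $\alpha,\beta$ so that the two functionals $g\mapsto\int_\alpha^\beta u_j(x)g(x)\,r(x)dx$ are linearly independent, which makes the target map onto $\C^2$. One caveat about your parenthetical alternative: a smooth cutoff does \emph{not} "avoid the matching algebra," because for $\chi$ smooth and $u\in\Deftau$ one has $(\chi u)^{[1]}=\chi u^{[1]}+p\chi'u$, and under Hypothesis~\ref{h2.1} the coefficient $p$ is only measurable with $p^{-1}\in L^1_{\loc}$, so $p\chi'u$ need not be locally absolutely continuous and $\chi u$ can fail to lie in $\Deftau$.
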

\begin{proof}
Let $u_1$, $u_2$ be a fundamental system of $\tau u = 0$ with $W(u_1,u_2)=1$ and let 
$\alpha, \beta\in(a,b)$, $\alpha<\beta$ such that the functionals
\begin{align}
 F_j(g) = \int_\alpha^\beta u_j(x) g(x) \, r(x) dx, \quad g\in\Lr,~j=1,2,
\end{align}
are linearly independent.
First we will show that there is some $u\in\Deftau$ such that
\begin{align}
 u(\alpha) = f_a(\alpha), \quad u^\qd(\alpha) = f_a^\qd(\alpha), \quad u(\beta) = f_b(\beta), 
 \quad u^\qd(\beta) = f_b^\qd(\beta).
\end{align}
Indeed, let $g\in\Lr$ and consider the solution $u$ of $\tau u=g$ with initial conditions
\begin{align}
 u(\alpha) = f_a(\alpha) \, \text{ and }\, u^\qd(\alpha)=f_a^\qd(\alpha).
\end{align}
With Lemma \ref{prop:repsol} one sees that $u$ has the desired properties if 
\be
      \begin{pmatrix} F_2(g) \\ F_1(g) \end{pmatrix} 
     =  \begin{pmatrix} u_1(\beta) & -u_2(\beta) \\ u_1^\qd(\beta) & -u_2^\qd(\beta) \end{pmatrix}^{-1}
       \begin{pmatrix}
        f_b(\beta) - c_1 u_1(\beta) - c_2 u_2(\beta) \\ f_b^\qd(\beta) - c_1u_1^\qd(\beta) - c_2 u_2^\qd(\beta)
       \end{pmatrix},
\ee
where $c_1$, $c_2\in\C$ are the constants appearing in Lemma \ref{prop:repsol}.
But since the functionals $F_1$, $F_2$ are linearly independent, we may choose $g\in\Lr$ such that this equation is valid.
Now the function $f$ defined by
\be
 f(x) = \begin{cases}
          f_a(x),   & x\in(a,\alpha), \\
          u(x),     & x\in(\alpha,\beta), \\
          f_b(x),   & x\in(\beta,b),
        \end{cases}
\ee
has the claimed properties.
\end{proof}

\begin{theorem}\label{thm:Tmin}
 The minimal operator $\Tmin$ is given by
\begin{align}
\begin{split}
& \Tmin f = \tau f,  \quad  f \in \dom{\Tmin} = \{g\in \dom{\Tmax} \,|\, \forall h\in\dom{\Tmax}:  \\
& \hspace*{6.2cm}  W(g,h)(a) = W(g,h)(b) = 0 \}.    
\end{split}
\end{align}
\end{theorem}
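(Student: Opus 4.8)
The plan is to exploit the already-established identity $\Tmin=\Tmax^\ast$ and to convert the adjoint condition into boundary conditions by means of the Lagrange identity of Lemma \ref{lem:l2lagrange}, which for $f,g\in\dom{\Tmax}$ reads $\spr{\tau f}{g}_{r}-\spr{f}{\tau g}_{r}=W(f,\ol g)(b)-W(f,\ol g)(a)$. Since $\Tpre\subseteq\Tmax$ and $\Tmax=\Tpre^\ast$ is closed, one has $\Tmin=\ol{\Tpre}\subseteq\Tmax$; in particular every $g\in\dom{\Tmin}$ already lies in $\dom{\Tmax}$ and satisfies $\Tmax^\ast g=\Tmin g=\tau g$. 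Before starting I would record two bookkeeping facts to be used repeatedly: first, $\dom{\Tmax}$ is invariant under complex conjugation, so quantifying over all $h\in\dom{\Tmax}$ is the same as quantifying over all $\ol f$ with $f\in\dom{\Tmax}$; second, since $p$ and $\foco$ are real one has $\ol{g^\qd}=\ol{g}^\qd$, and a direct computation then gives $W(g,\ol f)=-\,\ol{W(f,\ol g)}$. Combining these, the condition ``$W(g,h)(a)=0$ for all $h\in\dom{\Tmax}$'' is equivalent to ``$W(f,\ol g)(a)=0$ for all $f\in\dom{\Tmax}$'', and likewise at $b$. It therefore suffices to show that $g\in\dom{\Tmin}$ if and only if $g\in\dom{\Tmax}$ and $W(f,\ol g)(a)=W(f,\ol g)(b)=0$ for every $f\in\dom{\Tmax}$.

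The inclusion ``$\supseteq$'' is the easy direction. Assuming $g\in\dom{\Tmax}$ with $W(f,\ol g)(a)=W(f,\ol g)(b)=0$ for all $f\in\dom{\Tmax}$, the right-hand side of the Lagrange identity vanishes for every such $f$, so $\spr{\tau f}{g}_{r}=\spr{f}{\tau g}_{r}$ with $\tau g\in\Lr$. This says precisely that $g\in\dom{\Tmax^\ast}=\dom{\Tmin}$.

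The inclusion ``$\subseteq$'' is the hard direction, and here the main obstacle becomes visible. For $g\in\dom{\Tmin}$ the Lagrange identity, together with $\Tmax^\ast g=\tau g$, yields only that the \emph{difference} $W(f,\ol g)(b)-W(f,\ol g)(a)=0$ for all $f\in\dom{\Tmax}$, whereas one must show that each endpoint term vanishes separately. This is exactly what the gluing Lemma \ref{lem:funcdomtmax} is designed to overcome: given an arbitrary $h\in\dom{\Tmax}$, I would apply Lemma \ref{lem:funcdomtmax} with $f_a=\ol h$ and $f_b=0$ to produce an $f\in\dom{\Tmax}$ that equals $\ol h$ near $a$ and vanishes near $b$. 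Then $W(f,\ol g)(b)=0$ trivially, so the difference identity forces $W(f,\ol g)(a)=0$; since $f=\ol h$ near $a$, this reads $W(\ol h,\ol g)(a)=0$, which by the conjugation relation above is equivalent to $W(g,h)(a)=0$. Running the same argument with the endpoints interchanged (choosing $f_a=0$ and $f_b=\ol h$) gives $W(g,h)(b)=0$. As $h\in\dom{\Tmax}$ was arbitrary, this establishes the two boundary conditions defining $\dom{\Tmin}$ and completes the proof.
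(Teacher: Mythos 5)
Your proposal is correct and follows essentially the same route as the paper: both directions rest on $\Tmin=\Tmax^\ast$ together with the Lagrange identity, and the key step of separating the two endpoint contributions is achieved exactly as in the paper by using Lemma \ref{lem:funcdomtmax} to glue a given function near $a$ to the zero function near $b$ (and vice versa). Your explicit bookkeeping of the conjugation relation $W(g,\ol f)=-\,\ol{W(f,\ol g)}$ just makes precise a switch of arguments that the paper performs tacitly.
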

\begin{proof}
 If $f\in\dom{\Tmin}=\dom{\Tmax^\ast}\subseteq\dom{\Tmax}$, then
\begin{align}
 0 = \spr{\tau f}{g}_{r} - \spr{f}{\tau g}_{r} = W(f,\ol{g})(b) - W(f,\ol{g})(a), \quad g\in\dom{\Tmax}.
\end{align}
Given some $g\in\dom{\Tmax}$, there is a $g_a\in\dom{\Tmax}$ such that $\ol{g_a}=g$ in a 
 vicinity of $a$ and $g_a=0$ in a vicinity of $b$. Therefore, $W(f,g)(a) = W(f,\ol{g_a})(a) - W(f,\ol{g_a})(a) = 0$.
 Similarly, one obtains $W(f,g)(b)=0$ for each $g\in\dom{\Tmax}$.

 Conversely, if $f\in\dom{\Tmax}$ such that for each $g\in\dom{\Tmax}$, $W(f,g)(a)=W(f,g)(b)=0$, then
\begin{align}
 \spr{\tau f}{g}_{r} - \spr{f}{\tau g}_{r} = W(f,\ol{g})(b) - W(f,\ol{g})(a) = 0,
\end{align}
 hence $f\in \dom{\Tmax^\ast} = \dom{\Tmin}$.
\end{proof}

For regular $\tau$ on $(a,b)$ we may characterize the minimal operator by the boundary values of the functions $f\in \dom{\Tmax}$ as follows:

\begin{corollary}
 If $\tau$ is regular at $a$ and $f\in \dom{\Tmax}$, then
\begin{align}
 f(a) = f^\qd(a) = 0 \, \text{ iff } \,  \forall g\in \dom{\Tmax}: W(f,g)(a) = 0.
\end{align}
 A similar result holds at $b$.
\end{corollary}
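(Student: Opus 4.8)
The plan is to reduce the whole statement to the explicit boundary formula for the Wronskian at a regular endpoint, namely $W(f,g)(a) = f(a)g^\qd(a) - f^\qd(a)g(a)$. This formula is legitimate here: since $\tau$ is regular at $a$ and both $f$ and $g$ lie in $\dom{\Tmax}$ near $a$, the four limits $f(a)$, $f^\qd(a)$, $g(a)$, $g^\qd(a)$ all exist and are finite by the lemma preceding Lemma \ref{lem:l2lagrange}, so the formula displayed just after Lemma \ref{lem:l2lagrange} applies (with no conjugation, matching the modified Wronski determinant used in the corollary).

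The forward implication is then immediate: if $f(a) = f^\qd(a) = 0$, the boundary formula gives $W(f,g)(a) = f(a)g^\qd(a) - f^\qd(a)g(a) = 0$ for every $g \in \dom{\Tmax}$, requiring no further work.

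For the converse, the essential step is to exhibit two test functions in $\dom{\Tmax}$ realizing prescribed boundary data at $a$. First I would apply Theorem \ref{thm:EEreg} to produce solutions $u_1$, $u_2$ of $\tau u = 0$ with $u_1(a) = 1$, $u_1^\qd(a) = 0$ and $u_2(a) = 0$, $u_2^\qd(a) = 1$; since each $u_j$ is continuous on $(a,b)$ with finite limit at $a$ and $r(x)dx$ is a finite measure near $a$, each $u_j$ lies in $\Lr$ near $a$, while $\tau u_j = 0$ lies in $\Lr$ near $a$ trivially, so $u_j$ lies in $\dom{\Tmax}$ near $a$. I would then invoke Lemma \ref{lem:funcdomtmax} with $f_a = u_j$ and $f_b \equiv 0$ to obtain genuine elements $g_1$, $g_2 \in \dom{\Tmax}$ agreeing with $u_1$, $u_2$ respectively near $a$; in particular $g_1(a) = 1$, $g_1^\qd(a) = 0$ and $g_2(a) = 0$, $g_2^\qd(a) = 1$.

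Finally, evaluating the hypothesis on these two functions and using the boundary formula yields $W(f,g_1)(a) = -f^\qd(a) = 0$ and $W(f,g_2)(a) = f(a) = 0$, which is precisely the claim; the assertion at $b$ follows by the symmetric argument. The only mildly delicate point is the construction of the test functions — one must verify that the solutions of $\tau u = 0$ really lie in $\dom{\Tmax}$ near $a$ and then patch them to functions vanishing near $b$ — but this is dispatched cleanly by Theorem \ref{thm:EEreg} together with Lemma \ref{lem:funcdomtmax}, so I anticipate no genuine obstacle.
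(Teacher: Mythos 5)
Your proposal is correct and follows essentially the same route as the paper: the forward direction is immediate from the boundary formula $W(f,g)(a) = f(a)g^\qd(a) - f^\qd(a)g(a)$, and the converse rests on producing elements of $\dom{\Tmax}$ with prescribed initial values at $a$ by taking them to coincide near $a$ with solutions of $\tau u = 0$ and patching via Lemma \ref{lem:funcdomtmax}. The paper states this in two lines; you have merely filled in the routine verifications (finiteness of the boundary limits, membership of the solutions in $\dom{\Tmax}$ near $a$), all of which are dispatched exactly as you indicate.
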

\begin{proof}
 The claim follows from $W(f,g)(a) = f(a)g^\qd(a) - f^\qd(a)g(a)$ and the fact that one finds $g\in \dom{\Tmax}$ with prescribed initial values at $a$.
 Indeed, one can take $g$ to coincide with some solution of $\tau u = 0$ near $a$.
\end{proof}

Next we will show that $\Tmin$ always has self-adjoint extensions.

\begin{theorem}
 The deficiency indices $n(\Tmin)$ of the minimal operator $\Tmin$ are equal and at most two, that is, 
\be
  n(\Tmin) = \dim \big(\ran\big(\left(\Tmin - \I\right)^\bot\big)\big) 
  = \dim \big(\ran \big(\left(\Tmin + \I\right)^\bot\big)\big) \leq 2.
\ee
\end{theorem}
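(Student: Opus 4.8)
The plan is to establish two facts: first, that the deficiency indices $n_\pm(\Tmin) = \dim\ker(\Tmax \mp \I)$ are equal; second, that each is bounded above by two. Recall that the deficiency space for $z = \I$ is precisely the set of solutions of $\tau u = \I u$ lying in $\Lr$, and similarly for $z = -\I$; this identification follows because $\Tmin = \Tmax^\ast$, so $\ker(\Tmin^\ast - z) = \ker(\Tmax - z)$ consists of those $f \in \dom{\Tmax}$ with $\tau f = z f$, i.e.\ square-integrable solutions of $(\tau - z)u = 0$.

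For the upper bound, I would invoke Theorem~\ref{thm:exisuniq}: for any fixed $z \in \C$ and any $c \in (a,b)$, the solution space of $(\tau - z)u = 0$ is exactly two-dimensional, since a solution is uniquely determined by the initial data $(u(c), u^\qd(c)) \in \C^2$. The $\Lr$-solutions form a subspace of this two-dimensional space, so
\be
 n_\pm(\Tmin) = \dim\big(\ker(\Tmax \mp \I)\big) \le 2.
\ee
This is the easy half.

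For the equality of the indices, the cleanest route is to use the fact that $\tau$ is a \emph{real} differential expression, so that $\Tpre$ (and hence $\Tmin$) commutes with the natural complex conjugation $C\colon f \mapsto \ol f$ on $\Lr$, as already observed after \eqref{3.3}. Concretely, by the real-coefficient part of Theorem~\ref{thm:exisuniq}, if $u$ solves $(\tau - \I)u = 0$ then $\ol u$ solves $(\tau - (-\I))\,\ol u = 0$, and $u \in \Lr$ iff $\ol u \in \Lr$. Thus complex conjugation furnishes an antilinear bijection between $\ker(\Tmax - \I)$ and $\ker(\Tmax + \I)$, which preserves dimension. Hence $n_+(\Tmin) = n_-(\Tmin)$, and $\Tmin$ admits self-adjoint extensions by von Neumann's theorem.

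The main obstacle, such as it is, lies not in the abstract argument but in the initial identification of the deficiency indices with spaces of square-integrable solutions: one must confirm that $\ker(\Tmax - z)$ genuinely consists of \emph{all} $\Lr$-solutions of $(\tau - z)u = 0$, which requires knowing that any such solution automatically satisfies $\tau u = z u \in \Lr$ and hence lies in $\dom{\Tmax}$. This is immediate here since $u \in \Lr$ and $\tau u = z u \in \Lr$ place $u$ in $\dom{\Tmax}$ by definition. With that settled, both halves reduce to the two-dimensionality of the solution space and the symmetry afforded by real coefficients.
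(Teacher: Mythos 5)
Your proposal is correct and follows essentially the same route as the paper: identify $\ran(\Tmin \pm \I)^\perp$ with $\ker(\Tmax \mp \I)$, bound the latter by the two-dimensionality of the solution space of $(\tau \mp \I)u = 0$, and obtain equality of the two indices from the fact that $\Tmin$ commutes with the natural conjugation in $\Lr$. The paper's proof is just a terser version of exactly this argument.
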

\begin{proof}
 The fact that the dimensions are less than two follows from 
 \be
  \ran\big((\Tmin \pm \I)^\bot\big) = \ker((\Tmax \mp \I)),
 \ee
 because there are at most two linearly independent solutions of $(\tau\pm\I)u=0$.
 Moreover, equality is due to the fact that $\Tmin$ is real with respect to the natural conjugation in $\Lr$.
\end{proof}

\section{Weyl's Alternative}  \lb{s4} 

We say $\tau$ is in the limit-circle (l.c.) case at $a$, if for each $z\in\C$ all solutions of $(\tau-z)u=0$ 
lie in $\Lr$ near $a$. Furthermore, we say $\tau$ is in the limit-point (l.p.) case at $a$ if for each $z\in\C$ 
there is some solution of $(\tau-z)u=0$ which does not lie in $\Lr$ near $a$.
Similarly, one defines the l.c.\ and l.p.\ cases at the endpoint $b$.
It is clear that $\tau$ is only either in the l.c.\ or in the l.p.\ case at some boundary point.
The next lemma shows that $\tau$ indeed is in one of these cases at each endpoint, which is known 
as Weyl's alternative. 
 
\begin{lemma}\label{lemweylaltLC}
 If there is a $z_0\in\C$ such that all solutions of $(\tau-z_0) u = 0$ lie in $\Lr$ near $a$, 
 then $\tau$ is in the l.c.\ case at $a$.
 A similar result holds at the endpoint $b$.
\end{lemma}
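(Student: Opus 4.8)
The plan is to fix an arbitrary $z\in\C$ and an arbitrary solution $u$ of $(\tau-z)u=0$, and to show that $u$ lies in $\Lr$ near $a$; since $z$ and $u$ are arbitrary this is exactly the l.c.\ case at $a$. The device is to rewrite the equation as an inhomogeneous problem for the operator $\tau-z_0$, namely $(\tau-z_0)u=(z-z_0)u=:g$, and to exploit that a fundamental system $u_1,u_2$ of $(\tau-z_0)u=0$ (with $W=W(u_1,u_2)\neq0$ constant) already lies in $\Lr$ near $a$ by hypothesis. Applying the variation-of-parameters representation of Lemma \ref{prop:repsol} with base point $c\in(a,b)$ close to $a$, and using $\int_c^x=-\int_x^c$ for $x<c$, every such $u$ admits, for $x\in(a,c)$, the representation
\[
 u(x) = c_1 u_1(x) + c_2 u_2(x) + \frac{z-z_0}{W}\left[u_2(x)\int_x^c u_1(t)u(t)\,r(t)dt - u_1(x)\int_x^c u_2(t)u(t)\,r(t)dt\right],
\]
with fixed finite constants $c_1,c_2\in\C$ determined by the initial data of $u$ at $c$.

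Next I would turn this into an integral inequality for the truncated norm $\|h\|_{(x,c)}=\left(\int_x^c |h(t)|^2 r(t)\,dt\right)^{1/2}$. For any $y\in(x,c)$ one has $\big|\int_y^c u_j u\,r\,dt\big|\le \|u_j\|_{(x,c)}\|u\|_{(x,c)}$ by Cauchy--Schwarz (enlarging $(y,c)$ to $(x,c)$), so bounding $|u(y)|$ pointwise and taking the $\|\cdot\|_{(x,c)}$-norm via Minkowski's inequality yields
\[
 \|u\|_{(x,c)} \le |c_1|\,\|u_1\|_{(x,c)} + |c_2|\,\|u_2\|_{(x,c)} + \frac{2|z-z_0|}{|W|}\,\|u_1\|_{(x,c)}\,\|u_2\|_{(x,c)}\,\|u\|_{(x,c)}.
\]
Since $\|u_j\|_{(x,c)}\le\|u_j\|_{(a,c)}<\infty$ for $x\in(a,c)$, writing $A=|c_1|\,\|u_1\|_{(a,c)}+|c_2|\,\|u_2\|_{(a,c)}$ and $\kappa=\frac{2|z-z_0|}{|W|}\,\|u_1\|_{(a,c)}\,\|u_2\|_{(a,c)}$ gives $\|u\|_{(x,c)}\le A+\kappa\,\|u\|_{(x,c)}$ uniformly in $x\in(a,c)$.

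The crucial and only nontrivial step is the absorption of $\|u\|_{(x,c)}$ from the right-hand side: this is legitimate precisely because $u_1,u_2\in\Lr$ near $a$, so $\|u_j\|_{(a,c)}^2=\int_a^c |u_j|^2 r\to 0$ as $c\downarrow a$, whence $c$ may be chosen close enough to $a$ that $\kappa\le\tfrac12$ (note $W$ is a fixed nonzero constant). This is where the entire hypothesis is used, and I expect it to be the main point requiring care. Then $\|u\|_{(x,c)}\le 2A$ for all $x\in(a,c)$, and letting $x\downarrow a$ (monotone convergence) gives $\int_a^c |u|^2 r\le 4A^2<\infty$, i.e.\ $u\in\Lr$ near $a$. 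As $z\in\C$ and the solution $u$ were arbitrary, $\tau$ is in the l.c.\ case at $a$; the statement at $b$ follows by the symmetric argument on $(c,b)$.
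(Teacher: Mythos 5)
Your proposal is correct and follows essentially the same route as the paper's proof: rewrite $(\tau-z)u=0$ as $(\tau-z_0)u=(z-z_0)u$, apply the variation-of-parameters formula of Lemma \ref{prop:repsol}, estimate via Cauchy--Schwarz, and choose $c$ so close to $a$ that the Volterra-type term can be absorbed with constant $\tfrac12$ (the paper phrases the smallness condition via $v=|u_1|+|u_2|$ and squares the pointwise bound before integrating, whereas you take $L^2$-norms directly via Minkowski, but this is only a cosmetic difference). The absorption step is legitimate in both arguments because $\|u\|_{(x,c)}$ is finite for each fixed $x>a$ by continuity of $u$ on compact subintervals.
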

 \begin{proof}
 Let $z\in\C$ and $u$ be a solution of $(\tau-z)u = 0$. 
 If $u_1$, $u_2$ are a fundamental system of $(\tau-z_0)u=0$ with $W(u_1,u_2)=1$, 
 then $u_1$ and $u_2$ lie in $\Lr$ near $a$ by assumption.
 Therefore, there is some $c\in(a,b)$ such that the function $v=|u_1| + |u_2|$ satisfies
\be
 |z-z_0| \int_a^c v(t)^2 \, r(t) dt \leq \frac{1}{2}.
\ee
 Since $u$ is a solution of $(\tau-z_0)u = (z-z_0)u$ we have for each $x\in(a,b)$,
\be
 u(x) = c_1u_1(x) + c_2u_2(x) + (z-z_0) \int_c^x \left( u_1(x) u_2(t) - u_1(t)u_2(x)\right) u(t)r(t)dt,
\ee
for some $c_1$, $c_2\in\C$ by Lemma \ref{prop:repsol}. Hence, with $C= \max(|c_1|,|c_2|)$, one estimates
\be
 |u(x)| \leq C v(x) + |z-z_0| v(x) \int_x^c v(t) |u(t)| \, r(t) dt, \quad x\in(a,c),
\ee
and furthermore, using Cauchy--Schwarz,
\be
 |u(x)|^2 \leq 2 C^2 v(x)^2 + 2 |z-z_0|^2 v(x)^2 \int_x^c v(t)^2 \, r(t) dt \int_x^c |u(t)|^2 \, r(t) dt.
\ee
Now an integration yields for each $s\in(a,c)$,
\begin{align}
& \int_s^c |u(t)|^2 r(t)dt  \nonumber\\
&\quad \leq 2C^2 \int_a^c v(t)^2 r(t)dt + 2|z-z_0|^2 \left(\int_a^c v(t)^2 r(t)dt \right)^2 \int_s^c |u(t)|^2 r(t)dt 
\no \\
&\quad \leq 2C^2 \int_a^c v(t)^2 r(t)dt + \frac{1}{2} \int_s^c |u(t)|^2 r(t)dt,
\end{align}
 and therefore,
\be
 \int_s^c |u(t)|^2 r(t)dt \leq 4C^2 \int_a^c v(t)^2 \, r(t) dt < \infty.
\ee
Since $s\in(a,c)$ was arbitrary, this yields the claim.
 \end{proof}

 In particular, if $\tau$ is regular at an endpoint, then $\tau$ is in the l.c.\ case there since each 
 solution of $(\tau-z)u=0$ has a continuous extension to this endpoint.

With $\reg(\Tmin)$ we denote the set of all points of regular type of $\Tmin$, that is, all $z\in\C$ such that
$(\Tmin-z)^{-1}$ is a bounded operator (not necessarily everywhere defined). Recall that $\dim \ran(\Tmin - z)^\bot$
is constant on every connected component of $\reg(\Tmin)$ (\cite[Theorem\ 8.1]{We80}) and thus
$\dim\big(\ran\big((\Tmin - z)^\bot\big)\big) = \dim(\ker(\Tmax  - \ol{z})) = n(\Tmin)$ for every 
$z\in \reg(\Tmin)$.

\begin{lemma}\label{lemWeylRegType}
 For each $z\in \reg(\Tmin)$ there is a nontrivial solution of $(\tau-z)u=0$ which lies in $\Lr$ near $a$.
 A similar result holds at the endpoint $b$.
\end{lemma}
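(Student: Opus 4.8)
The plan is to localize the behaviour near $a$ by passing to the subinterval $(a,c)$, on which $\tau$ is regular at the endpoint $c$, and then to extract the desired solution from a deficiency-index count. First I would unwind the hypothesis: $z\in\reg(\Tmin)$ means precisely that there is a constant $k>0$ with $\|(\Tmin-z)f\|_{r}\ge k\|f\|_{r}$ for all $f\in\dom{\Tmin}$, and in particular for all $f\in\dom{\Tpre}$. Fix $c\in(a,b)$ and let $\Tmin^{(a,c)}$, $\Tmax^{(a,c)}$ denote the minimal and maximal operators associated with $\tau$ on the interval $(a,c)$. Since $p^{-1}$, $q$, $r$, $\foco$ are integrable near the interior point $c$, the expression $\tau$ is regular at the endpoint $c$ of $(a,c)$.

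Next I would transfer the lower bound to $(a,c)$. Any $f\in\Deftau$ with compact support in $(a,c)$ extends by zero across $c$ to an element of $\dom{\Tpre}$ on $(a,b)$ (the extension still lies in $\Deftau$ because $f$ and $f^\qd$ vanish identically near $c$), and the norm $\|(\tau-z)f\|$ computed on $(a,c)$ agrees with the corresponding norm on $(a,b)$. Hence the bound $\|(\tau-z)f\|\ge k\|f\|$ holds on these compactly supported functions, which form a core for $\Tmin^{(a,c)}$, and therefore on all of $\dom{\Tmin^{(a,c)}}$ by continuity; thus $z\in\reg(\Tmin^{(a,c)})$. By the same reasoning recalled just before the lemma, this gives $\dim\ran((\Tmin^{(a,c)}-z)^{\perp})=n(\Tmin^{(a,c)})$.

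It then remains to show $n(\Tmin^{(a,c)})\ge 1$ and to read off the solution. Using Lemma \ref{lem:funcdomtmax}, I would produce a function $g\in\dom{\Tmax^{(a,c)}}$ that vanishes near $a$ and coincides near $c$ with a solution of $\tau u=0$ normalized so that $g(c)=1$. By Theorem \ref{thm:Tmin} together with the characterization of $\dom{\Tmin}$ at a regular endpoint (the corollary following Theorem \ref{thm:Tmin}), membership in $\dom{\Tmin^{(a,c)}}$ would force $g(c)=g^\qd(c)=0$; since $g(c)=1$, we have $g\in\dom{\Tmax^{(a,c)}}\setminus\dom{\Tmin^{(a,c)}}$, so $\Tmin^{(a,c)}\subsetneq\Tmax^{(a,c)}=(\Tmin^{(a,c)})^{\ast}$ fails to be self-adjoint and $n(\Tmin^{(a,c)})\ge 1$. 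Consequently $\ker(\Tmax^{(a,c)}-\ol z)=\ran((\Tmin^{(a,c)}-z)^{\perp})\ne\{0\}$, so there is a nontrivial solution $u$ of $(\tau-\ol z)u=0$ lying in $L^2((a,c);r(x)dx)$; because the coefficients are real, $\ol u$ solves $(\tau-z)\ol u=0$ and likewise lies in $L^2((a,c);r(x)dx)$, that is, in $\Lr$ near $a$. The endpoint $b$ is treated symmetrically. I expect the main obstacle to be the verification that $n(\Tmin^{(a,c)})\ge 1$ --- equivalently, that the regular endpoint $c$ genuinely prevents $\Tmin^{(a,c)}$ from being self-adjoint --- since the transfer of the regular-type bound and the final conjugation step are routine once this is in place.
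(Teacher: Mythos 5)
Your proposal is correct and follows essentially the same route as the paper: localize to $(a,c)$ where $\tau$ is regular at the right endpoint, transfer the regular-type lower bound by zero extension of compactly supported functions, and then use the deficiency-index identity $\dim\ran\big((\Tmin^{(a,c)}-z)^{\perp}\big)=n\big(\Tmin^{(a,c)}\big)$ together with the existence of an element of $\dom{\Tmax^{(a,c)}}$ violating the Wronskian condition of Theorem \ref{thm:Tmin} at the regular endpoint $c$. The only (immaterial) difference is one of arrangement: the paper first settles the case where $\tau$ is regular at $b$ by contradiction and then reduces the general case to it, whereas you argue directly on $(a,c)$ that $n\big(\Tmin^{(a,c)}\big)\geq 1$ and conjugate at the end.
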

\begin{proof}
 First assume that $\tau$ is regular at $b$. If there were no solution of $(\tau-z)u=0$ which lies in $\Lr$
 near $a$, we would have $\ker(\Tmax-z) = \lbrace 0\rbrace$ and hence $n(\Tmin)=0$, that is, 
 $\Tmin=\Tmax$.
 But since there is an $f\in\dom{\Tmax}$ with
 \begin{align}
  f(b) = 1 \, \text{ and } \, f^\qd(b) = 0,
 \end{align}
 this is a contradiction to Theorem \ref{thm:Tmin}.
 
 For the general case pick some $c\in(a,b)$ and consider the minimal operator $T_c$ in $L^2((a,c);r(x)dx)$ 
 induced by $\tau|_{(a,c)}$. Then $z$ is a point of regular type of $T_c$.
 Indeed, we can extend each $f_c\in\dom{T_c}$ with zero and obtain a function $f\in\dom{\Tmin}$.
 For these functions and some positive constant $C$, 
 \begin{align}
  \left\| (T_c - z)f_c \right\|_{L^2((a,c);r(x)dx)} = \left\| (\Tmin-z)f \right\|_{2,r} \geq C \left\| f\right\|_{2,r} 
  = C\left\| f_c\right\|_{L^2((a,c);r(x)dx)}. 
 \end{align}
Now since the solutions of $(\tau|_{(a,c)}-z)u=0$ are exactly the solutions of
 $(\tau-z)u=0$ restricted to $(a,c)$, the claim follows from what we already proved.
\end{proof}

\begin{corollary}\label{cor:regtypeuniqsol}
 If $z\in \reg(\Tmin)$ and $\tau$ is in the l.p.\ case at $a$, then there is a unique
 nontrivial solution of $(\tau-z)u=0$ $($up to scalar multiples\,$)$, which lies in $\Lr$ near $a$. A similar result holds at the endpoint $b$.
\end{corollary}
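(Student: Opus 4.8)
The goal is to prove Corollary \ref{cor:regtypeuniqsol}: for $z \in \reg(\Tmin)$ with $\tau$ in the l.p.\ case at $a$, there is (up to scalar multiples) a \emph{unique} nontrivial solution of $(\tau-z)u=0$ lying in $\Lr$ near $a$. The plan is to combine the existence result already established in Lemma \ref{lemWeylRegType} with the defining property of the limit-point case to rule out a second independent square-integrable solution. Existence is free: Lemma \ref{lemWeylRegType} furnishes at least one nontrivial solution lying in $\Lr$ near $a$, so only uniqueness (in the stated projective sense) requires an argument.

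My approach to uniqueness is by contradiction. Suppose there were two linearly independent solutions $u_1$, $u_2$ of $(\tau-z)u=0$ that both lie in $\Lr$ near $a$. Since the solution space of $(\tau-z)u=0$ is two-dimensional (Theorem \ref{thm:exisuniq}), every solution of $(\tau-z)u=0$ would then be a linear combination of $u_1$ and $u_2$, and hence would lie in $\Lr$ near $a$ as well (the space of functions lying in $\Lr$ near $a$ is a vector space). In other words, \emph{all} solutions of $(\tau-z)u=0$ would lie in $\Lr$ near $a$. By Weyl's alternative, namely Lemma \ref{lemweylaltLC} applied with $z_0 = z$, this forces $\tau$ to be in the limit-circle case at $a$. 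But this directly contradicts the hypothesis that $\tau$ is in the limit-point case at $a$, since by definition $\tau$ cannot be simultaneously in both cases at the same endpoint. Therefore no second independent $\Lr$-solution near $a$ exists, and the solution provided by Lemma \ref{lemWeylRegType} is unique up to scalar multiples.

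I do not anticipate a genuine obstacle here; the statement is essentially a bookkeeping consequence of two results already in hand. The only point requiring mild care is the logical flow: the existence half relies on $z$ being a point of regular type (this is exactly what Lemma \ref{lemWeylRegType} uses), whereas the uniqueness half relies only on the limit-point hypothesis through Weyl's alternative and does not need regularity of type at all. It is worth stating this cleanly so the reader sees that the two halves draw on different inputs. The endpoint $b$ is handled verbatim by the symmetric versions of Lemmas \ref{lemweylaltLC} and \ref{lemWeylRegType}, so no separate argument is needed there.
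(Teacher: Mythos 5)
Your proposal is correct and takes essentially the same route as the paper: the paper's proof is the one-line observation that two linearly independent solutions in $\Lr$ near $a$ would force the l.c.\ case at $a$ (via Weyl's alternative, Lemma \ref{lemweylaltLC}), contradicting the l.p.\ hypothesis, with existence supplied by Lemma \ref{lemWeylRegType}. Your version merely spells out the two-dimensionality of the solution space and the separation of the existence and uniqueness inputs, which the paper leaves implicit.
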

\begin{proof}
 If there were two linearly independent solutions in $\Lr$ near $a$, $\tau$ would be l.c.\ at $a$.
\end{proof}

\begin{lemma}\label{lem:lclpwronski}
 $\tau$ is in the l.p.\ case at $a$ if and only if
\be
 W(f,g)(a) = 0, \quad f,\,g\in \dom{\Tmax}.
\ee
$\tau$ is in the l.c.\ case at $a$ if and only if there is a $f\in \dom{\Tmax}$ such that
\be
 W(f,\ol{f})(a) = 0 \, \text{ and } \, W(f,g)(a)\not=0 \, \text{ for some } \, g\in \dom{\Tmax}.
\ee
Similar results hold at the endpoint $b$.
\end{lemma}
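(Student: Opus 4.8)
The plan is to characterize the limit-point and limit-circle cases at $a$ by relating them to the behavior of the boundary Wronskian $W(\cdot,\cdot)(a)$ on $\dom{\Tmax}$, using the dimension count of deficiency spaces together with the construction of functions in $\dom{\Tmax}$ with prescribed local behavior (Lemma \ref{lem:funcdomtmax}). I would treat the l.p.\ characterization first, then deduce the l.c.\ characterization by negation, since the two cases are mutually exclusive and exhaustive by Weyl's alternative.

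For the l.p.\ statement, I would prove both implications. For the forward direction, suppose $\tau$ is l.p.\ at $a$; the key structural fact is that, modulo the regular endpoint $b$ (reducible by the same localization trick of cutting at some $c\in(a,b)$ used in the proof of Lemma \ref{lemWeylRegType}), functions in $\dom{\Tmax}$ that lie in $\dom{\Tmax}$ near $a$ behave, near $a$, like square-integrable solutions together with a correction. Concretely, for $z=\I$ the deficiency space $\ker(\Tmax-\I)$ and its conjugate together span, modulo $\dom{\Tmin}$, the space $\dom{\Tmax}/\dom{\Tmin}$; in the l.p.\ case at $a$ there is only a one-dimensional square-integrable solution space near $a$, so the boundary form $W(\cdot,\cdot)(a)$ degenerates. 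I would make this precise by showing that if $f,g$ lie in $\dom{\Tmax}$ near $a$, one can reduce to showing the Wronskian of the relevant $L^2$-solutions vanishes, and a single $L^2$-solution has vanishing self-Wronskian while any two are linearly dependent by Corollary \ref{cor:regtypeuniqsol}. For the converse, if $W(f,g)(a)=0$ for all $f,g\in\dom{\Tmax}$, I would argue by contraposition: if $\tau$ were l.c.\ at $a$ there would exist two linearly independent $L^2$-solutions $u_1,u_2$ near $a$ of $(\tau-\I)u=0$ with $W(u_1,u_2)(a)\neq 0$ (the Wronskian is a nonzero constant), and by Lemma \ref{lem:funcdomtmax} these can be extended to genuine elements of $\dom{\Tmax}$, producing a nonvanishing boundary Wronskian and a contradiction.

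For the l.c.\ characterization, I would exploit that l.c.\ at $a$ is exactly the negation of l.p.\ at $a$. Thus l.c.\ holds iff it is \emph{not} the case that $W(f,g)(a)=0$ for all $f,g\in\dom{\Tmax}$, i.e.\ there exist $f,g\in\dom{\Tmax}$ with $W(f,g)(a)\neq 0$. It then remains to upgrade this to the stated form: one can additionally arrange $W(f,\ol f)(a)=0$ for the same $f$. To see this, I would take a real-valued $f\in\dom{\Tmax}$ (using that $\Tmax$ is real with respect to the natural conjugation, so real and imaginary parts of elements of $\dom{\Tmax}$ again lie in $\dom{\Tmax}$), for which $W(f,\ol f)(a)=W(f,f)(a)=0$ automatically; among real-valued elements near $a$ one can still find the needed partner $g\in\dom{\Tmax}$ with $W(f,g)(a)\neq 0$, again via Lemma \ref{lem:funcdomtmax} applied to two linearly independent real solutions of $(\tau-z)u=0$ for real $z$, whose constant Wronskian is nonzero.

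The main obstacle I anticipate is the forward direction of the l.p.\ case: rigorously passing from ``only one $L^2$-solution near $a$'' to ``the boundary Wronskian vanishes on all of $\dom{\Tmax}$,'' since elements of $\dom{\Tmax}$ near $a$ are not themselves solutions. The cleanest route is to use the von Neumann / deficiency-space description, namely that $\dom{\Tmax}$ modulo $\dom{\Tmin}$ near $a$ is spanned by the square-integrable solutions, so that the boundary form $W_a$ is controlled entirely by Wronskians of $L^2$-solutions, which must vanish when the $L^2$-solution space near $a$ is one-dimensional. The localization to make $b$ regular, and the careful bookkeeping that the constructed extensions genuinely lie in $\dom{\Tmax}$ rather than merely near $a$, are the technical points requiring the most care.
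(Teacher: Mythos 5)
Your overall architecture (reduce to the regular-at-$b$ case by cutting at some $c\in(a,b)$, use Lemma \ref{lem:funcdomtmax} and real solutions for the l.c.\ direction, and deduce both equivalences from Weyl's alternative) matches the paper, and the l.c.\ half together with the contrapositive half of the l.p.\ statement are fine. The gap is in the forward l.p.\ direction, exactly at the point you flag as the main obstacle, and your proposed resolution does not close it. With $b$ regular and $\tau$ l.p.\ at $a$, the von Neumann decomposition gives $\dom{\Tmax}=\dom{\Tmin}\dotplus\linspan\{u_+\}\dotplus\linspan\{u_-\}$ with $u_\pm$ spanning $\ker(\Tmax\mp\I)$, so the quotient is \emph{two}-dimensional and $W(f,g)(a)$ reduces to a combination of $W(u_+,u_+)(a)$, $W(u_-,u_-)(a)$, and the cross term $W(u_+,u_-)(a)$. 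The first two vanish trivially, but $u_+$ and $u_-=\ol{u_+}$ solve \emph{different} equations ($z=\I$ and $z=-\I$), so Corollary \ref{cor:regtypeuniqsol} says nothing about them: they are linearly independent, and the Lagrange identity gives $W(u_+,\ol{u_+})(a)=W(u_+,\ol{u_+})(b)-2\I\|u_+\|^2_{2,r}$, which is not forced to vanish by any dimension count of the $L^2$-solution space at a fixed $z$. Showing that this cross-Wronskian vanishes is essentially the content of the lemma, so your reduction is circular at this point.

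The paper closes this by choosing different representatives of the quotient: since $\dim(\ker(\Tmax-\I))=1$ forces $\dom{\Tmax}/\dom{\Tmin}$ to be two-dimensional, one exhibits (via Lemma \ref{lem:funcdomtmax}) two elements $v,w\in\dom{\Tmax}$ that vanish identically near $a$ and have linearly independent boundary data at the regular endpoint $b$ (hence are independent modulo $\dom{\Tmin}$); these already span the quotient. Consequently every $f\in\dom{\Tmax}$ agrees near $a$ with some $f_0\in\dom{\Tmin}$, and $W(f,g)(a)=W(f_0,g_0)(a)=0$ by Theorem \ref{thm:Tmin}. If you wish to keep your deficiency-space route, you must either replace the basis $\{u_+,u_-\}$ by such representatives supported away from $a$, or supply an independent proof that $W(u_+,\ol{u_+})(a)=0$ (e.g.\ a Weyl-disk argument); neither is a cosmetic fix.
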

\begin{proof}
 Let $\tau$ be in the l.c.\ case at $a$ and $u_1$, $u_2$ be a real fundamental system of $\tau u=0$ with $W(u_1,u_2)=1$.
 Both, $u_1$ and $u_2$ lie in $\dom{\Tmax}$ near $a$. Hence, there are $f$, $g\in\dom{\Tmax}$ with $f=u_1$ and $g=u_2$ near $a$
 and $f=g=0$ near $b$. Consequently, we obtain
 \begin{align}
 W(f,g)(a) = W(u_1,u_2)(a) = 1 \, \text{ and } \, W(f,\ol{f})(a) = W(u_1,\ol{u_1})(a) = 0,
 \end{align}
 since $u_1$ is real.

Now assume $\tau$ is in the l.p.\ case at $a$ and regular at $b$. Then $\dom{\Tmax}$ is a 
two-dimensional extension of $\dom{\Tmin}$, since $\dim(\ker(\Tmax-\I)) = 1$ by 
Corollary \ref{cor:regtypeuniqsol}. Let $v$, $w\in\dom{\Tmax}$ with $v=w=0$ in a vicinity of 
$a$ and
 \be
  v(b) = w^\qd(b) = 1 \, \text{ and } \, v^\qd(b) = w(b) = 0.
 \ee
 Then
 \begin{align}
  \dom{\Tmax} = \dom{\Tmin} + \linspan\lbrace v,w\rbrace,
 \end{align}
 since $v$ and $w$ are linearly independent modulo $\dom{\Tmin}$ and they do not lie in $\dom{\Tmin}$. Then for each $f$, $g\in\dom{\Tmax}$ there are
 $f_0$, $g_0\in\dom{\Tmin}$ such that $f=f_0$ and $g=g_0$ in a vicinity of $a$ and therefore, 
 \be
  W(f,g)(a) = W(f_0,g_0)(a) = 0.
 \ee
 Now if $\tau$ is not regular at $b$ we pick some $c\in(a,b)$. Then for each $f\in\dom{\Tmax}$, 
 $f|_{(a,c)}$ lies in the domain of the maximal operator induced by $\tau|_{(a,c)}$ and the claim 
 follows from what we already proved.
\end{proof}

\begin{lemma}\label{lem:lplpnosol}
 Let $\tau$ be in the l.p.\ case at both endpoints and $z\in\C\backslash\R$. Then there is no nontrivial 
 solution of $(\tau-z)u=0$ in $\Lr$.
\end{lemma}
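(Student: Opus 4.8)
The plan is to argue by contradiction and to extract the conclusion from the Lagrange identity applied to $u$ paired with its complex conjugate. Suppose $u$ is a nontrivial solution of $(\tau-z)u=0$ with $u\in\Lr$. Since then $\tau u = zu\in\Lr$, we have $u\in\dom{\Tmax}$, and because $\tau$ is a real differential expression, $\ol u\in\dom{\Tmax}$ as well.

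The key point is that the limit-point hypothesis at both endpoints forces all boundary contributions of the Wronskian to vanish. Concretely, by Lemma \ref{lem:lclpwronski} the limit-point case at $a$ is equivalent to $W(f,g)(a)=0$ for all $f,g\in\dom{\Tmax}$, and similarly at $b$. I would apply this with $f=u$ and $g=\ol u$ to obtain $W(u,\ol u)(a)=0=W(u,\ol u)(b)$.

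Next I would feed this into the Lagrange identity of Lemma \ref{lem:l2lagrange}: since $u\in\dom{\Tmax}$,
\[
 \spr{\tau u}{u}_r - \spr{u}{\tau u}_r = W(u,\ol u)(b) - W(u,\ol u)(a) = 0.
\]
On the other hand, using $\tau u = zu$ together with the (conjugate-)linearity of $\spr{\cdot\,}{\cdot}_r$ gives
\[
 \spr{\tau u}{u}_r - \spr{u}{\tau u}_r = (z-\ol z)\,\|u\|_{2,r}^2 = 2\I\,\Im(z)\,\|u\|_{2,r}^2.
\]
Comparing the two expressions and using $\Im(z)\neq 0$ (as $z\in\C\backslash\R$) forces $\|u\|_{2,r}=0$, hence $u\equiv 0$, contradicting the nontriviality of $u$.

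There is no genuine obstacle here: the entire content is the recognition that pairing $u$ with $\ol u$ converts the limit-point boundary information into the familiar symmetry relation $(z-\ol z)\|u\|_{2,r}^2=0$. One could alternatively route the argument through Corollary \ref{cor:regtypeuniqsol}, since every non-real $z$ is a point of regular type of $\Tmin$ and a global $L^2$ solution would then coincide, up to scalars, with the unique $L^2$ solutions near $a$ and near $b$; but the direct computation above is shorter and entirely self-contained.
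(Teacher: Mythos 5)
Your proof is correct and follows essentially the same route as the paper's: both arguments combine the Lagrange identity for the pair $u$, $\ol{u}$ with the vanishing of the Wronskian at both endpoints in the l.p.\ case (Lemma \ref{lem:lclpwronski}) to conclude $(z-\ol{z})\|u\|_{2,r}^2=0$. The only cosmetic difference is that the paper takes the limits $\alpha\downarrow a$, $\beta\uparrow b$ in the finite-interval Lagrange identity, while you invoke the already-limited form from Lemma \ref{lem:l2lagrange}.
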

\begin{proof}
 If $u\in\Lr$ is a solution of $(\tau-z)v=0$, then $\ol{u}$ is a solution of $(\tau-\ol{z})w=0$ 
 and both $u$ and $\ol{u}$ lie in $\dom{\Tmax}$. Now the Lagrange identity yields
 \begin{align}
  W(u,\ol{u})(\beta) - W(u,\ol{u})(\alpha) = (z-\ol{z})\int_\alpha^\beta |u(t)|^2 \, r(t) dt = 2 \I \im(z) \int_\alpha^\beta |u(t)|^2 \, r(t) dt.
 \end{align}
 If $\alpha\rightarrow a$ and $\beta\rightarrow b$, the left-hand side converges to zero by Lemma \ref{lem:lclpwronski}
 and the right-hand side converges to $2 \I \, \im(z) \|u\|_{2,r}$, hence $\|u\|_{2,r}=0$.
\end{proof}

\begin{theorem}\label{thm:TminDefIndLCLP}
 The deficiency indices of the minimal operator $\Tmin$ are given by
\begin{align}
 n(\Tmin) = \begin{cases}
              0, & \text{if }\tau\text{ is l.c.\ at no boundary point}, \\
              1, & \text{if }\tau\text{ is l.c.\ at exactly one boundary point}, \\
              2, & \text{if }\tau\text{ is l.c.\ at both boundary points.} 
            \end{cases}
\end{align}
\end{theorem}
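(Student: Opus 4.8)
The plan is to reduce the computation of $n(\Tmin)$ to counting the linearly independent $\Lr$-solutions of $(\tau - \I)u = 0$ on all of $(a,b)$, and then to evaluate that count endpoint by endpoint. Since $\Tmin$ is symmetric, the estimate $\|(\Tmin - z)f\|_{2,r} \ge |\im(z)| \, \|f\|_{2,r}$ shows that every $z \in \C \setminus \R$ is a point of regular type, so in particular $\I \in \reg(\Tmin)$ and, by the identity recorded just before Lemma \ref{lemWeylRegType},
\[
 n(\Tmin) = \dim\big(\ker(\Tmax - \I)\big).
\]
Thus $n(\Tmin)$ equals the number of linearly independent solutions of $(\tau - \I)u = 0$ that lie in $\Lr$. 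Because any such solution is continuous on compact subintervals of $(a,b)$, it lies in $\Lr$ if and only if it lies in $\Lr$ near $a$ and near $b$. Writing $V_a$ (resp.\ $V_b$) for the subspace of the two-dimensional solution space of $(\tau - \I)u = 0$ consisting of those solutions lying in $\Lr$ near $a$ (resp.\ near $b$), we therefore have $\ker(\Tmax - \I) = V_a \cap V_b$.

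Next I would determine $\dim V_a$ and $\dim V_b$. By Lemma \ref{lemWeylRegType} there is a nontrivial solution in $\Lr$ near $a$, so $\dim V_a \ge 1$; by the very definition of the limit-circle case, $\dim V_a = 2$ precisely when $\tau$ is l.c.\ at $a$, while Corollary \ref{cor:regtypeuniqsol} gives $\dim V_a = 1$ when $\tau$ is l.p.\ at $a$ (using $\I \in \reg(\Tmin)$). The same holds at $b$. The count is then governed by
\[
 \dim(V_a \cap V_b) = \dim V_a + \dim V_b - \dim(V_a + V_b).
\]

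Finally I would carry out the three cases. If $\tau$ is l.c.\ at both endpoints, then $V_a = V_b$ equals the full two-dimensional solution space, so $n(\Tmin) = 2$. If $\tau$ is l.c.\ at exactly one endpoint, say at $a$, then $V_a$ is the whole solution space while $V_b$ is one-dimensional and hence contained in $V_a$, giving $V_a \cap V_b = V_b$ and $n(\Tmin) = 1$; the symmetric argument applies when the l.c.\ endpoint is $b$. If $\tau$ is l.p.\ at both endpoints, then $\dim V_a = \dim V_b = 1$, and pure dimension counting only yields $n(\Tmin) \in \{0,1\}$, since the two one-dimensional spaces might a priori coincide. This ambiguity is exactly what Lemma \ref{lem:lplpnosol} resolves: as $\I \in \C \setminus \R$, that lemma guarantees there is no nontrivial $\Lr$-solution of $(\tau - \I)u = 0$, forcing $V_a \cap V_b = \{0\}$ and $n(\Tmin) = 0$. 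The main obstacle is precisely this last case, the only configuration not settled by dimension counting alone, where one must invoke the Lagrange-identity argument underlying Lemma \ref{lem:lplpnosol}.
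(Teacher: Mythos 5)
Your proposal is correct and follows essentially the same route as the paper: reduce $n(\Tmin)$ to $\dim(\ker(\Tmax-\I))$, count the $\Lr$-solutions of $(\tau-\I)u=0$ near each endpoint via Corollary \ref{cor:regtypeuniqsol}, and invoke Lemma \ref{lem:lplpnosol} to rule out a nontrivial global $\Lr$-solution in the doubly limit-point case. The explicit $V_a$, $V_b$ bookkeeping is just a more detailed writeup of the paper's three-sentence argument.
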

\begin{proof}
 If $\tau$ is in the l.c.\ case at both endpoints, all solutions of $(\tau-\I)u=0$ lie in $\Lr$ and hence in $\dom{\Tmax}$.
 Therefore, $n(\Tmin) = \dim(\ker(\Tmax - \I)) = 2$.
 In the case when $\tau$ is in the l.c.\ case at exactly one endpoint, there is (up to scalar multiples) 
 exactly one nontrivial solution of $(\tau-\I)u=0$ in $\Lr$, by Corollary \ref{cor:regtypeuniqsol}.
 Now if $\tau$ is in the l.p.\ case at both endpoints, we have $\ker(\Tmax-\I)=\lbrace 0\rbrace$ by Lemma \ref{lem:lplpnosol} 
 and hence $n(\Tmin)=0$.
\end{proof}

\section{Self-Adjoint Realizations}  \lb{s5} 

We are interested in the self-adjoint restrictions of $\Tmax$ (or equivalently 
the self-adjoint extensions of $\Tmin$).
To this end, recall that  we introduced the convenient short-hand notation
\be
W_a^b(f,g)= W(f,g)(b) -W(f,g)(a), \quad f,\, g\in\dom{\Tmax}.
\ee

\begin{theorem}
 Some operator $S$ is a self-adjoint restriction of $\Tmax$ if and only if
\begin{equation}
S f = \tau f, \quad f\in 
 \dom{S} = \left\lbrace f\in\dom{\Tmax} \,|\, \forall g\in \dom{S}: W_a^b(f,\ol{g}) = 0 \right\rbrace, 
 \label{5.2}
\end{equation}
\end{theorem}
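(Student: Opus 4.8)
The plan is to prove both implications using the abstract theory of self-adjoint extensions of symmetric operators, realized concretely through the Wronskian boundary form $W_a^b$. The key structural fact from Lemma~\ref{lem:l2lagrange} is that for $f,g \in \dom{\Tmax}$ we have $\spr{\tau f}{g}_r - \spr{f}{\tau g}_r = W_a^b(f,\ol{g})$, so the bilinear boundary form $W_a^b(\cdot,\ol{\cdot})$ is precisely the obstruction to symmetry on $\dom{\Tmax}$. Since $\Tmax = \Tmin^*$, any operator $S$ with $\Tmin \subseteq S \subseteq \Tmax$ has adjoint satisfying $\Tmin \subseteq S^* \subseteq \Tmax$, and $S$ is self-adjoint iff $S = S^*$.

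First I would prove the forward direction. Suppose $S$ is a self-adjoint restriction of $\Tmax$, so $S = S^*$. For $f,g \in \dom{S}$ we have by self-adjointness (symmetry) that $\spr{Sf}{g}_r = \spr{f}{Sg}_r$, and since $Sf = \tau f$, $Sg = \tau g$, Lemma~\ref{lem:l2lagrange} gives $W_a^b(f,\ol{g}) = 0$ for all $f,g \in \dom{S}$. This shows $\dom{S}$ is contained in the set on the right-hand side of \eqref{5.2}. For the reverse containment, I would fix $f$ in the right-hand set, i.e. $f \in \dom{\Tmax}$ with $W_a^b(f,\ol{g}) = 0$ for all $g \in \dom{S}$; then for all $g \in \dom{S}$, $\spr{\tau f}{g}_r = \spr{f}{\tau g}_r = \spr{f}{Sg}_r$, which by definition of the adjoint means $f \in \dom{S^*}$ with $S^* f = \tau f$. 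Since $S = S^*$, this gives $f \in \dom{S}$, establishing equality of the two sets.

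For the converse direction, I would assume $S$ is defined by \eqref{5.2} and show $S = S^*$. Symmetry of $S$ is immediate: for $f,g \in \dom{S}$ the defining condition gives $W_a^b(f,\ol{g}) = 0$, so $\spr{Sf}{g}_r = \spr{f}{Sg}_r$ by Lemma~\ref{lem:l2lagrange}, whence $S \subseteq S^*$. To prove the reverse inclusion $S^* \subseteq S$, I would take $f \in \dom{S^*}$. Since $S$ is a restriction of $\Tmax = \Tmin^*$ and $S \supseteq \Tmin$ (one must check the defining domain contains $\dom{\Tmin}$, which follows from Theorem~\ref{thm:Tmin} since $W(g,h)(a)=W(g,h)(b)=0$ for $g \in \dom{\Tmin}$ and all $h \in \dom{\Tmax}$), we get $S^* \subseteq \Tmin^* = \Tmax$, so $f \in \dom{\Tmax}$ and $S^* f = \tau f$. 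The adjoint relation $\spr{Sg}{f}_r = \spr{g}{S^* f}_r$ for all $g \in \dom{S}$ then translates via Lemma~\ref{lem:l2lagrange} into $W_a^b(g,\ol{f}) = 0$ for all $g \in \dom{S}$, equivalently $W_a^b(f,\ol{g}) = 0$ (using the skew-symmetry $W(f,\ol{g}) = -W(\ol{g},f)$ and that $\dom S$ is closed under the natural conjugation), which is exactly the condition placing $f \in \dom{S}$.

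The main obstacle I anticipate is the converse direction, specifically verifying that the symmetric operator $S$ defined by the \emph{implicit} condition \eqref{5.2} is actually self-adjoint rather than merely symmetric — the subtlety being that \eqref{5.2} defines $\dom{S}$ self-referentially through $g \in \dom{S}$, so one must argue that this fixed-point-style definition is consistent and yields a genuine operator. The cleanest route is to observe that self-adjoint restrictions of $\Tmax$ are in bijection with Lagrangian (maximal isotropic) subspaces of the boundary form $W_a^b(\cdot,\ol{\cdot})$ on the quotient $\dom{\Tmax}/\dom{\Tmin}$, a finite-dimensional symplectic space by Theorem~\ref{thm:TminDefIndLCLP}; condition \eqref{5.2} says precisely that $\dom{S}/\dom{\Tmin}$ is its own orthogonal complement with respect to this form, which is the defining property of a Lagrangian subspace and hence of self-adjointness. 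One should also confirm that the natural conjugation mapping $g \mapsto \ol{g}$ preserves $\dom{S}$, so that the conditions involving $\ol{g}$ and $g$ are interchangeable; this uses that $\tau$ is a real differential expression and that $W(f,\ol{g})(a) = 0$ for all $g \in \dom S$ is equivalent to $W(\ol f, g)(a) = 0$ for all such $g$.
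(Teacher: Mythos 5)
Your proposal is correct and follows essentially the same route as the paper's proof: both directions rest on the Lagrange identity $\spr{\tau f}{g}_r-\spr{f}{\tau g}_r=W_a^b(f,\ol{g})$ together with $\Tmin\subseteq S\subseteq\Tmax=\Tmin^*$, exactly as in the paper (which, for the converse, likewise deduces $\dom{S^*}\subseteq\dom{\Tmax}$ and then reads off the Wronskian condition). Your closing worry about the ``self-referential'' nature of \eqref{5.2} is unfounded---the condition is a property a given operator may or may not satisfy, not a definition to be shown consistent---and the conjugation step you invoke is unnecessary since $W_a^b(g,\ol{f})=-\ol{W_a^b(f,\ol{g})}$ holds for all $f,g\in\dom{\Tmax}$ without any closure assumption on $\dom{S}$.
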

\begin{proof}
 We denote the right-hand side of \eqref{5.2} by $\dom {S_0}$. First assume 
 $S$ is a self-adjoint restriction of $\Tmax$. If $f\in \dom S$ then
 \begin{align}
  0 = \spr{\tau f}{g}_{r} - \spr{f}{\tau g}_{r} = W_a^b(f,\ol{g})
 \end{align}
 for each $g\in \dom S$ so that $f\in \dom {S_0}$.
 Now if $f\in \dom {S_0}$, then 
 \begin{align}
  0 = W_a^b(f,\ol{g}) = \spr{\tau f}{g}_{r} - \spr{f}{\tau g}_{r}
 \end{align}
 for each $g\in \dom S$, hence $f\in \text{dom}\,(S^*)=\dom S$.
 
 Conversely, assume $\dom S=\dom {S_0}$.  Then $S$ is symmetric since $\spr{\tau f}{g}_{r}=\spr{f}{\tau g}_{r}$ for each $f$, $g\in \dom S$.
 Now let $f\in \text{dom}\,(S^\ast)\subseteq\text{dom}\,(\Tmin^\ast) =\text{dom}\,(\Tmax)$, then
 \begin{align}
  0 = \spr{\tau f}{g}_{r} - \spr{f}{\tau g}_{r} = W_a^b(f,\ol{g}),
 \end{align}
 for each $g\in \dom S$.  Hence, $f\in \dom {S_0}=\dom S$, and it follows that $S$ is self-adjoint.
\end{proof}

The aim of this section is to determine all self-adjoint restrictions of $\Tmax$. 
If both endpoints are in the l.p.\ case this is an immediate consequence of Theorem \ref{thm:TminDefIndLCLP}.

\begin{theorem}\lb{t5.2}
 If $\tau$ is in the l.p.\ case at both endpoints then $\Tmin=\Tmax$ is a self-adjoint operator.
\end{theorem}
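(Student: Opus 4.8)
The plan is to combine the deficiency-index computation of Theorem \ref{thm:TminDefIndLCLP} with the general self-adjointness criterion for symmetric operators. Since $\tau$ is in the l.p.\ case at both endpoints, Theorem \ref{thm:TminDefIndLCLP} immediately gives $n(\Tmin)=0$, which means $\dim(\ker(\Tmax\mp\I))=0$. By von Neumann's theory of deficiency indices, a closed symmetric operator with vanishing deficiency indices is self-adjoint; hence $\Tmin$ is self-adjoint. It therefore remains only to verify the identity $\Tmin=\Tmax$.

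First I would recall the abstract relations already recorded in the excerpt, namely $\Tmin=\Tpre^{\ast\ast}=\Tmax^\ast$ together with $\Tpre^\ast=\Tmax$. Since $\Tpre$ is densely defined and symmetric, $\Tmin=\overline{\Tpre}$ is a closed symmetric operator satisfying $\Tmin^\ast=\Tpre^{\ast\ast\ast}=\Tpre^\ast=\Tmax$. Consequently we always have the chain of inclusions $\Tmin\subseteq\Tmax=\Tmin^\ast$.

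The key step is to show these inclusions collapse to equalities. Because $n(\Tmin)=0$, the general extension theory (equivalently, Theorem \ref{thm:Tmin}) tells us that the boundary condition defining $\dom{\Tmin}$ is vacuous: by Lemma \ref{lem:lclpwronski}, the l.p.\ case at $a$ forces $W(f,g)(a)=0$ for all $f,g\in\dom{\Tmax}$, and likewise the l.p.\ case at $b$ forces $W(f,g)(b)=0$. Hence the domain
\begin{align*}
\dom{\Tmin} = \big\{g\in\dom{\Tmax}\,\big|\,\forall h\in\dom{\Tmax}:\ W(g,h)(a)=W(g,h)(b)=0\big\}
\end{align*}
given by Theorem \ref{thm:Tmin} coincides with all of $\dom{\Tmax}$, since the defining Wronskian conditions hold automatically for every $g\in\dom{\Tmax}$. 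Therefore $\dom{\Tmin}=\dom{\Tmax}$, and since both operators act as $\tau$, we conclude $\Tmin=\Tmax$.

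The main obstacle — though it is largely bookkeeping here — is to be sure the two l.p.\ conditions from Lemma \ref{lem:lclpwronski} are being applied at both endpoints simultaneously and that the resulting Wronskian vanishing is enough to remove \emph{every} boundary constraint in Theorem \ref{thm:Tmin}. Once that is in place, self-adjointness is automatic: we have $\Tmin=\Tmax=\Tmin^\ast$, so $\Tmin$ equals its own adjoint. No further computation is required, as the analytic content was already absorbed into the deficiency-index count and the characterization of $\dom{\Tmin}$.
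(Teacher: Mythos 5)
Your proposal is correct and follows essentially the same route as the paper, which derives Theorem \ref{t5.2} as an immediate consequence of the deficiency-index count $n(\Tmin)=0$ in Theorem \ref{thm:TminDefIndLCLP}. The additional verification of $\dom{\Tmin}=\dom{\Tmax}$ via Lemma \ref{lem:lclpwronski} and Theorem \ref{thm:Tmin} is valid but redundant, since $\Tmin=\Tmin^\ast=\Tmax$ already follows once the deficiency indices vanish.
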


Next we turn to the case when one endpoint is in the l.c.\ case and the other one is in the l.p.\ case.
But before we do this, we need some more properties of the Wronskian.

\begin{lemma}\label{lem:WronskPropSR}
 Let $v\in\dom{\Tmax}$ such that $W(v,\ol{v})(a) = 0$ and suppose there is an $h\in\dom{\Tmax}$ with
    $W(h,\ol{v})(a)\not=0$. Then for each $f$, $g\in\dom{\Tmax}$ we have
 \begin{align}\label{eqn:WronskLCconj}
  W(f,\ol{v})(a) = 0\ \ \text{if and only if}\ \ W(\ol{f},\ol{v})(a) = 0
 \end{align}
 and
 \begin{align}\label{eqn:WronskLC2}
  W(f,\ol{v})(a) = W(g,\ol{v})(a) = 0 \ \ \text{implies} \ \ W(f,g)(a) = 0.
 \end{align}
 Similar results hold at the endpoint $b$.
\end{lemma}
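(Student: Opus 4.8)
The plan is to derive both assertions from two ingredients that are already available: the Plücker identity \eqref{2.12}, which holds pointwise on $(a,b)$ and hence, since all the relevant Wronskian limits at $a$ exist by Lemma \ref{lem:l2lagrange}, also in the limit $x\downarrow a$; and the conjugation symmetry of the Wronskian. The latter is the observation that, because $p$ and $\foco$ are real, one has $(\ol f)^\qd = \ol{f^\qd}$, so that $W(\ol f,\ol g)(x)=\ol{W(f,g)(x)}$ for all $x\in(a,b)$ and therefore $W(\ol f,\ol g)(a)=\ol{W(f,g)(a)}$. Note that every function occurring below, namely $f$, $g$, $v$, $h$ and their complex conjugates, lies in $\dom{\Tmax}$ near $a$, so all the Wronskian limits at $a$ are well defined. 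Throughout I would fix the element $h\in\dom{\Tmax}$ with $W(h,\ol v)(a)\neq0$; by antisymmetry this also gives $W(\ol v,h)(a)=-W(h,\ol v)(a)\neq0$.

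For \eqref{eqn:WronskLC2} I would apply the Plücker identity to the quadruple $(f_1,f_2,f_3,f_4)=(f,g,\ol v,h)$, which yields at $a$
\[
W(f,g)(a)\,W(\ol v,h)(a)+W(f,\ol v)(a)\,W(h,g)(a)+W(f,h)(a)\,W(g,\ol v)(a)=0.
\]
By hypothesis $W(f,\ol v)(a)=W(g,\ol v)(a)=0$, so the last two terms drop out and one is left with $W(f,g)(a)\,W(\ol v,h)(a)=0$; since $W(\ol v,h)(a)\neq0$, this forces $W(f,g)(a)=0$.

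For \eqref{eqn:WronskLCconj} I would first use the conjugation symmetry to recast the claim: since $W(\ol f,\ol v)(a)=\ol{W(f,v)(a)}$, the equivalence $W(f,\ol v)(a)=0\Leftrightarrow W(\ol f,\ol v)(a)=0$ is the same as $W(f,\ol v)(a)=0\Leftrightarrow W(f,v)(a)=0$. For the forward implication I would apply Plücker to $(f,v,\ol v,h)$; using $W(v,\ol v)(a)=0$ this collapses to
\[
W(f,v)(a)\,W(\ol v,h)(a)+W(f,\ol v)(a)\,W(h,v)(a)=0,
\]
so $W(f,\ol v)(a)=0$ gives $W(f,v)(a)=0$ because $W(\ol v,h)(a)\neq0$. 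For the reverse implication I would instead apply Plücker to $(f,\ol v,v,\ol h)$; again one term dies since $W(\ol v,v)(a)=-W(v,\ol v)(a)=0$, leaving
\[
W(f,\ol v)(a)\,W(v,\ol h)(a)+W(f,v)(a)\,W(\ol h,\ol v)(a)=0,
\]
and here the needed nonvanishing is $W(v,\ol h)(a)=-W(\ol h,v)(a)=-\ol{W(h,\ol v)(a)}\neq0$, obtained from the given hypothesis by conjugation symmetry.

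The one genuine subtlety, and the \emph{main obstacle}, is this last point: the reverse direction of \eqref{eqn:WronskLCconj} cannot reuse the coefficient $W(\ol v,h)(a)$ that drove the forward direction, so one is forced to pass to the conjugated partner $\ol h$ and check that $W(\ol h,v)(a)\neq0$. Everything else is routine bookkeeping of which two of the three Plücker terms vanish, using only $W(v,\ol v)(a)=0$ together with the antisymmetry and conjugation symmetry of the Wronskian. The analogous statements at the endpoint $b$ follow by the identical argument.
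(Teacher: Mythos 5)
Your proof is correct and follows essentially the same route as the paper: both results are obtained by applying the Pl\"ucker identity to suitable quadruples and using $W(v,\ol{v})(a)=0$ to kill one term. The only (harmless) difference is in the reverse implication of \eqref{eqn:WronskLCconj}, where the paper first applies Pl\"ucker to $(v,\ol{v},h,\ol{h})$ to conclude $W(h,v)(a)\neq 0$ and then reads off both directions from the single quadruple $(f,v,\ol{v},h)$, whereas you obtain the needed nonvanishing coefficient directly from conjugation symmetry by passing to $\ol{h}$.
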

\begin{proof}
 Choosing $f_1=v$, $f_2=\ol{v}$, $f_3=h$ and $f_4=\ol{h}$ in the Pl\"{u}cker identity, we infer that also $W(h,v)(a)\not=0$.
 Now let $f_1=f$, $f_2=v$, $f_3=\ol{v}$ and $f_4=h$, then the Pl\"{u}cker identity yields \eqref{eqn:WronskLCconj},
 whereas $f_1=f$, $f_2=g$, $f_3=\ol{v}$ and $f_4=h$ yields \eqref{eqn:WronskLC2}.
\end{proof}

\begin{theorem}\label{thm:SRLCLPWronsk}
 Suppose $\tau$ is in the l.c.\ case at $a$ and in the l.p.\ case at $b$. Then some 
 operator $S$ is a self-adjoint restriction of $\Tmax$ if and only if there is a 
 $v\in\dom{\Tmax}\backslash\dom{\Tmin}$ with $W(v,\ol{v})(a)=0$ such that
 \begin{equation}
 S f = \tau f, \quad f \in \dom{S} = \left\lbrace g\in\dom{\Tmax} \,|\, W(g,\ol{v})(a)=0 \right\rbrace. 
 \end{equation}
 A similar result holds if $\tau$ is in the l.c.\ case at $b$ and in the l.p.\ case at $a$.
\end{theorem}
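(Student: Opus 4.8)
The plan is to characterize self-adjoint restrictions of $\Tmax$ in this mixed l.c./l.p.\ case by reducing to the general boundary-condition criterion \eqref{5.2} and then exploiting the fact that, since $\tau$ is l.p.\ at $b$, all Wronskians vanish at $b$ by Lemma \ref{lem:lclpwronski}. Consequently $W_a^b(f,\ol g) = -W(f,\ol g)(a)$ for all $f,g\in\dom{\Tmax}$, so the self-adjointness condition in \eqref{5.2} collapses to a condition involving only the endpoint $a$. The deficiency index here is $n(\Tmin)=1$ by Theorem \ref{thm:TminDefIndLCLP}, so $\dom{\Tmax}$ is a one-dimensional extension of $\dom{\Tmin}$, and a self-adjoint restriction $S$ satisfies $\dom{\Tmin}\subsetneq\dom S\subsetneq\dom{\Tmax}$ with $\dim(\dom S/\dom{\Tmin})=1$.

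First I would prove the ``if'' direction. Given $v\in\dom{\Tmax}\setminus\dom{\Tmin}$ with $W(v,\ol v)(a)=0$, I define $S$ by the stated domain $\dom S=\{g\in\dom{\Tmax}\mid W(g,\ol v)(a)=0\}$. Since $v\notin\dom{\Tmin}$, Theorem \ref{thm:Tmin} together with Lemma \ref{lem:lclpwronski} guarantees there is some $h\in\dom{\Tmax}$ with $W(h,\ol v)(a)\neq 0$, so the hypotheses of Lemma \ref{lem:WronskPropSR} are met. For $f,g\in\dom S$ we have $W(f,\ol v)(a)=W(g,\ol v)(a)=0$; applying \eqref{eqn:WronskLCconj} converts $W(g,\ol v)(a)=0$ into $W(\ol g,\ol v)(a)=0$, i.e.\ $W(g',\ol v)(a)=0$ where I set $g'=\ol g$, and then \eqref{eqn:WronskLC2} yields $W(f,\ol g)(a)=0$. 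Hence $W_a^b(f,\ol g)=0$ for all $f,g\in\dom S$, so $S$ is symmetric. To get self-adjointness via \eqref{5.2}, I must check that the condition $W(g,\ol v)(a)=0$ is \emph{equivalent} to $W_a^b(g,\ol f)=0$ for all $f\in\dom S$, so that no function outside $\dom S$ satisfies all the boundary conditions; this is where the dimension count enters: $\dom S$ is a genuine one-dimensional extension of $\dom{\Tmin}$ (it contains $v$, noting $W(v,\ol v)(a)=0$), hence $\dom S=\dom{\Tmax}$ would force $v$ to impose a trivial condition, contradicting $W(h,\ol v)(a)\neq0$, so $\dom S$ is exactly codimension-one in $\dom{\Tmax}$ and therefore maximal symmetric, i.e.\ self-adjoint.

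For the ``only if'' direction, I start from a self-adjoint restriction $S$ and use \eqref{5.2}. Since $n(\Tmin)=1$, pick any $v\in\dom S\setminus\dom{\Tmin}$; then $v$ spans $\dom S$ modulo $\dom{\Tmin}$. Symmetry of $S$ gives $W_a^b(v,\ol v)=0$, which by the l.p.\ property at $b$ reduces to $W(v,\ol v)(a)=0$. I then must show $\dom S=\{g\mid W(g,\ol v)(a)=0\}$. The inclusion $\subseteq$ follows because for $g\in\dom S$ the self-adjointness condition $W_a^b(g,\ol v)=0$ reduces (using $v\in\dom S$ so $\ol v$ may be used as test function after applying \eqref{eqn:WronskLCconj}) to $W(g,\ol v)(a)=0$. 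The reverse inclusion follows from maximality: the right-hand set is symmetric by the ``if'' argument and contains $\dom S$, and both are codimension-one over $\dom{\Tmin}$, so they coincide.

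The main obstacle I anticipate is the careful bookkeeping of complex conjugates in the Wronskian boundary conditions—specifically, verifying that the single scalar condition $W(g,\ol v)(a)=0$ is genuinely equivalent to the full family $\{W_a^b(g,\ol f)=0:f\in\dom S\}$ from \eqref{5.2}. This requires invoking \eqref{eqn:WronskLCconj} to pass freely between $W(\cdot,\ol v)(a)$ and $W(\ol{\,\cdot\,},\ol v)(a)$, and \eqref{eqn:WronskLC2} to conclude pairwise vanishing, while simultaneously using the codimension-one dimension count to rule out any strictly larger symmetric domain. The l.c.\ case at $b$ with l.p.\ at $a$ then follows by the obvious symmetry of the roles of the two endpoints, which I would state without repeating the argument.
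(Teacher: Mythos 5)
Your proof is correct and follows essentially the same route as the paper's: both arguments rest on $n(\Tmin)=1$, the vanishing of all Wronskians at the l.p.\ endpoint $b$ (Lemma \ref{lem:lclpwronski}), and the codimension count in $\dom{\Tmax}/\dom{\Tmin}$ showing that the boundary-condition set $\{g\mid W(g,\ol v)(a)=0\}$ is exactly a one-dimensional extension of $\dom{\Tmin}$, namely $\dom{\Tmin}+\linspan\{v\}$. The only organizational difference is that the paper verifies symmetry trivially on $\dom{\Tmin}+\linspan\{v\}$ and then proves the set equality, whereas you verify symmetry of the boundary-condition domain directly via Lemma \ref{lem:WronskPropSR}; both are sound.
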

\begin{proof}
 Since $n(\Tmin)=1$, the self-adjoint extensions of $\Tmin$ are precisely the one-dimensional, 
  symmetric extensions of $\Tmin$. Hence some operator $S$ is a self-adjoint extension of $\Tmin$ if and only
  if there is a $v\in\dom{\Tmax}\backslash\dom{\Tmin}$ with $W(v,\ol{v})(a)=0$ such that
 \be
 S f= \tau f, \quad f\in \dom{S} =  \dom{\Tmin} + \linspan\left\lbrace v \right\rbrace.  
 \ee
 Hence, we have to prove that 
 \be
   \dom{\Tmin} + \linspan\left\lbrace v \right\rbrace = \left\lbrace g\in\dom{\Tmax} \,|\, W(g,\ol{v})(a) = 0 \right\rbrace.
 \ee
 The subspace on the left-hand side is included in the right one because of 
  Theorem \ref{thm:Tmin} and $W(v,\ol{v})(a)=0$.
 On the other hand, if the subspace on the right-hand side were larger, then it would coincide with  $\dom{\Tmax}$ and, hence,
  would imply $v\in\dom{\Tmin}$.
\end{proof}

Two self-adjoint restrictions are distinct if and only if the corresponding functions $v$ are linearly independent modulo $\Tmin$.
 Furthermore, $v$ can always be chosen such that $v$ is equal to some real solution of $(\tau-z)u=0$ with $z\in\R$ in some vicinity of $a$.

It remains to consider the case when both endpoints are in the l.c.\ case.

\begin{theorem}\label{thm:SRLCLCWronsk}
 Suppose $\tau$ is in the l.c.\ case at both endpoints. Then some operator $S$ is a self-adjoint restriction of
  $\Tmax$ if and only if there are $v$, $w\in\dom{\Tmax}$, linearly independent modulo $\dom{\Tmin}$, with
 \be\label{eqn:WronskLCLCvw}
  W_a^b(v,\ol{v}) = W_a^b(w,\ol{w}) = W_a^b(v,\ol{w}) = 0
 \ee
 such that
 \begin{equation}
 S f=\tau f, \quad 
f \in \dom{S} = \left\lbrace g\in\dom{\Tmax} \,|\, W_a^b(g,\ol{v}) = W_a^b(g,\ol{w})=0 \right\rbrace.
\end{equation}
\end{theorem}
\begin{proof}
 Since $n(\Tmin)=2$ the self-adjoint restrictions of $\Tmax$ are precisely the two-dimensional, symmetric
  extensions of $\Tmin$. Hence, an operator $S$ is a self-adjoint restriction of $\Tmax$ if and only if there 
  are $v$, $w\in\dom{\Tmax}$, linearly independent modulo $\dom{\Tmin}$, with \eqref{eqn:WronskLCLCvw} such that
 \begin{equation}
S f =\tau f,    \quad  f \in \dom{S} =  \dom{\Tmin} + \linspan\left\lbrace v,w \right\rbrace. 
 \end{equation} 
 Therefore, we have to prove that
 \be
    \dom{\Tmin} + \linspan\left\lbrace v,w \right\rbrace 
    = \left\lbrace f\in\dom{\Tmax} \,|\, W_a^b(f,\ol{v}) = W_a^b(f,\ol{w}) = 0 \right\rbrace := \cD.
 \ee
 Indeed, the subspace on the left-hand side is contained in $\cD$ by Theorem \ref{thm:Tmin} and \eqref{eqn:WronskLCLCvw}.
 In order to prove that it is also not larger, consider the linear functionals $F_v$, $F_w$ on $\dom{\Tmax}$ defined by
 \be
  F_v(f) = W_a^b(f,\ol{v}) \, \text{ and } \, F_w(f) = W_a^b(f,\ol{w}) \, \text{ for } \, f\in\dom{\Tmax}.
 \ee
 The intersection of the kernels of these functionals is precisely $\cD$. Furthermore, these functionals are 
  linearly independent. Indeed, assume $c_1$, $c_2\in\C$ and $c_1 F_v + c_2 F_w=0$, then for all $f\in\dom{\Tmax}$, 
 \be
  0 = c_1 F_v(f) + c_2 F_w(f) = c_1 W_a^b(f,\ol{v}) + c_2 W_a^b(f,\ol{w}) = W_a^b(f,c_1\ol{v} + c_2\ol{w}).
 \ee
 However, by Lemma \ref{lem:funcdomtmax} this yields
 \be
  W(f,c_1\ol{v} + c_2\ol{w})(a) = W(f,c_1\ol{v} + c_2\ol{w})(b) = 0
 \ee
 for all $f\in\dom{\Tmax}$ and consequently $c_1\ol{v} + c_2 \ol{w}\in\dom{\Tmin}$. Now since $v$, $w$ are linearly
  independent modulo $\dom{\Tmin}$ we infer that $c_1=c_2=0$ and Lemma \ref{lem:hilfslemmaadjoint} implies  that
 \be
  \ker (F_v) \not\subseteq \ker (F_w) \, \text{ and } \, \ker (F_w) \not\subseteq \ker (F_v).
 \ee
 Hence, there exist $f_v$, $f_w\in\dom{\Tmax}$ such that $W_a^b(f_v,\ol{v})=W_a^b(f_w,\ol{w})=0$, but for which
  $W_a^b(f_v,\ol{w})\not=0$ and $W_a^b(f_w,\ol{v}) \not=0$. Both $f_v$ and $f_w$ do not lie in $\cD$ and are linearly
  independent; hence, $\cD$ is at most a two-dimensional extension of $\dom{\Tmin}$.
\end{proof}

In the case when $\tau$ is in the l.c.\ case at both endpoints, we may divide the self-adjoint restrictions of
 $\Tmax$ into two classes. Indeed, we say some operator $S$ is a self-adjoint restriction of $\Tmax$ with
 {\it separated boundary conditions} if it is of the form
 \begin{equation}
S f=\tau f,   \quad 
f \in \dom{S} = \left\lbrace g\in\dom{\Tmax} \,|\, W(g,\ol{v})(a)=W(g,\ol{w})(b)=0 \right\rbrace, 
 \end{equation}
where $v$, $w\in\dom{\Tmax}$ such that $W(v,\ol{v})(a)=W(w,\ol{w})(b)=0$ but $W(h,\ol{v})(a)\not=0\not=W(h,\ol{w})(b)$ for some $h\in\dom{\Tmax}$.
Conversely, each operator of this form is a self-adjoint restriction of $\Tmax$ by 
Theorem \ref{thm:SRLCLCWronsk} and Lemma \ref{lem:funcdomtmax}.
The remaining self-adjoint restrictions are called self-adjoint restrictions of 
$\Tmax$ with {\it coupled boundary conditions}.

\section{Boundary Conditions}  \lb{s6}

In this section, let $w_1$, $w_2\in\dom{\Tmax}$ with
\begin{align}\label{eqn:ufuncBCa}
 W(w_1,\ol{w_2})(a) = 1 \, \text{ and } \, W(w_1,\ol{w_1})(a) = W(w_2,\ol{w_2})(a) = 0,
\end{align}
if $\tau$ is in the l.c.\ case at $a$ and 
\begin{align}\label{eqn:ufuncBCb}
 W(w_1,\ol{w_2})(b) = 1 \, \text{ and } \, W(w_1,\ol{w_1})(b) = W(w_2,\ol{w_2})(b) = 0,
\end{align}
if $\tau$ is in the l.c.\ case at $b$.
We will describe the self-adjoint restrictions of $\Tmax$ in terms of the linear functionals 
 $\BCa^1$, $\BCa^2$, $\BCb^1$ and $\BCb^2$ on $\dom{\Tmax}$, defined by
\begin{align}
 \BCa^1(f) = W(f,\ol{w_2})(a) \, \text{ and } \, \BCa^2(f) = W(\ol{w_1},f)(a) \, \text{ for } \, f\in\dom{\Tmax},
\end{align}
if $\tau$ is in the l.c.\ case at $a$ and 
\begin{align}
 \BCb^1(f) = W(f,\ol{w_2})(b) \, \text{ and } \, \BCb^2(f) = W(\ol{w_1},f)(b) 
 \, \text{ for } \, f\in\dom{\Tmax},
\end{align}
if $\tau$ is in the l.c.\ case at $b$. 

If $\tau$ is in the l.c.\ case at some endpoint, functions with \eqref{eqn:ufuncBCa} (resp., 
 with \eqref{eqn:ufuncBCb}) always exist. Indeed, one may take them to coincide near the endpoint with some 
 real solutions of $(\tau-z)u=0$ with $W(u_1,u_2)=1$ for some $z\in\R$ and use Lemma \ref{lem:funcdomtmax}.

In the regular case these functionals may take the form of point evaluations of the function
 and its quasi-derivative at the boundary point.

\begin{lemma}\label{prop:PointEvalBC}
 Suppose $\tau$ is regular at $a$. Then there are $w_1$, $w_2\in\dom{\Tmax}$ with 
 \eqref{eqn:ufuncBCa} such that
  the corresponding linear functionals $\BCa^1$ and $\BCa^2$ satisfy
 \begin{equation}
  \BCa^1(f) = f(a) \, \text{ and } \, \BCa^2(f) = f^\qd(a) \, \text{ for } \, f\in\dom{\Tmax}.    \lb{6BC}
 \end{equation}
 The analogous result holds at the endpoint $b$.
\end{lemma}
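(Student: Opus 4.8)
The goal is to produce two functions $w_1, w_2 \in \dom{\Tmax}$ satisfying \eqref{eqn:ufuncBCa} whose associated functionals $\BCa^1, \BCa^2$ coincide with the point evaluations $f \mapsto f(a)$ and $f \mapsto f^\qd(a)$. Since $\tau$ is regular at $a$, by the Corollary following Theorem \ref{thm:Tmin} (or directly by Theorem \ref{thm:EEreg}) every $f \in \dom{\Tmax}$ has finite limits $f(a)$ and $f^\qd(a)$, and
\[
 W(f,\ol{g})(a) = f(a)\,\ol{g^\qd(a)} - f^\qd(a)\,\ol{g(a)}, \quad f,g \in \dom{\Tmax}.
\]
The strategy is therefore to choose $w_1, w_2$ so that their boundary data $(w_j(a), w_j^\qd(a))$ realize the desired functionals through this formula.

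\emph{Choosing the boundary data.} First I would pick $w_1, w_2 \in \dom{\Tmax}$ with prescribed values
\[
 w_1(a) = 0, \quad w_1^\qd(a) = -1, \quad w_2(a) = 1, \quad w_2^\qd(a) = 0.
\]
Such functions exist: by Theorem \ref{thm:EEreg} one solves $(\tau - z)u = 0$ for some fixed $z \in \R$ with these initial data at $a$, obtaining real solutions defined near $a$, and then extends them to all of $\dom{\Tmax}$ via Lemma \ref{lem:funcdomtmax} (extending arbitrarily near $b$). With these data, a direct substitution into the boundary Wronskian formula gives
\begin{align*}
 \BCa^1(f) &= W(f,\ol{w_2})(a) = f(a)\,\ol{w_2^\qd(a)} - f^\qd(a)\,\ol{w_2(a)} = -f^\qd(a)\cdot 0 + f(a)\cdot 0 \cdots,
\end{align*}
so here I must be careful to match signs; the correct assignment is to take $w_2$ with $(w_2(a), w_2^\qd(a)) = (0,1)$ so that $\BCa^1(f) = W(f,\ol{w_2})(a) = f(a)\ol{w_2^\qd(a)} - f^\qd(a)\ol{w_2(a)} = f(a)$, and $w_1$ with $(w_1(a), w_1^\qd(a)) = (1,0)$ so that $\BCa^2(f) = W(\ol{w_1},f)(a) = \ol{w_1(a)}\,f^\qd(a) - \ol{w_1^\qd(a)}\,f(a) = f^\qd(a)$. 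Pinning down the precise boundary values that produce exactly \eqref{6BC} is the one step requiring genuine care, since the definitions of $\BCa^1$ and $\BCa^2$ use the Wronskian in opposite argument orders.

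\emph{Verifying the normalization.} With the choice $w_1(a) = w_1^\qd(a)$-data $= (1,0)$ and $w_2$-data $= (0,1)$, I would then check that \eqref{eqn:ufuncBCa} holds. Since both $w_1$ and $w_2$ arise from real solutions near $a$, we have $\ol{w_j} = w_j$ near $a$, whence $W(w_j, \ol{w_j})(a) = W(w_j, w_j)(a) = 0$ automatically, and
\[
 W(w_1, \ol{w_2})(a) = w_1(a)\,\ol{w_2^\qd(a)} - w_1^\qd(a)\,\ol{w_2(a)} = 1\cdot 1 - 0 \cdot 0 = 1.
\]
Thus both normalization conditions are satisfied, and the functionals reduce to point evaluations as claimed. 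The analogous construction at $b$ is identical, replacing limits from the right by limits from the left and using the regularity of $\tau$ at $b$. I do not anticipate any serious obstacle beyond the bookkeeping of the Wronskian's argument order; the existence of functions with prescribed boundary data is guaranteed by Theorem \ref{thm:EEreg} together with Lemma \ref{lem:funcdomtmax}, and the reality of the chosen solutions makes the vanishing of $W(w_j,\ol{w_j})(a)$ immediate.
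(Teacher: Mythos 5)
Your proof is correct and follows essentially the same route as the paper: one takes $w_1$, $w_2$ to coincide near $a$ with the real solutions of $\tau u=0$ having initial data $(w_1(a),w_1^{[1]}(a))=(1,0)$ and $(w_2(a),w_2^{[1]}(a))=(0,1)$ (extended to $\dom{\Tmax}$ via Lemma \ref{lem:funcdomtmax}), after which both the normalization \eqref{eqn:ufuncBCa} and the point-evaluation identities \eqref{6BC} follow by direct substitution into the boundary Wronskian. The only blemish is the false start with the data $(0,-1)$ and $(1,0)$, which you correctly abandon before arriving at the paper's choice.
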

\begin{proof}
 Take $w_1$, $w_2\in\dom{\Tmax}$ to coincide near $a$ with the real solutions $u_1$, $u_2$ of $\tau u=0$ with 
\begin{align}
 u_1(a) = u_2^\qd(a) = 1 \, \text{ and } \, u_1^\qd(a) = u_2(a) = 0.
\end{align}
\end{proof}

Using the Pl\"{u}cker identity one easily obtains the equality
\begin{align}
 W(f,g)(a) = \BCa^1(f)\BCa^2(g) - \BCa^2(f)\BCa^1(g), \quad f,\,g\in\dom{\Tmax}.
\end{align}
Then for each $v\in\dom{\Tmax}\backslash\dom{\Tmin}$ with $W(v,\ol{v})(a)=0$ and $W(h,\ol{v})(a)\not=0$ for some $h\in\dom{\Tmax}$, one may show that 
 there is a $\varphi_a\in[0,\pi)$ such that
\begin{align}\label{eqnBCrelavtophi}
 W(f,\ol{v})(a) = 0 \, \text{ iff } \, \BCa^1(f) \cos(\varphi_a) - \BCa^2(f)\sin(\varphi_a)=0, \quad f\in\dom{\Tmax}.
\end{align}
Conversely, if some $\varphi_a\in[0,\pi)$ is given, then there exists a $v\in\dom{\Tmax}$, not belonging to $\dom{\Tmin}$, with 
 $W(v,\ol{v})(a) = 0$ and $W(h,\ol{v})(a)\not=0$ for some $h\in\dom{\Tmax}$ such that
\begin{align}\label{eqnBCrelaphitov}
 W(f,\ol{v})(a) = 0 \, \text{ iff } \, \BCa^1(f) \cos(\varphi_a) - \BCa^2(f)\sin(\varphi_a)=0, \quad f\in\dom{\Tmax}.
\end{align}
Using this, Theorem \ref{thm:SRLCLPWronsk} immediately yields the following characterization of the
 self-adjoint restrictions of $\Tmax$ in terms of the boundary functionals.

\begin{theorem}\label{thm:bcLCLP}
 Suppose $\tau$ is in the l.c.\ case at $a$ and in the l.p.\ case at $b$. Then some operator $S$ is a 
  self-adjoint restriction of $\Tmax$ if and only if there is some $\varphi_a\in[0,\pi)$ such that 
 \begin{align}
 \begin{split} 
 & S f = \tau f,   \\
& f \in \dom{S} = \left\lbrace g \in\dom{\Tmax} \left|\, \BCa^1(g) \cos(\varphi_a) 
- \BCa^2(g) \sin(\varphi_a) = 0\right.\right\rbrace. 
\end{split} 
 \end{align}
A similar result holds if $\tau$ is in the l.c.\ case at $b$ and in the l.p.\ case at $a$.
\end{theorem}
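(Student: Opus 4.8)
The plan is to read the statement off directly from Theorem \ref{thm:SRLCLPWronsk}, translating its single Wronskian condition into the language of the boundary functionals $\BCa^1,\BCa^2$ by means of the two equivalences \eqref{eqnBCrelavtophi} and \eqref{eqnBCrelaphitov} recorded just above it. Since $\tau$ is l.c.\ at $a$ and l.p.\ at $b$, Theorem \ref{thm:SRLCLPWronsk} already reduces every self-adjoint restriction $S$ of $\Tmax$ to the data of a single $v\in\dom{\Tmax}\setminus\dom{\Tmin}$ with $W(v,\ol v)(a)=0$, via $\dom{S}=\{g\in\dom{\Tmax}\,|\,W(g,\ol v)(a)=0\}$. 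So all that remains is to set up the bijective correspondence between admissible $v$'s and angles $\varphi_a\in[0,\pi)$.

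For the forward implication, suppose $S$ is a self-adjoint restriction of $\Tmax$. Theorem \ref{thm:SRLCLPWronsk} furnishes a $v\in\dom{\Tmax}\setminus\dom{\Tmin}$ with $W(v,\ol v)(a)=0$ and $\dom{S}=\{g\in\dom{\Tmax}\,|\,W(g,\ol v)(a)=0\}$. Because $\Tmin$ is real, $\ol v\notin\dom{\Tmin}$ as well; and since $\tau$ is l.p.\ at $b$ all Wronskians at $b$ vanish by Lemma \ref{lem:lclpwronski}, so the description of $\dom{\Tmin}$ in Theorem \ref{thm:Tmin} forces $W(\ol v,h)(a)\neq 0$, i.e.\ $W(h,\ol v)(a)\neq 0$, for some $h\in\dom{\Tmax}$. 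Thus $v$ satisfies exactly the hypotheses under which \eqref{eqnBCrelavtophi} applies, and that equivalence supplies a $\varphi_a\in[0,\pi)$ with $W(g,\ol v)(a)=0$ iff $\BCa^1(g)\cos(\varphi_a)-\BCa^2(g)\sin(\varphi_a)=0$, giving $\dom{S}$ the asserted form.

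For the converse, given any $\varphi_a\in[0,\pi)$, the equivalence \eqref{eqnBCrelaphitov} produces a $v\in\dom{\Tmax}\setminus\dom{\Tmin}$ with $W(v,\ol v)(a)=0$ and $W(h,\ol v)(a)\neq 0$ for some $h$, for which the boundary condition $\BCa^1(g)\cos(\varphi_a)-\BCa^2(g)\sin(\varphi_a)=0$ is equivalent to $W(g,\ol v)(a)=0$. Feeding this $v$ back into Theorem \ref{thm:SRLCLPWronsk} shows that the operator $S$ so defined is precisely a self-adjoint restriction of $\Tmax$. The l.c.-at-$b$/l.p.-at-$a$ case is handled symmetrically, interchanging the endpoints and using the functionals built from \eqref{eqn:ufuncBCb}.

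At this level the theorem is essentially a repackaging, so the genuine work sits in the two equivalences I am invoking, and that is where I expect any real obstacle. Proving them amounts to noting, via the bilinear identity $W(f,g)(a)=\BCa^1(f)\BCa^2(g)-\BCa^2(f)\BCa^1(g)$, that $W(\,\cdot\,,\ol v)(a)$ is a fixed linear combination $c_1\BCa^1+c_2\BCa^2$ of the boundary functionals, and then showing that the self-adjointness constraint $W(v,\ol v)(a)=0$ forces the coefficient vector $(c_1,c_2)$ to be a real multiple of $(\cos\varphi_a,-\sin\varphi_a)$ for a unique $\varphi_a\in[0,\pi)$, while the nondegeneracy hypothesis $W(h,\ol v)(a)\neq 0$ guarantees $(c_1,c_2)\neq(0,0)$ so the angle is well defined; the converse direction reverses this construction.
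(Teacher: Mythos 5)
Your proposal is correct and follows essentially the same route as the paper: the paper also obtains this theorem as an immediate consequence of Theorem \ref{thm:SRLCLPWronsk} combined with the equivalences \eqref{eqnBCrelavtophi} and \eqref{eqnBCrelaphitov}. Your additional verification that the nondegeneracy hypothesis $W(h,\ol{v})(a)\neq 0$ holds (via Lemma \ref{lem:lclpwronski} and Theorem \ref{thm:Tmin}, using that $\tau$ is l.p.\ at $b$) is a detail the paper leaves implicit, and it is argued correctly.
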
 

Next we will give a characterization of the self-adjoint restrictions of $\Tmax$ if $\tau$ is in the l.c.\ case at both endpoints.

\begin{theorem}\label{thm:LCLCBC}
 Suppose $\tau$ is in the l.c.\ case at both endpoints. Then some operator $S$ is a self-adjoint restriction 
  of $\Tmax$ if and only if there are matrices $B_a$, $B_b\in\C^{2\times 2}$ with
 \begin{align}\label{eqnBCLCLCmatcond}
  \rang(B_a | B_b) = 2 \, \text{ and } \, B_aJB_a^\ast = B_bJB_b^\ast \, \text{ with } \,  
     J=\begin{pmatrix} 0 & -1 \\ 1 & 0 \end{pmatrix},
 \end{align}
 such that
 \begin{equation}
 S f= \tau f, \quad 
f \in \dom{S} = \left\lbrace g\in\dom{\Tmax} \left|\, B_a\begin{pmatrix} \BCa^1(g) \\ \BCa^2(g) \end{pmatrix} 
    = B_b \begin{pmatrix} \BCb^1(g) \\ \BCb^2(g) \end{pmatrix}\right. \right\rbrace.
 \end{equation}
\end{theorem}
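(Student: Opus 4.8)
The plan is to characterize the self-adjoint restrictions of $\Tmax$ in the l.c.-l.c.\ case by translating the abstract Wronskian conditions of Theorem \ref{thm:SRLCLCWronsk} into matrix conditions on the boundary functionals. Since $\tau$ is in the l.c.\ case at both endpoints, Theorem \ref{thm:SRLCLCWronsk} tells us that $S$ is a self-adjoint restriction of $\Tmax$ if and only if there are $v$, $w\in\dom{\Tmax}$, linearly independent modulo $\dom{\Tmin}$, satisfying \eqref{eqn:WronskLCLCvw}, such that $\dom{S}$ is cut out by the conditions $W_a^b(g,\ol{v})=W_a^b(g,\ol{w})=0$. The first task is therefore to express each functional $g\mapsto W_a^b(g,\ol{v})$ in terms of the four basic boundary functionals $\BCa^1,\BCa^2,\BCb^1,\BCb^2$. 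Using the identity $W(f,g)(a)=\BCa^1(f)\BCa^2(g)-\BCa^2(f)\BCa^1(g)$ (and its analogue at $b$) derived just before Theorem \ref{thm:bcLCLP}, I would write
\begin{align}
\begin{split}
W_a^b(g,\ol{v}) &= W(g,\ol v)(b)-W(g,\ol v)(a) \\
&= \big[\BCb^1(g)\BCb^2(\ol v)-\BCb^2(g)\BCb^1(\ol v)\big] - \big[\BCa^1(g)\BCa^2(\ol v)-\BCa^2(g)\BCa^1(\ol v)\big],
\end{split}
\end{align}
so that the pair of conditions defining $\dom S$ becomes a homogeneous linear system in the vectors $(\BCa^1(g),\BCa^2(g))^\top$ and $(\BCb^1(g),\BCb^2(g))^\top$. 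Reading off the coefficients gives the matrices $B_a$ and $B_b$: concretely, the $a$-rows come from the entries $\BCa^2(\ol v),\BCa^1(\ol v)$ (for $v$) and $\BCa^2(\ol w),\BCa^1(\ol w)$ (for $w$), and similarly at $b$ with the sign coming from $W_a^b$.

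Next I would verify that the two algebraic conditions \eqref{eqnBCLCLCmatcond} correspond exactly to the two hypotheses on $v,w$ in Theorem \ref{thm:SRLCLCWronsk}. The rank condition $\rang(B_a\mid B_b)=2$ should encode that the two functionals $F_v(g)=W_a^b(g,\ol v)$ and $F_w(g)=W_a^b(g,\ol w)$ are linearly independent, which in the proof of Theorem \ref{thm:SRLCLCWronsk} was shown to be equivalent to $v,w$ being linearly independent modulo $\dom{\Tmin}$; here linear dependence of the rows of $(B_a\mid B_b)$ would force a nontrivial combination $c_1\ol v+c_2\ol w\in\dom{\Tmin}$ via Lemma \ref{lem:funcdomtmax}. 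The symmetry condition $B_aJB_a^\ast=B_bJB_b^\ast$ should encode precisely the three vanishing Wronskians in \eqref{eqn:WronskLCLCvw}: expanding both sides, the matrix entries of $B_aJB_a^\ast$ are, up to conjugation, the quantities $W(v,\ol v)(a)$, $W(w,\ol w)(a)$, $W(v,\ol w)(a)$ (and their conjugates), and similarly for $b$, so that $B_aJB_a^\ast=B_bJB_b^\ast$ is equivalent to $W_a^b(v,\ol v)=W_a^b(w,\ol w)=W_a^b(v,\ol w)=0$. I would check this by a direct $2\times2$ computation, being careful that $J$ is the canonical symplectic form and that the Hermitian adjoint produces the correct complex conjugates matching the $\ol v,\ol w$ appearing in the functionals.

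For the converse direction I would start from arbitrary $B_a,B_b$ satisfying \eqref{eqnBCLCLCmatcond} and reconstruct suitable $v,w\in\dom{\Tmax}$. Since $\tau$ is l.c.\ at both endpoints, every prescription of the four boundary functionals is attainable by some element of $\dom{\Tmax}$ (one builds $v,w$ from real solutions near each endpoint and patches via Lemma \ref{lem:funcdomtmax}, exactly as in the paragraph preceding Theorem \ref{thm:bcLCLP}); the rank-two condition guarantees one can solve for $v,w$ that are linearly independent modulo $\dom{\Tmin}$, and the symmetry condition then yields \eqref{eqn:WronskLCLCvw}. Applying Theorem \ref{thm:SRLCLCWronsk} to these $v,w$ produces the self-adjoint operator $S$ with the stated domain. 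The main obstacle I anticipate is purely bookkeeping: getting the dictionary between $(v,w)$ and $(B_a,B_b)$ to respect the conjugations and signs so that the Hermitian identity $B_aJB_a^\ast=B_bJB_b^\ast$ comes out exactly right rather than off by a transpose, a conjugate, or an overall sign. Once that correspondence is pinned down, both implications follow mechanically from Theorem \ref{thm:SRLCLCWronsk} and Lemma \ref{lem:funcdomtmax}.
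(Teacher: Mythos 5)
Your proposal follows the paper's proof essentially verbatim: both directions reduce to Theorem \ref{thm:SRLCLCWronsk}, the matrices $B_a$, $B_b$ are read off from $W_a^b(g,\ol v)=W_a^b(g,\ol w)=0$ via the Pl\"ucker-type identity $W(f,g)(a)=\BCa^1(f)\BCa^2(g)-\BCa^2(f)\BCa^1(g)$, the rank condition is matched to linear independence of $v,w$ modulo $\dom{\Tmin}$ through Lemma \ref{lem:funcdomtmax}, and the symmetry condition $B_aJB_a^\ast=B_bJB_b^\ast$ is matched to the three vanishing Wronskians in \eqref{eqn:WronskLCLCvw}. The sign/conjugation bookkeeping you flag is exactly what the paper settles by writing $B_a=\bigl(\begin{smallmatrix}\BCa^2(\ol v) & -\BCa^1(\ol v)\\ \BCa^2(\ol w) & -\BCa^1(\ol w)\end{smallmatrix}\bigr)$ and likewise at $b$, after which it too dismisses the verification as ``a simple computation.''
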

\begin{proof}
If $S$ is a self-adjoint restriction of $\Tmax$, there exist $v$, $w\in\dom{\Tmax}$, linearly independent
 modulo $\dom{\Tmin}$, with 
 \begin{equation} 
 W_a^b(v,\ol{v})=W_a^b(w,\ol{w})=W_a^b(v,\ol{w})=0,
 \end{equation}
 such that 
  \begin{equation} 
 \dom{S} = \left\lbrace f\in\dom{\Tmax} \left|\, W_a^b(f,\ol{v})=W_a^b(f,\ol{w})=0\right.\right\rbrace. 
  \end{equation}
 Let $B_a$, $B_b\in\C^{2\times2}$ be defined by 
  \begin{equation} 
 B_a=\left(\begin{matrix} \BCa^2(\ol{v}) & -\BCa^1(\ol{v}) \\ \BCa^2(\ol{w}) & -\BCa^1(\ol{w}) \end{matrix}\right)\, \text{ and } \,  
   B_b=\left(\begin{matrix} \BCb^2(\ol{v}) & -\BCb^1(\ol{v}) \\ \BCb^2(\ol{w}) & -\BCb^1(\ol{w}) \end{matrix}\right). 
    \end{equation}
 Then a simple computation shows that 
  \begin{equation} 
 B_aJB_a^*=B_bJB_b^* \, \text{ iff } \, W_a^b(v,\ol{v})=W_a^b(w,\ol{w})=W_a^b(v,\ol{w})=0. 
  \end{equation}
 In order to prove $\rang{(B_a|B_b)}=2$, let $c_1$, $c_2\in\C$ and 
  \begin{equation} 
 0=c_1\begin{pmatrix} \BCa^2(\ol{v}) \\ -\BCa^1(\ol{v}) \\ \BCb^2(\ol{v}) \\ -\BCb^1(\ol{v}) \end{pmatrix} + c_2 \begin{pmatrix} \BCa^2(\ol{w}) \\ -\BCa^1(\ol{w}) \\ \BCb^2(\ol{w}) \\ -\BCb^1(\ol{w}) \end{pmatrix}=\begin{pmatrix} \BCa^2(c_1\ol{v}+c_2\ol{w}) \\ -\BCa^1(c_1\ol{v}+c_2\ol{w}) \\ \BCb^2(c_1\ol{v}+c_2\ol{w}) \\ -\BCb^1(c_1\ol{v}+c_2\ol{w}) \end{pmatrix}. 
  \end{equation}
 Hence, the function $c_1\ol{v}+c_2\ol{w}$ lies in the kernel of $\BCa^1$, $\BCa^2$, $\BCb^1$ and $\BCb^2$, and 
 therefore, $W(c_1\ol{v}+c_2\ol{w},f)(a)=0$ and $W(c_1\ol{v}+c_2\ol{w},f)(b)=0$ for each $f\in\dom{\Tmax}$. 
 This means that $c_1\ol{v}+c_2\ol{w}\in\dom{\Tmin}$ and hence $c_1=c_2=0$, since $\ol{v}$, $\ol{w}$ are 
 linearly independent modulo $\dom{\Tmin}$. This proves that $(B_a|B_b)$ has rank two. 
 Furthermore, a calculation yields that for $f\in\dom{\Tmax}$ 
 \begin{equation} 
 W_a^b(f,\ol{v})=W_a^b(f,\ol{w})=0 \, \text{ iff } \, 
 B_a\left(\begin{matrix}\BCa^1(f)\\ \BCa^2(f)\end{matrix}\right)
    =B_b\left(\begin{matrix} \BCb^1(f)\\ \BCb^2(f)\end{matrix}\right), 
 \end{equation}  
which proves that $S$ is given as in the claim.

Conversely, let $B_a$, $B_b\in\C^{2\times2}$ with the claimed properties be given.
Then there are $v$, $w\in\dom{\Tmax}$ such that 
\begin{equation} 
B_a=\left(\begin{matrix} \BCa^2(\ol{v}) & -\BCa^1(\ol{v}) \\ \BCa^2(\ol{w}) & -\BCa^1(\ol{w}) \end{matrix}\right)
   \, \text{ and } \, B_b=\left(\begin{matrix} \BCb^2(\ol{v}) & -\BCb^1(\ol{v}) \\ \BCb^2(\ol{w}) & -\BCb^1(\ol{w}) \end{matrix}\right). 
\end{equation}
In order to prove that $v$ and $w$ are linearly independent modulo $\dom{\Tmin}$, let $c_1$, $c_2\in\C$ 
 and $c_1v+c_2w\in\dom{\Tmin}$, then
\begin{equation} 
0=\begin{pmatrix} \BCa^2(\ol{c_1} \ol{v}+\ol{c_2} \ol{w}) \\ -\BCa^1(\ol{c_1} \ol{v}+\ol{c_2} \ol{w}) \\
       \BCb^2(\ol{c_1} \ol{v}+\ol{c_2} \ol{w}) \\ -\BCb^1(\ol{c_1} \ol{v}+\ol{c_2} \ol{w}) \end{pmatrix}
    = \ol{c_1} \begin{pmatrix} \BCa^2(\ol{v}) \\ -\BCa^1(\ol{v}) \\ \BCb^2(\ol{v}) \\ -\BCb^1(\ol{v}) \end{pmatrix} 
  + \ol{c_2} \begin{pmatrix} \BCa^2(\ol{w}) \\ -\BCa^1(\ol{w}) \\ \BCb^2(\ol{w}) \\ -\BCb^1(\ol{w}) \end{pmatrix}. 
\end{equation}
Now the rows of $(B_a|B_b)$ are linearly independent, hence $c_1=c_2=0$. 
Since again 
\begin{equation} 
B_aJB_a^*=B_bJB_b^* \, \text{ iff } \, W_a^b(v,\ol{v})=W_a^b(w,\ol{w})=W_a^b(v,\ol{w})=0, 
 \end{equation} 
the functions $v$, $w$ satisfy the assumptions of Theorem\ \ref{thm:SRLCLCWronsk}. 
As above, one infers once again that for $f\in\dom{\Tmax}$,
\begin{equation} 
B_a\left(\begin{matrix}\BCa^1(f)\\ \BCa^2(f)\end{matrix}\right)=B_b\left(\begin{matrix} \BCb^1(f)\\ \BCb^2(f)\end{matrix}\right)
   \, \text{ iff } \, W_a^b(f,\ol{w})=W_a^b(f,\ol{w})=0. 
 \end{equation}   
 Hence, $S$ is a self-adjoint restriction of $\Tmax$ by Theorem \ref{thm:SRLCLCWronsk}.
\end{proof}

As in the preceding section, if $\tau$ is in the l.c.\ case at both endpoints, we may divide the self-adjoint 
 restrictions of $\Tmax$ into two classes.
 
\begin{theorem}\label{thm:SRLCLCsepcoup}
Suppose $\tau$ is in the l.c.\ case at both endpoints. 
Then some operator $S$ is a self-adjoint restriction of $\Tmax$ with separated boundary conditions if and only 
 if there are $\varphi_a$, $\varphi_b\in[0,\pi)$ such that
 \begin{align}\label{eqn:SRLCLCsep}
& S f = \tau f,   \\
& f \in \dom{S} = \bigg\{g\in\dom{\Tmax} \, \bigg| \,  
\begin{array}{l} \BCa^1(g)\cos(\varphi_a)-\BCa^2(g)\sin(\varphi_a)=0, \\
\BCb^1(g)\cos(\varphi_b)-\BCb^2(g)\sin(\varphi_b)=0 \end{array}\bigg\}.   \no 
 \end{align} 
 Furthermore, $S$ is a self-adjoint restriction of $\Tmax$ with coupled boundary conditions if and only if
 there are $\phi\in[0,\pi)$ and $R\in\R^{2\times2}$ with $\det{(R)}=1$ $($i.e., $R \in \SL_2(\bbR)$$)$  
 such that
 \begin{align}\label{eqn:SRLCLCcoup}
 \begin{split} 
 & S f = \tau f,     \\
& f \in \dom{S} = \bigg\{g \in\dom{\Tmax} \, \bigg| \, \left(\begin{matrix}\BCb^1(g)\\\BCb^2(g)\end{matrix}\right) 
      = \E^{\I\phi}R\left(\begin{matrix}\BCa^1(g)\\\BCa^2(g)\end{matrix}\right)\bigg\}.    \\
\end{split} 
 \end{align} 
\end{theorem}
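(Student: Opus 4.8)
The plan is to reduce everything to the matrix parametrization of self-adjoint restrictions already established in Theorem \ref{thm:LCLCBC} and to distinguish the separated from the coupled case through the algebraic structure of the admissible pairs $(B_a,B_b)$. The separated case is essentially a transcription of the one-sided parametrization developed around \eqref{eqnBCrelavtophi}--\eqref{eqnBCrelaphitov}, so the substantive work concerns the coupled case.

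For the separated case, recall from the end of Section \ref{s5} that $S$ is a self-adjoint restriction of $\Tmax$ with separated boundary conditions precisely when $\dom{S}=\{g\in\dom{\Tmax}\mid W(g,\ol v)(a)=W(g,\ol w)(b)=0\}$ for suitable $v$ attached to $a$ and $w$ attached to $b$. The equivalence \eqref{eqnBCrelavtophi} converts $W(g,\ol v)(a)=0$ into $\BCa^1(g)\cos(\varphi_a)-\BCa^2(g)\sin(\varphi_a)=0$ for a unique $\varphi_a\in[0,\pi)$, and its analogue at $b$ produces $\varphi_b$; this yields \eqref{eqn:SRLCLCsep}. Conversely, given $\varphi_a,\varphi_b\in[0,\pi)$, the relation \eqref{eqnBCrelaphitov} supplies admissible functions $v,w$, and the operator with domain \eqref{eqn:SRLCLCsep} is a self-adjoint restriction with separated boundary conditions by the definition just recalled together with Lemma \ref{lem:funcdomtmax}.

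For the coupled case I would start from a self-adjoint restriction presented via $(B_a,B_b)$ as in Theorem \ref{thm:LCLCBC}. Taking determinants in $B_aJB_a^\ast=B_bJB_b^\ast$ and using $\det(J)=1$ gives $|\det(B_a)|=|\det(B_b)|$, so either both matrices are nonsingular or both are singular; in the latter case each has rank exactly one, since rank zero together with $\rang(B_a|B_b)=2$ would force the remaining, invertible, matrix to satisfy $BJB^\ast=0$. In the singular subcase, let $\ell_a$ and $\ell_b$ be nonzero left null vectors of $B_a$ and $B_b$; by $\rang(B_a|B_b)=2$ they are linearly independent with $\ell_aB_b\neq0\neq\ell_bB_a$, and testing the identity $B_aJB_a^\ast=B_bJB_b^\ast$ against $\ell_a$ and $\ell_b$ shows $(\ell_aB_b)J(\ell_aB_b)^\ast=0$ and $(\ell_bB_a)J(\ell_bB_a)^\ast=0$. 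In the $2\times2$ setting this forces $\ell_aB_b$ and $\ell_bB_a$ to be complex scalar multiples of real vectors, so the boundary condition is equivalent to one real condition at $a$ and one at $b$, i.e.\ $S$ is separated. Hence the genuinely coupled restrictions are exactly those for which $B_a$ and $B_b$ are both nonsingular.

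It then remains to extract the form \eqref{eqn:SRLCLCcoup}. With $B_a,B_b$ invertible, set $R'=B_b^{-1}B_a$, so that the boundary condition reads $(\BCb^1(g),\BCb^2(g))^\top=R'\,(\BCa^1(g),\BCa^2(g))^\top$. Multiplying $B_aJB_a^\ast=B_bJB_b^\ast$ on the left by $B_b^{-1}$ and on the right by $(B_b^{-1})^\ast$ gives $R'J(R')^\ast=J$; taking determinants yields $|\det(R')|=1$, so $\det(R')=\E^{2\I\phi}$ for a unique $\phi\in[0,\pi)$. Putting $R=\E^{-\I\phi}R'$ one has $\det(R)=1$ and still $RJR^\ast=J$. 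The polynomial identity $RJR^\top=\det(R)\,J=J$, valid for any $2\times2$ matrix, combined with $RJR^\ast=J$ forces $R^\top=R^\ast=\ol R^\top$, hence $R=\ol R$ is real and $R\in\SL_2(\R)$, giving \eqref{eqn:SRLCLCcoup}. The converse is a direct check: for $\phi\in[0,\pi)$ and $R\in\SL_2(\R)$, taking $B_a=\E^{\I\phi}R$ and $B_b$ the identity matrix one verifies $\rang(B_a|B_b)=2$ and $B_aJB_a^\ast=RJR^\top=J=B_bJB_b^\ast$, so Theorem \ref{thm:LCLCBC} applies and the resulting coupled restriction has domain \eqref{eqn:SRLCLCcoup}. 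I expect the main obstacle to be the dichotomy in the third paragraph, namely the careful verification that both-singular pairs always reduce to the separated normal form while the $J$-identity excludes the mixed case.
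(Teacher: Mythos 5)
Your proof follows essentially the same route as the paper's: the separated case is dispatched via \eqref{eqnBCrelavtophi}--\eqref{eqnBCrelaphitov} exactly as in the text, and the coupled case proceeds by reducing to the pair $(B_a,B_b)$ of Theorem \ref{thm:LCLCBC}, establishing the rank dichotomy, showing the both-singular case collapses to separated conditions, and extracting $\E^{\I\phi}R$ from $B_b^{-1}B_a$. Two of your algebraic steps are in fact cleaner than the paper's: testing $B_aJB_a^\ast=B_bJB_b^\ast$ against left null vectors and using that $uJu^\ast=0$ forces $u$ to be a complex multiple of a real vector replaces the paper's explicit rank-one outer-product decomposition $B_az=c_a^{\top}z\,w_a$; and deducing reality of $R$ from $RJR^{\top}=\det(R)J=J$ together with $RJR^\ast=J$ (hence $R^{\top}=R^\ast$ since $RJ$ is invertible) replaces the paper's entrywise computation $B=J(B^{-1})^\ast J^\ast=\E^{2\I\phi}\ol{B}$.

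There is one step you assert but do not prove: the claim that the coupled restrictions are \emph{exactly} those with both $B_a,B_b$ nonsingular, equivalently that the operator defined by \eqref{eqn:SRLCLCcoup} in your converse direction is not separated. You prove ``both singular $\Rightarrow$ separated,'' hence ``coupled $\Rightarrow$ both nonsingular,'' but the reverse containment is needed to conclude that \eqref{eqn:SRLCLCcoup} yields a restriction with \emph{coupled} boundary conditions (a given $S$ is not a priori prevented from admitting a second representation by singular matrices). The repair is short and is what the paper supplies: if $S$ of the form \eqref{eqn:SRLCLCcoup} were separated, there would exist $f\in\dom{S}\setminus\dom{\Tmin}$ vanishing near $a$, so $\BCa^1(f)=\BCa^2(f)=0$, and the condition $(\BCb^1(f),\BCb^2(f))^{\top}=\E^{\I\phi}R\,(\BCa^1(f),\BCa^2(f))^{\top}=0$ then forces $f\in\dom{\Tmin}$, a contradiction. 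With that sentence added, your argument is complete.
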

\begin{proof}
Using \eqref{eqnBCrelavtophi} and \eqref{eqnBCrelaphitov} one easily sees that the 
self-adjoint restrictions 
 of $\Tmax$ with separated boundary conditions are precisely the ones given in \eqref{eqn:SRLCLCsep}. Hence, we only have to prove the second claim.
Let $S$ be a self-adjoint restriction of $\Tmax$ with coupled boundary conditions and 
 $B_a$, $B_b\in\C^{2\times 2}$ matrices as in Theorem \ref{thm:LCLCBC}.
Then by \eqref{eqnBCLCLCmatcond} either both of them have rank one or both have rank two.
In the first case we have
\begin{align}
 B_a z = c_a^{\top} z w_a \, \text{ and } \, B_b z = c_b^{\top} z w_b
\end{align}
for some $c_a$, $c_b$, $w_a$, $w_b\in\C^2\backslash\lbrace (0,0)\rbrace$.
Since the vectors $w_a$ and $w_b$ are linearly independent (recall that $\rang(B_a|B_b)=2$) one infers that
\begin{align}
 B_a\begin{pmatrix} \BCa^1(f) \\ \BCa^2(f) \end{pmatrix} = B_b \begin{pmatrix} \BCb^1(f) \\ \BCb^2(f) \end{pmatrix} \, \text{ iff } \,  B_a\begin{pmatrix} \BCa^1(f) \\ \BCa^2(f) \end{pmatrix} = B_b \begin{pmatrix} \BCb^1(f) \\ \BCb^2(f) \end{pmatrix}=0.
\end{align}
In particular,
\begin{align}
 B_a J B_a^\ast = B_b J B_b^\ast  \, \text{ iff } \, B_a JB_a^\ast = B_b J B_b^\ast = 0.
\end{align}
Now let $v\in\dom{\Tmax}$ with $\BCa^2(\ol{v})=c_1$ and $\BCa^1(\ol{v})=-c_2$. 
A simple calculation yields 
\begin{align}
\begin{split} 
 0=B_aJB_a^* & = W(w_1,w_2)(a)(\BCa^1(v)\BCa^2(\ol{v})-\BCa^2(v)\BCa^1(\ol{v}))w_a \ol{w_a}^{\top} \\ 
 & = W(w_1,w_2)(a)W(v,\ol{v})(a)w_a \ol{w_a}^{\top}. 
 \end{split} 
\end{align} 
Hence, $W(v,\ol{v})(a)=0$ and since $(\BCa^1(v),\BCa^2(v))=(c_2,c_1)\not=0$, $v\not\in\dom{\Tmin}$. 
Furthermore, for each $f\in\dom{\Tmax}$,
\begin{align}
B_a\begin{pmatrix} \BCa^1(f) \\ \BCa^2(f)\end{pmatrix} & =(\BCa^1(f)\BCa^2(\ol{v}) - \BCa^2(f)\BCa^1(\ol{v}))w_a = W(f,\ol{v})(a)w_a.
\end{align}
Similarly one obtains a function $f\in\dom{\Tmax}\backslash\dom{\Tmin}$ with $W(w,\ol{w})(b)=0$ and
\begin{align}
 B_b\begin{pmatrix} \BCb^1(f) \\ \BCb^2(f)\end{pmatrix}= W(f,\ol{w})(b) w_b, \quad f\in\dom{\Tmax}.
\end{align}
However, this shows that $S$ is a self-adjoint restriction with separated boundary conditions.  Hence, both matrices, $B_a$ and $B_b$, have rank two. If we set $B=B_b^{-1}B_a$, 
then $B=J(B^{-1})^\ast J^\ast$ and therefore, $|\det (B)|=1$; hence, $\det (B) = \E^{2i\phi}$ for 
some $\phi\in[0,\pi)$.
If we set $R=\E^{-i\phi}B$, one infers from the identities
\begin{align}
\begin{split}
 B & = \begin{pmatrix} b_{11} & b_{12} \\ b_{21} & b_{22} \end{pmatrix} 
     = J(B^{-1})^\ast J^\ast = \E^{2\I\phi} \begin{pmatrix} 0 & -1 \\ 1 & 0 \end{pmatrix}
       \begin{pmatrix} \ol{b_{22}} & - \ol{b_{21}} \\ -\ol{b_{12}} & \ol{b_{11}} \end{pmatrix}
       \begin{pmatrix} 0 & 1 \\ -1 & 0 \end{pmatrix} \\
   & = \E^{2i\phi} \begin{pmatrix} \ol{b_{11}} & \ol{b_{12}} \\ \ol{b_{21}} & \ol{b_{22}} \end{pmatrix},
\end{split}
\end{align}
that $R\in\R^{2\times2}$ with $\det (R)=1$.
Now because for each $f\in\dom{\Tmax}$ 
\begin{align}
 B_a\begin{pmatrix} \BCa^1(f) \\ \BCa^2(f) \end{pmatrix} =
   B_b\begin{pmatrix} \BCb^1(f) \\ \BCb^2(f) \end{pmatrix} \, \text{ iff } \,  
   \begin{pmatrix} \BCb^1(f) \\ \BCb^2(f) \end{pmatrix} = \E^{\I\phi} R 
    \begin{pmatrix} \BCa^1(f) \\ \BCa^2(f) \end{pmatrix},
\end{align}
$S$ has the claimed representation.

Conversely, if $S$ is of the form \eqref{eqn:SRLCLCcoup}, then Theorem \ref{thm:LCLCBC} shows that it is a 
  self-adjoint restriction of $\Tmax$. Now if $S$ were a self-adjoint restriction with separated boundary
 conditions, there would exist an $f\in \dom{S}\backslash\dom{\Tmin}$, vanishing in some vicinity of $a$. By the boundary condition 
 we would also have $\BCb^1(f)=\BCb^2(f)=0$, that is, $f\in\dom{\Tmin}$. Hence, $S$ cannot be a self-adjoint restriction with separated boundary conditions.
\end{proof}

We note that the separated self-adjoint extensions described in \eqref{eqn:SRLCLCsep} are always real (that is, commute with the antiunitary operator of complex conjugation, resp., the natural conjugation in $\Lr$). The coupled boundary conditions in \eqref{eqn:SRLCLCcoup} are real if and only if $\phi = 0$ (see also 
\cite[Sect.\ 4.2]{Ze05}).

\section{The Spectrum and the Resolvent} \lb{s7}

In this section we will compute the resolvent $R_z = (S - z I_r)^{-1}$ of a self-adjoint restriction $S$ of $\Tmax$.
First we deal with the case when both endpoints are in the l.c.\ case.

\begin{theorem}\label{thmSResolLCLC}
 Suppose $\tau$ is in the l.c.\ case at both endpoints and $S$ is a self-adjoint restriction of $\Tmax$.
 Then for each $z\in\rho(S)$, the resolvent $R_z$ is an integral operator
 \begin{align}
  R_z g (x) = \int_a^b G_z(x,y) g(y) \, r(y) dy, \quad x\in(a,b),~g\in\Lr,
 \end{align}
 with a  square integrable kernel $G_z$, that is, $R_z$ is a Hilbert-Schmidt operator, 
 $R_z \in \cB_2\big(L^2((a,b); r(x)dx)\big)$. 
 For any two given linearly independent solutions $u_1$, $u_2$ of $(\tau-z)u=0$,
 there are coefficients $m^\pm_{ij}(z)\in\C$, $i$, $j\in\lbrace 1,2\rbrace$, such that the kernel is given by
 \begin{equation}
  G_z(x,y) = \begin{cases}
               \sum_{i,j=1}^2 m^+_{ij}(z) u_i(x) u_j(y), & y\in(a,x], \\
               \sum_{i,j=1}^2 m^-_{ij}(z) u_i(x) u_j(y), & y\in[x,b).
             \end{cases}.     \lb{7.0} 
 \end{equation}
\end{theorem}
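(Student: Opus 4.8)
The plan is to produce $R_z g$ explicitly by variation of parameters and then read off the kernel. Fix two linearly independent solutions $u_1$, $u_2$ of $(\tau - z)u = 0$ and set $W = W(u_1,u_2) \neq 0$. Since $\tau$ is in the l.c.\ case at both endpoints, both $u_1$ and $u_2$ lie in $\Lr$, and for any $g \in \Lr$ the products $u_1 g r$, $u_2 g r$ are integrable near each endpoint by Cauchy--Schwarz. By Lemma \ref{prop:repsol} (extended to $c=a$ as noted after Theorem \ref{thm:EEreg}), the function
\[
 f_p(x) = \frac{u_1(x)}{W}\int_a^x u_2(t) g(t) r(t)\,dt - \frac{u_2(x)}{W}\int_a^x u_1(t) g(t) r(t)\,dt
\]
solves $(\tau - z)f_p = g$, and every solution of $(\tau-z)f=g$ has the form $f = f_p + c_1 u_1 + c_2 u_2$.

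First I would split $\int_a^x = \int_a^b - \int_x^b$, which peels off two $g$-dependent constants that can be absorbed into $c_1$, $c_2$, rewriting the general solution as
\[
 f(x) = \wti c_1\, u_1(x) + \wti c_2\, u_2(x) - \frac{u_1(x)}{W}\int_x^b u_2 g r\,dt + \frac{u_2(x)}{W}\int_x^b u_1 g r\,dt,
\]
with $\wti c_1$, $\wti c_2$ still free. The explicit $\int_x^b$ terms already have kernel form $\sum_{i,j} c_{ij}\, u_i(x) u_j(y)$ supported on $\{y \ge x\}$, while the $\wti c_j u_j(x)$ terms will contribute to both branches once $\wti c_1,\wti c_2$ are expressed as integrals against $g$.

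Next I would impose $f \in \dom{S}$. Substituting the above into the two boundary functionals that define $\dom{S}$ produces a $2\times 2$ linear system for $(\wti c_1, \wti c_2)$ whose right-hand side consists of linear functionals of $g$; using bilinearity of the Wronskian together with the Lagrange identity (Lemma \ref{propLagrange}), each such functional is an integral $\int_a^b (\dots)\, g r$ against a fixed linear combination of $u_1, u_2$. The coefficient matrix of this system is independent of $g$, and it is invertible precisely because $z \in \rho(S)$ forces the homogeneous problem $(\tau - z)u = 0$, $u \in \dom{S}$, to have only the trivial solution (as $z \notin \sigma_p(S)$). Solving for $\wti c_1$, $\wti c_2$ and inserting back yields $f(x) = \int_a^b G_z(x,y) g(y) r(y)\,dy$, where the $\wti c_j u_j(x)$ terms give a common combination $\sum_{i,j} m_{ij}\, u_i(x) u_j(y)$ on both $\{y \le x\}$ and $\{y \ge x\}$, and the explicit diagonal terms shift the two branches apart, producing the distinct coefficient matrices $m^+_{ij}(z)$ and $m^-_{ij}(z)$ of \eqref{7.0}.

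Finally, square-integrability of $G_z$ is immediate: it is a finite linear combination of products $u_i(x) u_j(y)$ on triangular regions, and since each $u_i \in \Lr$,
\[
 \int_a^b\!\!\int_a^b |u_i(x) u_j(y)|^2\, r(x) r(y)\,dx\,dy = \|u_i\|_{2,r}^2\, \|u_j\|_{2,r}^2 < \infty,
\]
so $G_z \in L^2((a,b)^2; r\otimes r)$ and $R_z \in \cB_2(\Lr)$. By construction $f = R_z g \in \dom{S}$ solves $(\tau - z)f = g$, and since $z \in \rho(S)$ makes $S - z I_r$ a bijection onto $\Lr$, this identifies $R_z$ with $(S - z I_r)^{-1}$. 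I expect the main obstacle to be the bookkeeping that isolates the boundary-condition system and the verification that its determinant vanishes exactly when $z \in \sigma_p(S)$; the stated kernel form \eqref{7.0} and its square-integrability then follow with little additional effort.
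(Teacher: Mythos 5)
Your proposal is correct and follows essentially the same route as the paper's proof: variation of parameters via Lemma \ref{prop:repsol}, a $2\times 2$ linear system coming from the boundary conditions whose coefficient matrix is invertible precisely because $z\in\rho(S)$ excludes eigenfunctions, and square-integrability of the kernel from $u_1,u_2\in\Lr$. The only (harmless) difference is that the paper first treats compactly supported $g$ and then extends by boundedness and density, whereas you handle general $g\in\Lr$ directly, which is legitimate since the l.c.\ assumption makes the integrals $\int_a^x u_j g\, r\,dt$ absolutely convergent by Cauchy--Schwarz (note that your appeal to the remark after Theorem \ref{thm:EEreg} is not quite the right citation, as that remark concerns the regular case, but your own Cauchy--Schwarz argument already covers what is needed).
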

\begin{proof}
Let $u_1$, $u_2$ be two linearly independent solutions of $(\tau-z)u=0$ with $W(u_1,u_2)=1$. 
 If $g\in L^2_c((a,b);r(x)dx)$, then $R_z g$ is a solution of $(\tau -z)f=g$ which lies in $\dom{S}$. Hence, from Lemma \ref{prop:repsol} we get for suitable constants $c_1$, $c_2\in\C$
 \begin{align}\label{eqn::resoleqn}
  R_z g(x)=u_1(x)\left(c_1+\int_a^x{u_2(t) g(t) \, r(t) dt}\right)+u_2(x)\left(c_2-\int_a^x{u_1(t) g(t) \, r(t) dt}\right),
 \end{align} 
 for $x\in(a,b)$. Furthermore, since $R_z g$ satisfies the boundary conditions, we obtain
 \begin{align}
  B_a\begin{pmatrix} \BCa^1(R_z g) \\ \BCa^2(R_z g) \end{pmatrix} = B_b\begin{pmatrix} \BCb^1(R_z g) \\ \BCb^2(R_z g) \end{pmatrix},
 \end{align} for some suitable matrices $B_a$, $B_b\in\C^{2\times 2}$ as in Theorem \ref{thm:LCLCBC}.
 Now since $g$ has compact support, we infer that
 \begin{align}
  \begin{pmatrix} \BCa^1(R_z g) \\ \BCa^2(R_z g) \end{pmatrix} & = \begin{pmatrix} c_1\BCa^1(u_1)+c_2\BCa^1(u_2) \\ 
     c_1\BCa^2(u_1)+c_2\BCa^2(u_2) \end{pmatrix} = \begin{pmatrix} \BCa^1(u_1) & \BCa^1(u_2) \\ 
  \BCa^2(u_1) & \BCa^2(u_2) \end{pmatrix}\begin{pmatrix} c_1 \\ c_2 \end{pmatrix}   \no \\
 & = M_\alpha \begin{pmatrix} c_1 \\ c_2 \end{pmatrix},
\end{align} 
 as well as 
 \begin{align} 
 \begin{pmatrix} \BCb^1(R_z g) \\ \BCb^2(R_z g) \end{pmatrix} & = 
   \begin{pmatrix} \left(c_1+\int_a^b{u_2(t) g(t) \, r(t) dt}\right)\BCb^1(u_1) \\ 
     \left(c_1+\int_a^b{u_2(t) g(t) \, r(t) dt}\right)\BCb^2(u_1) \end{pmatrix} \nonumber\\
     &\quad+ 
  \begin{pmatrix} \left(c_2-\int_a^b{u_1(t) g(t) \, r(t) dt}\right)\BCb^1(u_2) \\ 
        \left(c_2-\int_a^b{u_1(t) g(t) \, r(t) dt}\right)\BCb^2(u_2) \end{pmatrix}  \no \\
 & = \begin{pmatrix} \BCb^1(u_1) & \BCb^1(u_2) \\ \BCb^2(u_1) & \BCb^2(u_2) \end{pmatrix}
   \begin{pmatrix} c_1+\int_a^b{u_2(t) g(t) \, r(t) dt} \\ c_2-\int_a^b{u_1(t) g(t) \, r(t) dt} \end{pmatrix} \no \\
 & = M_\beta \begin{pmatrix} c_1 \\ c_2 \end{pmatrix} + M_\beta \begin{pmatrix} \int_a^b{u_2(t) g(t) \, r(t) dt} 
 \\ 
  -\int_a^b{u_1(t) g(t) \, r(t) dt}\end{pmatrix}.
\end{align}
 Consequently, 
 \begin{align}
   \left(B_aM_\alpha -B_b M_\beta\right)\begin{pmatrix} c_1 \\ c_2 \end{pmatrix} = B_b M_\beta \begin{pmatrix} \int_a^b{u_2(t) g(t) \, r(t) dt} \\ -\int_a^b{u_1(t) g(t) \, r(t) dt} \end{pmatrix}.
 \end{align}
 Now if $B_aM_\alpha-B_bM_\beta$ were not invertible, we would have
 \begin{align}
  \begin{pmatrix} d_1 \\ d_2 \end{pmatrix}\in\C^2\backslash\lbrace(0,0)\rbrace \, \text{ with } \, B_aM_\alpha\begin{pmatrix} d_1 \\ d_2 \end{pmatrix}=B_bM_\beta\begin{pmatrix} d_1 \\ d_2 \end{pmatrix},
 \end{align} 
and the function $d_1u_1+d_2u_2$ would be a solution of $(\tau-z)u=0$ satisfying the boundary conditions
  of $S$, and consequently would be an eigenvector with eigenvalue $z$. However, this would contradict $z\in\rho(S)$, and it follows that $B_aM_\alpha-B_bM_\beta$ must be invertible. 
 Since
  \begin{align}
   \begin{pmatrix} c_1 \\ c_2 \end{pmatrix} = \left(B_aM_\alpha -B_b M_\beta\right)^{-1} B_b M_\beta \begin{pmatrix} \int_a^b{u_2(t) g(t) \, r(t) dt} \\ -\int_a^b{u_1(t) g(t) \, r(t) dt} \end{pmatrix},
  \end{align}
  the constants $c_1$ and $c_2$ may be written as linear combinations of 
 \begin{equation} 
 \int_a^b{u_2(t) g(t) \, r(t) dt}\, \text{ and } \, \int_a^b{u_1(t) g(t) \, r(t) dt},
 \end{equation}
 where the coefficients are independent of $g$. 
 Using equation \eqref{eqn::resoleqn} one verifies that $R_z g$ has an integral-representation
 with a function $G_z$ as claimed. The function $G_z$ is square-integrable, since the solutions
 $u_1$ and $u_2$ lie in $\Lr$ by assumption. 
 Finally, since the operator $K_z$ defined
 \begin{equation}
 K_z g(x) = \int_a^b G_z(x,y)g(y) \, r(y)dy, \quad x\in(a,b),~g\in\Lr,
 \end{equation}
 on $\Lr$, and the resolvent $R_z$ are bounded, the claim follows since they coincide on a dense subspace.
\end{proof}

Since the resolvent $R_z$ is compact, in fact, Hilbert--Schmidt, this implies discreteness of the spectrum.

\begin{corollary}\label{corSpecRDis}
 Suppose $\tau$ is in the l.c.\ case at both endpoints and $S$ is a self-adjoint restriction of $\Tmax$. 
 Then $S$ has purely discrete spectrum, that is, $\sigma(S)=\sigdis(S)$. Moreover,
 \begin{align}
  \sum_{\lambda\in\sigma(S)} \frac{1}{1+\lambda^2} < \infty 
  \, \text{ and } \,  
    \dim(\ker(S-\lambda)) \leq 2, \quad \lambda\in\sigma(S).
 \end{align}
\end{corollary}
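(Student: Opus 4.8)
The plan is to read off all three assertions directly from the Hilbert--Schmidt property of the resolvent established in Theorem \ref{thmSResolLCLC}. Fix any $z\in\rho(S)$; since $R_z\in\cB_2(\Lr)\subseteq\cB_\infty(\Lr)$, the resolvent is compact. A self-adjoint operator whose resolvent is compact at one point of its resolvent set (equivalently, by the first resolvent identity, at every point) has purely discrete spectrum, so that $\sigma(S)=\sigdis(S)$; this is the standard criterion (see, e.g., \cite{We80}, \cite{Te09}). In particular $\sigma_{ess}(S)=\emptyset$ and every $\lambda\in\sigma(S)$ is an isolated eigenvalue of finite multiplicity.

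For the summability I would specialize to $z=\I$, which lies in $\rho(S)$ because $S=S^\ast$ forces $\sigma(S)\subseteq\R$. By the spectral mapping for resolvents, the nonzero eigenvalues of the compact (and, being a function of $S$, normal) operator $R_\I$ are exactly $(\lambda-\I)^{-1}$, $\lambda\in\sigma(S)$, and a short computation shows $\ker(R_\I-(\lambda-\I)^{-1}I_r)=\ker(S-\lambda)$, so geometric multiplicities are preserved. Since $R_\I$ is Hilbert--Schmidt and normal, the sum of the squared moduli of its eigenvalues (counted with multiplicity) equals $\|R_\I\|_{\cB_2}^2$. Because each $\lambda$ is real, one has the exact identity $|\lambda-\I|^2=1+\lambda^2$, whence
\[
\sum_{\lambda\in\sigma(S)}\frac{1}{1+\lambda^2}\le\sum_{\lambda\in\sigma(S)}\frac{\dim(\ker(S-\lambda))}{1+\lambda^2}=\|R_\I\|_{\cB_2}^2<\infty.
\]

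Finally, the multiplicity bound is purely a statement about the underlying differential equation: every element of $\ker(S-\lambda)$ is a solution $u\in\Deftau$ of $(\tau-\lambda)u=0$, and by the existence and uniqueness result of Theorem \ref{thm:exisuniq} such a solution is determined by the pair $(u(c),u^\qd(c))\in\C^2$ at any fixed $c\in(a,b)$. Hence the solution space of $(\tau-\lambda)u=0$ is at most two-dimensional, which yields $\dim(\ker(S-\lambda))\le2$ for every $\lambda\in\sigma(S)$.

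I do not expect a genuine obstacle here, since the analytic heavy lifting was already carried out in Theorem \ref{thmSResolLCLC}; the only point requiring a little care is the bookkeeping in the second paragraph, namely that passing from $S$ to $R_\I$ preserves eigenvalue multiplicities and that the Hilbert--Schmidt norm controls the sum of squared eigenvalue moduli (which for the normal operator $R_\I$ is in fact an equality). Everything else reduces to the elementary comparison above.
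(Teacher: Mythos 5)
Your argument is correct and is exactly the route the paper intends: the corollary is stated as an immediate consequence of the Hilbert--Schmidt property of $R_z$ from Theorem \ref{thmSResolLCLC}, with discreteness following from compactness of the resolvent, the summability from the identity $\sum_{\lambda}\dim(\ker(S-\lambda))\,|\lambda-\I|^{-2}=\|R_\I\|_{\cB_2}^2$ for the normal Hilbert--Schmidt operator $R_\I$, and the multiplicity bound from the two-dimensionality of the solution space of $(\tau-\lambda)u=0$ guaranteed by Theorem \ref{thm:exisuniq}. The paper leaves these details unstated, so your write-up simply supplies the standard bookkeeping, and it does so correctly.
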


If $S$ is a self-adjoint restriction of $\Tmax$ with separated boundary conditions or if (at least) one endpoint is in the l.c.\ case, then the resolvent has a simpler form.

\begin{theorem}\label{thm:ressep}
 Suppose $S$ is a self-adjoint restriction of $\Tmax$ $($with separated boundary conditions if $\tau$ is in the l.c.\ at both endpoints$)$ and $z\in\rho(S)$.
 Furthermore, let $u_a$ and $u_b$ be nontrivial solutions of $(\tau-z)u=0$, such that
 \begin{align}
  u_a\, \begin{cases}
         \text{satisfies the boundary condition at }a\text{ if }\tau\text{ is in the l.c.\ case at }a, \\
         \text{lies in }\Lr\text{ near }a\text{ if }\tau\text{ is in the l.p.\ case at }a,
      \end{cases}
 \end{align}
 and
 \begin{align}
  u_b\, \begin{cases}
         \text{satisfies the boundary condition at }b\text{ if }\tau\text{ is in the l.c.\ case at }b, \\
         \text{lies in }\Lr\text{ near }b\text{ if }\tau\text{ is in the l.p.\ case at }b.
      \end{cases}
 \end{align}
 Then the resolvent $R_z$ is given by
 \begin{align}\label{eq:ressbc}
  R_z g(x) 
           & = \int_{a}^b G_z(x,y) g(y) \, r(y) dy, \quad x\in(a,b),~g\in\Lr,
 \end{align}
 where
 \begin{equation}
  G_z(x,y) = \frac{1}{W(u_b,u_a)}\begin{cases}
               u_a(y) u_b(x), & y\in(a,x], \\
               u_a(x) u_b(y), & y\in[x,b). \\
             \end{cases}       \lb{7.4} 
 \end{equation}
\end{theorem}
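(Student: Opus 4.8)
The plan is to identify the claimed integral operator with the resolvent by verifying the identity on the dense subspace $L^2_c((a,b);r(x)dx)$ and then extending it by boundedness. First I would record that $u_a$ and $u_b$ are linearly independent, so that the (constant) Wronskian $W(u_b,u_a)$ is nonzero: were they proportional, the common solution would lie in $\Lr$ near both endpoints and would satisfy the separated boundary conditions (respectively the limit-point conditions) at both $a$ and $b$, hence would be an eigenfunction of $S$ for the eigenvalue $z$, contradicting $z\in\rho(S)$.

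Next, writing $K_z$ for the integral operator with kernel \eqref{7.4}, I would fix $g\in L^2_c((a,b);r(x)dx)$ with $\supp g\subseteq[\alpha,\beta]\subset(a,b)$ and set $f=K_z g$, that is,
\[
f(x) = \frac{u_b(x)}{W(u_b,u_a)}\int_a^x u_a(y) g(y)\,r(y)\,dy + \frac{u_a(x)}{W(u_b,u_a)}\int_x^b u_b(y) g(y)\,r(y)\,dy.
\]
Using $\int_x^b = \int_a^b - \int_a^x$, the function $f$ differs from the variation-of-parameters particular solution furnished by Lemma \ref{prop:repsol} (with the fundamental system $u_a$, $u_b$) only by a fixed multiple of $u_a$; hence $(\tau-z)f=g$. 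I would then exploit the compact support of $g$: for $x<\alpha$ the first integral vanishes and $f$ equals a fixed multiple of $u_a$, whereas for $x>\beta$ the second integral vanishes and $f$ equals a fixed multiple of $u_b$. Consequently $f$ lies in $\Lr$ near $a$ (resp.\ near $b$) because $u_a$ (resp.\ $u_b$) does, and $\tau f = g + z f\in\Lr$, so $f\in\dom{\Tmax}$. Moreover near $a$ one has $W(f,h)(a)=0$ for the relevant $h\in\dom{\Tmax}$ by the choice of $u_a$ (in the l.c.\ case via the separated boundary condition that $u_a$ satisfies, in the l.p.\ case automatically by Lemma \ref{lem:lclpwronski}), and symmetrically at $b$. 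Therefore $f\in\dom{S}$ with $(S-z)f=g$, which forces $K_z g = R_z g$.

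Finally I would pass from $L^2_c$ to all of $\Lr$. For each fixed $x$ the section $y\mapsto G_z(x,y)$ lies in $\Lr$, since near $a$ it is a multiple of $u_a$ (square integrable near $a$) and near $b$ a multiple of $u_b$ (square integrable near $b$), while it is bounded on compacta; hence $K_z g(x)$ is given by an absolutely convergent integral for every $g\in\Lr$. Approximating an arbitrary $g\in\Lr$ by $g_n\in L^2_c((a,b);r(x)dx)$ in norm, the established identity $R_z g_n = K_z g_n$, boundedness of $R_z$, and the Cauchy--Schwarz estimate controlling $K_z g_n(x)-K_z g(x)$ together yield $R_z g = K_z g$ a.e.\ after passing to an a.e.\ convergent subsequence of $R_z g_n$.

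I expect the only genuine subtlety to be the verification that $f\in\dom{S}$ at a limit-point endpoint: there is no boundary condition to impose, and the point is precisely that the compact support of $g$ forces $f$ to coincide with a multiple of the distinguished $\Lr$ solution near that endpoint, so that membership of $f$ in $\Lr$ near the endpoint (and hence $f\in\dom{\Tmax}$, with the boundary Wronskians vanishing automatically) is what must be extracted. Everything else reduces to the representation formula of Lemma \ref{prop:repsol} and routine estimates.
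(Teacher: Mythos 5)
Your proposal is correct and follows essentially the same route as the paper's proof: verify the formula for compactly supported $g$ via the variation-of-parameters representation of Lemma \ref{prop:repsol} together with the observation that $K_z g$ is a multiple of $u_a$ near $a$ and of $u_b$ near $b$ (hence lies in $\dom{S}$), then extend to all of $\Lr$ by density and boundedness of the resolvent. The paper is merely terser at the two points you elaborate — the membership of $f$ in $\dom{S}$ at a limit-point endpoint and the pointwise convergence in the density step — and your treatment of both is sound.
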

\begin{proof}
The functions $u_a$, $u_b$ are linearly independent; otherwise, they would be eigenvectors of $S$ 
 with eigenvalue $z$. Hence, they form a fundamental system of $(\tau-z)u=0$.
 Now for each $f\in\Lr$ we define a function $f_g$ by 
 \begin{align}
&  f_g(x)=W(u_b,u_a)^{-1}\left(u_b(x)\int_a^xu_a(t) g(t) \, r(t) dt+u_a(x)\int_x^bu_b(t) g(t) \, r(t) dt\right), \nonumber\\
 & \hspace*{9.5cm} x\in(a,b). 
 \end{align}
 If $f\in L^2_c((a,b);r(x)dx)$, then $f_g$ is a solution of $(\tau-z)f=g$ by Lemma \ref{prop:repsol}.
 Moreover, $f_g$ is a scalar multiple of $u_a$ near $a$ and a scalar multiple of $u_b$ near $b$. 
 Hence, the function $f_g$ satisfies the boundary conditions of $S$
  and therefore, $R_z g = f_g$.
 Now if $g\in\Lr$ is arbitrary and $g_n\in L^2_c((a,b);r(x)dx)$ is a sequence with $g_n\rightarrow g$ as $n\rightarrow\infty$,
  we obtain $R_z g_n\rightarrow R_z g$ since the resolvent is bounded. 
  Furthermore, $f_{g_n}$ converges pointwise to $f_g$, hence $R_z g=f_g$. 
\end{proof}

If $\tau$ is in the l.p.\ case at some endpoint, then Corollary \ref{cor:regtypeuniqsol} shows that
 there is always a, unique up to scalar multiples, nontrivial solution of $(\tau-z)u=0$, 
 lying in $\Lr$ near this endpoint. 
 Also if $\tau$ is in the l.c.\ case at some endpoint, there exists a, unique up to scalar multiples,
 nontrivial solution of $(\tau-z)u=0$, satisfying the boundary condition at this endpoint. 
 Hence, functions $u_a$ and $u_b$, as in Theorem \ref{thm:ressep} always exist.

\begin{corollary}\label{corSpecRSimple}
 If $S$ is a self-adjoint restriction of $\Tmax$ $($with separated boundary conditions if $\tau$ is in the l.c.\ at both endpoints\,$)$, then all eigenvalues of $S$ are simple.
\end{corollary}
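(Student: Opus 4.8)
The plan is to bypass the resolvent entirely and argue directly that the eigenspace $\ker(S-\lambda)$ is at most one-dimensional for every eigenvalue $\lambda \in \sigma_p(S)$; the key mechanism is that the Wronskian of two solutions of the same equation is constant, while the boundary condition at a single endpoint already forces it to vanish there. So let $\lambda$ be an eigenvalue of $S$ and let $f$, $g\in\dom{S}$ be two corresponding eigenfunctions. Since $(\tau-\lambda)f=(\tau-\lambda)g=0$, both $f$ and $g$ are solutions of $(\tau-\lambda)u=0$, and hence, as noted following Lemma \ref{propLagrange}, the Wronskian $W(f,g)$ is constant on $(a,b)$. It therefore suffices to prove $W(f,g)(a)=0$, for then $W(f,g)\equiv 0$ and $f$, $g$ are linearly dependent by the characterization of vanishing Wronskians recorded after Lemma \ref{propLagrange}.

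I would establish $W(f,g)(a)=0$ by distinguishing the two possibilities at $a$. If $\tau$ is in the l.p.\ case at $a$, then since $f$, $g\in\dom{S}\subseteq\dom{\Tmax}$ lie in $\dom{\Tmax}$ near $a$, Lemma \ref{lem:lclpwronski} gives $W(f,g)(a)=0$ at once. If instead $\tau$ is in the l.c.\ case at $a$, then under the hypotheses of the corollary $S$ imposes a separated boundary condition at $a$: this is automatic when $\tau$ is in the l.p.\ case at $b$ (Theorem \ref{thm:bcLCLP}) and is the standing assumption when $\tau$ is in the l.c.\ case at both endpoints (Theorem \ref{thm:SRLCLCsepcoup}). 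Thus there is a $v\in\dom{\Tmax}$ with $W(v,\ol{v})(a)=0$ and $W(h,\ol{v})(a)\neq 0$ for some $h\in\dom{\Tmax}$, such that every element of $\dom{S}$ satisfies $W(\cdot\,,\ol{v})(a)=0$; in particular $W(f,\ol{v})(a)=W(g,\ol{v})(a)=0$. Lemma \ref{lem:WronskPropSR}, specifically \eqref{eqn:WronskLC2}, then yields $W(f,g)(a)=0$.

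In either case $W(f,g)(a)=0$, and combined with the constancy of $W(f,g)$ this forces $W(f,g)\equiv 0$, so $f$ and $g$ are linearly dependent; hence $\dim(\ker(S-\lambda))\le 1$ and every eigenvalue of $S$ is simple. I expect the only genuinely delicate point to be bookkeeping the case distinction at $a$---in particular verifying that the separated boundary condition really does reduce, in the l.c.\ case, to a single Wronskian condition $W(\cdot\,,\ol{v})(a)=0$ of the exact form required by the hypotheses of Lemma \ref{lem:WronskPropSR}---rather than any analytic difficulty; everything else is an immediate consequence of the constancy of the Wronskian and of the results already proved.
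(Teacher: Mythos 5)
Your proof is correct and follows essentially the same route as the paper's: both arguments rest on the constancy of the Wronskian of two eigenfunctions and on forcing it to vanish at an endpoint, either via Lemma \ref{lem:lclpwronski} in the l.p.\ case or via the Pl\"ucker identity (packaged in your case as Lemma \ref{lem:WronskPropSR}\,\eqref{eqn:WronskLC2}) applied to the separated boundary condition. The only difference is cosmetic: the paper uses whichever endpoint happens to be l.p., while you systematically work at $a$; both case analyses are exhaustive and the delicate point you flag (that the separated condition at an l.c.\ endpoint is exactly of the form required by Lemma \ref{lem:WronskPropSR}) is indeed guaranteed by Theorems \ref{thm:SRLCLPWronsk} and \ref{thm:SRLCLCsepcoup}.
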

\begin{proof}
 Suppose $\lambda\in\R$ is an eigenvalue and $u_i\in \dom{S}$ with $\tau u_i=\lambda u_i$ for $i=1,2$, that is, they are solutions of $(\tau-\lambda)u=0$.
  If $\tau$ is in the l.p.\ case at some endpoint, then clearly the Wronskian $W(u_1,u_2)$ vanishes.
  Otherwise, since both functions satisfy the same boundary conditions this follows using the Pl\"{u}cker identity.
\end{proof}

Since the deficiency index of $\Tmin$ is finite, the essential spectrum of self-adjoint realizations is 
independent of the boundary conditions, that is, all self-adjoint restrictions of $\Tmax$ have the 
same essential spectrum (cf., e.g., \cite[Theorem\ 8.18]{We80})
We conclude this section by proving that the essential spectrum of the self-adjoint restrictions 
of $\Tmax$ is determined by the behavior of the coefficients in some arbitrarily small neighborhood 
of the endpoints. In order to state this result we need some notation. Fix some $c\in(a,b)$ and 
denote by $\tau|_{(a,c)}$ (resp., by $\tau|_{(c,b)}$) the differential expression on $(a,c)$ (resp., on $(c,b)$) corresponding to our coefficients restricted to $(a,c)$ (resp., to $(c,b)$). Furthermore, let 
 $S_{(a,c)}$ (resp., $S_{(c,b)}$) be some self-adjoint extension of $\tau|_{(a,c)}$ (resp., of $\tau|_{(c,b)}$).

\begin{theorem}
 For each $c\in(a,b)$ we have
 \begin{align}
  \sigess\left(S\right) = \sigess\left(S_{(a,c)}\right) \cup \sigess\left(S_{(c,b)}\right). 
 \end{align}
\end{theorem}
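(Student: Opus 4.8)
The plan is to reduce the statement to the well-known stability of the essential spectrum under finite-rank perturbations by decoupling the interval at the interior point $c$. Since $c\in(a,b)$, the coefficients $p^{-1}$, $q$, $r$, $\foco$ are integrable near $c$, so $\tau$ is regular at $c$ and the boundary values $f(c)$ and $f^\qd(c)$ are well defined for every $f$ lying in $\dom{\Tmax}$ near $c$. As noted just before the statement, the finiteness of the deficiency indices forces the essential spectrum of every self-adjoint realization on a fixed interval to be independent of the boundary conditions (cf.\ \cite[Theorem\ 8.18]{We80}); hence both sides of the claimed identity are unchanged if we replace $S$, $S_{(a,c)}$, and $S_{(c,b)}$ by any convenient self-adjoint extensions. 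I would therefore fix the decoupled operator
\[
\widehat{S} = S_{(a,c)} \oplus S_{(c,b)} \quad \text{on} \quad L^2((a,c);r(x)dx) \oplus L^2((c,b);r(x)dx) \cong \Lr,
\]
choosing $S_{(a,c)}$ and $S_{(c,b)}$ to impose the separated (Dirichlet) condition $f(c)=0$ at the common endpoint $c$. Because the essential spectrum of an orthogonal direct sum is the union of the essential spectra of the summands, $\sigess(\widehat{S}) = \sigess(S_{(a,c)}) \cup \sigess(S_{(c,b)})$, and it remains only to prove $\sigess(S) = \sigess(\widehat{S})$.

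The key step is to exhibit $S$ and $\widehat{S}$ as self-adjoint extensions of one and the same symmetric operator with finite deficiency indices. Let $\widehat{\Tmin} = \Tmin^{(a,c)} \oplus \Tmin^{(c,b)}$ be the direct sum of the two minimal operators associated with $\tau|_{(a,c)}$ and $\tau|_{(c,b)}$. By the corollary following Theorem \ref{thm:Tmin}, a function in $\dom{\widehat{\Tmin}}$ satisfies $f(c^-)=f^\qd(c^-)=0$ and $f(c^+)=f^\qd(c^+)=0$; in particular it extends across $c$ to an element of $\Deftau$ obeying the minimal boundary behaviour at $a$ and $b$, so $\widehat{\Tmin}\subseteq\Tmin$. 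Since every self-adjoint restriction of $\Tmax$ extends $\Tmin$, we have $\widehat{\Tmin}\subseteq S$ and trivially $\widehat{\Tmin}\subseteq\widehat{S}$, so both $S$ and $\widehat{S}$ are self-adjoint extensions of $\widehat{\Tmin}$. The deficiency indices of $\widehat{\Tmin}$ equal $n(\Tmin^{(a,c)}) + n(\Tmin^{(c,b)}) \le 4$ by Theorem \ref{thm:TminDefIndLCLP}, hence are finite.

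With this in place, the conclusion follows from Krein's resolvent formula: for $z\in\rho(S)\cap\rho(\widehat{S})$, the difference $R_z-\widehat{R}_z$ of the two resolvents has rank at most $2\,n(\widehat{\Tmin})$, and is therefore a finite-rank, in particular compact, operator in $\cB_\infty(\Lr)$. Weyl's theorem on the invariance of the essential spectrum under compact perturbations of the resolvent then yields $\sigess(S)=\sigess(\widehat{S})$, completing the argument.

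I expect the main obstacle to be the bookkeeping in the second paragraph: one must carefully check that decoupling at the regular interior point $c$ really places $S$ and $\widehat{S}$ over a common symmetric operator $\widehat{\Tmin}$ with finite defect, and that the one-sided vanishing conditions at $c$ are exactly those characterizing $\dom{\widehat{\Tmin}}$. An alternative route avoiding Krein's formula would compare the explicit Green's functions from Theorem \ref{thm:ressep} directly and show that $R_z-\widehat{R}_z$ has a kernel which is a finite sum of products of solutions of $(\tau-z)u=0$, hence finite rank; this is more computational but self-contained.
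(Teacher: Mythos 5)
Your proposal is correct and follows essentially the same route as the paper: the paper also identifies $\Lr$ with $L^2((a,c);r(x)dx)\oplus L^2((c,b);r(x)dx)$ and observes that $S$ and $S_{(a,c)}\oplus S_{(c,b)}$ are both finite-dimensional extensions of the common symmetric operator $T_c f=\tau f$ with $\dom{T_c}=\{g\in\dom{\Tmin}\mid g(c)=g^{[1]}(c)=0\}$, which is exactly your $\widehat{\Tmin}$. The paper merely leaves implicit the Krein-formula/Weyl-theorem step that you spell out.
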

\begin{proof}
 If one identifies $\Lr$ with the orthogonal sum 
 \begin{align}
  \Lr = L^2((a,c);r(x)dx) \oplus L^2((c,b);r(x)dx),
 \end{align}
 then the operator
 \begin{align}
  S_c = S_{(a,c)} \oplus S_{(c,b)},
 \end{align}
 is self-adjoint in $\Lr$.
 Now the claim follows, since $S$ and $S_c$ are both finite dimensional extensions of the symmetric operator given by
 \begin{equation}
T_c f = \tau f, \quad f\in \dom{T_c} = \big\{g\in\dom{\Tmin} \big|\, g(c) = g^\qd(c) = 0 \big\}. 
 \end{equation}
\end{proof}

An immediate corollary is that the essential spectrum only depends on the behavior of the coefficients in some neighborhood of the endpoints, recovering Weyl's splitting method.

\begin{corollary}
 For each $\alpha, \beta\in(a,b)$ with $\alpha<\beta$ we have
 \begin{align}
  \sigess\left(S\right) = \sigess\left(S_{(a,\alpha)}\right) \cup \sigess\left(S_{(\beta,b)}\right). 
 \end{align}
\end{corollary}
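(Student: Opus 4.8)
The plan is to reduce the corollary directly to the preceding theorem by a splitting argument at two interior points, exploiting the fact that the essential spectrum is determined by the coefficients near the endpoints and is insensitive to finite-dimensional (in particular, boundary-condition) perturbations. Recall from the preceding theorem that for any single cut point $c \in (a,b)$ we have $\sigess(S) = \sigess(S_{(a,c)}) \cup \sigess(S_{(c,b)})$, where $S_{(a,c)}$ and $S_{(c,b)}$ are \emph{arbitrary} self-adjoint extensions of $\tau$ restricted to the respective subintervals; the result does not depend on which self-adjoint extension is chosen, precisely because $n(\Tmin) \le 2$ is finite and hence the essential spectrum is stable under finite-rank resolvent perturbations.

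First I would apply the theorem with cut point $c = \alpha$ to obtain
\begin{align}
 \sigess(S) = \sigess\big(S_{(a,\alpha)}\big) \cup \sigess\big(S_{(\alpha,b)}\big).
\end{align}
The first summand is already in the desired form. For the second summand, I would apply the theorem a second time, now to the differential expression $\tau|_{(\alpha,b)}$ on the interval $(\alpha,b)$, using the interior cut point $\beta \in (\alpha,b)$. This yields
\begin{align}
 \sigess\big(S_{(\alpha,b)}\big) = \sigess\big(S_{(\alpha,\beta)}\big) \cup \sigess\big(S_{(\beta,b)}\big)
\end{align}
for any self-adjoint extensions of $\tau|_{(\alpha,\beta)}$ and $\tau|_{(\beta,b)}$. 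Combining the two displays gives
\begin{align}
 \sigess(S) = \sigess\big(S_{(a,\alpha)}\big) \cup \sigess\big(S_{(\alpha,\beta)}\big) \cup \sigess\big(S_{(\beta,b)}\big).
\end{align}

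The final step is to discard the middle term. Since $\tau$ is regular on the compact subinterval $(\alpha,\beta)$ (the coefficients $p^{-1}, q, r, \foco$ are integrable there, being integrable on every compact subinterval of $(a,b)$ by Hypothesis \ref{h2.1}), any self-adjoint realization $S_{(\alpha,\beta)}$ is in the limit-circle case at both endpoints. By Corollary \ref{corSpecRDis}, such an operator has purely discrete spectrum, so $\sigess(S_{(\alpha,\beta)}) = \emptyset$. Hence the middle term drops out and we obtain
\begin{align}
 \sigess(S) = \sigess\big(S_{(a,\alpha)}\big) \cup \sigess\big(S_{(\beta,b)}\big),
\end{align}
as claimed. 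The only point requiring a little care is the invocation of the preceding theorem with the \emph{restricted} expression $\tau|_{(\alpha,b)}$ in place of $\tau$: one must observe that Hypothesis \ref{h2.1} is inherited by coefficients restricted to any subinterval, so the entire theory (and in particular the preceding splitting theorem) applies verbatim on $(\alpha,b)$. I expect no genuine obstacle here, as the argument is a clean two-fold iteration of the theorem together with the discreteness of the spectrum in the regular case.
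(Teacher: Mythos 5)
Your proof is correct and is exactly the argument the paper intends (the paper states the corollary without proof, calling it immediate): iterate the preceding splitting theorem at the two cut points $\alpha$ and $\beta$, and observe that $\tau|_{(\alpha,\beta)}$ is regular on the compact subinterval $[\alpha,\beta]\subset(a,b)$, hence l.c.\ at both endpoints, so Corollary \ref{corSpecRDis} makes the middle essential spectrum empty. Your side remarks — that the choice of self-adjoint extension is irrelevant by finiteness of the deficiency indices, and that Hypothesis \ref{h2.1} is inherited on subintervals — are precisely the points the paper relies on in the paragraph preceding the theorem.
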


\section{The Weyl--Titchmarsh--Kodaira {\em m}-Function}  \lb{s8}

In this section let $S$ be a self-adjoint restriction of $\Tmax$ (with separated boundary conditions if $\tau$ is in the l.c.\ case at both endpoints). 
Our aim is to define a singular Weyl--Titchmarsh--Kodaira function as introduced
 recently in \cite{ET12}, \cite{GZ06}, and \cite{KST12}. To this end we need a real entire fundamental system $\theta_z$, $\phi_z$
 of $(\tau-z)u=0$ with $W(\theta_z,\phi_z)=1$, such that $\phi_z$ lies in $\dom{S}$ near $a$, that is, $\phi_z$ lies in $\Lr$ near $a$ and satisfies the boundary condition at $a$ if $\tau$ is in the l.c.\ case at $a$.

\begin{hypothesis}\label{hypREFS}
 There is a real entire fundamental system $\theta_z$, $\phi_z$ of $(\tau-z)u=0$ with $W(\theta_z,\phi_z)=1$, 
  such that $\phi_z$ lies in $\dom{S}$ near $a$.
\end{hypothesis}

Under the assumption of Hypothesis \ref{hypREFS} we
may define a function \,$m:\rho(S)\rightarrow\C$\, by requiring that the solutions 
\begin{equation}
\psi_z = \theta_z + m(z)\phi_z, \quad z\in\rho(S), 
\end{equation}
 lie in $\dom{S}$ near $b$, that is, they lie in $\Lr$ near $b$ and satisfy the boundary condition at $b$, if $\tau$ is 
 in the l.c.\ case at $b$.
 This function $m$ is well-defined (use Corollary \ref{cor:regtypeuniqsol} if $\tau$ is in the l.p.\ case at $b$)
  and called the singular Weyl--Titchmarsh--Kodaira function of $S$. 
The solutions $\psi_z$, $z\in\rho(S)$, are called the Weyl solutions of $S$. 

\begin{theorem}\label{thmweyltitchManal}
 The singular Weyl--Titchmarsh--Kodaira function $m$ is analytic on $\rho(S)$ and satisfies 
 \begin{align}\label{eqprop::mpsi} 
  m(z)= \ol{m(\ol{z})}, \quad z\in\rho(S).
 \end{align}
\end{theorem}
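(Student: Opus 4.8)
The plan is to establish analyticity by writing $m$ locally as a quotient of analytic functions coming from a single matrix element of the resolvent, and to obtain the reflection symmetry from the uniqueness of the Weyl solution together with the reality of all the ingredients.

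For the analyticity, fix $z_0\in\rho(S)$ and choose $c\in(a,b)$. The Weyl solutions satisfy $W(\psi_z,\phi_z)=W(\theta_z+m(z)\phi_z,\phi_z)=W(\theta_z,\phi_z)=1$, so Theorem \ref{thm:ressep} applies with $u_a=\phi_z$ and $u_b=\psi_z$, and since $W(u_b,u_a)=1$ the Green's function reduces to $G_z(x,y)=\phi_z(y)\psi_z(x)$ for $y\le x$. I would then take $g,h\in L^2_c((a,b);r(x)dx)$ with $\supp g\subset(a,c)$ and $\supp h\subset(c,b)$, so that in the double integral defining $\spr{R_z g}{h}_r$ only the branch $y<x$ occurs, giving the factorization
\[
 \spr{R_z g}{h}_r = P(z)\,\big[B(z)+m(z)\,C(z)\big],
\]
where $P(z)=\int_a^c \phi_z\,g\,r$, $B(z)=\int_c^b \theta_z\,\ol{h}\,r$, and $C(z)=\int_c^b \phi_z\,\ol{h}\,r$. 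By Hypothesis \ref{hypREFS} and the local bounds of Theorem \ref{thmMSLEanaly} these three are entire, while $z\mapsto\spr{R_z g}{h}_r$ is analytic on $\rho(S)$ by the standard analyticity of the resolvent. Solving for $m$ yields $m(z)=\big(\spr{R_z g}{h}_r-P(z)B(z)\big)/\big(P(z)C(z)\big)$.

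It then remains only to arrange a nonvanishing denominator near $z_0$. Since $\phi_{z_0}$ is a nontrivial solution, it vanishes identically on neither $(a,c)$ nor $(c,b)$, so I can pick $g$ and $h$ with $P(z_0)\neq0$ and $C(z_0)\neq0$; by continuity of the entire functions $P$ and $C$ the product $PC$ is nonvanishing in a neighborhood of $z_0$, where $m$ coincides with a quotient of analytic functions and is therefore analytic. As $z_0\in\rho(S)$ was arbitrary, $m$ is analytic on all of $\rho(S)$.

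For the symmetry I would use that $\rho(S)$ is invariant under $z\mapsto\ol{z}$ (self-adjointness of $S$) and that reality of the fundamental system gives $\ol{\theta_z}=\theta_{\ol z}$, $\ol{\phi_z}=\phi_{\ol z}$. Conjugating $\psi_z=\theta_z+m(z)\phi_z$ and using that $\tau$ is real, the function $\ol{\psi_z}=\theta_{\ol z}+\ol{m(z)}\,\phi_{\ol z}$ solves $(\tau-\ol z)u=0$; it lies in $\Lr$ near $b$ because $|\ol{\psi_z}|=|\psi_z|$, and in the l.c.\ case it still satisfies the boundary condition at $b$ by the conjugation property \eqref{eqn:WronskLCconj} of Lemma \ref{lem:WronskPropSR}. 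Hence $\ol{\psi_z}$ lies in $\dom{S}$ near $b$, and the solution of $(\tau-\ol z)u=0$ with that property is unique up to scalar multiples (Corollary \ref{cor:regtypeuniqsol} in the l.p.\ case, one-dimensionality of the boundary condition in the l.c.\ case). Thus $\ol{\psi_z}=\lambda\,\psi_{\ol z}$ for some scalar $\lambda$, and comparing coefficients of the linearly independent pair $\theta_{\ol z},\phi_{\ol z}$ in $\ol{\psi_z}=\theta_{\ol z}+\ol{m(z)}\phi_{\ol z}$ and $\psi_{\ol z}=\theta_{\ol z}+m(\ol z)\phi_{\ol z}$ forces $\lambda=1$ and $\ol{m(z)}=m(\ol z)$, i.e.\ $m(z)=\ol{m(\ol z)}$.

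The main obstacle is the symmetry step: one must verify that complex conjugation genuinely sends the Weyl solution at $b$ to a multiple of the Weyl solution of the conjugate equation. This rests on the reality of the boundary condition, which is exactly the content of \eqref{eqn:WronskLCconj}; once that invariance is in hand, the uniqueness of the $b$-Weyl solution does the rest, and the analyticity step is comparatively routine.
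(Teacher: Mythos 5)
Your proposal is correct and follows essentially the same route as the paper: both extract $m(z)$ from a matrix element of the resolvent via the Green's function of Theorem \ref{thm:ressep} and the local bounds of Theorem \ref{thmMSLEanaly}, and both deduce the reflection formula from the reality of $\theta_z,\phi_z$ together with the uniqueness of the Weyl solution at $b$ (the paper cites Lemma \ref{lem:WronskPropSR} for exactly the boundary-condition invariance you verify). The only cosmetic difference is that you use an off-diagonal element $\spr{R_z g}{h}_r$ with disjointly supported $g,h$ to isolate one branch of the kernel, whereas the paper uses the diagonal element $\spr{R_z\chi_{[c,d)}}{\chi_{[c,d)}}_r$; in both cases $m$ is recovered as a quotient by a nonvanishing entire coefficient.
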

\begin{proof}
 Let $c$, $d\in(a,b)$ with $c<d$. From Theorem \ref{thm:ressep} and the equation
  \begin{equation}
 W(\psi_z,\phi_z)=W(\theta_z,\phi_z)+m(z)W(\phi_z,\phi_z)=1,\quad z\in\rho(S), 
  \end{equation}
 we obtain for each 
 $z\in\rho(S)$ and $x\in[c,d)$,
 \begin{align} 
  R_z\chi_{[c,d)}(x) & =\psi_z(x)\int_{c}^x \phi_z(y) \, r(y) dy + \phi_z(x)\int_x^d \psi_z(y) \, r(y) dy  \no \\
 & = (\theta_z(x)+m(z)\phi_z(x))\int_c^x \phi_z(y) \, r(y) dy    \no \\
& \quad + \phi_z(x)\int_x^d \left[\theta_z(y) +m(z)\phi_z(y)\right] r(y)dy   \no \\
 & = m(z)\phi_z(x)\int_c^d \phi_z(y) \, r(y) dy + \int_c^d \widetilde{G}_z(x,y) \, r(y) dy, 
 \end{align} 
 where 
  \begin{equation}
 \widetilde{G}_z(x,y)=\begin{cases} \phi_z(y)\theta_z(x), & y \leq x, \\
         \phi_z(x)\theta_z(y), & y \geq x,\end{cases}
  \end{equation}        
 and hence
 \begin{align} 
 \spr{R_z\chi_{[c,d)}}{\chi_{[c,d)}}_{r} & = m(z)\left(\int_c^d \phi_z(y) r(y) dy\right)^2 + \int_c^d\int_c^d \widetilde{G}_z(x,y) r(y) dy\, r(x)dx. 
 \end{align}
 The left-hand side of this equation is analytic in $\rho(S)$ since the resolvent is.
 Furthermore, the integrals are analytic in $\rho(S)$ as well, since the integrands are analytic and locally 
  bounded by Theorem \ref{thmMSLEanaly}.
 Hence, $m$ is analytic if for each $z_0\in\rho(S)$, there exist $c$, $d\in(a,b)$ such that 
  \begin{equation}
 \int_c^d \phi_{z_0}(y) \, r(y) dy\not=0. 
  \end{equation}
However, this holds; otherwise, $\phi_{z_0}$ would vanish almost everywhere. 
 Moreover, equation \eqref{eqprop::mpsi} is valid since the function 
  \begin{equation}
 \theta_{\ol{z}}+ \ol{m(z)} \phi_{\ol{z}}= \ol{\left[\theta_z+m(z)\phi_z\right]}, 
  \end{equation}
 lies in $\dom{S}$ near $b$ 
  by Lemma \ref{lem:WronskPropSR}.
\end{proof}

As an immediate consequence of Theorem \ref{thmweyltitchManal} one infers that $\psi_z(x)$ 
 and $\psi_z^\qd(x)$ are analytic functions in $z\in\rho(S)$ for each $x\in(a,b)$.

\begin{remark}\label{remWTtilde}
 Suppose $\tilde{\theta}_z$, $\tilde{\phi}_z$ is some other real entire fundamental system of $(\tau-z)u=0$ with
  $W(\tilde{\theta}_z,\tilde{\phi}_z)=1$, such that $\tilde{\phi}_z$ lies in $S$ near $a$. Then 
 \begin{align}
  \tilde{\theta}_z = \E^{-g(z)} \theta_z - f(z)\phi_z, \, \text{ and } \, \tilde{\phi}_z = \E^{g(z)} \phi_z, \quad z\in\C,
 \end{align}
 for some entire functions $f$, $g$ with $f(z)$ real and $g(z)$ real modulo $\I\pi$. The corresponding singular 
 Weyl--Titchmarsh--Kodaira functions are related via
 \begin{align}
  \widetilde{m}(z) = \E^{-2g(z)} m(z) + \E^{-g(z)}f(z), \quad z\in\rho(S).
 \end{align}
 In particular, the maximal domain of holomorphy or the structure of poles and singularities do not change.
\end{remark}
We continue with the construction of a real entire fundamental system in the case when $\tau$ is in the l.c.\ case at $a$.

\begin{theorem}\label{thmweyltitchfundsys}
 Suppose $\tau$ is in the l.c.\ case at $a$. Then there exists a real entire fundamental system 
 $\theta_z$, $\phi_z$ of $(\tau-z)u=0$ with $W(\theta_z,\phi_z)=1$,
 such that $\phi_z$ lies in $\dom{S}$ near $a$, 
 \begin{align}
  W(\theta_{z_1},\phi_{z_2})(a) = 1 \, \text{ and } \, 
  W(\theta_{z_1},\theta_{z_2})(a) = W(\phi_{z_1},\phi_{z_2})(a)=0, \quad z_1,\,z_2\in\C.
 \end{align}
\end{theorem}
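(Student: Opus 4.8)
The plan is to reduce everything to prescribing the two boundary functionals $\BCa^1$, $\BCa^2$ of the sought solutions and then to read off all four normalizations from the bilinear identity $W(f,g)(a)=\BCa^1(f)\BCa^2(g)-\BCa^2(f)\BCa^1(g)$ established in Section \ref{s6}. First I would fix $z_0\in\R$ and choose the reference functions $w_1$, $w_2\in\dom{\Tmax}$ of \eqref{eqn:ufuncBCa} to coincide near $a$ with real solutions of $(\tau-z_0)u=0$, taking $w_1$ to lie in $\dom{S}$ near $a$; since $\BCa^1(w_1)=W(w_1,w_2)(a)=1$ and $\BCa^2(w_1)=W(w_1,w_1)(a)=0$, the boundary condition of $S$ at $a$ then reads simply $\BCa^2(\,\cdot\,)=0$. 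Next, fix $c\in(a,b)$ and let $c_z$, $s_z$ be the fundamental system of $(\tau-z)u=0$ with $c_z(c)=s_z^\qd(c)=1$, $c_z^\qd(c)=s_z(c)=0$; by Theorem \ref{thmMSLEanaly} these are entire of order $\nicefrac{1}{2}$ in $z$, by Theorem \ref{thm:exisuniq} real for real $z$, and $W(c_z,s_z)=1$. As $\tau$ is l.c.\ at $a$, both lie in $\dom{\Tmax}$ near $a$, so
\begin{equation}
 N(z)=\begin{pmatrix} \BCa^1(c_z) & \BCa^1(s_z) \\ \BCa^2(c_z) & \BCa^2(s_z) \end{pmatrix}
\end{equation}
is well defined with $\det N(z)=W(c_z,s_z)(a)=1$. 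I would then let $\phi_z$, $\theta_z$ be the solutions whose coefficient vectors in the basis $c_z$, $s_z$ are $N(z)^{-1}(1,0)^\top$ and $N(z)^{-1}(0,-1)^\top$, so that $(\BCa^1(\phi_z),\BCa^2(\phi_z))=(1,0)$ and $(\BCa^1(\theta_z),\BCa^2(\theta_z))=(0,-1)$ identically in $z$. Granting that $N(z)^{-1}$ is entire, $\phi_z$ lies in $\dom{S}$ near $a$ (because $\BCa^2(\phi_z)=0$), and inserting these constant functional values into the bilinear Wronskian identity immediately yields $W(\theta_z,\phi_z)=1$, $W(\theta_{z_1},\phi_{z_2})(a)=1$, and $W(\theta_{z_1},\theta_{z_2})(a)=W(\phi_{z_1},\phi_{z_2})(a)=0$.

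The crux, and the step I expect to be the main obstacle, is showing that the entries of $N(z)$ are entire. For fixed $x\in(a,c]$ the quantity $W(c_z,w_2)(x)=c_z(x)w_2^\qd(x)-c_z^\qd(x)w_2(x)$ is entire in $z$ by Theorem \ref{thmMSLEanaly}, while the Lagrange identity (Lemma \ref{lem:l2lagrange}) together with $\tau c_z=zc_z$ gives the representation
\begin{equation}
 \BCa^1(c_z)=W(c_z,w_2)(a)=w_2^\qd(c)-\int_a^c c_z(t)\big[z\,w_2(t)-(\tau w_2)(t)\big]\,r(t)\,dt,
\end{equation}
and similarly for the other three entries. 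Hence it suffices to prove that $x\mapsto W(c_z,w_2)(x)$ tends to its limit at $a$ uniformly for $z$ in compact subsets $K\subset\C$, as a locally uniform limit of entire functions is entire. By Cauchy--Schwarz the tail $\big|W(c_z,w_2)(a)-W(c_z,w_2)(x)\big|$ is bounded by $\|c_z\|_{L^2((a,x);r\,dx)}\big(|z|\,\|w_2\|_{L^2((a,x);r\,dx)}+\|\tau w_2\|_{L^2((a,x);r\,dx)}\big)$, so, since the last two norms vanish as $x\downarrow a$, everything reduces to the single uniform estimate $\sup_{z\in K}\|c_z\|_{L^2((a,c);r\,dx)}<\infty$ (and the analogue for $s_z$).

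This last bound is precisely Weyl's limit-circle estimate, which I would obtain by reproducing the Gronwall argument in the proof of Lemma \ref{lemweylaltLC}: representing $c_z$ via Lemma \ref{prop:repsol} against the fixed $z_0$-fundamental system and choosing $c$ close enough to $a$ that $\sup_{z\in K}|z-z_0|\int_a^c v(t)^2 r(t)\,dt\le\tfrac12$, with $v=|c_{z_0}|+|s_{z_0}|$, yields $\|c_z\|^2_{L^2((a,c);r\,dx)}\le 4C^2\int_a^c v^2\,r\,dt$ uniformly in $z\in K$, where $C$ depends only on the fixed initial data of $c_z$ at $c$; any remaining compact subinterval up to the original base point is controlled by the locally uniform bound of Theorem \ref{thmMSLEanaly}. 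Realness for real $z$ is inherited from $c_z$, $s_z$ and the real entries of $N(z)$, and since $\det N(z)\equiv1$ forces $N(z)^{-1}$ to have entries that are polynomials in those of $N(z)$, the functions $\phi_z$, $\theta_z$ are genuinely real entire solutions. Assembling these pieces produces the desired fundamental system; the only delicate point is the uniform $L^2$-control of $c_z$, $s_z$ near the singular endpoint $a$.
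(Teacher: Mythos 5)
Your proposal is correct and follows essentially the same route as the paper: you construct $\theta_z$, $\phi_z$ by prescribing their Wronskians at $a$ against a fixed real reference pair (the paper does this directly with $W(u_{z,j},\theta)(a)$, $W(u_{z,j},\phi)(a)$; you repackage it as inverting the matrix of boundary functionals, which is the same computation), you reduce analyticity of these boundary data to a locally uniform $L^2$-bound on $u_{z,j}$ near $a$ via the Lagrange identity and Cauchy--Schwarz, and you obtain that bound from the Gronwall estimate in the proof of Lemma \ref{lemweylaltLC} -- exactly as in the paper's proof. The only cosmetic differences are the use of the $\BCa^j$-formalism in place of explicit Pl\"ucker-identity manipulations for the four normalizations.
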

\begin{proof}
 Let $\theta$, $\phi$ be a real fundamental system of $\tau u=0$ with $W(\theta,\phi)=1$ such that $\phi$ lies in $\dom{S}$ near $a$.
 Now fix some $c\in(a,b)$ and for each $z\in\C$ let $u_{z,1}$, $u_{z,2}$ be the fundamental system of
 \begin{align}
  (\tau-z)u=0 \, \text{ with } \, u_{z,1}(c)=u_{z,2}^\qd(c) = 1 \, \text{ and } \, u_{z,1}^\qd(c)=u_{z,2}(c)=0.
 \end{align}
 Then by the existence and uniqueness theorem we have $u_{\ol{z},i}= \ol{u_{z,i}}$, $i=1,2$. If we introduce
 \begin{align}
  \theta_z(x) & = W(u_{z,1},\theta)(a) u_{z,2}(x) - W(u_{z,2},\theta)(a) u_{z,1}(x), & x\in(a,b), \\
  \phi_z(x)   & = W(u_{z,1},\phi)(a) u_{z,2}(x)   - W(u_{z,2},\phi)(a) u_{z,1}(x),   & x\in(a,b),
 \end{align}
 then the functions $\phi_z$ lie in $\dom{S}$ near $a$ since
 \begin{align}
  W(\phi_z,\phi)(a) & = W(u_{z,1},\phi)(a)W(u_{z,2},\phi)(a) - W(u_{z,2},\phi)(a) W(u_{z_1},\phi)(a) = 0.
 \end{align}
 Furthermore, a direct calculation shows that $\theta_{\ol{z}}=\ol{\theta_{z}}$ and 
 $\phi_{\ol{z}}=\ol{\phi_{z}}$.
 The remaining equalities follow upon repeatedly using the Pl\"{u}cker identity. 
 It remains to prove that the functions $W(u_{z,1},\theta)(a)$, $W(u_{z,2},\theta)(a)$,
  $W(u_{z,1},\phi)(a)$ and $W(u_{z,2},\phi)(a)$ are entire in $z$.
 Indeed, by the Lagrange identity
 \begin{align}
  W(u_{z,1},\theta)(a) = W(u_{z,1},\theta)(c) - z \lim_{x\downarrow a} \int_x^c \theta(t) u_{z,1}(t) \, r(t) dt.
 \end{align}
 Now the integral on the right-hand side is analytic by Theorem \ref{thmMSLEanaly} and in order to prove that 
  the limit is also analytic we need to show that the integral is bounded as $x\downarrow a$, locally uniformly
  in $z$. But the proof of Lemma \ref{lemweylaltLC} shows that, for each $z_0\in\C$,
 \begin{align}
  \left| \int_x^c \theta(t) u_{z,1}(t) r(t) \, dt \right|^2 & \leq K \int_a^c \left|\theta(t)\right|^2 \, r(t) dt 
                   \int_a^c \left[\left|u_{z_0,1}(t)\right| +\left|u_{z_0,2}(t)\right|\right]^2 \, r(t) dt,
 \end{align}
 for some constant $K\in\R$ and all $z$ in some neighborhood of $z_0$. Analyticity of the other functions
  is proved similarly.
\end{proof}

If $\tau$ is regular at $a$, then one may even take $\theta_z$, $\phi_z$ to be the solutions of $(\tau-z)u=0$ 
 with the initial values 
  \begin{equation}
\theta_z(a)=\phi_z^\qd(a)=\cos(\varphi_a) \, \text{ and } 
\, -\theta_z^\qd(a)=\phi_z(a)=\sin(\varphi_a), 
 \end{equation}
for some suitable $\varphi_a\in[0,\pi)$.

\begin{corollary}\label{spek::manal}
 Suppose $\tau$ is in the l.c.\ case at $a$ and $\theta_z$, $\phi_z$ is a real entire fundamental system of $(\tau-z)u=0$ as in Theorem \ref{thmweyltitchfundsys}.
 Then the corresponding singular Weyl--Titchmarsh--Kodaira function $m$ is a Nevanlinna--Herglotz function.
\end{corollary}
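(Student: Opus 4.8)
\emph{Proof proposal.} A function is Nevanlinna--Herglotz precisely when it is analytic on $\C\setminus\R$, satisfies the reflection symmetry $m(\ol z)=\ol{m(z)}$, and maps the open upper half-plane into the closed upper half-plane. The analyticity on $\rho(S)$ and the symmetry $m(z)=\ol{m(\ol z)}$ are already furnished by Theorem \ref{thmweyltitchManal}; since $S$ is self-adjoint we have $\C\setminus\R\subseteq\rho(S)$, so $m$ is in particular analytic on the upper half-plane. Thus the only point requiring work is the sign of $\im(m(z))$, and the plan is to derive the exact formula $\im(m(z))=\im(z)\,\|\psi_z\|_{2,r}^2$.

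First I would record the integrability facts that make the computation legitimate. Because $\tau$ is in the l.c.\ case at $a$, \emph{every} solution of $(\tau-z)u=0$ lies in $\Lr$ near $a$; in particular $\theta_z$ and $\phi_z$ do, so $\psi_z=\theta_z+m(z)\phi_z$ lies in $\Lr$ near $a$, and since $\psi_z$ lies in $\dom S$ near $b$ it also lies in $\Lr$ near $b$. Hence $\psi_z\in\Lr$ globally, and so does $\ol{\psi_z}$, which is a solution of $(\tau-\ol z)u=0$. Both $\psi_z$ and $\ol{\psi_z}$ therefore lie in $\dom{\Tmax}$ near each endpoint, so Lemma \ref{lem:l2lagrange} guarantees that the boundary limits of the Wronskian below exist.

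The core step applies the Lagrange identity (Lemma \ref{propLagrange}) to the pair $\psi_z$, $\ol{\psi_z}$. Since $\tau\psi_z=z\psi_z$ and $\tau\ol{\psi_z}=\ol z\,\ol{\psi_z}$, the integrand reduces to $(z-\ol z)|\psi_z|^2=2\I\,\im(z)|\psi_z|^2$, giving
\[
 W(\psi_z,\ol{\psi_z})(\beta)-W(\psi_z,\ol{\psi_z})(\alpha)=2\I\,\im(z)\int_\alpha^\beta|\psi_z(t)|^2\,r(t)\,dt .
\]
Letting $\beta\uparrow b$ and $\alpha\downarrow a$ yields $W(\psi_z,\ol{\psi_z})(b)-W(\psi_z,\ol{\psi_z})(a)=2\I\,\im(z)\,\|\psi_z\|_{2,r}^2$. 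I then evaluate the two boundary terms separately. At $b$ the term vanishes: in the l.p.\ case directly by Lemma \ref{lem:lclpwronski}, while in the l.c.\ case $\psi_z$ satisfies the separated boundary condition $W(\psi_z,\ol w)(b)=0$, whence \eqref{eqn:WronskLCconj} of Lemma \ref{lem:WronskPropSR} gives $W(\ol{\psi_z},\ol w)(b)=0$ and then \eqref{eqn:WronskLC2} forces $W(\psi_z,\ol{\psi_z})(b)=0$. At $a$ I expand $W(\psi_z,\ol{\psi_z})(a)$ bilinearly and insert the normalization of Theorem \ref{thmweyltitchfundsys}, namely $W(\theta_{z_1},\phi_{z_2})(a)=1$ and $W(\theta_{z_1},\theta_{z_2})(a)=W(\phi_{z_1},\phi_{z_2})(a)=0$; only the mixed terms survive and produce $W(\psi_z,\ol{\psi_z})(a)=\ol{m(z)}-m(z)=-2\I\,\im(m(z))$.

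Combining the two evaluations gives the central identity $\im(m(z))=\im(z)\,\|\psi_z\|_{2,r}^2$ for all $z\in\rho(S)$. As $\psi_z$ is nontrivial (its $\theta_z$-component has coefficient one and $\theta_z,\phi_z$ are independent), the norm is strictly positive, so $\im(z)>0$ forces $\im(m(z))>0$; together with analyticity on the upper half-plane and the reflection symmetry this is exactly the Nevanlinna--Herglotz property. I expect the genuinely delicate points to be the justification that the boundary limits exist and that the term at $b$ vanishes \emph{uniformly} across the l.c.\ and l.p.\ cases—both dispatched by the structural results of Sections \ref{s4}--\ref{s5}—rather than the algebra at $a$, which is entirely forced by the normalization built into Theorem \ref{thmweyltitchfundsys}.
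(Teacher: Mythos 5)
Your proof is correct and follows essentially the same route as the paper's: both establish the key identity $\im(m(z))=\im(z)\,\|\psi_z\|_{2,r}^2$ by applying the Lagrange identity to the Weyl solutions, showing the Wronskian term at $b$ vanishes (l.p.\ case directly, l.c.\ case via the common boundary condition), and evaluating the term at $a$ through the normalization of Theorem \ref{thmweyltitchfundsys}. The only cosmetic difference is that the paper first derives $(z_1-z_2)\int_a^b\psi_{z_1}\psi_{z_2}\,r\,dt=m(z_1)-m(z_2)$ for a general pair $z_1,z_2\in\rho(S)$ and then specializes to $z_2=\ol{z_1}$, whereas you work directly with the pair $\psi_z$, $\ol{\psi_z}$.
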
 
\begin{proof}
In order to prove the Nevanlinna--Herglotz property, we show that
 \begin{align}\label{eqnWTHerglotz}
  0<\|\psi_z\|_{2,r}^2=\frac{\im(m(z))}{\im(z)}, \quad z\in\C\backslash\R.
 \end{align}
Indeed, if $z_1$, $z_2\in\rho(S)$, then 
\begin{align} 
 W(\psi_{z_1},\psi_{z_2})(a) & = W(\theta_{z_1},\theta_{z_2})(a)+m(z_2)W(\theta_{z_1},\phi_{z_2})(a) \no \\ 
 & \quad + m(z_1)W(\phi_{z_1},\theta_{z_2})(a)+m(z_1)m(z_2)W(\phi_{z_1},\phi_{z_2})(a)  \no \\ 
 & = m(z_2)-m(z_1).
\end{align}
If $\tau$ is in the l.p.\ case at $b$, then furthermore we have $W(\psi_{z_1},\psi_{z_2})(b)=0$, since clearly $\psi_{z_1}$, $\psi_{z_2}\in\dom{\Tmax}$. 
This also holds if $\tau$ is in the l.c.\ case at $b$, since then $\psi_{z_1}$ and $\psi_{z_2}$ satisfy the same boundary condition at $b$.
Now the Lagrange identity yields 
\begin{align}\begin{split}
(z_1-z_2)\int_a^b \psi_{z_1}(t)\psi_{z_2}(t) \, r(t)dt & =W(\psi_{z_1},\psi_{z_2})(b)-W(\psi_{z_1},\psi_{z_2})(a) \\ & =m(z_1)-m(z_2).
\end{split}\end{align} 
In particular, for $z\in\C\backslash\R$, using $m(\ol{z})= \ol{m(z)}$ as well as $\psi_{\ol{z}}=\theta_{\ol{z}}+m(\ol{z})\phi_{\ol{z}}=\ol{\psi_{z}}$, we obtain
\begin{align} 
||\psi_z||_r^2 & = \int_a^b \psi_z(t) \psi_{\ol{z}}(t) \, r(t)dt=\frac{m(z)-m(\ol{z})}{z-\ol{z}}=\frac{\im(m(z))}{\im(z)}.\end{align} 
Since $\psi_z$ is a nontrivial solution, we furthermore have $0<||\psi_z||_r^2$.
\end{proof}

We conclude this section with a necessary and sufficient condition for Hypothesis \ref{hypREFS} to hold.
 To this end, for each $c\in(a,b)$, let $S^D_{(a,c)}$ be the self-adjoint operator associated to $\tau|_{(a,c)}$ with a Dirichlet boundary condition at $c$ and the same boundary condition as $S$ at $a$. 

\begin{theorem}
 The following items $(i)$--$(iii)$ are equivalent: \\
$(i)$ \hspace*{1mm} Hypothesis \ref{hypREFS}. \\
$(ii)$ \hspace*{.01mm} There is a real entire solution $\phi_z$ of $(\tau-z)u=0$ which lies in $\dom{S}$ near $a$. \\
$(iii)$ The spectrum of $S^D_{(a,c)}$ is purely discrete for some $c\in(a,b)$. 
\end{theorem}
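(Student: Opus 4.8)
The plan is to establish all three equivalences through the chain $(i)\Rightarrow(ii)\Rightarrow(iii)$ together with the return $(iii)\Rightarrow(ii)\Rightarrow(i)$. The implication $(i)\Rightarrow(ii)$ is immediate, since the solution $\phi_z$ furnished by Hypothesis \ref{hypREFS} is already a real entire solution lying in $\dom{S}$ near $a$. For $(ii)\Rightarrow(iii)$, fix $c\in(a,b)$ and recall that $\tau$ is regular at the interior point $c$. A number $z$ is an eigenvalue of $S^D_{(a,c)}$ precisely when some nontrivial solution of $(\tau-z)u=0$ lies in $\dom{S}$ near $a$ and satisfies the Dirichlet condition $u(c)=0$. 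The space of solutions lying in $\dom{S}$ near $a$ is one-dimensional and spanned by $\phi_z$ — in the l.p.\ case because $\tau$ is not l.c.\ at $a$, and in the l.c.\ case because the boundary condition at $a$ is a single linear constraint — so this condition reduces to $\phi_z(c)=0$. Thus $\sigma(S^D_{(a,c)})=\{z\in\C \,|\, \phi_z(c)=0\}$. Since $z\mapsto\phi_z(c)$ is entire by Theorem \ref{thmMSLEanaly} and is not identically zero (otherwise every $z\in\C$ would be an eigenvalue of the self-adjoint operator $S^D_{(a,c)}$), its zero set is discrete with no finite accumulation point and the eigenvalues have finite multiplicity; hence $S^D_{(a,c)}$ has purely discrete spectrum.

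For the converse $(iii)\Rightarrow(ii)$, let $c(z,\cdot)$, $s(z,\cdot)$ be the real entire fundamental system based at the regular point $c$, normalized by $c(z,c)=s^\qd(z,c)=1$ and $c^\qd(z,c)=s(z,c)=0$, which is entire in $z$ by Theorem \ref{thmMSLEanaly}. For $z\in\rho(S^D_{(a,c)})$ there is a unique solution lying in $\dom{S}$ near $a$, normalized as $\psi_-(z,\cdot)=c(z,\cdot)+m_a(z)\,s(z,\cdot)$, where the half-line Weyl--Titchmarsh coefficient $m_a$ satisfies $m_a(\ol z)=\ol{m_a(z)}$. Purely discrete spectrum forces $\sigma(S^D_{(a,c)})$ to be a discrete real set, so that $m_a$, a priori analytic only off the real axis, extends to a function meromorphic on all of $\C$ with poles located among the eigenvalues $\{\lambda_n\}$. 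By the Weierstrass factorization theorem choose a real entire function $E$ vanishing to the order of the corresponding pole at each $\lambda_n$; then $E\,m_a$ continues to a real entire function and
\begin{equation}
\phi_z := E(z)\,\psi_-(z,\cdot) = E(z)\,c(z,\cdot) + \big(E\,m_a\big)(z)\,s(z,\cdot)
\end{equation}
is real entire in $z$, solves $(\tau-z)u=0$, and lies in $\dom{S}$ near $a$ for every $z$: for $z\notin\{\lambda_n\}$ it is a nonzero multiple of $\psi_-(z,\cdot)$, while at $z=\lambda_n$ it reduces to a nonzero multiple of $s(\lambda_n,\cdot)$, i.e.\ of the corresponding eigenfunction, which lies in $\dom{S}$ near $a$. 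This is exactly statement $(ii)$.

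Finally, for $(ii)\Rightarrow(i)$ I would complete $\phi_z$ to a fundamental system. Fixing $c$, the real entire functions $z\mapsto\phi_z(c)$ and $z\mapsto\phi_z^\qd(c)$ have no common zero, since a common zero $z_0$ would force $\phi_{z_0}\equiv0$ by the uniqueness part of Theorem \ref{thm:exisuniq}. As the ring of entire functions is a B\'ezout domain, there are entire functions $A$, $B$ with $A(z)\phi_z^\qd(c)-B(z)\phi_z(c)=1$; replacing $A(z)$, $B(z)$ by $\tfrac12(A(z)+\ol{A(\ol z)})$ and $\tfrac12(B(z)+\ol{B(\ol z)})$ and using that $\phi_z(c)$, $\phi_z^\qd(c)$ are real entire, one may take $A$, $B$ real entire. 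Letting $\theta_z$ be the solution with $\theta_z(c)=A(z)$ and $\theta_z^\qd(c)=B(z)$, Theorem \ref{thmMSLEanaly} makes $\theta_z$ real entire, and $W(\theta_z,\phi_z)=A(z)\phi_z^\qd(c)-B(z)\phi_z(c)=1$, so $\theta_z$, $\phi_z$ is the required fundamental system and Hypothesis \ref{hypREFS} holds.

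The main obstacle is the direction $(iii)\Rightarrow(ii)$: everything hinges on upgrading the analyticity of the singular Weyl function $m_a$ off the real axis to genuine meromorphy on all of $\C$, which is precisely where discreteness of $\sigma(S^D_{(a,c)})$ enters, and on verifying that the regularized solution $\phi_z$ still lies in $\dom{S}$ near $a$ at the exceptional points $z=\lambda_n$, where the normalization $\psi_-$ itself degenerates. The l.p.\ case at $a$ is the genuinely singular one here; when $\tau$ is l.c.\ at $a$ the fundamental system of Theorem \ref{thmweyltitchfundsys} already yields $(i)$, and $m_a$ is visibly a ratio of entire functions of the form $-W(c(z,\cdot),v)(a)/W(s(z,\cdot),v)(a)$, so the factorization step becomes elementary.
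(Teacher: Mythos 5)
The paper itself disposes of this theorem by citing \cite[Lemmas 2.2 and 2.4]{KST12}, and your argument is in substance the argument of that reference: the cycle of implications, the B\'ezout-ring completion of $\phi_z$ to a real entire fundamental system for $(ii)\Rightarrow(i)$, and the meromorphic regularization of the Weyl solution for $(iii)\Rightarrow(ii)$ are exactly the intended steps. Two small points in those directions deserve a word: in $(ii)\Rightarrow(i)$ the claim that $\phi_z(c)$ and $\phi_z^{[1]}(c)$ have no common zero presupposes that $\phi_z\not\equiv 0$ for every $z$ (the only sensible reading of $(ii)$, but worth saying); and in $(iii)\Rightarrow(ii)$ the assertion that $m_a$ continues meromorphically needs a reason why its isolated singularities at the eigenvalues are poles rather than essential singularities --- this is where the Nevanlinna--Herglotz property of $m_a$ (the mirrored Corollary \ref{spek::manal}, available because $c$ is regular, hence l.c.) enters, since a Herglotz function whose associated measure is supported on a discrete set is automatically meromorphic with simple poles there.

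The genuine gap is in $(ii)\Rightarrow(iii)$. You identify the \emph{eigenvalues} of $S^D_{(a,c)}$ with the zeros of the entire function $z\mapsto\phi_z(c)$ and then infer purely discrete spectrum from the discreteness of that zero set; but for a self-adjoint operator, discreteness of the point spectrum does not preclude continuous spectrum emanating from the (possibly singular) endpoint $a$, so the asserted equality $\sigma(S^D_{(a,c)})=\{z\,|\,\phi_z(c)=0\}$ is exactly what remains to be proved. The missing step is to show that every $z$ with $\phi_z(c)\neq 0$ belongs to $\rho(S^D_{(a,c)})$. One clean way, entirely within the paper's toolkit: for $z\in\bbC\setminus\bbR$, Theorem \ref{thm:ressep} gives the Green's function of $S^D_{(a,c)}$ as $-\phi_z(c)^{-1}\phi_z(x\wedge y)\,s(z,x\vee y)$, where $s(z,\cdot)$ is the entire solution with $s(z,c)=0$, $s^{[1]}(z,c)=1$; hence for $g\in L^2_c((a,c);r(x)dx)$ the function $z\mapsto\spr{R_zg}{g}_r$ is analytic on all of $\{z\,|\,\phi_z(c)\neq0\}$ by Theorem \ref{thmMSLEanaly}, and Stone's formula (as in Lemma \ref{lemspectransEfgmu}) then shows the spectral measure vanishes on any real interval avoiding the zeros of $\phi_z(c)$. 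This yields $\sigma(S^D_{(a,c)})\subseteq\{z\,|\,\phi_z(c)=0\}$, a discrete set, and the simplicity of eigenvalues (Corollary \ref{corSpecRSimple}) finishes the claim. With that insertion your proof is complete.
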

\begin{proof}
 The proof follows the one for Schr\"{o}dinger operators given in 
 \cite[Lemma 2.2 and Lemma 2.4]{KST12} step by step.
\end{proof}

\section{The Spectral Transformation} \lb{s9}

In this section let $S$ be a self-adjoint restriction of $\Tmax$ (with separated boundary conditions if $\tau$ is in the l.c.\ case at both endpoints) as in the preceding section. Furthermore, we assume that there is a real entire fundamental system $\theta_z$, $\phi_z$ of $(\tau-z)u=0$ with $W(\theta_z,\phi_z)=1$ such that $\phi_z$ lies in $\dom{S}$ near $a$. By $m$ we denote the 
 corresponding singular Weyl--Titchmarsh--Kodaira function and by $\psi_z$ the Weyl solutions of $S$.

Recall that by the spectral theorem, for all functions $f$, $g\in\Lr$ there is a unique complex measure $E_{f,g}$ such that
 \begin{align}
  \spr{R_z f}{g}_{r} = \int_\R \frac{1}{\lambda-z} \, dE_{f,g}(\lambda), \quad z\in\rho(S).
 \end{align}
In order to obtain a spectral transformation we define for each $f\in L^2_c((a,b);r(x)dx)$ the transform of $f$
\begin{align}\label{eqnSThatf}
 \hat{f}(z) = \int_a^b \phi_z(x) f(x) \, r(x) dx, \quad z\in\C.
\end{align}

Next, we will use this to associate a measure with $m(z)$ by virtue of
the Stieltjes--Liv\v{s}i\'{c} inversion formula following literally the proof of \cite[Lemma\ 3.3]{KST12} 
(see also \cite[Theorem\ 2.6]{GZ06}):

\begin{lemma}\label{lemspectransEfgmu}
There is a unique Borel measure $\mu$ defined via
\be\label{defrho}
 \mu((\lambda_1,\lambda_2]) = \lim_{\delta\downarrow 0}\,\lim_{\varepsilon\downarrow 0} \frac{1}{\pi} 
                      \int_{\lambda_1+\delta}^{\lambda_2+\delta} \im(m(\lambda+\I\varepsilon))\, d\lambda,
\ee
 for each $\lambda_1$, $\lambda_2\in\R$ with $\lambda_1<\lambda_2$, such that
\be\label{rhomuf}
dE_{f,g} = \hat{f}\, \ol{\hat{g}} \, d\mu, \quad f,\,g\in L^2_c((a,b);r(x)dx).
\ee
In particular,
\begin{align}
 \spr{R_z f}{g}_{r} = \int_\R \frac{\hat{f}(\lambda) \ol{\hat{g}(\lambda)}}{\lambda-z} \, d\mu(\lambda), 
 \quad z\in\rho(S).
\end{align}
\end{lemma}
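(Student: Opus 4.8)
The plan is to feed the explicit resolvent kernel into the Stieltjes--Liv\v{s}ic inversion formula and to reduce the whole statement to the diagonal case $f=g$ by polarization. First I would record the kernel of $R_z$ from Theorem \ref{thm:ressep}, choosing $u_a=\phi_z$ and $u_b=\psi_z$ (legitimate since $\phi_z$ lies in $\dom{S}$ near $a$ and $\psi_z$ near $b$); because $W(\psi_z,\phi_z)=W(\theta_z,\phi_z)+m(z)W(\phi_z,\phi_z)=1$, the Green's function is $G_z(x,y)=\psi_z(x)\phi_z(y)$ for $y\le x$ and $\phi_z(x)\psi_z(y)$ for $y\ge x$. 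Substituting $\psi_z=\theta_z+m(z)\phi_z$ splits $G_z(x,y)=\widetilde G_z(x,y)+m(z)\phi_z(x)\phi_z(y)$ with the symmetric kernel $\widetilde G_z$ already introduced in the proof of Theorem \ref{thmweyltitchManal}. Integrating against $f,g\in L^2_c((a,b);r(x)dx)$ then gives, for $z\in\rho(S)$,
\be
 \spr{R_z f}{g}_{r} = m(z)\,\hat f(z)\,\check g(z) + H_{f,g}(z),
\ee
where $\check g(z)=\int_a^b \phi_z(x)\ol{g(x)}\,r(x)dx$ and $H_{f,g}(z)=\int_a^b\int_a^b \widetilde G_z(x,y)f(y)\ol{g(x)}\,r(y)r(x)\,dy\,dx$. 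By Theorem \ref{thmMSLEanaly} together with compact support, $\hat f$, $\check g$ and $H_{f,g}$ are entire in $z$, and on the real axis $\check g(\lambda)=\ol{\hat g(\lambda)}$ since $\phi_\lambda$ is real.

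Next I would invoke the spectral theorem in the form of the Stieltjes--Liv\v{s}ic formula: as $\spr{R_z f}{g}_{r}$ is the Cauchy transform of the finite measure $E_{f,g}$, the double limit in \eqref{defrho} applied to $\im\spr{R_{\lambda+\I\eps}f}{g}_{r}$ recovers $E_{f,g}((\lambda_1,\lambda_2])$. The reason to treat $f=g$ first is that then $\widetilde G_\lambda$ is real and symmetric for $\lambda\in\R$, so $H_{f,f}(\lambda)$ is real; hence $\im H_{f,f}(\lambda+\I\eps)\to0$ as $\eps\downarrow0$, and by the local bound of Theorem \ref{thmMSLEanaly} this entire contribution drops out of the inversion integral. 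Moreover $P(z):=\hat f(z)\check f(z)$ is entire with $P(\lambda)=|\hat f(\lambda)|^2\ge0$ on $\R$. What remains is the transfer identity
\be
 \lim_{\delta\downarrow0}\,\lim_{\eps\downarrow0}\frac{1}{\pi}\int_{\lambda_1+\delta}^{\lambda_2+\delta}\im\big(m(\lambda+\I\eps)P(\lambda+\I\eps)\big)\,d\lambda = \int_{(\lambda_1,\lambda_2]} |\hat f(\lambda)|^2\,d\mu(\lambda),
\ee
with $\mu$ defined by \eqref{defrho}. Writing $\im(mP)=\re(P)\im(m)+\im(P)\re(m)$ and using $\re(P(\lambda+\I\eps))\to P(\lambda)$ uniformly on compacta together with $\im(P(\lambda+\I\eps))=\Oh(\eps)$, the first term converges to the $\mu$-integral of the continuous weight $P$ by a standard partition and uniform-continuity argument, while the cross term is controlled through $\eps\int_{\lambda_1+\delta}^{\lambda_2+\delta}|\re(m(\lambda+\I\eps))|\,d\lambda\to0$. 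Specialized to an $f$ with $\hat f$ non-vanishing on a given interval, this same computation shows that the double limit in \eqref{defrho} exists and defines a positive Borel measure, locally given by $dE_{f,f}/|\hat f|^2$.

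This establishes \eqref{rhomuf} for $f=g$. The sesquilinear polarization identity $B(f,g)=\tfrac14\sum_{k=0}^3 \I^k B(f+\I^k g,f+\I^k g)$, applied simultaneously to $E_{f,g}$ and to the form $\hat f\,\ol{\hat g}\,\mu$ (both of which are linear in $f$ and antilinear in $g$, and whose diagonals now agree), extends \eqref{rhomuf} to arbitrary $f,g\in L^2_c((a,b);r(x)dx)$. Uniqueness of $\mu$ is immediate, since a Borel measure is determined by its values on half-open intervals, and positivity follows from $\int|\hat f|^2\,d\mu=E_{f,f}\ge0$. Finally, integrating $1/(\lambda-z)$ against \eqref{rhomuf} and using $\spr{R_z f}{g}_{r}=\int_\R dE_{f,g}(\lambda)/(\lambda-z)$ yields the ``in particular'' formula.

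I expect the genuine obstacle to be the rigorous justification of the transfer identity, that is, the interchange of the $\eps$- and $\delta$-limits with the $\mu$-integration and the disposal of the cross term $\re(m)\im(P)$. This is precisely the technical core carried out for Schr\"odinger operators in \cite[Lemma\ 3.3]{KST12} and \cite[Theorem\ 2.6]{GZ06}, and the remaining algebraic steps transcribe to the present setting once the kernel decomposition above and the analyticity estimates of Theorem \ref{thmMSLEanaly} are in hand.
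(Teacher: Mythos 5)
Your proposal is correct and follows essentially the same route as the paper, which gives no details of its own but states that the result follows by applying the Stieltjes--Liv\v{s}i\'{c} inversion formula ``literally'' as in \cite[Lemma\ 3.3]{KST12} and \cite[Theorem\ 2.6]{GZ06}; your kernel decomposition $G_z=\widetilde G_z+m(z)\phi_z(x)\phi_z(y)$, the disposal of the entire real summand $H_{f,f}$, the transfer of the inversion formula through the weight $|\hat f|^2$, and the final polarization are precisely the skeleton of that cited argument. You also correctly identify the one genuinely technical point (the interchange of limits and the cross term $\re(m)\im(P)$) as the part that is carried out in those references rather than in this paper.
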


In particular, the preceding lemma shows that the mapping $f\mapsto \hat{f}$ is an isometry from $L^2_c((a,b);r(x)dx)$ into $\Lrmu$.
Indeed, for each $f\in L^2_c((a,b);r(x)dx)$ one infers that
\begin{align}
 \| \hat{f}\|_{\mu}^2 = \int_\R \hat{f}(\lambda) \ol{\hat{f}(\lambda)} \, d\mu(\lambda) 
 = \int_\R dE_{f,f} =  \| f\|_{2,r}^2.
\end{align}
Hence, we may extend this mapping uniquely to an isometric linear operator $\mathcal{F}$ from $\Lr$ into $\Lrmu$ by
\begin{align}
 \mathcal{F} f(\lambda) = \lim_{\alpha\downarrow a}\, \lim_{\beta\uparrow b} \int_\alpha^\beta \phi_\lambda(x) f(x) \, r(x) dx, \quad \lambda\in\R,~f\in\Lr,
\end{align}
where the limit on the right-hand side is a limit in the Hilbert space $\Lrmu$.
Using this linear operator $\mathcal{F}$, it is quite easy to extend the result of Lemma \ref{lemspectransEfgmu} to functions $f$, $g\in\Lr$.
In fact, one gets that $dE_{f,g} =  \mathcal{F}f\, \ol{\mathcal{F} g} \, d\mu$, that is, 
\begin{align}
 \spr{R_z f}{g}_{r} = \int_\R \frac{\mathcal{F}f(\lambda) \ol{\mathcal{F}g(\lambda)}}{\lambda-z} \, d\mu(\lambda), \quad z\in\rho(S).
\end{align}
We will see below that $\mathcal{F}$ is not only isometric, but also onto, that is, $\ran(\mathcal{F})=\Lrmu$.
In order to compute the inverse and the adjoint of $\mathcal{F}$, we introduce for each function $g\in L^2_c(\R;d\mu)$ the transform
\begin{align}
 \check{g}(x) = \int_\R \phi_\lambda(x) g(\lambda) \, d\mu(\lambda), \quad x\in(a,b).
\end{align}
For arbitrary $\alpha, \beta\in(a,b)$ with $\alpha<\beta$ we estimate
\begin{align} 
 \int_\alpha^\beta \left|\check{g}(x)\right|^2 \, r(x) dx & = \int_\alpha^\beta \check{g}(x) \int_\R \phi_\lambda(x) \ol{g(\lambda)} \, d\mu(\lambda) \, r(x) dx  \no \\
          & = \int_\R \ol{g(\lambda)}  \int_\alpha^\beta \phi_\lambda(x) \check{g}(x) \,  r(x) dx \, d\mu(\lambda)  
          \no \\
          & \leq \left\| g\right\|_\mu \left\| \mathcal{F}\left(\chi_{[\alpha,\beta)}\check{g}\right)\right\|_\mu  \no \\
          & \leq \left\| g\right\|_\mu \sqrt{\int_\alpha^\beta \left|\check{g}(x)\right|^2 r(x) dx}.
 \end{align}
Hence, $\check{g}$ lies in $\Lr$ with $\|\check{g}\|_{2,r} \leq \|g\|_{2,\mu}$ and we may extend this mapping uniquely to a bounded linear operator $\mathcal{G}$ 
on $\Lrmu$ into $\Lr$.

If $F$ is a Borel measurable function on $\R$, then we denote by $\M_F$ the maximally defined operator of 
 multiplication with $F$ in $\Lrmu$.

\begin{lemma}\label{lemspectransfourtrans}
 The operator $\mathcal{F}$ is unitary with inverse $\mathcal{G}$.
\end{lemma}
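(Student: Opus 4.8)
The plan is to show that $\mathcal{F}$ maps $\Lr$ isometrically \emph{onto} $\Lrmu$ and that its adjoint is exactly $\mathcal{G}$; since $\mathcal{F}$ is already known to be isometric, these two facts force $\mathcal{F}^{-1}=\mathcal{F}^\ast=\mathcal{G}$. I would first identify $\mathcal{G}$ with $\mathcal{F}^\ast$. For $f\in L^2_c((a,b);r(x)dx)$ and $g\in L^2_c(\R;d\mu)$, the joint continuity and local boundedness of $(\lambda,x)\mapsto\phi_\lambda(x)$ guaranteed by Theorem \ref{thmMSLEanaly}, together with the compact supports, justify Fubini's theorem in
\[
 \spr{\mathcal{F}f}{g}_\mu = \int_\R\Big(\int_a^b\phi_\lambda(x)f(x)\,r(x)dx\Big)\ol{g(\lambda)}\,d\mu(\lambda) = \int_a^b f(x)\,\ol{\check{g}(x)}\,r(x)dx = \spr{f}{\mathcal{G}g}_r,
\]
where I use that $\phi_\lambda$ is real for $\lambda\in\R$. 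By density of the compactly supported functions and boundedness of $\mathcal{F}$ and $\mathcal{G}$, this extends to all $f\in\Lr$ and $g\in\Lrmu$, so $\mathcal{G}=\mathcal{F}^\ast$. In particular $g\perp\ran(\mathcal{F})$ if and only if $\mathcal{G}g=0$, and since $\mathcal{F}$ is isometric its range is closed; hence it only remains to prove that $\mathcal{G}g=0$ implies $g=0$.

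The key intermediate step is the intertwining relation $\mathcal{F}R_zf = (\,\cdot\,-z)^{-1}\hat{f}$ for $f\in L^2_c((a,b);r(x)dx)$ and $z\in\C\backslash\R$. From Lemma \ref{lemspectransEfgmu} one has $\spr{\mathcal{F}R_zf}{\mathcal{F}h}_\mu = \spr{R_zf}{h}_r = \int_\R(\lambda-z)^{-1}\hat{f}\,\ol{\hat{h}}\,d\mu$ for every $h\in\Lr$, so the vector $r_0 := \mathcal{F}R_zf - (\,\cdot\,-z)^{-1}\hat{f}$ is orthogonal to $\ran(\mathcal{F})$. Thus $(\,\cdot\,-z)^{-1}\hat{f} = \mathcal{F}R_zf - r_0$ is precisely the orthogonal decomposition of $(\,\cdot\,-z)^{-1}\hat{f}$ into its $\ran(\mathcal{F})$-part $\mathcal{F}R_zf$ and its orthogonal part $-r_0$, and the Pythagorean theorem gives $\|r_0\|_{2,\mu}^2 = \|(\,\cdot\,-z)^{-1}\hat{f}\|_{2,\mu}^2 - \|\mathcal{F}R_zf\|_{2,\mu}^2$. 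Now $\|\mathcal{F}R_zf\|_{2,\mu} = \|R_zf\|_{2,r}$ by isometry, while the spectral theorem together with $dE_{f,f} = |\hat{f}|^2\,d\mu$ yields $\|R_zf\|_{2,r}^2 = \int_\R|\lambda-z|^{-2}|\hat{f}|^2\,d\mu = \|(\,\cdot\,-z)^{-1}\hat{f}\|_{2,\mu}^2$. Hence $\|r_0\|_{2,\mu}=0$ and the intertwining relation follows, extending to all $f\in\Lr$ by continuity.

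Finally I would prove surjectivity. Let $g\in\Lrmu$ with $\mathcal{G}g=0$, equivalently $g\perp\ran(\mathcal{F})$. For every $f\in L^2_c((a,b);r(x)dx)$ and every $z\in\C\backslash\R$ the intertwining relation gives
\[
 0 = \spr{g}{\mathcal{F}R_zf}_\mu = \spr{g}{(\,\cdot\,-z)^{-1}\hat{f}}_\mu = \int_\R\frac{g(\lambda)\,\ol{\hat{f}(\lambda)}}{\lambda-\ol{z}}\,d\mu(\lambda),
\]
so the Cauchy transform of the finite complex measure $g\,\ol{\hat{f}}\,d\mu$ vanishes off the real axis, which forces $g\,\ol{\hat{f}} = 0$ $\mu$-a.e. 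Choosing a countable dense set $\{f_n\}\subset L^2_c((a,b);r(x)dx)$ and discarding a $\mu$-null set, I obtain that for $\mu$-a.e.\ $\lambda$ with $g(\lambda)\neq0$ one has $\hat{f_n}(\lambda)=\int_a^b\phi_\lambda f_n\,r\,dx=0$ for all $n$; since $\{f_n\}$ is dense in $\Lr$ and $\phi_\lambda$ is a nontrivial, hence a.e.\ nonvanishing, solution, this is impossible. Therefore $g=0$ $\mu$-a.e., so $\ran(\mathcal{F})=\Lrmu$ and $\mathcal{F}$ is unitary with $\mathcal{F}^{-1}=\mathcal{F}^\ast=\mathcal{G}$.

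The step I expect to be the main obstacle is the intertwining relation. The naive route of computing $\mathcal{F}R_zf$ pointwise from the explicit Green's function of Theorem \ref{thm:ressep} runs into the facts that $R_zf$ need not have compact support and that $\phi_\lambda$ need not lie in $\Lr$ near a limit-point endpoint, so the associated boundary Wronskian integrals may fail to converge. The Pythagorean argument combined with the norm identity $\|R_zf\|_{2,r}^2=\int_\R|\lambda-z|^{-2}\,dE_{f,f}$ from the spectral theorem sidesteps this difficulty completely; the remaining surjectivity argument then rests only on the standard injectivity of the Cauchy transform and a routine separability argument.
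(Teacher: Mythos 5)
Your proof is correct, but it reaches surjectivity by a genuinely different route than the paper. The paper first verifies $\mathcal{G}\mathcal{F}f=f$ directly on compactly supported functions, and then proves $\ran(\mathcal{F})=\Lrmu$ through the bounded functional calculus: from $\spr{\M_G\mathcal{F}F(S)f}{\mathcal{F}g}_\mu=\spr{\M_G\M_F\mathcal{F}f}{\mathcal{F}g}_\mu$ it deduces $\mathcal{F}F(S)f=F\,\mathcal{F}f$ $\mu$-a.e.\ and then manufactures the characteristic function of any small interval $J$ inside $\ran(\mathcal{F})$ by choosing $F=\hat f^{-1}\chi_J$. You instead identify $\mathcal{G}$ with $\mathcal{F}^\ast$ by Fubini, prove the single intertwining relation $\mathcal{F}R_zf=(\cdot-z)^{-1}\hat f$ by an orthogonal-decomposition/Pythagoras argument (a clean way to avoid the convergence issues you correctly anticipate in a pointwise Green's-function computation, since $\phi_\lambda$ need not lie in $\Lr$ near a limit-point endpoint), and then kill $\ker(\mathcal{F}^\ast)$ via injectivity of the Cauchy transform of the finite complex measure $g\,\ol{\hat f}\,d\mu$. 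Your route buys a sharper intermediate statement --- the resolvent intertwining, which the paper only records later as the adjoint identity \eqref{eqnTransRes} in Section \ref{s10} --- at the cost of invoking uniqueness of Cauchy transforms; the paper's route needs only multiplicativity of the functional calculus. Two small points to tighten in your last step. First, density of $\{f_n\}$ in $\Lr$ is not quite the right hypothesis: for fixed $\lambda$ the functional $f\mapsto\hat f(\lambda)$ is unbounded on $\Lr$, so you should instead take a countable family that is total in $L^2((\alpha,\beta);r(x)dx)$ for every compact $[\alpha,\beta]\subset(a,b)$ --- for instance indicators of intervals with rational endpoints --- so that $\hat f_n(\lambda)=0$ for all $n$ forces $\phi_\lambda\,r=0$ a.e.\ and hence $\phi_\lambda\equiv 0$, the desired contradiction. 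Second, ``nontrivial, hence a.e.\ nonvanishing'' is more than you need and not obvious under Hypothesis \ref{h2.1}; nontriviality plus continuity of $\phi_\lambda$ already yields a subinterval of positive $r\,dx$-measure on which it does not vanish, which suffices.
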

\begin{proof}
 First we prove $\mathcal{G}\mathcal{F}f=f$ for each $f\in\Lr$.
 Indeed, if $f$, $g\in L^2_c((a,b);r(x)dx)$, then
 \begin{align} 
  \spr{f}{g}_{r} & = \int_\R dE_{f,g} = \int_\R \hat{f}(\lambda) \ol{\hat{g}(\lambda)} \, d\mu(\lambda) \no \\
        & = \lim_{n\rightarrow\infty} \int_{(-n,n]} \hat{f}(\lambda) \int_a^b \phi_\lambda(x) \ol{g(x)} \, r(x) dx\, d\mu(\lambda) \no \\
        & = \lim_{n\rightarrow\infty} \int_a^b \ol{g(x)} \int_{(-n,n]} \hat{f}(\lambda) \phi_\lambda(x) \, 
        d\mu(\lambda) \, r(x) dx \no \\
        & = \lim_{n\rightarrow\infty} \spr{\mathcal{G}\M_{\chi_{(-n,n]}}\mathcal{F}f}{g}_{r} = \spr{\mathcal{GF}f}{g}_{r}.
\end{align}
 Now since $L^2_c((a,b);r(x)dx)$ is
  dense in $\Lr$ we infer that $\mathcal{GF}f=f$ for all $f\in\Lr$.
In order to prove that $\mathcal{G}$ is the inverse of $\mathcal{F}$, it remains to show that $\mathcal{F}$ 
 is surjective, that is, $\ran(\mathcal{F})=\Lrmu$.
 Therefore, let $f$, $g\in\Lr$ and $F$, $G$ be bounded measurable functions on $\R$.
 Since $E_{f,g}$ is the spectral measure of $S$ we get
 \begin{align}
  \spr{\M_G \mathcal{F} F(S)f}{\mathcal{F}g}_\mu = \spr{G(S)F(S)f}{g}_{r}
     = \spr{\M_G \M_F \mathcal{F}f}{\mathcal{F}g}_\mu.
 \end{align}
 Now if we set $h=F(S)f$, then we obtain from this last equation
 \begin{align}
  \int_\R G(\lambda) \ol{\mathcal{F}g(\lambda)} \big[ \mathcal{F}h(\lambda) - F(\lambda)\mathcal{F}f(\lambda)\big]d\mu(\lambda) = 0. 
 \end{align}
 Since this holds for each bounded measurable function $G$, we infer 
 \begin{align}
 \ol{\mathcal{F}g(\lambda)} \left( \mathcal{F}h(\lambda) - F(\lambda)\mathcal{F}f(\lambda)\right)=0,
 \end{align}
  for almost all $\lambda\in\R$ with respect to $\mu$. Furthermore, for each $\lambda_0\in\R$ we can find
  a $g\in L^2_c((a,b);r(x)dx)$ such that $\hat{g}\not=0$ in a vicinity of $\lambda_0$. 
 Hence, we even have $\mathcal{F}h = F\mathcal{F}f$ 
  almost everywhere with respect to $\mu$. But this shows that $\ran(\mathcal{F})$ contains all characteristic 
  functions of intervals. Indeed, let $\lambda_0\in\R$ and choose $f\in L^2_c((a,b);r(x)dx)$ such that 
  $\hat{f}\not=0$ in a vicinity of $\lambda_0$. Then for each interval $J$, the closure of which is contained in 
  this vicinity, one may choose 
  \begin{align}
   F(\lambda) = \begin{cases} \hat{f}(\lambda)^{-1}, & \text{if }\lambda\in J, \\
                              0, & \text{if }\lambda\in\R\backslash J,
                \end{cases}
  \end{align}
which yields $\chi_J = \mathcal{F} h\in\ran(\mathcal{F})$.
 Thus, $\ran(\mathcal{F})=\Lrmu$ follows.
\end{proof}

\begin{theorem}\label{thmspectransS}
  The self-adjoint operator $S$ is given by $S = \mathcal{F}^\ast \M_{\mathrm{id}} \mathcal{F}$.
\end{theorem}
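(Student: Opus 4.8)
The plan is to deduce the theorem from the unitarity of $\mathcal{F}$ (Lemma \ref{lemspectransfourtrans}) together with the resolvent representation obtained just above it, by comparing resolvents. Since $\M_{\mathrm{id}}$ is multiplication by the real-valued identity function, it is self-adjoint in $\Lrmu$, and hence $\mathcal{F}^\ast \M_{\mathrm{id}} \mathcal{F}$ is self-adjoint as a unitary conjugate of a self-adjoint operator. Because two self-adjoint operators coincide as soon as their resolvents agree at a single non-real point, it suffices to fix $z\in\C\backslash\R$ and show $R_z = \mathcal{F}^\ast \M_{F_z} \mathcal{F}$, where $F_z(\lambda) = (\lambda - z)^{-1}$ is the bounded continuous function whose multiplication operator $\M_{F_z}\in\cB(\Lrmu)$ is precisely the resolvent $(\M_{\mathrm{id}} - z)^{-1}$. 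Equivalently, since $\mathcal{F}$ is unitary, I must prove the bounded-operator identity $\mathcal{F} R_z \mathcal{F}^\ast = \M_{F_z}$.

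To establish this, let $f,g\in\Lr$ be arbitrary. The extension of Lemma \ref{lemspectransEfgmu} to $\Lr$ recorded above gives $dE_{f,g} = \mathcal{F} f\,\overline{\mathcal{F} g}\,d\mu$, so that
\[ \spr{R_z f}{g}_{r} = \int_\R \frac{\mathcal{F} f(\lambda)\,\overline{\mathcal{F} g(\lambda)}}{\lambda - z}\, d\mu(\lambda) = \spr{\M_{F_z}\mathcal{F} f}{\mathcal{F} g}_\mu. \]
On the other hand, unitarity of $\mathcal{F}$ yields $\spr{R_z f}{g}_{r} = \spr{\mathcal{F} R_z f}{\mathcal{F} g}_\mu$. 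Equating these gives $\spr{\mathcal{F} R_z f}{\mathcal{F} g}_\mu = \spr{\M_{F_z}\mathcal{F} f}{\mathcal{F} g}_\mu$ for all $f,g\in\Lr$. Because $\mathcal{F}$ is onto, $\mathcal{F} g$ ranges over all of $\Lrmu$, and therefore $\mathcal{F} R_z f = \M_{F_z}\mathcal{F} f$ for every $f\in\Lr$; that is, $\mathcal{F} R_z \mathcal{F}^\ast = \M_{F_z}$, as required.

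Rearranging this identity as $R_z = \mathcal{F}^\ast \M_{F_z} \mathcal{F} = \mathcal{F}^\ast (\M_{\mathrm{id}} - z)^{-1} \mathcal{F} = (\mathcal{F}^\ast \M_{\mathrm{id}} \mathcal{F} - z)^{-1}$ shows that $S$ and $\mathcal{F}^\ast \M_{\mathrm{id}} \mathcal{F}$ share the same resolvent at $z$, hence coincide, proving $S = \mathcal{F}^\ast \M_{\mathrm{id}} \mathcal{F}$. There is in fact no serious obstacle here: the substance of the argument was already carried out in Lemma \ref{lemspectransEfgmu} (the measure $\mu$ and the identity $dE_{f,g} = \hat f\,\overline{\hat g}\,d\mu$) and Lemma \ref{lemspectransfourtrans} (unitarity and surjectivity of $\mathcal{F}$). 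The only points requiring a modicum of care are the standard identification of $(\M_{\mathrm{id}} - z)^{-1}$ with $\M_{F_z}$ --- which rests on $F_z$ being bounded since $z\notin\R$ --- and the use of surjectivity of $\mathcal{F}$ to pass from the sesquilinear identity to the operator identity $\mathcal{F} R_z \mathcal{F}^\ast = \M_{F_z}$.
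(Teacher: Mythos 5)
Your proof is correct, but it takes a different route from the paper's. The paper argues directly at the level of the operator $S$: it first verifies that $\dom{S}$ and $\dom{\mathcal{F}^\ast \M_{\mathrm{id}}\mathcal{F}}$ coincide, using the spectral-theorem characterization $f\in\dom{S}$ iff $\int_\R|\lambda|^2\,dE_{f,f}(\lambda)<\infty$ together with $dE_{f,f}=|\mathcal{F}f|^2\,d\mu$, and then shows $\spr{Sf}{g}_r=\int_\R\lambda\,\mathcal{F}f(\lambda)\ol{\mathcal{F}g(\lambda)}\,d\mu(\lambda)=\spr{\mathcal{F}^\ast\M_{\mathrm{id}}\mathcal{F}f}{g}_r$ for all $g$. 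You instead work entirely with bounded operators: you convert the already-established identity $\spr{R_zf}{g}_r=\int_\R\tfrac{\mathcal{F}f\,\ol{\mathcal{F}g}}{\lambda-z}\,d\mu$ into the operator identity $\mathcal{F}R_z\mathcal{F}^\ast=\M_{F_z}=(\M_{\mathrm{id}}-z)^{-1}$ via surjectivity of $\mathcal{F}$, and then invoke the fact that self-adjoint operators with a common resolvent value coincide. Your route buys you the domain equality for free (it is encoded in the resolvent identity) and avoids any unbounded-operator manipulations, at the modest cost of having to justify the identification $(\M_{\mathrm{id}}-z)^{-1}=\M_{F_z}$ and the uniqueness-from-one-resolvent-value principle; the paper's route is more self-contained but must handle the domain issue explicitly. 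Both arguments rest on the same two inputs, Lemma \ref{lemspectransEfgmu} and the unitarity of $\mathcal{F}$ from Lemma \ref{lemspectransfourtrans}, and both are sound.
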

\begin{proof}
  First note that for each $f\in\Lr$,
   \begin{align}\begin{split}
& f\in\dom{S}  \, \text{ iff } \,  \int_\R |\lambda|^2 dE_{f,f}(\lambda)<\infty \, \text{ iff } \, \int_\R |\lambda|^2 |\mathcal{F}f(\lambda)|^2 d\mu(\lambda) < \infty \\
& \quad \, \text{ iff } \, \mathcal{F}f\in\dom{\M_{\mathrm{id}}} \, \text{ iff } \, f\in\dom{\mathcal{F}^\ast\M_{\mathrm{id}}\mathcal{F}}.
  \end{split}\end{align}
 In this case, Lemma \ref{lemspectransEfgmu} implies
 \begin{align} 
  \spr{S f}{g}_{r} &= \int_\R\lambda dE_{f,g}(\lambda) = \int_\R \lambda \mathcal{F}f(\lambda) 
  \ol{\mathcal{F}g(\lambda)} \, d\mu(\lambda) 
               = \int_\R \M_{\mathrm{id}}\mathcal{F} f(\lambda) \ol{\mathcal{F}g(\lambda)} \, d\mu(\lambda) \no \\
              &= \spr{\mathcal{F}^\ast\M_{\mathrm{id}}\mathcal{F}f}{g}_{r}, \quad g\in\Lr.
\end{align}
  Consequently, $\mathcal{F}^\ast\M_{\mathrm{id}}\mathcal{F}f=S f$.
\end{proof}

Now the spectrum can be read off from the boundary behavior of the 
singular Weyl--Titchmarsh--Kodaira function $m$ in the usual way (see, e.g.,  \cite{Gi89} in the 
classical context and the recent \cite[Corollary 3.5]{KST12}, as well as the references therein).

\begin{corollary}\label{cor:splimm}
 The spectrum of $S$ is given by
\begin{align}
 \sigma(S) & = \supp(\mu) = \overline{\{ \lambda\in\R \,|\, 0 < \limsup_{\varepsilon\downarrow 0} \im(m(\lambda+\I\varepsilon))\}}.
\end{align}
Moreover,
\begin{align}
\sigma_p(S) & = \{ \lambda\in\R \,|\, 0 < \lim_{\varepsilon\downarrow0} \varepsilon \im(m(\lambda+\I\varepsilon)) \}, \\
\sigma_{ac}(S) & = \overline{\{ \lambda\in\R \,|\, 0 < \limsup_{\varepsilon\downarrow0} \im(m(\lambda+\I\varepsilon)) < \infty \}}^{ess},
\end{align}
where $\overline{\Omega}^{ess} = \{ \lambda\in\R \,|\, |(\lambda-\varepsilon,\lambda+\varepsilon)\cap\Omega|>0 \text{ for all }\varepsilon>0 \}$,
is the essential closure of a Borel set $\Omega\subseteq\R$, and
\begin{align}
\Sigma_s = \{ \lambda\in\R \,|\, \limsup_{\varepsilon\downarrow0} \im(m(\lambda+\I\varepsilon)) = \infty \}
\end{align}
is a minimal support for the singular spectrum $($singular continuous plus pure point spectrum\,$)$ of $S$.
\end{corollary}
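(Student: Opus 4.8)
The plan is to push the entire statement through the spectral transformation constructed in the previous section. By Theorem~\ref{thmspectransS}, $\mathcal{F}$ is a unitary map of $\Lr$ onto $\Lrmu$ intertwining $S$ with the operator $\M_{\mathrm{id}}$ of multiplication by the independent variable. Hence $S$ and $\M_{\mathrm{id}}$ are unitarily equivalent, and every spectral quantity of $S$ equals the corresponding quantity of $\M_{\mathrm{id}}$; the latter is determined entirely by the Borel measure $\mu$. First I would record the classical facts for a multiplication operator on $\Lrmu$: its spectrum equals $\supp(\mu)$; a point $\lambda$ is an eigenvalue precisely when $\mu(\{\lambda\})>0$, with eigenspace the span of $\chi_{\{\lambda\}}$; an essential support of the absolutely continuous part $\mu_{ac}$ is a minimal support of $\sigma_{ac}(\M_{\mathrm{id}})$; and any support of the singular part $\mu_s$ is a minimal support of the singular spectrum. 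Transporting these four statements back through $\mathcal{F}$ reduces the corollary to reading the measure-theoretic data of $\mu$ off the function $m$.

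The second and principal step is exactly this reading, which rests on the Nevanlinna--Herglotz nature of $m$ (Corollary~\ref{spek::manal}) and on the fact, established in Lemma~\ref{lemspectransEfgmu}, that $\mu$ is the measure generated by $m$ via the Stieltjes--Liv\v{s}ic inversion formula \eqref{defrho}. I would then invoke the standard boundary-value theory for such functions, following \cite{GZ06} and \cite[Corollary~3.5]{KST12} (see also \cite{Gi89}): the support of $\mu$ is the closure of the set on which $\limsup_{\eps\downarrow 0}\im(m(\lambda+\I\eps))>0$; the mass of an atom is recovered as $\mu(\{\lambda\})=\lim_{\eps\downarrow0}\eps\,\im(m(\lambda+\I\eps))$, which yields the characterization of $\sigma_p(S)$; the absolutely continuous part has density $\pi^{-1}\im(m(\lambda+\I 0))$ wherever the boundary value exists and is finite, so that $\{\lambda : 0<\limsup_{\eps\downarrow0}\im(m(\lambda+\I\eps))<\infty\}$ is an essential support of $\mu_{ac}$ and its essential closure gives $\sigma_{ac}(S)$; and the singular part is carried by the set $\Sigma_s$ on which $\im(m(\lambda+\I\eps))\to\infty$, making $\Sigma_s$ a minimal support of the singular spectrum.

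The main obstacle is entirely contained in this boundary analysis: the identification of the atoms, of the a.c.\ density, and above all of minimal supports for $\sigma_{ac}$ and $\sigma_s$ is the nontrivial (de Branges / Aronszajn--Donoghue) part of Herglotz theory. Since it is purely a statement about a Herglotz function and its representing measure, independent of the Sturm--Liouville origin of $m$, I would not reprove it but cite the verbatim-applicable results in \cite{GZ06}, \cite{KST12}. The sole input the boundary theory requires is the Herglotz property of $m$ itself, which Corollary~\ref{spek::manal} supplies when $\tau$ is in the l.c.\ case at $a$; in the remaining case the positivity of $\mu$ furnished by Lemma~\ref{lemspectransEfgmu} guarantees $\im(m(\lambda+\I\eps))\ge 0$ for $\eps>0$, so that $m$ is again of Nevanlinna--Herglotz type and the same theory applies (the admissible normalizations of Remark~\ref{remWTtilde} affecting only the maximal domain of holomorphy, not the spectral data). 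Assembling the four correspondences then yields the stated equalities.
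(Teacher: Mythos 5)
Your proposal matches the paper's treatment: the paper offers no proof of this corollary beyond the remark immediately preceding it, which---exactly as you do---reduces everything to the unitary equivalence $S=\mathcal{F}^\ast \M_{\mathrm{id}}\mathcal{F}$ of Theorem \ref{thmspectransS} together with the measure $\mu$ of Lemma \ref{lemspectransEfgmu}, and then cites the boundary-value theory of \cite{Gi89}, \cite{GZ06}, and \cite[Corollary 3.5]{KST12}. One small caution: under the general Hypothesis \ref{hypREFS} the positivity of $\mu$ does \emph{not} force $\im(m(\lambda+\I\eps))\ge 0$ pointwise, so your claim that $m$ is then again of Nevanlinna--Herglotz type is unjustified; this does not damage the argument, however, since the cited results are established precisely for such generalized Nevanlinna $m$-functions, using only the Stieltjes inversion relation \eqref{defrho} between $m$ and $\mu$.
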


\begin{lemma}\label{propspectransEVmu}
 If $\lambda\in\sigma(S)$ is an eigenvalue, then
 \begin{align}
  \mu(\lbrace\lambda\rbrace) = \left\| \phi_\lambda\right\|_{2,r}^{-2}.
 \end{align}
\end{lemma}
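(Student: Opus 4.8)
The plan is to read off the atom $\mu(\{\lambda\})$ directly from the explicit unitary spectral transformation, rather than from any boundary behaviour of $m$. The key tool is the inverse transform $\mathcal{G}=\mathcal{F}^{\ast}$ from Lemma \ref{lemspectransfourtrans}, evaluated on the characteristic function $\chi_{\{\lambda\}}$ of the one-point set $\{\lambda\}$. Since $\lambda$ is an eigenvalue of $S$ and, by Theorem \ref{thmspectransS}, $S=\mathcal{F}^{\ast}\M_{\mathrm{id}}\mathcal{F}$ is unitarily equivalent to multiplication by the identity on $\Lrmu$, the point $\lambda$ must be an atom of $\mu$, that is, $\mu(\{\lambda\})>0$; indeed $\chi_{\{\lambda\}}$ is then a nonzero eigenvector of $\M_{\mathrm{id}}$ whose image under $\mathcal{G}$ spans $\ker(S-\lambda)$. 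Because $\mu$ is locally finite (its finiteness on bounded intervals is already used in the surjectivity argument of Lemma \ref{lemspectransfourtrans}), we have $\mu(\{\lambda\})<\infty$, so $\chi_{\{\lambda\}}$ is bounded with compact support and hence lies in $L^2_c(\R;d\mu)$.

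With these preliminaries I would simply compute $\mathcal{G}\chi_{\{\lambda\}}=\check{\chi}_{\{\lambda\}}$ from its defining formula and invoke the norm-preserving property of $\mathcal{G}$. Using a neutral integration variable $\nu$, the single atom $\{\lambda\}$ contributes exactly $\phi_\lambda(x)\mu(\{\lambda\})$, so
\begin{align}
 \mathcal{G}\chi_{\{\lambda\}}(x) = \int_\R \phi_\nu(x)\,\chi_{\{\lambda\}}(\nu)\, d\mu(\nu) = \mu(\{\lambda\})\,\phi_\lambda(x), \quad x\in(a,b).
\end{align}
Since $\mathcal{G}$ is the inverse of the unitary $\mathcal{F}$, it is itself unitary and therefore isometric, whence
\begin{align}
 \mu(\{\lambda\}) = \|\chi_{\{\lambda\}}\|_{2,\mu}^2 = \|\mathcal{G}\chi_{\{\lambda\}}\|_{2,r}^2 = \mu(\{\lambda\})^2\,\|\phi_\lambda\|_{2,r}^2 .
\end{align}
Dividing through by $\mu(\{\lambda\})>0$ yields $\mu(\{\lambda\})\,\|\phi_\lambda\|_{2,r}^2=1$, which is precisely the claimed identity.

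The only point requiring genuine care is the justification that $\mu(\{\lambda\})>0$ together with the (automatic) finiteness and non-vanishing of $\|\phi_\lambda\|_{2,r}$; everything else is a routine evaluation of a point mass and an application of unitarity. Both facts are consequences of $\lambda$ being an eigenvalue: the unitary equivalence of Theorem \ref{thmspectransS} forces the atom, and the displayed computation simultaneously exhibits $\mu(\{\lambda\})\phi_\lambda$ as the square-integrable eigenfunction, in agreement with the simplicity of the eigenvalue from Corollary \ref{corSpecRSimple} and with $\phi_\lambda$ lying in $\dom{S}$ near $a$. I expect no delicate analytic estimates to be needed, so the main obstacle is conceptual bookkeeping—tracking that $\chi_{\{\lambda\}}$ is a legitimate element of $L^2_c(\R;d\mu)$ and that its inverse transform is a nonzero multiple of $\phi_\lambda$—rather than any hard calculation.
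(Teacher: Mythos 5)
Your argument is correct. It differs from the paper's proof in a small but genuine way: the paper works with the \emph{forward} transform, noting that $\phi_\lambda$ is an eigenvector of $S$, that $\hat f(\lambda)=\spr{f}{\phi_\lambda}_r$, and that $E(\{\lambda\})$ is the orthogonal projection onto $\phi_\lambda$, so that the relation $dE_{f,g}=\hat f\,\ol{\hat g}\,d\mu$ from Lemma \ref{lemspectransEfgmu} gives
$\|\phi_\lambda\|_{2,r}^2=E_{\phi_\lambda,\phi_\lambda}(\{\lambda\})=|\hat\phi_\lambda(\lambda)|^2\mu(\{\lambda\})=\|\phi_\lambda\|_{2,r}^4\,\mu(\{\lambda\})$. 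You instead apply the \emph{inverse} transform to $\chi_{\{\lambda\}}$ and use the isometry of $\mathcal{G}=\mathcal{F}^{-1}$; the two computations are dual to one another and both rest on the unitarity established in Lemma \ref{lemspectransfourtrans}. Your route has the minor advantage that it does not presuppose $\phi_\lambda\in\Lr$ or that $\phi_\lambda$ satisfies the boundary condition at $b$ (facts the paper asserts in its first line): square-integrability of $\phi_\lambda$ falls out of the identity $\mu(\{\lambda\})=\mu(\{\lambda\})^2\|\phi_\lambda\|_{2,r}^2$ together with $0<\mu(\{\lambda\})<\infty$, which you correctly justify from the unitary equivalence $S=\mathcal{F}^\ast \M_{\mathrm{id}}\mathcal{F}$ and local finiteness of $\mu$. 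The paper's version is marginally shorter once one accepts that $\phi_\lambda$ is the eigenfunction; yours is marginally more self-contained. Both are sound.
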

\begin{proof}
 Under this assumptions $\phi_\lambda$ is an eigenvector of $S$ and 
 $\hat{f}(\lambda) = \spr{f}{\phi_\lambda}_{r}$, $f\in\Lr$. Consequently,
  \begin{align}
   \left\| \phi_\lambda\right\|_{2,r}^2 & = E_{\phi_\lambda,\phi_\lambda}(\lbrace\lambda\rbrace) = \mathcal{F}\phi_\lambda(\lambda) \ol{\mathcal{F}\phi_\lambda(\lambda)} \mu(\lbrace\lambda\rbrace) = \left\|\phi_\lambda\right\|_{2,r}^4 \mu(\lbrace\lambda\rbrace),
  \end{align}
  since $E(\lbrace\lambda\rbrace)$ is the orthogonal projection onto $\phi_\lambda$.
\end{proof}

\begin{lemma}\label{lemspectransGreentransform}
 For every $z\in\rho(S)$ and all $x\in(a,b)$ the transform of the Green's function $G_z(x,\cdot\,)$ and its quasi-derivative $\partial_x^\qd G_z(x,\cdot\,)$ are given by
 \begin{align}
  \mathcal{F} G_z(x,\cdot\,) (\lambda) = \frac{\phi_\lambda(x)}{\lambda-z} \, \text{ and } \,  
      \mathcal{F} \partial_x^\qd G_z(x,\cdot\,) (\lambda) = \frac{\phi_\lambda^\qd(x)}{\lambda-z}, \quad \lambda\in\R.
 \end{align}
\end{lemma}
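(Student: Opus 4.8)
The plan is to identify the two $L^2(\R;d\mu)$-elements $\mathcal{F}[G_z(x,\cdot\,)]$ and $\mathcal{F}[\partial_x^\qd G_z(x,\cdot\,)]$ by pairing them against the dense set $L^2_c(\R;d\mu)$, and to exploit that the inverse transform $\mathcal{G}=\mathcal{F}^{-1}$ (Lemma \ref{lemspectransfourtrans}) is given on compactly supported functions by the genuinely convergent integral $\check g(x)=\int_\R \phi_\lambda(x) g(\lambda)\,d\mu(\lambda)$. The first thing I would record is that, for fixed $x\in(a,b)$ and $z\in\rho(S)$, both $G_z(x,\cdot\,)$ and $\partial_x^\qd G_z(x,\cdot\,)$ lie in $\Lr$: by the product form of the Green's function in Theorem \ref{thm:ressep}, as functions of $y$ they are constant multiples of $u_a$ on $(a,x]$ and of $u_b$ on $[x,b)$, and $u_a$, $u_b$ are square integrable near $a$, $b$, respectively. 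Thus $F_x:=\mathcal{F}[G_z(x,\cdot\,)]$ and $\widetilde{F}_x:=\mathcal{F}[\partial_x^\qd G_z(x,\cdot\,)]$ are well-defined elements of $\Lrmu$.

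The next step is a pairing identity. Differentiating the variation-of-parameters formula for $R_z f$ from Theorem \ref{thm:ressep}, the two boundary terms cancel (because of the factor $W(u_b,u_a)^{-1}$), so that $(R_z f)^\qd(x)=\int_a^b \partial_x^\qd G_z(x,y) f(y)\,r(y)\,dy$. Hence, for each $f\in\Lr$,
\[
(R_z f)(x)=\spr{G_z(x,\cdot\,)}{\ol f}_{r}, \qquad (R_z f)^\qd(x)=\spr{\partial_x^\qd G_z(x,\cdot\,)}{\ol f}_{r}.
\]
Since $\phi_\lambda$ is real for $\lambda\in\R$, one has $\ol{\mathcal{F}\ol f}=\hat f$, so Parseval's identity (unitarity of $\mathcal{F}$) turns these into $(R_z f)(x)=\int_\R F_x(\lambda)\hat f(\lambda)\,d\mu(\lambda)$ and $(R_z f)^\qd(x)=\int_\R \widetilde{F}_x(\lambda)\hat f(\lambda)\,d\mu(\lambda)$. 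As both sides are bounded linear functionals of $f\in\Lr$ (the kernels lie in $\Lr$, and $f\mapsto\hat f$ is isometric), these identities, valid a priori on $L^2_c((a,b);r(x)dx)$, extend to all $f\in\Lr$.

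Now comes the evaluation. I would insert $f=\mathcal{G}\ol h$ with $h\in L^2_c(\R;d\mu)$, so that $\hat f=\ol h$ and $\spr{F_x}{h}_{\mu}=(R_z\mathcal{G}\ol h)(x)$. By Theorem \ref{thmspectransS} one has $R_z\mathcal{G}=\mathcal{G}\,\M_{1/(\mathrm{id}-z)}$, whence $R_z\mathcal{G}\ol h=\mathcal{G}[\ol h/(\cdot-z)]$ with $\ol h/(\cdot-z)\in L^2_c(\R;d\mu)$; the defining integral for $\mathcal{G}$ then yields the absolutely convergent expression $(R_z\mathcal{G}\ol h)(x)=\int_\R \frac{\phi_\lambda(x)}{\lambda-z}\,\ol h(\lambda)\,d\mu(\lambda)$. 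Therefore $\spr{F_x}{h}_{\mu}=\int_\R \frac{\phi_\lambda(x)}{\lambda-z}\,\ol h(\lambda)\,d\mu(\lambda)$ for all $h\in L^2_c(\R;d\mu)$, and density of $L^2_c(\R;d\mu)$ in $\Lrmu$ forces $F_x(\lambda)=\phi_\lambda(x)/(\lambda-z)$ for $\mu$-a.e.\ $\lambda$, which is the first identity (and shows en passant that $\phi_{(\cdot)}(x)/(\cdot-z)\in\Lrmu$). The quasi-derivative identity follows by the identical computation, once one knows $(R_z\mathcal{G}\ol h)^\qd(x)=\int_\R \frac{\phi_\lambda^\qd(x)}{\lambda-z}\,\ol h(\lambda)\,d\mu(\lambda)$.

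The one genuinely technical point, and the main obstacle, is this last interchange of the quasi-derivative $\partial_x^\qd$ with the integration $\int_\R\cdots\,d\mu$. This is precisely where reducing to compactly $\lambda$-supported $h$ pays off: on the compact support of $\ol h/(\cdot-z)$ the estimate $|\phi_\lambda(x)|+|\phi_\lambda^\qd(x)|\le C\E^{B\sqrt{|\lambda|}}$ of Theorem \ref{thmMSLEanaly} is uniform (and locally uniform in $x$), so differentiation under the integral sign is justified by dominated convergence and the local absolute continuity of $x\mapsto\phi_\lambda(x)$, $x\mapsto\phi_\lambda^\qd(x)$. Had one worked instead with the (merely $L^2$, not compactly $\lambda$-supported) function $\hat f/(\cdot-z)$, this growth bound would not give integrability, which is exactly why the detour through $\mathcal{G}$ on $L^2_c(\R;d\mu)$ is the efficient route.
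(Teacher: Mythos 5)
Your proof is correct and follows essentially the same route as the paper: both arguments reduce the claim to the identity $R_z\check{g}(x)=\int_\R \phi_\lambda(\cdot)(\lambda-z)^{-1}g(\lambda)\,d\mu(\lambda)$ for $g\in L^2_c(\R;d\mu)$, pair the Green's function against $\check{g}$ via unitarity of $\mathcal{F}$, and conclude by density. The only (cosmetic) differences are that the paper obtains the key intermediate identity from Lemma \ref{lemspectransEfgmu} rather than from $R_z\mathcal{G}=\mathcal{G}\M_{1/(\mathrm{id}-z)}$, and that you spell out the differentiation-under-the-integral step that the paper leaves implicit in the line $R_z\check{g}^\qd(x)=\int_\R\phi_\lambda^\qd(x)(\lambda-z)^{-1}g(\lambda)\,d\mu(\lambda)$.
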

\begin{proof}
 First note that $G_z(x,\cdot\,)$ and $\partial_x^\qd G_z(x,\cdot\,)$ both lie in $\Lr$.
 Then using Lemma \ref{lemspectransEfgmu}, we get for each $f\in L^2_c((a,b);r(x)dx)$ and $g\in L^2_c(\R;d\mu)$
 \begin{align}
  \spr{R_z \check{g}}{f}_{r} & = \int_\R \frac{g(\lambda) \ol{\hat{f}(\lambda)}}{\lambda-z} \, d\mu(\lambda) 
            = \int_a^b \int_\R \frac{\phi_\lambda(x)}{\lambda-z} g(\lambda) \, d\mu(\lambda)\, \ol{f(x)} \, r(x) dx.
 \end{align}
 Hence,
 \begin{equation}
  R_z \check{g}(x) = \int_\R \frac{\phi_\lambda(x)}{\lambda-z} g(\lambda) \, d\mu(\lambda)
 \end{equation}
 for almost all $x\in(a,b)$. Using Theorem \ref{thm:ressep}, one verifies
 \begin{equation}
  \spr{\mathcal{F} G_z(x,\cdot\,)}{\ol{g}}_\mu = \spr{G_z(x,\cdot\,)}{\ol{\check{g}}}_{r} = \int_\R \frac{\phi_\lambda(x)}{\lambda-z} g(\lambda) \, d\mu(\lambda),
 \end{equation}
 for almost all $x\in(a,b)$. Since all three terms are absolutely continuous, this equality holds for 
 all $x\in(a,b)$, which proves the first part of the claim. The equality for the transform of the quasi-derivative follows from
 \begin{align}
  \spr{\mathcal{F} \partial_x^\qd G_z(x,\cdot\,)}{\ol{g}}_\mu = \spr{\partial_x^\qd G_z(x,\cdot\,)}{\ol{\check{g}}}_{r} = R_z \check{g}^\qd(x) = \int_\R \frac{\phi_\lambda^\qd(x)}{\lambda-z} g(\lambda) \, d\mu(\lambda).
 \end{align}
\end{proof}

\begin{lemma}\label{lemspectranweyltrans}
 Suppose $\tau$ is in the l.c.\ case at $a$ and $\theta_z$, $\phi_z$ is a real entire fundamental system as in Theorem \ref{thmweyltitchfundsys}.
  Then for each $z\in\rho(S)$ the transform of the Weyl solution $\psi_z$ is given by
 \begin{align}
  \mathcal{F} \psi_z(\lambda) = \frac{1}{\lambda-z},\quad \lambda\in\R.
 \end{align}
\end{lemma}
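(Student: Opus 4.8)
The plan is to sidestep a direct evaluation of the transform integral at the singular endpoint $b$ and instead reduce everything to the transform of the Green's function already computed in Lemma \ref{lemspectransGreentransform}. First note that, since $\tau$ is in the l.c.\ case at $a$, both $\theta_z$ and $\phi_z$ lie in $\Lr$ near $a$, while $\psi_z$ lies in $\dom{S}$ (hence in $\Lr$) near $b$; consequently $\psi_z\in\Lr$ and $\mathcal{F}\psi_z$ is genuinely defined as an element of $\Lrmu$. Moreover $W(\psi_z,\phi_z)=W(\theta_z,\phi_z)=1$, so Theorem \ref{thm:ressep} (applied with $u_a=\phi_z$ and $u_b=\psi_z$) shows that the Green's function factorizes as $G_z(x,y)=\phi_z(\min(x,y))\,\psi_z(\max(x,y))$. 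Fixing some $c\in(a,b)$ with $\phi_z(c)\neq 0$ (possible since $\phi_z$ is a nontrivial solution), this factorization yields the $\Lr$-identity
\[
 \psi_z=\chi_{(a,c)}\psi_z+\frac{1}{\phi_z(c)}\,G_z(c,\cdot\,)-\frac{\psi_z(c)}{\phi_z(c)}\,\chi_{(a,c)}\phi_z .
\]

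Next I would apply the bounded linear operator $\mathcal{F}$ to this identity. The middle term is handled directly by Lemma \ref{lemspectransGreentransform}, giving $\mathcal{F}G_z(c,\cdot\,)(\lambda)=\phi_\lambda(c)/(\lambda-z)$. For the two remaining terms, supported in $(a,c)$, I would realize each as the $\Lr$-limit of $\chi_{[\alpha,c)}\psi_z$ (resp.\ $\chi_{[\alpha,c)}\phi_z$) as $\alpha\downarrow a$, use the isometry of $\mathcal{F}$ (Lemma \ref{lemspectransfourtrans}) to get convergence in $\Lrmu$, and evaluate the transform of each compactly supported piece through the Lagrange identity (Lemma \ref{propLagrange}) applied to the solutions $\phi_\lambda$ and $\psi_z$ (resp.\ $\phi_z$). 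For instance,
\[
 \int_\alpha^c\phi_\lambda\,\psi_z\,r\,dx=\frac{W(\psi_z,\phi_\lambda)(\alpha)-W(\psi_z,\phi_\lambda)(c)}{\lambda-z}.
\]
Letting $\alpha\downarrow a$ and using the boundary normalizations of Theorem \ref{thmweyltitchfundsys}, namely $W(\theta_z,\phi_\lambda)(a)=1$ and $W(\phi_z,\phi_\lambda)(a)=0$ (whence $W(\psi_z,\phi_\lambda)(a)=1$), one identifies the pointwise limits with the $\Lrmu$-limits and obtains $\mathcal{F}(\chi_{(a,c)}\psi_z)(\lambda)=(1-W(\psi_z,\phi_\lambda)(c))/(\lambda-z)$ together with $\mathcal{F}(\chi_{(a,c)}\phi_z)(\lambda)=-W(\phi_z,\phi_\lambda)(c)/(\lambda-z)$.

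Finally I would assemble the three contributions over the common denominator $\lambda-z$ and simplify the resulting bracket. Expanding the Wronskians at $c$ and using the single relation $W(\psi_z,\phi_z)(c)=1$, a short cancellation collapses the bracket to the constant $1$, which gives $\mathcal{F}\psi_z(\lambda)=1/(\lambda-z)$ as claimed. The only genuinely delicate point is the behavior at the singular endpoint $b$: a naive direct application of the Lagrange identity over all of $(a,b)$ would require control of $W(\psi_z,\phi_\lambda)(b)$, which need not even converge pointwise in the l.p.\ case since $\phi_\lambda$ need not be square integrable near $b$. Routing the entire $b$-endpoint contribution through $G_z(c,\cdot\,)$, whose transform is already known, is exactly what removes this obstacle, leaving only the routine identification of an $\Lrmu$-limit with an almost-everywhere pointwise limit along a subsequence.
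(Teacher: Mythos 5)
Your proposal is correct, and every step I checked goes through: the three-term decomposition $\psi_z=\chi_{(a,c)}\psi_z+\phi_z(c)^{-1}G_z(c,\cdot\,)-\psi_z(c)\phi_z(c)^{-1}\chi_{(a,c)}\phi_z$ is a valid a.e.\ identity in $\Lr$ (using $W(\psi_z,\phi_z)=1$ in \eqref{7.4}), the Lagrange-identity evaluation of the truncated transforms together with $W(\theta_z,\phi_\lambda)(a)=1$ and $W(\phi_z,\phi_\lambda)(a)=0$ from Theorem \ref{thmweyltitchfundsys} gives the stated limits, and the final bracket collapses to $1$ precisely because $\phi_\lambda(c)-\phi_z(c)W(\psi_z,\phi_\lambda)(c)+\psi_z(c)W(\phi_z,\phi_\lambda)(c)=\phi_\lambda(c)\,[1-W(\psi_z,\phi_z)(c)]=0$. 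The paper takes a related but slicker route: it introduces the auxiliary function $\widetilde{\psi}_z(x,y)$ equal to $\psi_z(y)$ for $y\geq x$ and $m(z)\phi_z(y)$ for $y<x$, observes (implicitly) that this is the combination $\theta_z(x)\,\partial_x^\qd G_z(x,\cdot\,)-\theta_z^\qd(x)\,G_z(x,\cdot\,)$, reads off $\mathcal{F}\widetilde{\psi}_z(x,\cdot\,)(\lambda)=W(\theta_z,\phi_\lambda)(x)/(\lambda-z)$ from Lemma \ref{lemspectransGreentransform}, and then lets $x\downarrow a$, using $\widetilde{\psi}_z(x,\cdot\,)\to\psi_z$ in $\Lr$ and $W(\theta_z,\phi_\lambda)(x)\to 1$. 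Both arguments share the key idea of routing the entire contribution of the singular endpoint $b$ through the already-computed Green's function transform; the paper's version avoids your Lagrange-identity computation and the cancellation at $c$ entirely (the limit $x\downarrow a$ does that work), at the price of also invoking the quasi-derivative half of Lemma \ref{lemspectransGreentransform}, whereas your decomposition only needs its first half. Your version is somewhat longer but equally rigorous, and your closing remark correctly identifies why a naive global Lagrange identity over $(a,b)$ would fail in the l.p.\ case at $b$.
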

\begin{proof}
 From Lemma \ref{lemspectransGreentransform} we obtain for each $x\in(a,b)$
 \begin{align}
  \mathcal{F} \widetilde{\psi}_z(x,\cdot\,)(\lambda) = \frac{W(\theta_z,\phi_\lambda)(x)}{\lambda-z}, \quad \lambda\in\R,
 \end{align}
 where
 \begin{align}
  \widetilde{\psi}_z(x,y) = \begin{cases}
                         \psi_z(y), & y\geq x, \\
                         m(z) \phi_z(y), & y<x.
                        \end{cases}
 \end{align}
 Now the claim follows by letting $x\downarrow a$, using Theorem \ref{thmweyltitchfundsys}.
\end{proof}

 Under the assumptions of Lemma \ref{lemspectranweyltrans}, $m$ is a Nevanlinna--Herglotz function. Hence,
 \begin{align}\label{eqnSThergMrep}
  m(z) = c_1 + c_2 z + \int_\R 
 \bigg( \frac{1}{\lambda-z} - \frac{\lambda}{1+\lambda^2} \bigg) d\mu(\lambda),\quad z\in\C\backslash\R,
 \end{align}
 where the constants $c_1$, $c_2$ are given by 
\begin{align}
 c_1 = \re(m(\I)) \, \text{ and } \, c_2=\lim_{\eta\uparrow\infty} \frac{m(\I\eta)}{\I\eta}\geq 0.
\end{align}

\begin{corollary}\label{corlinearterm}
 If $\tau$ is in the l.c.\ case at $a$ and $\theta_z$, $\phi_z$ is a real entire fundamental system as in Theorem \ref{thmweyltitchfundsys}, then $c_2 = 0$ in \eqref{eqnSThergMrep}.
\end{corollary}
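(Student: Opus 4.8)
The plan is to play the Nevanlinna--Herglotz representation \eqref{eqnSThergMrep} evaluated along the imaginary axis against the explicit spectral transform of the Weyl solution. Recall that $c_2 = \lim_{\eta\uparrow\infty} m(\I\eta)/(\I\eta)\ge 0$, and that taking imaginary parts in \eqref{eqnSThergMrep} at $z=\I\eta$ (the terms $c_1$, $c_2 z$, and $-\lambda/(1+\lambda^2)$ contributing $c_2\eta$ and nothing else to $\im$) gives the identity
\begin{align}
 \frac{\im(m(\I\eta))}{\eta} = c_2 + \int_\R \frac{d\mu(\lambda)}{\lambda^2+\eta^2}, \quad \eta>0. \nonumber
\end{align}
So it suffices to produce a second, independent expression for $\im(m(\I\eta))/\eta$ that already accounts for the full integral, forcing $c_2=0$.

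First I would invoke equation \eqref{eqnWTHerglotz} from Corollary \ref{spek::manal}, which under the present hypotheses gives $\|\psi_z\|_{2,r}^2 = \im(m(z))/\im(z)$ for $z\in\C\backslash\R$; here $\psi_z\in\Lr$ and the norm is finite because in the l.c.\ case at $a$ both $\theta_z$ and $\phi_z$ lie in $\Lr$ near $a$, while $\psi_z$ lies in $\dom{S}$ near $b$. Next, by Lemma \ref{lemspectranweyltrans} the spectral transform of the Weyl solution is $\mathcal{F}\psi_z(\lambda)=(\lambda-z)^{-1}$, and since $\mathcal{F}$ is unitary by Lemma \ref{lemspectransfourtrans}, Parseval yields $\|\psi_z\|_{2,r}^2 = \int_\R |\lambda-z|^{-2}\,d\mu(\lambda)$. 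Combining the two displays gives $\im(m(z))/\im(z) = \int_\R |\lambda-z|^{-2}\,d\mu(\lambda)$, and at $z=\I\eta$ this reads
\begin{align}
 \frac{\im(m(\I\eta))}{\eta} = \int_\R \frac{d\mu(\lambda)}{\lambda^2+\eta^2}, \quad \eta>0. \nonumber
\end{align}
Comparing the two boxed expressions for $\im(m(\I\eta))/\eta$ immediately forces $c_2=0$.

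The argument needs no limiting step, since the identity holds for every $\eta$; one only subtracts. The two points to verify, none of which is a genuine obstacle, are that the integral $\int_\R (\lambda^2+\eta^2)^{-1}\,d\mu(\lambda)$ is finite (which holds because $\mu$ is a Nevanlinna--Herglotz measure, so $\int_\R (1+\lambda^2)^{-1}\,d\mu<\infty$, and also because it equals the finite quantity $\|\psi_{\I\eta}\|_{2,r}^2$), and that the measure $\mu$ furnished by the spectral transform in Lemma \ref{lemspectransEfgmu} is literally the one appearing in \eqref{eqnSThergMrep} — which is exactly how that representation was obtained. Thus the only conceptual content is the recognition that the Weyl-solution norm can be read off in two ways, one of which reproduces precisely the integral part of the Herglotz representation, leaving no room for a linear term.
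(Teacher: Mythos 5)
Your proposal is correct and follows essentially the same route as the paper: take imaginary parts in \eqref{eqnSThergMrep}, then identify $\im(m(z))/\im(z)$ with $\|\psi_z\|_{2,r}^2$ via \eqref{eqnWTHerglotz} and with $\int_\R |\lambda-z|^{-2}\,d\mu(\lambda)$ via Lemma \ref{lemspectranweyltrans} and unitarity of $\mathcal{F}$, whence $c_2=0$. The only cosmetic difference is that you specialize to $z=\I\eta$ while the paper works with arbitrary $z\in\C\backslash\R$; the substance is identical.
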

\begin{proof}
 Taking imaginary parts in \eqref{eqnSThergMrep} yields for each $z\in\C\backslash\R$, 
 \begin{align}
  \im(m(z)) = c_2\im(z) + \int_\R\im\left(\frac{1}{\lambda-z}\right)d\mu(\lambda) = c_2\im(z) + \int_\R \frac{\im(z)}{|\lambda-z|^2} \, d\mu(\lambda).
 \end{align}
 Using the last identity in conjunction with Lemma \ref{lemspectranweyltrans} and \eqref{eqnWTHerglotz}, we obtain
 \begin{align}
  c_2 + \int_\R \frac{1}{|\lambda-z|^2} \, d\mu(\lambda) & = \frac{\im(m(z))}{\im(z)} 
  = \|\psi_z\|_{2,r}^2 = \int_\R \frac{1}{|\lambda-z|^2} \, d\mu(\lambda). 
 \end{align}
\end{proof}

\begin{remark}
Given another singular Weyl--Titchmarsh--Kodaira function $\widetilde{m}$ as in Remark \ref{remWTtilde}, the corresponding spectral measures are related by
\begin{align}
 d\tilde{\mu} = \E^{-2g} d\mu,
\end{align}
where $g$ is the real entire function appearing in Remark \ref{remWTtilde}.
In particular, the measures are mutually absolutely continuous and the associated spectral 
transformations only differ by a simple rescaling with the positive function $\E^{-2g}$. 
\end{remark}

\section{The Spectral Multiplicity} \lb{s10}

In the present section we consider the general case where none of the endpoints are supposed to satisfy the requirements of the previous section. Therefore, let $S$ be a self-adjoint restriction of $\Tmax$ (with separated boundary conditions if $\tau$ is in the l.c.\ case at both endpoints). 
In this situation, the spectral multiplicity of $S$ is potentially two and hence we will work with a matrix-valued spectral transformation. The results in this section extend classical spectral multiplicity results for  second-order Schr\"odinger operators originally due to Kac \cite{Ka62}, \cite{Ka63} (see also Gilbert \cite{Gi98} and Simon \cite{Si05}) to the general situation discussed in this paper. 

We fix some interior point $x_0\in(a,b)$ and consider the real entire fundamental system $\theta_z$, $\phi_z$ of solutions of $(\tau-z)u=0$ with the initial conditions
\begin{align}
  \theta_z(x_0) = \phi^\qd_z(x_0) = \cos(\vphi_a) \,\text{ and }\, -\theta_z^\qd(x_0) = \phi_z(x_0) = \sin(\vphi_a), 
\end{align}
for some fixed $\vphi_a\in[0,\pi)$.
The Weyl solutions are defined by
\begin{align}
 \psi_{z,\pm}(x) = \theta_z(x) \pm m_\pm(z)\phi_z(x), \quad x\in(a,b),~z\in\C\backslash\R,
\end{align} 
such that for all $c \in (a,b)$, 
\begin{equation} 
\psi_{z,-} \in L^2((a,c);r(x)dx) \, \text{ and } \,  \psi_{z,+} \in L^2((c,b);r(x)dx).   
\end{equation} 
Hereby, $m_\pm$ are the regular Weyl--Titchmarsh--Kodaira functions of the operators $S_\pm$ obtained by restricting $S$
to $(a,x_0)$ and $(x_0,b)$ with a boundary condition 
\begin{align}
 f(x_0) \cos(\vphi_a) - f^\qd(x_0) \sin(\vphi_a)=0,
\end{align}
 respectively. 
 One notes that according to Corollary~\ref{spek::manal}, $m_\pm$ are 
 Nevanlinna--Herglotz functions.
One introduces the $2\times 2$ Weyl--Titchmarsh--Kodaira matrix
\begin{align}\label{eq:wmmat}
M(z) = \begin{pmatrix}
         -\frac{1}{m_+(z) + m_-(z)} & \frac{1}{2} \frac{m_-(z) - m_+(z)}{m_+(z) + m_-(z)} \\
         \frac{1}{2} \frac{m_-(z) - m_+(z)}{m_+(z) + m_-(z)} & \frac{m_-(z)m_+(z)}{m_+(z) + m_-(z)}
       \end{pmatrix}, \quad z\in\C\backslash\R, 
\end{align}
and observes that $\det(M(z)) = - 1/4$. Moreover, a brief computation shows that the function $M$ is a matrix-valued Nevanlinna--Herglotz function
and thus has a representation
\begin{align}\label{NHrepWeylM}
 M(z) = C_1 + C_2 z + \int_\R \left(\frac{1}{\lambda-z}-\frac{\lambda}{1+\lambda^2}\right) d\Omega(\lambda), \quad z\in\C\backslash\R,
\end{align}
where $C_1$ is a self-adjoint matrix, $C_2$ a nonnegative matrix, and $\Omega$ is a self-adjoint, matrix-valued measure which is given by the Stieltjes inversion formula
\begin{align}
 \Omega((\lambda_1,\lambda_2]) = \lim_{\delta\downarrow 0} \lim_{\eps\downarrow 0} \frac{1}{\pi} \int_{\lambda_1+\delta}^{\lambda_2+\delta} \im(M(\lambda+\I\eps)) d\lambda, \quad\lambda_1,\lambda_2\in\R, ~\lambda_1<\lambda_2.
\end{align}
It will be shown in Corollary \ref{c10.4} that one actually has $C_2 = 0$ in \eqref{NHrepWeylM}.  
Furthermore, the trace $\Omega^{\mathrm{tr}} = \Omega_{1,1}+\Omega_{2,2}$ of $\Omega$ defines a nonnegative measure and the components of $\Omega$ are absolutely continuous with respect to $\Omega^{\mathrm{tr}}$. The respective densities are denoted by $R_{i,j}$, $i, j\in\lbrace 1,2\rbrace$, and are given by
\begin{align}\label{eq:Rlim}
R_{i,j}(\lambda) = \lim_{\eps\downarrow 0} \frac{\im(M_{i,j}(\lambda+\I\eps))}{\im(M_{1,1}(\lambda+\I\eps) + M_{2,2}(\lambda+\I\eps))},
\end{align}
where the limit exists almost everywhere with respect to $\Omega^{\mathrm{tr}}$. One notes that $R$ is nonnegative and has trace equal to one. In particular, all entries
of $R$ are bounded, 
\begin{equation} 
0 \le R_{1,1},R_{2,2} \le 1, \quad | R_{1,2} | = |R_{2,1}| \le 1/2. 
\end{equation} 

Furthermore, the corresponding Hilbert space $L^2(\R; d\Omega)$ is associated with the inner product
\begin{align}
 \spr{\hat{f}}{\hat{g}}_\Omega = \int_\R \hat{f}(\lambda) \overline{\hat{g}(\lambda)} 
 \, d\Omega(\lambda) = 
 \int_\R \sum_{i,j=1}^2 \hat{f}_i(\lambda) R_{i,j}(\lambda) \overline{\hat{g}_j(\lambda)} \, d\Omega^{\mathrm{tr}}(\lambda),
\end{align}
where for each $f\in L^2_c((a,b);r(x)dx)$, one defines the transform, $\hat f$ of $f$, as 
\begin{align}\label{eqnSTIItrans}
 \hat{f}(z) = \begin{pmatrix} \hat f_1(z) \\ \hat f_2(z) \end{pmatrix} = 
 \begin{pmatrix} \int_a^b \theta_z(x)f(x)\, r(x) dx \\ \int_a^b \phi_z(x) f(x)\, r(x) dx \end{pmatrix}, 
 \quad z\in\C.
\end{align}
In the following lemma, we will relate the $2\times 2$ matrix-valued measure $\Omega$ to the operator-valued spectral measure $E$ of $S$. If $F$ is a measurable function on $\R$, we denote with $\M_F$ the maximally defined operator of multiplication with $F$ in the Hilbert space $L^2(\R; d\Omega)$.

\begin{lemma}\label{lemSTIIisom}
Assume that  $f$, $g\in L^2_c((a,b);r(x)dx)$. Then, 
  \begin{align}
    \spr{E((\lambda_1,\lambda_2])f}{g}_r = \spr{\M_{\indik_{(\lambda_1,\lambda_2]}} \hat{f}}{\hat{g}}_\Omega
  \end{align}
  for all $\lambda_1$, $\lambda_2\in\R$ with $\lambda_1<\lambda_2$.
\end{lemma}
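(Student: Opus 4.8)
The plan is to compute $\spr{R_z f}{g}_r$ explicitly from the Green's function of $S$, to isolate the matrix $M(z)$ of \eqref{eq:wmmat} as the object carrying all of the $z$-singularities, and then to recover the spectral measure by the Stieltjes--Liv\v{s}i\'c inversion formula, matching it against the definition of $\Omega$ in \eqref{NHrepWeylM}. Since the asserted identity is additive and inner-continuous in $(\lambda_1,\lambda_2]$, it suffices to identify the two complex Borel measures $B\mapsto E_{f,g}(B)=\spr{E(B)f}{g}_r$ and $B\mapsto\int_B\hat f(\lambda)^\top\,d\Omega(\lambda)\,\overline{\hat g(\lambda)}$ on bounded intervals; the latter is a well-defined finite complex measure there because $\hat f$, $\hat g$ are entire, hence bounded on compacta by Theorem \ref{thmMSLEanaly}, and $\Omega$ is finite on bounded sets.

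First I would record the Green's function. The solutions $\psi_{z,-}$ and $\psi_{z,+}$ play the roles of $u_a$ and $u_b$ in Theorem \ref{thm:ressep}, and since $W(\psi_{z,+},\psi_{z,-})=-(m_+(z)+m_-(z))$, that theorem yields an explicit double integral for $\spr{R_z f}{g}_r$, $z\in\rho(S)$, $f,g\in L^2_c((a,b);r(x)dx)$. Expanding $\psi_{z,\pm}=\theta_z\pm m_\pm(z)\phi_z$ and collecting the terms that depend on the ordering of $x,y$ against those that do not, the kernel splits as
\begin{align}
\begin{split}
 G_z(x,y) & = \begin{pmatrix}\theta_z(x)\\ \phi_z(x)\end{pmatrix}^{\!\top} M(z) \begin{pmatrix}\theta_z(y)\\ \phi_z(y)\end{pmatrix} \\
 & \quad + \tfrac{1}{2}\,\sgn(x-y)\big(\theta_z(x)\phi_z(y)-\theta_z(y)\phi_z(x)\big),
\end{split}
\end{align}
where $M(z)$ is precisely the matrix of \eqref{eq:wmmat} (using $W(\theta_z,\phi_z)=1$), and the second summand is entire in $z$ for fixed $x,y$. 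Integrating against $f$, $g$, invoking the transform \eqref{eqnSTIItrans} and the relations $\overline{\theta_z}=\theta_{\ol z}$, $\overline{\phi_z}=\phi_{\ol z}$, this gives
\begin{align}
 \spr{R_z f}{g}_r = \overline{\hat g(\ol z)}^{\,\top} M(z)\,\hat f(z) + H(z), \quad z\in\rho(S),
\end{align}
with $H$ entire (the double integral of the ordering term, finite and analytic by the local bounds of Theorem \ref{thmMSLEanaly}); here symmetry of $M$ and of $\Omega$ makes the pairing match the $\Omega$-inner product.

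Next I would apply the Stieltjes--Liv\v{s}i\'c formula in its jump form, recovering $E_{f,g}$ on $(\lambda_1,\lambda_2]$ as the $\eps\downarrow0$ limit of $\frac{1}{2\pi\I}\int(\spr{R_{\lambda+\I\eps}f}{g}_r-\spr{R_{\lambda-\I\eps}f}{g}_r)\,d\lambda$. The entire function $H$ has no jump across $\R$ and thus contributes nothing. In the remaining product $\overline{\hat g(\ol z)}^\top M(z)\hat f(z)$, both outer factors are continuous up to the real axis, with $\hat f(\lambda\pm\I\eps)\to\hat f(\lambda)$ and $\overline{\hat g(\,\overline{\lambda\pm\I\eps}\,)}=\overline{\hat g(\lambda\mp\I\eps)}\to\overline{\hat g(\lambda)}$ locally uniformly, so the entire jump originates from $M$; because $M$ is matrix Nevanlinna--Herglotz with $M(\ol z)=M(z)^\ast$, its $\im$-inversion coincides with its jump-inversion and reproduces exactly the measure $\Omega$ of \eqref{NHrepWeylM}. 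Passing to the limit therefore yields $E_{f,g}((\lambda_1,\lambda_2])=\int_{(\lambda_1,\lambda_2]}\hat f(\lambda)^\top\,d\Omega(\lambda)\,\overline{\hat g(\lambda)}=\spr{\M_{\indik_{(\lambda_1,\lambda_2]}}\hat f}{\hat g}_\Omega$, exactly as in the scalar computation underlying Lemma \ref{lemspectransEfgmu}.

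The main obstacle is the rigorous passage to the limit in this last step. The delicacy is twofold: the transforms $\hat f(z)$, $\hat g(\ol z)$ grow like $\E^{B\sqrt{|z|}}$ by Theorem \ref{thmMSLEanaly}, so one must exploit the cancellation in the jump rather than bound the product factorwise; and $\Omega$ need not be a finite measure, so interchanging the $\eps\downarrow0$ limit with the $\lambda$-integration, and the assertion that the continuous factors may be frozen at real $\lambda$ in the limit, must be controlled. This is handled precisely by the localization to bounded intervals noted at the outset, together with the locally uniform convergence of $\hat f(\lambda\pm\I\eps)$ and $\hat g(\lambda\mp\I\eps)$ on compact $\lambda$-sets and the finiteness of $\Omega$ on bounded intervals.
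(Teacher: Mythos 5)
Your proposal is correct and follows essentially the same route as the paper: the published proof simply evaluates Stone's formula using the Green's function representation \eqref{eq:ressbc} and the Stieltjes inversion formula, deferring all details to the proof of Theorem~2.12 in \cite{GZ06}. Your explicit splitting of the kernel into the $M(z)$-part plus the entire ordering term, followed by the jump argument in which the transforms are frozen at real $\lambda$ while the Herglotz matrix $M$ supplies the measure $\Omega$, is precisely the computation carried out in that reference.
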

\begin{proof}
This follows by evaluating Stone's formula
\begin{equation}  
\spr{E((\lambda_1,\lambda_2])f}{g}_r =
\lim_{\delta\downarrow 0} \lim_{\eps\downarrow 0} \frac{1}{\pi} \int_{\lambda_1+\delta}^{\lambda_2+\delta} \im\left( \spr{R_{\lam+\I\eps} f}{g}_r\right) d\lam,
\end{equation} 
using formula \eqref{eq:ressbc} for the resolvent together with the Stieltjes inversion formula, literally following the proof of \cite[Theorem~2.12]{GZ06}.
\end{proof}

Lemma \ref{lemSTIIisom} shows that the transformation defined in~\eqref{eqnSTIItrans} uniquely extends to an
 isometry $\mathcal{F}$ from $\Lr$ into $L^2(\R; d\Omega)$.

\begin{theorem}
 The operator $\mathcal{F}$ is unitary with inverse given by
 \begin{align}\label{eq:Finv2}
 \mathcal{F}^{-1} g(x) =
 \lim_{N\rightarrow\infty} \int_{[-N,N)} g(\lam) \begin{pmatrix} \theta_\lam(x)\\ 
 \phi_\lam(x)\end{pmatrix} d\Omega(\lam), \quad g\in L^2(\R; d\Omega),
 \end{align}
 where the limit exists in $\Lr$.
 Moreover, one has $S = \mathcal{F}^\ast \M_{\mathrm{id}} \mathcal{F}$.
\end{theorem}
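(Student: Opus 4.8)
The plan is to reproduce, in the matrix-valued setting, the scalar arguments underlying Lemma~\ref{lemspectransfourtrans} and Theorem~\ref{thmspectransS}. Since Lemma~\ref{lemSTIIisom} already shows that $\mathcal{F}$ is isometric from $\Lr$ into $L^2(\R;d\Omega)$, the real content is to prove surjectivity of $\mathcal{F}$, to identify its adjoint with the operator in \eqref{eq:Finv2}, and to transfer $S$ into a multiplication operator.

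First I would introduce the candidate inverse. For compactly supported $g\in L^2(\R;d\Omega)$ put
\[
 \mathcal{G}g(x) = \int_\R g(\lam)\begin{pmatrix}\theta_\lam(x)\\ \phi_\lam(x)\end{pmatrix} d\Omega(\lam), \quad x\in(a,b),
\]
and establish the bound $\|\mathcal{G}g\|_{2,r}\le\|g\|_\Omega$ exactly as in the estimate preceding Lemma~\ref{lemspectransfourtrans}: restricting the $x$-integral to $[\alpha,\beta)$, interchanging the order of integration, recognizing the inner $x$-integral as (the conjugate of) a truncated transform $\mathcal{F}(\chi_{[\alpha,\beta)}\mathcal{G}g)$, and applying Cauchy--Schwarz in $L^2(\R;d\Omega)$. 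This makes $\mathcal{G}$ bounded and shows the limit in \eqref{eq:Finv2} exists in $\Lr$. Then I would identify $\mathcal{G}=\mathcal{F}^\ast$: for $f\in L^2_c((a,b);r(x)dx)$ and compactly supported $g$, Fubini together with the reality of $\theta_\lam$, $\phi_\lam$ for real $\lam$ gives $\spr{\mathcal{G}g}{f}_r=\spr{g}{\mathcal{F}f}_\Omega$, and density extends this to all $f$, $g$. Since $\mathcal{F}$ is isometric, $\mathcal{F}^\ast\mathcal{F}=\mathcal{G}\mathcal{F}=I_r$.

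The surjectivity of $\mathcal{F}$ is the \emph{main obstacle}, because $L^2(\R;d\Omega)$ generically has spectral multiplicity two. Following the scalar proof, the spectral theorem gives, for every bounded measurable $F$ and all $f,g\in\Lr$, the identity $\spr{\M_G\mathcal{F}F(S)f}{\mathcal{F}g}_\Omega=\spr{G(S)F(S)f}{g}_r=\spr{\M_G\M_F\mathcal{F}f}{\mathcal{F}g}_\Omega$ for every bounded $G$, whence $\mathcal{F}(F(S)f)=\M_F\mathcal{F}f$ holds $\Omega$-almost everywhere; thus $\ran(\mathcal{F})$ is invariant under multiplication by $L^\infty$-functions. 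To see that $\ran(\mathcal{F})=L^2(\R;d\Omega)$ one must, near each spectral point, produce transforms spanning both coordinate directions. This is where the base point $x_0$ enters: concentrating $f$ near $x_0$ as an approximate $\delta_{x_0}$ and as an approximate $\delta_{x_0}'$, the transforms $\hat f=\big(\int_a^b\theta_\lam f\,r\,dx,\ \int_a^b\phi_\lam f\,r\,dx\big)^\top$ approach, in the limit and locally uniformly on bounded $\lam$-intervals of finite $\Omega$-mass, the two constant directions fixed by the initial conditions at $x_0$, namely $(\cos\vphi_a,\sin\vphi_a)^\top$ and $(-\sin\vphi_a,\cos\vphi_a)^\top$, arising respectively from the values and the quasi-derivatives of $\theta_\lam$, $\phi_\lam$ at $x_0$. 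These two vectors are linearly independent in $\C^2$, so combined with the $L^\infty$-invariance they allow one to realize, for every Borel set $J$ and each coordinate, the corresponding vector-valued characteristic function as an element of $\ran(\mathcal{F})$, exactly paralleling the construction of $\chi_J$ in the scalar proof. Hence $\ran(\mathcal{F})=L^2(\R;d\Omega)$, $\mathcal{F}$ is unitary, and $\mathcal{F}^{-1}=\mathcal{F}^\ast=\mathcal{G}$, which is precisely the formula \eqref{eq:Finv2}.

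Finally, the identity $S=\mathcal{F}^\ast\M_{\mathrm{id}}\mathcal{F}$ follows verbatim from the argument of Theorem~\ref{thmspectransS}: for $f\in\Lr$ one has $f\in\dom{S}$ iff $\int_\R\lam^2\,d\spr{E(\lam)f}{f}_r<\infty$ iff $\mathcal{F}f\in\dom{\M_{\mathrm{id}}}$, and on this domain Lemma~\ref{lemSTIIisom} yields $\spr{Sf}{g}_r=\int_\R\lam\,d\spr{E(\lam)f}{g}_r=\spr{\M_{\mathrm{id}}\mathcal{F}f}{\mathcal{F}g}_\Omega=\spr{\mathcal{F}^\ast\M_{\mathrm{id}}\mathcal{F}f}{g}_r$ for all $g\in\Lr$, so that $Sf=\mathcal{F}^\ast\M_{\mathrm{id}}\mathcal{F}f$.
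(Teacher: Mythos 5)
Your overall architecture (isometry from Lemma \ref{lemSTIIisom}, identification of the candidate inverse with $\mathcal{F}^\ast$, and the final verification of $S=\mathcal{F}^\ast\M_{\mathrm{id}}\mathcal{F}$ via the spectral measure) is sound and agrees with the paper up to the surjectivity step. That step, however, contains a genuine gap exactly where the second coordinate direction is supposed to come from. Testing $\theta_\lam$ and $\phi_\lam$ against an approximate $\delta_{x_0}'$ with respect to $r(x)dx$ does not produce the quasi-derivatives $\theta_\lam^{\qd}(x_0)$, $\phi_\lam^{\qd}(x_0)$: formally it produces a quantity built from the ordinary derivative and the weight $r$, and under Hypothesis \ref{h2.1} the derivatives $\theta_\lam'=p^{-1}\theta_\lam^{\qd}-\foco\,\theta_\lam$ are merely locally integrable, so the limit need not exist at the fixed point $x_0$ (where $p(x_0)^{-1}$ and $\foco(x_0)$ are not even well defined), and when it does exist it is not $(-\sin(\vphi_a),\cos(\vphi_a))^\top$. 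Since on the set where $\det(R(\lam))>0$ the multiplicity is two, the single direction $(\cos(\vphi_a),\sin(\vphi_a))^\top$ obtained from the approximate $\delta_{x_0}$ cannot generate all of $L^2(\R;d\Omega)$ under $L^\infty$-multiplication, so the argument does not close. A smaller issue of the same nature: your derivation of $\mathcal{F}F(S)f=\M_F\mathcal{F}f$ by cancelling $\ol{\mathcal{F}g(\lam)}$ pointwise requires the transforms to span $\ran(R(\lam))$ $\Omega^{\mathrm{tr}}$-a.e., which is essentially the totality you are trying to prove; this step can be repaired by a norm computation based on Lemma \ref{lemSTIIisom}, but as written it is circular.

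The paper circumvents both difficulties by proving $\ker(\mathcal{F}^\ast)=\{0\}$ instead of attacking $\ran(\mathcal{F})$ directly: a Fubini argument yields $\mathcal{F}^\ast\big(\tfrac{1}{\cdot-z}\,g\big)=R_z\mathcal{F}^\ast g$, Stone--Weierstra{\ss} and the spectral theorem upgrade this to $\mathcal{F}^\ast\M_{\chi_I}g=\chi_I(S)\mathcal{F}^\ast g$, and for $g\in\ker(\mathcal{F}^\ast)$ one obtains $\int_I g(\lam)\,(\theta_\lam(x),\phi_\lam(x))^\top d\Omega(\lam)=0$ for \emph{all} $x\in(a,b)$. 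The quasi-derivative direction then comes for free: one takes the quasi-derivative of this identically vanishing function of $x$ (differentiation under the integral sign being justified by the first-order system \eqref{eqn:system}) and evaluates at $x=x_0$, where the initial conditions yield the two independent vectors. If you wish to keep your range-based strategy, the correct substitute for your approximate $\delta'$ is the function $\partial_x^{\qd}G_z(x_0,\cdot)\in\Lr$, whose transform is $\tfrac{1}{\lam-z}\big(\theta_\lam^{\qd}(x_0),\phi_\lam^{\qd}(x_0)\big)^\top$ by Lemma \ref{lemspectrans2Greentransform}; note, however, that the paper proves that lemma \emph{after} the theorem, using \eqref{eqnTransRes}, so you would have to establish the Green's function transform independently to avoid circularity.
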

\begin{proof}
Because of Lemma \ref{lemSTIIisom}, it remains to show that $\mathcal{F}$ is onto. Since it is straightforward to verify
that the integral operator on the right-hand side of \eqref{eq:Finv2} is the adjoint of $\mathcal{F}$, we can equivalently show that $\ker(\mathcal{F}^*)=\{0\}$. 
To this end, let $g\in L^2(\R; d\Omega)$, $N\in\N$, and $z\in\rho(S)$. Then
\begin{equation} 
(S-z) \int_{[-N,N)} \frac{1}{\lam-z} \, g(\lam) 
\begin{pmatrix} \theta_\lam(x)\\ \phi_\lam(x)\end{pmatrix} d\Omega(\lam)
= \int_{[-N,N)} g(\lam) \begin{pmatrix} \theta_\lam(x)\\ \phi_\lam(x)\end{pmatrix} d\Omega(\lam),
\end{equation}
since interchanging integration with differentiation can be justified using Fubini's theorem. Taking the limit
$N\to\infty$, one concludes that 
\begin{equation}\label{eqnTransRes}
\mathcal{F}^* \frac{1}{\cdot -z} \, g = R_z \mathcal{F}^* g, \quad g\in L^2(\R; d\Omega).
\end{equation}
By Stone--Weierstra{\ss}, one concludes in addition that 
$\mathcal{F}^* \M_F g = F(S) \mathcal{F}^* g$ for any
continuous function $F$ vanishing at infinity, and by a consequence of the spectral theorem 
(see, e.g., the last part of
\cite[Theorem~3.1]{Te09}), one can further extend this to characteristic functions of intervals $I$. 
Hence, for $g \in \ker(\mathcal{F}^*)$ one infers that 
\begin{equation} 
\int_I g(\lam) \begin{pmatrix} \theta_\lam(x)\\ \phi_\lam(x)\end{pmatrix} d\Omega(\lam) =0
\end{equation}
for any compact interval $I$. Moreover, after taking derivatives, one also obtains 
\begin{equation} 
\int_I g(\lam) \begin{pmatrix} \theta^\qd_\lam(x)\\ \phi^\qd_\lam(x)\end{pmatrix} d\Omega(\lam) =0.
\end{equation}
Choosing $x=x_0$ implies 
\begin{equation} 
\int_I g(\lam) \begin{pmatrix} \cos(\vphi_a)\\ \sin(\vphi_a)\end{pmatrix} d\Omega(\lam) =
\int_I g(\lam) \begin{pmatrix} -\sin(\vphi_a)\\ \cos(\vphi_a)\end{pmatrix} d\Omega(\lam) =0
\end{equation}
for any compact interval $I$, and thus $g=0$, as required.
\end{proof}

As in Lemma \ref{lemspectransGreentransform}, one can determine the transform of the Green's function upon employing Theorem \ref{thm:ressep} and equation \eqref{eqnTransRes}. 

\begin{lemma}\label{lemspectrans2Greentransform}
 For every $z\in\rho(S)$ and all $x\in(a,b)$ the transform of the Green's function $G_z(x,\cdot\,)$ and its quasi-derivative $\partial_x^\qd G_z(x,\cdot\,)$ are given by
\begin{align}
& \mathcal{F} G_z(x,\cdot\,) (\lambda) = \frac{1}{\lambda-z} \begin{pmatrix} \theta_\lam(x)\\ \phi_\lam(x)\end{pmatrix} \, \text{ and } \,  
      \mathcal{F} \partial_x^\qd G_z(x,\cdot\,) (\lambda) = \frac{1}{\lambda-z} \begin{pmatrix} \theta^\qd_\lam(x)\\ \phi^\qd_\lam(x)\end{pmatrix},    \no \\
&\hspace*{9.5cm}  \lambda\in\R.      
 \end{align}
\end{lemma}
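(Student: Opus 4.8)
The plan is to mirror the scalar argument of Lemma~\ref{lemspectransGreentransform}, with the matrix-valued resolvent--transform identity \eqref{eqnTransRes} taking over the role played there by the scalar resolvent formula. First I would record that, for fixed $x\in(a,b)$ and $z\in\rho(S)$, both $G_z(x,\cdot\,)$ and $\partial_x^\qd G_z(x,\cdot\,)$ belong to $\Lr$: this is immediate from the explicit representation \eqref{7.4} in Theorem~\ref{thm:ressep} together with the square integrability of $u_a$, $u_b$ near the respective endpoints. Consequently their images under $\mathcal{F}$ are well-defined elements of $L^2(\R;d\Omega)$, and it suffices to identify them by pairing against a dense family of test functions $g\in L^2_c(\R;d\Omega)$.

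For such a $g$, write $\check g=\mathcal{F}^\ast g$. Using that $\mathcal{F}$ is unitary I would start from
\[
 \spr{\mathcal{F}G_z(x,\cdot\,)}{g}_\Omega = \spr{G_z(x,\cdot\,)}{\check g}_{r}.
\]
Since $\theta_\lambda$, $\phi_\lambda$, the densities $R_{i,j}$, and the trace measure $\Omega^{\mathrm{tr}}$ are all real for real $\lambda$, one checks $\ol{\check g}=\mathcal{F}^\ast\ol g$; and since $G_z(x,y)$ is the integral kernel of $R_z$ from Theorem~\ref{thm:ressep}, the right-hand side equals $\big(R_z\mathcal{F}^\ast\ol g\big)(x)$. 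Now \eqref{eqnTransRes} rewrites this as $\big(\mathcal{F}^\ast\tfrac{1}{\,\cdot\,-z}\ol g\big)(x)$, which the inverse transform \eqref{eq:Finv2} evaluates to
\[
 \int_\R \frac{1}{\lambda-z}\,\ol{g(\lambda)}\begin{pmatrix}\theta_\lambda(x)\\ \phi_\lambda(x)\end{pmatrix} d\Omega(\lambda).
\]
Invoking the symmetry $R_{i,j}=R_{j,i}$ of the densities (equivalently, the symmetry of $M$), this last integral is precisely $\spr{\tfrac{1}{\,\cdot\,-z}\big(\theta_{\cdot}(x),\phi_{\cdot}(x)\big)^{\top}}{g}_\Omega$. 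As $g$ runs through a dense set, the first claimed identity follows as an equality in $L^2(\R;d\Omega)$.

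The quasi-derivative statement is obtained by the same chain of equalities, now using that $\partial_x^\qd G_z(x,\cdot\,)$ is the integral kernel of $f\mapsto (R_zf)^\qd$ and differentiating the representation of $\mathcal{F}^\ast\big(\tfrac{1}{\,\cdot\,-z}\ol g\big)$ under the integral sign; this interchange is justified by Fubini's theorem exactly as in the proof of the preceding theorem, using the local, $z$-uniform bounds of Theorem~\ref{thmMSLEanaly} for $\theta_\lambda(x)$, $\phi_\lambda(x)$, $\theta_\lambda^\qd(x)$, $\phi_\lambda^\qd(x)$ together with the boundedness $0\le R_{1,1},R_{2,2}\le 1$, $|R_{1,2}|\le 1/2$ of the densities. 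This replaces the column $\big(\theta_\lambda(x),\phi_\lambda(x)\big)^{\top}$ by $\big(\theta_\lambda^\qd(x),\phi_\lambda^\qd(x)\big)^{\top}$. Finally, to pass from ``almost every $x$'' to ``every $x\in(a,b)$'', I would note that $x\mapsto G_z(x,\cdot\,)$ and $x\mapsto\partial_x^\qd G_z(x,\cdot\,)$ are absolutely continuous as $\Lr$-valued maps, that $\mathcal{F}$ is bounded, and that the right-hand sides depend continuously on $x$, so the identities extend by continuity.

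The main obstacle I anticipate is the bookkeeping for the interchanges of integration against the matrix-valued measure $\Omega$ --- both the Fubini step that turns $\spr{G_z(x,\cdot\,)}{\check g}_{r}$ into an explicit $\lambda$-integral and the differentiation under the integral in the quasi-derivative case. Compact support of $g$, the uniform-on-compacts solution bounds of Theorem~\ref{thmMSLEanaly}, and the boundedness of the densities $R_{i,j}$ together supply dominating functions, so the interchanges are legitimate; the only genuinely matrix-valued ingredient beyond the scalar proof of Lemma~\ref{lemspectransGreentransform} is the symmetry $R_{i,j}=R_{j,i}$ needed to match the two $\Omega$-pairings.
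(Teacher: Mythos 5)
Your argument is correct and is precisely the route the paper intends: the paper gives no detailed proof here, merely noting that one argues ``as in Lemma \ref{lemspectransGreentransform}, \dots employing Theorem \ref{thm:ressep} and equation \eqref{eqnTransRes}'', and your proposal fleshes out exactly that chain (pairing against $g\in L^2_c(\R;d\Omega)$, unitarity of $\mathcal{F}$, the kernel representation of $R_z$, the identity \eqref{eqnTransRes}, and the continuity-in-$x$ step), together with the same treatment of the quasi-derivative as in the scalar case.
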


As a consequence, one obtains the following refinement of \eqref{NHrepWeylM}:

\begin{corollary} \lb{c10.4} 
The matrix $C_2$ in \eqref{NHrepWeylM} is zero.
\end{corollary}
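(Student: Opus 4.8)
The plan is to show that the quadratic form of $C_2$ vanishes on two orthonormal directions and then invoke $C_2\ge 0$. Writing $N(z) := \int_\R |\lambda-z|^{-2}\,d\Omega(\lambda)$, taking imaginary parts in \eqref{NHrepWeylM} gives
\begin{equation}
\frac{\im(M(z))}{\im(z)} = C_2 + N(z), \quad z\in\C\backslash\R,
\end{equation}
with $C_2\ge 0$ and $N(z)\ge 0$. Set $\mathbf{u} = (\cos(\vphi_a),\sin(\vphi_a))^\top$ and $\mathbf{u}^\perp = (-\sin(\vphi_a),\cos(\vphi_a))^\top$; by the initial conditions at $x_0$ these coincide with $(\theta_\lambda(x_0),\phi_\lambda(x_0))^\top$ and $(\theta^\qd_\lambda(x_0),\phi^\qd_\lambda(x_0))^\top$ for every $\lambda$, and they form an orthonormal basis of $\R^2$. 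Since $\{\mathbf{u},\mathbf{u}^\perp\}$ is orthonormal and $C_2\ge 0$, it suffices to prove $\mathbf{u}^\top C_2\mathbf{u} = 0$ and $(\mathbf{u}^\perp)^\top C_2\mathbf{u}^\perp = 0$: then the diagonal of $C_2$ in this basis vanishes, and positive semidefiniteness forces $C_2 = 0$.

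I evaluate both sides of these two identities. On the spectral side, Lemma \ref{lemspectrans2Greentransform} together with the unitarity of $\mathcal{F}$ yields, for fixed $z$,
\begin{equation}
\|G_z(x_0,\cdot\,)\|_{2,r}^2 = \mathbf{u}^\top N(z)\,\mathbf{u}, \quad \|\partial_x^\qd G_z(x_0,\cdot\,)\|_{2,r}^2 = (\mathbf{u}^\perp)^\top N(z)\,\mathbf{u}^\perp,
\end{equation}
because the transforms $\mathcal{F}G_z(x_0,\cdot\,)(\lambda) = (\lambda-z)^{-1}\mathbf{u}$ and $\mathcal{F}\partial_x^\qd G_z(x_0,\cdot\,)(\lambda) = (\lambda-z)^{-1}\mathbf{u}^\perp$ have $\lambda$-independent vector parts. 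On the operator side, I use the representation \eqref{eq:ressbc}--\eqref{7.4} of the resolvent with $u_a = \psi_{z,-}$ and $u_b = \psi_{z,+}$, noting $W(\psi_{z,+},\psi_{z,-}) = -(m_+(z)+m_-(z))$. Splitting the defining integrals at $x_0$ and invoking the half-line Herglotz relations
\begin{equation}
\int_a^{x_0}|\psi_{z,-}(t)|^2\,r(t)\,dt = \frac{\im(m_-(z))}{\im(z)}, \quad \int_{x_0}^b|\psi_{z,+}(t)|^2\,r(t)\,dt = \frac{\im(m_+(z))}{\im(z)},
\end{equation}
which follow from the Lagrange identity exactly as in the proof of \eqref{eqnWTHerglotz} (Corollary \ref{spek::manal}) applied to $S_\pm$, a direct computation reduces $\|G_z(x_0,\cdot\,)\|_{2,r}^2$ and $\|\partial_x^\qd G_z(x_0,\cdot\,)\|_{2,r}^2$ to $\im(M_{1,1}(z))/\im(z)$ and $\im(M_{2,2}(z))/\im(z)$, i.e., to $\mathbf{u}^\top\im(M(z))\mathbf{u}/\im(z)$ and $(\mathbf{u}^\perp)^\top\im(M(z))\mathbf{u}^\perp/\im(z)$, respectively.

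Comparing the two evaluations of each norm gives $\mathbf{u}^\top C_2\mathbf{u} = 0$ and $(\mathbf{u}^\perp)^\top C_2\mathbf{u}^\perp = 0$, whence $C_2 = 0$ as explained above. The routine but essential bookkeeping is the second computation: one differentiates \eqref{7.4} in the first variable, keeping track of the jump at $x_0$, and then verifies the algebraic identity $\im(M_{2,2}(z))/\im(z) = |m_+(z)+m_-(z)|^{-2}\big(|\psi_{z,+}^\qd(x_0)|^2\,\im(m_-(z)) + |\psi_{z,-}^\qd(x_0)|^2\,\im(m_+(z))\big)/\im(z)$, where the explicit entries of $M(z)$ from \eqref{eq:wmmat} enter. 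I expect this quasi-derivative norm identity, together with fixing the correct orientation and sign conventions in the half-line relations for $m_\pm$, to be the main obstacle; the first ($\mathbf{u}$) identity is comparatively immediate and can alternatively be obtained from the resolvent-square identity $R_z R_{\bar z} = (R_z - R_{\bar z})/(z-\bar z)$, which gives $\|G_z(x_0,\cdot\,)\|_{2,r}^2 = \im(G_z(x_0,x_0))/\im(z)$ combined with $G_z(x_0,x_0) = \mathbf{u}^\top M(z)\mathbf{u}$.
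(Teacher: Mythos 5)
Your proof is correct and takes essentially the same route as the paper: both evaluate $\|G_z(x_0,\cdot\,)\|_{2,r}^2$ and $\|\partial_x^\qd G_z(x_0,\cdot\,)\|_{2,r}^2$ once via Lemma \ref{lemspectrans2Greentransform} together with the unitarity of $\mathcal{F}$, and once via the kernel \eqref{7.4} combined with the half-line analogues of \eqref{eqnWTHerglotz} for $m_\pm$. The only differences are minor: you invoke $C_2\geq 0$ and the vanishing of its trace to avoid the mixed quadratic form $(-\sin(\vphi_a),\cos(\vphi_a))\,\im(M(z))\,(\cos(\vphi_a),\sin(\vphi_a))^\top$, which the paper also computes in order to obtain the full matrix identity \eqref{eqnHerglpropM}, and your passing identification of the quadratic form $(\cos(\vphi_a),\sin(\vphi_a))\,M(z)\,(\cos(\vphi_a),\sin(\vphi_a))^\top$ with the entry $M_{1,1}(z)$ (and similarly for $M_{2,2}(z)$) is literally valid only when $\vphi_a=0$, though the objects you actually compare in the final step are the correct ones.
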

\begin{proof}
Following the proof of Corollary \ref{corlinearterm}, it suffices to show that 
\begin{equation}\label{eqnHerglpropM}
 \frac{\im(M(z))}{\im(z)} = \int_\R \frac{1}{|\lambda-z|^2} \, 
 d\Omega(\lambda), \quad z\in\C\backslash\R. 
\end{equation}
Therefore, one first concludes from Lemma \ref{lemspectrans2Greentransform} that for 
every $z\in\C\backslash\R$,   
\begin{equation} 
\int_a^b |G_z(x_0,y)|^2 \, r(y) dy 
= \begin{pmatrix} \cos(\vphi_a)\\ \sin(\vphi_a)\end{pmatrix} \int_\R \frac{1}{|z-\lam|^2} \, 
d\Omega(\lambda) \begin{pmatrix} \cos(\vphi_a)\\ \sin(\vphi_a)\end{pmatrix}.  \lb{10.15a} 
\end{equation} 
Using \eqref{7.4} and \eqref{eqnWTHerglotz} to evaluate the left-hand side of \eqref{10.15a}, 
one obtains 
\begin{align}
\int_a^b |G_z(x_0,y)|^2 \, r(y) dy &= \frac{1}{|W(\psi_{z,+},\psi_{z,-}))|^2} \Big(
|\psi_{z,+}(x_0)|^2 \int_a^{x_0} |\psi_{z,-}(y)|^2 \, r(y) dy    \no \\
& \quad + |\psi_{z,-}(x_0)|^2 \int_{x_0}^b |\psi_{z,+}(y)|^2 \, r(y) dy \Big)    \no \\
& =   \begin{pmatrix} \cos(\vphi_a)\\ \sin(\vphi_a)\end{pmatrix} \frac{\im(M(z))}{\im(z)} \begin{pmatrix} \cos(\vphi_a)\\ \sin(\vphi_a)\end{pmatrix}.
\end{align} 
In a similar manner, one proves corresponding formulas for
\begin{align}
\begin{pmatrix} -\sin(\vphi_a)\\ \cos(\vphi_a)\end{pmatrix} \frac{\im(M(z))}{\im(z)} \begin{pmatrix} \cos(\vphi_a)\\ \sin(\vphi_a)\end{pmatrix} \quad \text{and}\quad
\begin{pmatrix} -\sin(\vphi_a)\\ \cos(\vphi_a)\end{pmatrix} \frac{\im(M(z))}{\im(z)} \begin{pmatrix} -\sin(\vphi_a)\\ \cos(\vphi_a)\end{pmatrix}, 
\end{align}
establishing the identity \eqref{eqnHerglpropM}.
\end{proof}

We note that the vanishing of the linear term $C_2 z$ in \eqref{NHrepWeylM} is typical in this context and refer to 
\cite[Ch.\ 7]{ABT11} and \cite{Ma92} for detailed discussions. 

Finally we turn to spectral multiplicities.
Therefore, one introduces the measurable unitary matrix $U(\lam)$ which diagonalizes
$R(\lam)$, that is,
\begin{equation} \label{defrulam}
R(\lam) = U(\lam)^* \begin{pmatrix} \varrho_1(\lam) & 0 \\ 0 & \varrho_2(\lam) \end{pmatrix} U(\lam),
\end{equation}
where $0\le \varrho_1(\lam) \le \varrho_2(\lam)\le 1$ are the eigenvalues of $R(\lam)$.
In addition, one observes that $\varrho_1(\lam)+ \varrho_2(\lam)=1$ since $\tr(R(\lam))=1$. The matrix $U(\lam)$
gives rise to a unitary operator $L^2(\R; d\Omega) \to L^2(\R; \varrho_1 d\Omega^{\mathrm{tr}}) \oplus L^2(\R; \varrho_2 d\Omega^{\mathrm{tr}})$
which leaves $\M_{\mathrm{id}}$ invariant. From this observation one immediately obtains
the analog of Corollary~\ref{cor:splimm}.

\begin{corollary}
Introduce the Nevanlinna--Herglotz function
\be
M^\mathrm{tr}(z) = \tr(M(z))= \frac{m_-(z)m_+(z) -1}{m_+(z) + m_-(z)}, \quad z\in\C\backslash\R,
\ee
associated with the trace measure $d \Omega^\mathrm{tr}$.
Then the spectrum of $S$ is given by
\begin{align}
 \sigma(S) & =  \supp(d\Omega^\mathrm{tr}) = \overline{\lbrace \lambda\in\R \,|\, 0 < \limsup_{\varepsilon\downarrow 0} \im(M^\mathrm{tr}(\lambda+\I\varepsilon))\rbrace}.
\end{align}
Moreover,
\begin{align}
\sigma_p(S) & = \{ \lambda\in\R \,|\, 0 < \lim_{\varepsilon\downarrow0} \varepsilon \im(M^\mathrm{tr}(\lambda+\I\varepsilon)) \}, \\
\sigma_{ac}(S) & = \overline{\{ \lambda\in\R \,|\, 0 < \limsup_{\varepsilon\downarrow0} \im(M^\mathrm{tr}(\lambda+\I\varepsilon)) < \infty \}}^{ess},
\end{align}
and
\begin{align}\label{defSigs}
\Sigma_s = \{ \lambda\in\R \,|\, \limsup_{\varepsilon\downarrow0} \im(M^\mathrm{tr}(\lambda+\I\varepsilon)) = \infty \}
\end{align}
is a minimal support for the singular spectrum $($singular continuous plus pure point spectrum$)$ 
of $S$.
\end{corollary}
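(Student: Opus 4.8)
The plan is to reduce the matrix-valued situation entirely to the scalar result of Corollary \ref{cor:splimm}. The first step is to take the trace in the Nevanlinna--Herglotz representation \eqref{NHrepWeylM}; since the trace is linear and commutes with the matrix-valued integral, one obtains
\begin{align*}
 M^\mathrm{tr}(z) = \tr(C_1) + \tr(C_2)\, z + \int_\R \Big(\frac{1}{\lambda-z} - \frac{\lambda}{1+\lambda^2}\Big)\, d\Omega^\mathrm{tr}(\lambda), \quad z\in\C\backslash\R,
\end{align*}
and Corollary \ref{c10.4} shows that $\tr(C_2)=0$. Hence $M^\mathrm{tr}$ is a \emph{scalar} Nevanlinna--Herglotz function whose representing measure is precisely the trace measure $\Omega^\mathrm{tr}$ and which carries no linear term. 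Consequently, the four boundary-value characterizations asserted in the statement are, for this scalar Herglotz function, nothing but the content of Corollary \ref{cor:splimm} applied to the pair $(M^\mathrm{tr},\Omega^\mathrm{tr})$: they identify $\supp(\Omega^\mathrm{tr})$ together with minimal supports for its absolutely continuous and its singular parts purely in terms of the boundary behavior of $\im(M^\mathrm{tr}(\lambda+\I\varepsilon))$ as $\varepsilon\downarrow 0$.

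It therefore remains to match the spectrum of $S$, and its absolutely continuous, singular continuous, and pure point parts, with the corresponding objects attached to the scalar measure $\Omega^\mathrm{tr}$. For this I would invoke the unitary equivalence $S = \mathcal{F}^\ast \M_{\mathrm{id}} \mathcal{F}$ together with the pointwise diagonalization \eqref{defrulam}. Because $U(\lambda)$ acts fiberwise in $\lambda$, the induced unitary map $L^2(\R; d\Omega) \to L^2(\R; \varrho_1\, d\Omega^\mathrm{tr}) \oplus L^2(\R; \varrho_2\, d\Omega^\mathrm{tr})$ commutes with $\M_{\mathrm{id}}$, so that $S$ is unitarily equivalent to multiplication by the independent variable on this orthogonal sum. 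Since the spectrum of multiplication by $\lambda$ on $L^2(\R;\nu)$ equals $\supp(\nu)$ and its Lebesgue decomposition reads off the corresponding decomposition of $\nu$, one concludes
\begin{align*}
 \sigma(S) = \supp(\varrho_1\, d\Omega^\mathrm{tr}) \cup \supp(\varrho_2\, d\Omega^\mathrm{tr}),
\end{align*}
with the analogous union formulas for the absolutely continuous, singular continuous, and pure point parts.

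The decisive simplification is that $\varrho_1+\varrho_2=1$ together with $\varrho_1\le\varrho_2$ forces $\varrho_2\ge 1/2$ almost everywhere with respect to $\Omega^\mathrm{tr}$, so that $\varrho_2\, d\Omega^\mathrm{tr}$ is mutually absolutely continuous with $\Omega^\mathrm{tr}$, while $\varrho_1\, d\Omega^\mathrm{tr}$ is absolutely continuous with respect to it. Hence each of the support, and of the absolutely continuous, singular continuous, and pure point parts of the orthogonal sum coincides with the corresponding object of $\Omega^\mathrm{tr}$ alone; the second multiplicity sheet $\varrho_1\, d\Omega^\mathrm{tr}$ contributes nothing new to supports or to the Lebesgue type decomposition. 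Combining this identification with the scalar characterizations of $(M^\mathrm{tr},\Omega^\mathrm{tr})$ from the first paragraph yields all the asserted formulas. The one point requiring care, and which I regard as the main obstacle, is exactly this matching of the operator's Lebesgue (a.c./s.c./p.p.) decomposition with that of the scalar measure $\Omega^\mathrm{tr}$: one must use that every entry $\Omega_{i,j}$ is absolutely continuous with respect to $\Omega^\mathrm{tr}$ (so that the densities $R_{i,j}$ in \eqref{eq:Rlim} are well defined and bounded) to ensure that no spectral mass is hidden on an $\Omega^\mathrm{tr}$-null set, thereby justifying that the scalar trace function faithfully detects the full spectral data of $S$.
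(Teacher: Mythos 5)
Your argument is correct and follows essentially the same route as the paper: the paper's (one-line) proof rests on exactly the fiberwise diagonalization \eqref{defrulam}, the induced unitary onto $L^2(\R;\varrho_1\,d\Omega^{\mathrm{tr}})\oplus L^2(\R;\varrho_2\,d\Omega^{\mathrm{tr}})$, and the transfer of the scalar boundary-value characterizations of Corollary \ref{cor:splimm} to the pair $(M^{\mathrm{tr}},\Omega^{\mathrm{tr}})$. You merely make explicit the two facts the paper leaves implicit, namely $\tr(C_2)=0$ and $\varrho_2\ge 1/2$ $\Omega^{\mathrm{tr}}$-a.e.\ (so that $\varrho_2\,d\Omega^{\mathrm{tr}}$ and $d\Omega^{\mathrm{tr}}$ are mutually absolutely continuous), which is a welcome clarification but not a different proof.
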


Furthermore, this allows us to investigate the spectral multiplicity of $S$.

\begin{lemma} \label{lem:spmul}
If we define
\begin{align}
\Sigma_1 &= \{ \lam\in \supp(d\Omega^{tr}) \,| \det(R(\lam))= \varrho_1(\lam) \varrho_2(\lam)=0 \},\\
\Sigma_2 &= \{ \lam\in \supp(d\Omega^{tr}) \,| \det(R(\lam))= \varrho_1(\lam) \varrho_2(\lam)>0 \},
\end{align}
then $\M_{\mathrm{id}}=  \M_{\mathrm{id}\cdot \indik_{\Sigma_1}} \oplus  \M_{\mathrm{id}\cdot \indik_{\Sigma_2}}$ and
the spectral multiplicity of $\M_{\mathrm{id}\cdot \indik_{\Sigma_1}}$ is one and
the spectral multiplicity of $\M_{\mathrm{id}\cdot \indik_{\Sigma_2}}$ is two.
\end{lemma}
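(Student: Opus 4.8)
The plan is to reduce the whole statement to the standard multiplicity theory of scalar multiplication operators, exploiting the pointwise diagonalization $R(\lambda) = U(\lambda)^* \operatorname{diag}(\varrho_1(\lambda),\varrho_2(\lambda)) U(\lambda)$ introduced just above. Since $\varrho_1(\lambda)+\varrho_2(\lambda)=1$ together with $0\le\varrho_1(\lambda)\le\varrho_2(\lambda)\le 1$, the two defining conditions split $\supp(d\Omega^{\mathrm{tr}})$ into the disjoint Borel sets $\Sigma_1$ and $\Sigma_2$: on $\Sigma_1$ the relation $\varrho_1\varrho_2=0$ forces $\varrho_1\equiv 0$ and hence $\varrho_2\equiv 1$ (as $\varrho_2=0$ is incompatible with the trace being one), while on $\Sigma_2$ both eigenvalues are strictly positive. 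Because $\indik_{\Sigma_1}$ and $\indik_{\Sigma_2}$, viewed as multiplication operators on $L^2(\R;d\Omega)$, are orthogonal projections that commute with $\M_{\mathrm{id}}$ and sum to the identity modulo $d\Omega^{\mathrm{tr}}$-null sets, they reduce $\M_{\mathrm{id}}$ and immediately yield the asserted orthogonal decomposition $\M_{\mathrm{id}} = \M_{\mathrm{id}\cdot\indik_{\Sigma_1}} \oplus \M_{\mathrm{id}\cdot\indik_{\Sigma_2}}$.

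Next I would perform the unitary transformation induced by $U(\cdot)$. Writing the inner product on $L^2(\R;d\Omega)$ fiberwise as $\int_\R \langle R(\lambda)\hat f(\lambda),\hat g(\lambda)\rangle_{\C^2}\,d\Omega^{\mathrm{tr}}(\lambda)$ and using $R=U^*\operatorname{diag}(\varrho_1,\varrho_2)U$, the substitution $\hat f\mapsto U\hat f$ turns this into
\begin{equation}
\int_\R \big(\varrho_1 (U\hat f)_1\,\overline{(U\hat g)_1} + \varrho_2 (U\hat f)_2\,\overline{(U\hat g)_2}\big)\,d\Omega^{\mathrm{tr}}.
\end{equation}
Hence $\hat f\mapsto U\hat f$ is a unitary map of $L^2(\R;d\Omega)$ onto $L^2(\R;\varrho_1 d\Omega^{\mathrm{tr}}) \oplus L^2(\R;\varrho_2 d\Omega^{\mathrm{tr}})$, and since $U(\lambda)$ acts fiberwise it commutes with multiplication by the scalar $\lambda$. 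Thus $\M_{\mathrm{id}}$ is unitarily equivalent to $\M_{\mathrm{id}}\oplus\M_{\mathrm{id}}$ on the latter space. As $\indik_{\Sigma_j}$ depends only on $\lambda$, it commutes with the fiber unitary $U(\lambda)$, so this equivalence restricts to the reducing subspaces over $\Sigma_1$ and $\Sigma_2$ and reduces the two multiplicity claims to a computation on each piece.

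On $\Sigma_1$ the vanishing $\varrho_1\equiv 0$ gives $L^2(\Sigma_1;\varrho_1 d\Omega^{\mathrm{tr}})=\{0\}$, so $\M_{\mathrm{id}\cdot\indik_{\Sigma_1}}$ is unitarily equivalent to multiplication by $\lambda$ on the single space $L^2(\Sigma_1;\varrho_2 d\Omega^{\mathrm{tr}}) = L^2(\Sigma_1;d\Omega^{\mathrm{tr}})$, which has simple spectrum, i.e.\ multiplicity one. On $\Sigma_2$ both densities are strictly positive $\Omega^{\mathrm{tr}}$-almost everywhere, so the rescaling $g\mapsto\sqrt{\varrho_i}\,g$ is a unitary map $L^2(\Sigma_2;\varrho_i d\Omega^{\mathrm{tr}})\to L^2(\Sigma_2;d\Omega^{\mathrm{tr}})$ intertwining the respective operators of multiplication by $\lambda$; applying it in each summand identifies $\M_{\mathrm{id}\cdot\indik_{\Sigma_2}}$ with multiplication by $\lambda$ on $L^2(\Sigma_2;d\Omega^{\mathrm{tr}})\oplus L^2(\Sigma_2;d\Omega^{\mathrm{tr}})$, which by definition has uniform spectral multiplicity two.

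The argument is essentially measure-theoretic bookkeeping once the measurable diagonalization is in hand. The only point requiring genuine care is the measurability of $U(\cdot)$ and of the eigenvalue densities $\varrho_i$, which guarantees that all the fiberwise unitaries assemble into honest unitary operators between the $L^2$-spaces; this has already been recorded in the construction of $U(\lambda)$ preceding the lemma. Beyond that, one only invokes the standard fact (Hahn--Hellinger) that multiplication by the independent variable on $L^2(\nu)$ models multiplicity one and on $L^2(\nu)\oplus L^2(\nu)$ models uniform multiplicity two, so I expect no serious obstacle.
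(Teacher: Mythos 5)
Your proposal is correct and follows essentially the same route as the paper: the fiberwise diagonalization by $U(\lambda)$ turns $\M_{\mathrm{id}}$ into multiplication by $\lambda$ on $L^2(\R;\varrho_1 d\Omega^{\mathrm{tr}})\oplus L^2(\R;\varrho_2 d\Omega^{\mathrm{tr}})$, one summand degenerates over $\Sigma_1$, and mutual absolute continuity of $\varrho_j\indik_{\Sigma_2}d\Omega^{\mathrm{tr}}$ with $\indik_{\Sigma_2}d\Omega^{\mathrm{tr}}$ (which your $\sqrt{\varrho_i}$ rescaling implements explicitly) gives multiplicity two over $\Sigma_2$. The only cosmetic difference is your labeling $\varrho_1\equiv 0$, $\varrho_2\equiv 1$ on $\Sigma_1$ versus the paper's normalization to $\varrho_1\equiv 1$, $\varrho_2\equiv 0$ after possibly swapping components of $U(\lambda)$.
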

\begin{proof}
For fixed $\lam\in\Sigma_1$ we have either $\varrho_1(\lam)=1$, $\varrho_2(\lam)=0$ or $\varrho_1(\lam)=0$, $\varrho_2(\lam)=1$. In the latter
case we can modify $U(\lam)$ to also switch components and hence we can assume $\varrho_1(\lam)=1$, $\varrho_2(\lam)=0$
for all $\lam\in\Sigma_1$. Hence $\M_{\mathrm{id}\cdot \indik_{\Sigma_1}}$ is unitarily equivalent to
multiplication with $\lam$ in $L^2(\R;\indik_{\Sigma_1} d\Omega^{\mathrm{tr}})$.
Moreover, since $\varrho_j \indik_{\Sigma_2} d\Omega^{\mathrm{tr}}$ and 
$\indik_{\Sigma_2} d\Omega^{\mathrm{tr}}$
are mutually absolutely continuous, $\M_{\mathrm{id}\cdot \indik_{\Sigma_2}}$ is unitary equivalent
to $\M_{\mathrm{id}}$ in the Hilbert space $L^2(\R; \indik_{\Sigma_1} d\Omega^{\mathrm{tr}} I_2)$.
\end{proof}

Combining \eqref{eq:wmmat} with \eqref{eq:Rlim}, one concludes that 
\begin{align}\label{detr}
\det(R(\lam)) = \lim_{\eps\downarrow 0} \frac{\im(m_+(\lam+\I\eps))
\im(m_-(\lam+\I\eps))}{|m_+(\lam+\I\eps) + m_-(\lam+\I\eps)|^2}
\frac{1}{\im(M^\mathrm{tr}(\lam+\I\eps))^2},
\end{align}
where the first factor is bounded by $1/4$. At this point Lemma \ref{lem:spmul} yields the 
following result. 

\begin{theorem}
The singular spectrum of $S$ has spectral multiplicity one. The
absolutely continuous spectrum of $S$ has multiplicity two on the subset 
$\sig_{ac}(S_+) \cap \sig_{ac}(S_-)$ and multiplicity one
on $\sig_{ac}(S) \backslash (\sig_{ac}(S_+) \cap \sig_{ac}(S_-))$.
Here $S_\pm$ are the restrictions of $S$ to $(a,x_0)$ and $(x_0,b)$, respectively.
\end{theorem}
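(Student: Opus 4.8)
The plan is to reduce everything to the multiplicity of the multiplication operator $\M_{\mathrm{id}}$ via the unitarity $S = \mathcal{F}^\ast \M_{\mathrm{id}} \mathcal{F}$ established in the preceding theorem, and then to read off the multiplicity from the rank of $R(\lam)$ using Lemma \ref{lem:spmul}: that lemma gives multiplicity one on $\Sigma_1 = \{\det(R) = 0\}$ and multiplicity two on $\Sigma_2 = \{\det(R) > 0\}$, so the whole argument comes down to deciding, $\Omega^{\mathrm{tr}}$-almost everywhere, where $\det(R(\lam))$ vanishes and where it is strictly positive. Throughout I would use three standard facts: that for the Nevanlinna--Herglotz functions $m_\pm$ the boundary values $m_\pm(\lam + \I 0) = \lim_{\eps \downarrow 0} m_\pm(\lam+\I\eps)$ exist finitely for Lebesgue-a.e.\ $\lam \in \R$; that by Corollary \ref{cor:splimm} applied to $S_\pm$ the sets $\sig_{ac}(S_\pm)$ are the essential closures of $\{\lam \st 0 < \im(m_\pm(\lam+\I 0)) < \infty\}$; and that the singular part of $\Omega^{\mathrm{tr}}$ is concentrated on $\Sigma_s$, where $\im(M^{\mathrm{tr}}(\lam + \I 0)) = \infty$.

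For the singular spectrum I would argue directly from \eqref{detr}. On $\Sigma_s$ the boundary value $\im(M^{\mathrm{tr}}(\lam+\I 0))$ is infinite, so the factor $\im(M^{\mathrm{tr}}(\lam+\I\eps))^{-2}$ tends to zero; since the remaining factor $\im(m_+)\im(m_-)/|m_+ + m_-|^2$ is bounded by $1/4$, formula \eqref{detr} forces $\det(R(\lam)) = 0$ there. Hence the singular part of $\Omega^{\mathrm{tr}}$ lives on $\Sigma_1$, and Lemma \ref{lem:spmul} gives that the singular spectrum of $S$ has spectral multiplicity one.

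For the absolutely continuous part I would work on the a.c.\ part of $\Omega^{\mathrm{tr}}$, where $0 < \im(M^{\mathrm{tr}}(\lam+\I 0)) < \infty$ and the boundary values $m_\pm(\lam+\I0)$ are finite. Formula \eqref{detr} then shows that $\det(R(\lam)) > 0$ precisely when both $\im(m_+(\lam+\I 0)) > 0$ and $\im(m_-(\lam+\I 0)) > 0$ (the denominator $|m_+ + m_-|^2$ being harmless on the a.c.\ set, since $\im(m_+ + m_-) > 0$ whenever at least one summand has positive imaginary part), and $\det(R(\lam)) = 0$ when at least one of the two boundary imaginary parts vanishes. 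By the characterization of $\sig_{ac}(S_\pm)$ recalled above, the former set is $\sig_{ac}(S_+) \cap \sig_{ac}(S_-)$ up to Lebesgue-null sets, so Lemma \ref{lem:spmul} yields multiplicity two there. On the complementary set within $\sig_{ac}(S)$ exactly one of $\im(m_\pm(\lam+\I0))$ is positive; indeed, if say $\im(m_-) = 0$ and $\im(m_+) > 0$, a short computation gives $\im(M^{\mathrm{tr}}) = \im(m_+)(1 + m_-^2)/|m_+ + m_-|^2 > 0$, so such points genuinely belong to $\sig_{ac}(S)$, while $\det(R) = 0$ forces multiplicity one. As a byproduct this also confirms $\sig_{ac}(S) = \sig_{ac}(S_+) \cup \sig_{ac}(S_-)$.

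The main obstacle I expect is the careful bookkeeping of the various almost-everywhere statements with respect to the correct measures: the boundary values of $m_\pm$ and of $M^{\mathrm{tr}}$ exist only Lebesgue-a.e., the densities $R_{i,j}$ only $\Omega^{\mathrm{tr}}$-a.e., and one must ensure that the sets $\{\det(R) > 0\}$, $\sig_{ac}(S_+) \cap \sig_{ac}(S_-)$, and $\Sigma_s$ agree up to null sets for the relevant measures so that the multiplicity assignments of Lemma \ref{lem:spmul} transfer to the spectral-theoretic statement. The finiteness and positivity of the boundary values together with the non-vanishing of the denominator $m_+ + m_-$ on the a.c.\ set are exactly what make formula \eqref{detr} usable, and verifying these cleanly is the technical heart of the argument; this is precisely the point where the classical Kac analysis \cite{Ka62}, \cite{Ka63} (see also \cite{Gi98}, \cite{Si05}) carries over.
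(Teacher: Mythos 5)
Your proposal is correct and follows essentially the same route as the paper: both arguments reduce the multiplicity question to Lemma \ref{lem:spmul} and then evaluate formula \eqref{detr} on the minimal support $\Sigma_s$ of the singular part (where the bounded first factor forces $\det(R)=0$) and on the minimal supports $\Sigma_{ac,\pm}$ of the absolutely continuous spectra of $S_\pm$ (where $\det(R)>0$ exactly on their intersection). You merely supply more of the bookkeeping—e.g.\ the explicit computation of $\im(M^{\mathrm{tr}})$ when one of $\im(m_\pm)$ vanishes—that the paper compresses into ``evaluating \eqref{detr}.''
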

\begin{proof}
Using the fact that $\Sigma_s$ is a minimal support for the singular part of $S$ one
obtains $S_s= S_{pp} \oplus S_{sc} = E(\Sigma_s) S$ and $S_{ac}= (1-E(\Sigma_s)) S$.
Thus, evaluating \eqref{detr} using \eqref{defSigs}, one infers that the singular part has multiplicity
one by Lemma \ref{lem:spmul}.

For the absolutely continuous part, one uses that the corresponding sets 
\begin{equation} 
\Sigma_{ac,\pm} = \{ \lam\in\R \,|\, 0< \lim_{\eps\downarrow 0}
\im(m_\pm(\lam+\I\eps)) <\infty \}
\end{equation}
are minimal supports for the absolutely continuous spectra of $S_\pm$.
Again, the remaining result follows from Lemma \ref{lem:spmul} upon evaluating \eqref{detr}.
\end{proof}

\section{(Non-)Principal Solutions, Boundedness from Below, and the Friedrichs Extension} \lb{s11}

In this section we develop various new applications to oscillation theory, establish the connection between 
non-oscillatory solutions and boundedness from below of $T_0$, extend a limit-point criterion for $T_0$ 
to our present general assumptions, and characterize the Friedrichs extension $S_F$ of $T_0$.  

Assuming Hypothesis \ref{h2.1}, we start by investigating some (non-)oscillatory-type properties of real-valued solutions $u\in \Deftau$ of the distributional Sturm--Liouville equation
\begin{equation}\lb{10.1}
 - \big(u^\qd\big)' + \foco u^\qd + qu =\lambda u r \; \text{ for fixed }\, \lambda \in \bbR.
\end{equation}
Throughout this section, solutions of \eqref{10.1} are always taken to be real-valued, in accordance with Theorem \ref{thm:exisuniq}. In addition, we occasionally refer to 
$p$ as being sign-definite on an interval $I \subseteq \bbR$, by which we mean that 
$p>0$ or $p<0$ a.e.\ on $I$. 

We begin with a Sturm-type separation theorem for the zeros of pairs of linearly independent real-valued solutions of \eqref{10.1}.

\begin{theorem}\lb{t10.1}
Assume Hypothesis \ref{h2.1} and suppose that $u_j$, $j=1,2$, are two linearly independent real-valued solutions of \eqref{10.1} for a fixed $\lambda \in \bbR$.  If $x_j\in (a,b)$, $j=1,2$, are two zeros of $u_1$ with $x_1<x_2$ and $p$ is sign-definite on $(x_1,x_2)$, then $u_2$ has at least one zero in $[x_1,x_2]$.  If, in addition, $\tau$ is regular at the endpoint $a$ and $x_1=a$, then $u_2$ has a zero in $[a,x_2]$.  An analogous result holds if $\tau$ is regular at the endpoint $b$.
\end{theorem}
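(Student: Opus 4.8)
The plan is to mimic the classical Sturm separation argument, but to replace pointwise derivatives (which need not exist under Hypothesis \ref{h2.1}) by the quasi-derivative $u^\qd$ together with the constancy of the Wronskian. First I would reduce to the case in which $x_1$ and $x_2$ are \emph{consecutive} zeros of $u_1$, i.e.\ $u_1$ has no zero in the open interval $(x_1,x_2)$. Indeed, since $p$ is sign-definite on $(x_1,x_2)$, the integrating-factor representation used below (re-centered at any base point) shows that an accumulation point $x_\ast\in(x_1,x_2)$ of zeros of $u_1$ would force both $u_1(x_\ast)=0$ and $u_1^\qd(x_\ast)=0$, whence $u_1\equiv 0$ by the uniqueness part of Theorem \ref{thm:exisuniq}; thus the zeros of $u_1$ in $(x_1,x_2)$ are isolated, and one may choose two consecutive ones inside $[x_1,x_2]$. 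It then suffices to produce a zero of $u_2$ in the smaller interval. Throughout, set $c=W(u_1,u_2)$, a nonzero constant because $u_1$, $u_2$ are linearly independent. Using $u_1(x_1)=u_1(x_2)=0$ gives
\[
 c = -\,u_1^\qd(x_1)\,u_2(x_1) = -\,u_1^\qd(x_2)\,u_2(x_2),
\]
so in particular $u_1^\qd(x_j)\neq 0$ and $u_2(x_j)\neq 0$ for $j=1,2$.

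Assume, for contradiction, that $u_2$ has no zero in $[x_1,x_2]$. Being real-valued and continuous, $u_2$ then keeps a constant sign there, so $u_2(x_1)$ and $u_2(x_2)$ share the same sign. The crux of the argument — and the step I expect to be the main obstacle, since in the distributional setting one cannot appeal to signs of $u_1'$ at the endpoints — is to show that, by contrast, $u_1^\qd(x_1)$ and $u_1^\qd(x_2)$ have \emph{opposite} signs. To this end I would introduce the integrating factor $E(x)=\E^{\int_{x_1}^x \foco(t)\,dt}>0$ and set $g=E\,u_1\in AC_{\loc}((a,b))$. From the first row of the system \eqref{eqn:system} (with $z=\lambda$ and vanishing inhomogeneity), namely $u_1'=-\foco\,u_1+p^{-1}u_1^\qd$, one computes $g'=E\,(u_1'+\foco\,u_1)=E\,u_1^\qd/p$ a.e., which is locally integrable, while $g(x_1)=g(x_2)=0$. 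Hence
\[
 E(x)\,u_1(x) = \int_{x_1}^x E(t)\,\frac{u_1^\qd(t)}{p(t)}\,dt, \qquad x\in(x_1,x_2).
\]
Since $E>0$, the function $g$ has the same sign as $u_1$, which is constant on $(x_1,x_2)$. Letting $x\downarrow x_1$ and using that $u_1^\qd$ is continuous with $u_1^\qd(x_1)\neq0$ while $p$ is sign-definite, the sign of $u_1(x)$ near $x_1$ equals $\sgn(u_1^\qd(x_1))\,\sgn(p)$; writing the same integral backwards from $x_2$ shows the sign of $u_1(x)$ near $x_2$ equals $-\sgn(u_1^\qd(x_2))\,\sgn(p)$. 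Matching both to the (constant) sign of $u_1$ on $(x_1,x_2)$ yields $\sgn(u_1^\qd(x_1))=-\sgn(u_1^\qd(x_2))$, as claimed.

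Combining the two sign facts, the products $u_1^\qd(x_1)u_2(x_1)$ and $u_1^\qd(x_2)u_2(x_2)$ have opposite signs, yet they are equal (both equal $-c$); this forces $c=0$, contradicting linear independence. Therefore $u_2$ must vanish somewhere in $[x_1,x_2]$. For the variant in which $\tau$ is regular at $a$ and $x_1=a$, Theorem \ref{thm:EEreg} guarantees that the limits $u_1(a)$, $u_1^\qd(a)$, $u_2(a)$, $u_2^\qd(a)$ exist and are finite and that $W(u_1,u_2)(a)=c$; the identical computation then applies verbatim with $x_1$ replaced by the endpoint $a$, and the analogous statement at $b$ follows by the same reasoning (or by the reflection $x\mapsto a+b-x$).
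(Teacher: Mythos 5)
Your proof is correct, but it follows the ``other'' classical route to Sturm separation rather than the one the paper uses. The paper argues directly: assuming $u_2\neq 0$ on $[x_1,x_2]$, it forms the quotient $u_1/u_2$, computes $(u_1/u_2)'=-c/(p\,u_2^2)$ a.e.\ from the constancy of the Wronskian, and integrates between the two zeros of $u_1$ to obtain $c\int_{x_1}^{x_2}dx/(p(x)u_2(x)^2)=0$; sign-definiteness of $p$ together with $p^{-1}\neq 0$ a.e.\ makes the integral nonzero, so $c=0$. This needs neither a reduction to consecutive zeros nor any sign analysis of $u_1^{[1]}$, and the endpoint variant follows simply because regularity at $a$ makes $1/(p\,u_2^2)$ integrable up to $a$. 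Your argument instead evaluates the Wronskian at the zeros of $u_1$ and shows that $u_1^{[1]}$ must change sign between consecutive zeros via the integrating-factor representation; in doing so you essentially re-derive the paper's Lemma \ref{l10.1a} (nontrivial solutions change sign at a zero, hence zeros are isolated) as an intermediate step, which the paper states and proves separately right after Theorem \ref{t10.1}. Both proofs rest on the same two pillars --- constancy of $W(u_1,u_2)$ and the fact that $p$ sign-definite with $p^{-1}\in L^1_{\loc}$, $p\neq 0$ a.e.\ forces certain integrals to be nonzero --- but yours is longer because of the reduction to consecutive zeros, while it buys you the sign-change lemma as a by-product and avoids dividing by $u_2$. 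One tiny imprecision: the hypothetical accumulation point of zeros you rule out could a priori lie at $x_1$ or $x_2$ rather than in the open interval, but your one-sided representation handles that case identically, so nothing is lost.
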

\begin{proof}
Since the Wronskian of two real-valued solutions of \eqref{10.1} is a constant (cf.\ the discussion after Lemma \ref{propLagrange}), 
\begin{equation}\lb{10.2}
W(u_1,u_2)(x)=u_1(x)u_2^{[1]}(x)-u_1^{[1]}(x)u_2(x)=c, \quad x\in [x_1,x_2],
\end{equation}
for some $c\in \bbR$.  If $u_2$ has no zero in $[x_1,x_2]$ then the quotient $u_1/u_2$ is absolutely continuous on $[x_1,x_2]$ and \eqref{10.2} implies
\begin{equation}\lb{10.3}
\bigg(\frac{u_1}{u_2} \bigg)'(x)=-\frac{c}{p(x)u_2(x)^2} \; \text{ for a.e.\ $x\in (x_1,x_2)$.}
\end{equation}
Subsequently, integrating the equation in \eqref{10.3} from $x_1$ to $x_2$ and using $u_1(x_j)=0$, $j=1,2$, one obtains
\begin{equation}\lb{10.4}
c\int_{x_1}^{x_2}\frac{dx}{p(x)u_2(x)^2}=0.
\end{equation}
The sign definiteness assumption on $p$ implies the integral appearing in \eqref{10.4} is nonzero, and, consequently, one concludes $c=0$.  Therefore,  $u_1$ and $u_2$ must be linearly dependent real-valued solutions of \eqref{10.1}.  The result now follows by contraposition.

To prove the remaining statement, one may simply repeat the above argument, noting that 
regularity of $\tau$ at the endpoint $a$ guarantees that the function appearing in the right hand 
side of \eqref{10.3} is integrable on $(a,x_2)$.
\end{proof}

Note also that all zeros are simple in the sense that (nontrivial) solutions must change sign at a zero.

\begin{lemma}\lb{l10.1a}
Assume Hypothesis \ref{h2.1} and suppose that $u$ is a nontrivial real-valued solution of \eqref{10.1} for a fixed $\lambda \in \bbR$. If $x_0\in (a,b)$ is a zero and $p$ is sign-definite in a neighborhood of $x_0$, then $u$ must change sign at $x_0$.
\end{lemma}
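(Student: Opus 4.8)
The plan is to exploit the fact that a nontrivial solution cannot vanish simultaneously with its first quasi-derivative, and then to remove the first-order term by an integrating factor, so that the sign of the relevant derivative becomes transparent and sign-definiteness of $p$ can be brought to bear directly.

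First I would record that $u^\qd(x_0) \neq 0$. Indeed, since $u$ solves $(\tau-\lam)u=0$ with $u(x_0)=0$, if we also had $u^\qd(x_0)=0$, then the uniqueness assertion of Theorem \ref{thm:exisuniq} (applied with $g=0$, $c=x_0$, $d_1=d_2=0$) would force $u\equiv 0$, contradicting nontriviality. Since $u^\qd\in AC_{\loc}((a,b))$ is in particular continuous, it keeps a fixed nonzero sign on some interval $(x_0-\delta,x_0+\delta)$; shrinking $\delta$ if necessary, we may also assume $p$ is sign-definite on this interval.

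Next I would introduce the integrating factor $E(x)=\exp\big(\int_{x_0}^x \foco(t)\,dt\big)$, which is well-defined, strictly positive, and locally absolutely continuous since $\foco\in L^1_{\loc}((a,b);dx)$, and set $v=uE$. As $u$ and $E$ both lie in $AC_{\loc}((a,b))$, the product rule together with $E'=\foco E$ gives, for a.e.\ $x$ near $x_0$,
\[
v'(x)=\big(u'(x)+\foco(x)u(x)\big)E(x)=\frac{u^\qd(x)}{p(x)}\,E(x).
\]
On $(x_0-\delta,x_0+\delta)$ the right-hand side has one fixed sign a.e., because $E>0$, the continuous function $u^\qd$ has a fixed sign, and $p$ is sign-definite. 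Since $v$ is absolutely continuous and $v(y)-v(x)=\int_x^y v'(t)\,dt$, one-signed $v'$ a.e.\ forces $v$ to be strictly monotone there. Because $v(x_0)=u(x_0)E(x_0)=0$, it follows that $v$ takes opposite signs on the two sides of $x_0$; as $E>0$ and $u=v/E$, the same holds for $u$, i.e.\ $u$ changes sign at $x_0$.

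The step requiring the most care is the passage from $v'$ being one-signed a.e.\ to strict monotonicity of $v$: this rests on the absolute continuity of $v$ rather than on any pointwise differentiability of $u$, and it is exactly the point where the hypothesis that $p$ is sign-definite (so that $u^\qd/p$ does not change sign) is used — indeed the only place it enters. Everything else is a direct consequence of uniqueness of solutions and of the structural identity $u^\qd=p[u'+\foco u]$ for the quasi-derivative.
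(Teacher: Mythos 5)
Your argument is correct and is essentially the paper's own proof: the paper writes the integrated form $u(x) = \E^{-S(x)}\int_{x_0}^x \E^{S(y)}p(y)^{-1}u^{[1]}(y)\,dy$ with $S(x)=\int_{x_0}^x s$, which is exactly your identity $v' = u^{[1]}E/p$ for $v=uE$ integrated from $x_0$, and then uses $u^{[1]}(x_0)\neq 0$ (by uniqueness), continuity of $u^{[1]}$, and sign-definiteness of $p$ in the same way. Your write-up just makes the monotonicity step and the role of absolute continuity more explicit.
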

\begin{proof}
Regarding $u'(x)=p(x)^{-1} u^\qd(x) - s(x) u(x)$ as a differential equation for $u$ we obtain
\begin{equation}
u(x) = \E^{-S(x)} \int_{x_0}^x \E^{S(y)} p(y)^{-1} u^\qd(y) \, dy, \qquad S(x) = \int_{x_0}^x s(y) \, dy.
\end{equation}
Since $u^\qd(x_0) \neq 0$ (otherwise, $u \equiv 0$) and $u^\qd \in AC_{\loc}((a,b))$, the claim follows.
\end{proof}

\begin{definition} \lb{d10.2} 
Suppose Hypothesis \ref{h2.1} holds and let $\lambda \in \bbR$.  The differential expression $\tau-\lambda$ is called {\it oscillatory at $a$} (resp., $b$) if some solution of \eqref{10.1} has infinitely many zeros accumulating at $a$ (resp., $b$); otherwise, $\tau-\lambda$ is called {\it non-oscillatory at $a$} (resp., $b$).
\end{definition}

Under the assumption that $\tau-\lambda$ is non-oscillatory at the endpoint $b$, and that $p$ is 
sign-definite a.e.\ on $(c,b)$, the next result establishes the existence of a distinguished solution 
which is, in a heuristic sense, ``smaller'' than any other solution near $b$.  An analogous result 
holds if \eqref{10.1} is non-oscillatory at $a$.

\begin{theorem} \lb{t10.3}
Assume Hypothesis \ref{h2.1} and let $\lambda \in \bbR$ be fixed.  In addition, suppose that there exists $c\in (a,b)$ such that $p$ is sign-definite a.e.\ on $(c,b)$.  If $\tau-\lambda$ is non-oscillatory at $b$, there exists a real-valued solution $u_0$ of \eqref{10.1} satisfying the following properties $(i)$--$(iii)$ in which $u_1$ denotes an arbitrary real-valued solution of \eqref{10.1} linearly independent of $u_0$.\\
$(i)$ $u_0$ and $u_1$ satisfy the limiting relation
\begin{equation}\lb{10.5}
\lim_{x\uparrow b}\frac{u_0(x)}{u_1(x)}=0.
\end{equation}
$(ii)$  $u_0$ and $u_1$ satisfy
\begin{equation}\lb{10.6}
\int ^b\frac{dx}{|p(x)|u_1(x)^2}<\infty \, \text{ and } \,  \int ^b\frac{dx}{|p(x)|u_0(x)^2}=\infty.   
\end{equation}
$(iii)$ Suppose $x_0\in(c,b)$ strictly exceeds the largest zero, if any, of $u_0$, and 
$u_1(x_0)\neq 0$.  If $u_1(x_0)/u_0(x_0)>0$, then $u_1$ has no 
$($resp., exactly one$)$ zero in $(x_0,b)$ if $W(u_0,u_1) \gtrless 0$ $($resp., 
$W(u_0,u_1) \lessgtr 0$$)$, in the case $p\gtrless 0$ a.e.\ on $(c,b)$.  On the other hand, 
if $u_1(x_0)/u_0(x_0)<0$, then $u_1$ has no $($resp., exactly one$)$ 
zero in $(x_0,b)$ if $W(u_0,u_1) \lessgtr 0$ 
$($resp., $W(u_0,u_1) \gtrless 0$$)$ in the case $p\gtrless 0$ a.e.\ on $(c,b)$.\\
\end{theorem}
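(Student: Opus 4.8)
The plan is to construct the principal solution $u_0$ explicitly from an auxiliary non-vanishing solution and then to extract all three assertions from the first-order relation \eqref{10.3} for quotients of solutions, namely $(u/\tilde u)' = -W(u,\tilde u)/(p\,\tilde u^2)$ wherever $\tilde u \neq 0$. First I would record the one consequence of non-oscillation used throughout: since $\tau-\lambda$ is non-oscillatory at $b$, no nontrivial solution of \eqref{10.1} has zeros accumulating at $b$ (accumulation at an interior point is impossible by the simplicity of zeros in Lemma \ref{l10.1a}, and accumulation at $b$ is excluded by Definition \ref{d10.2} together with the separation Theorem \ref{t10.1}). Hence every nontrivial solution is non-vanishing on some interval $[c_1,b)\subseteq(c,b)$, so that \eqref{10.3} is available there.

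Next I would fix such a solution $w$, non-vanishing on $[c_1,b)$, and split into two cases according to whether $\int_{c_1}^b dt/(|p(t)|w(t)^2)$ converges. If it diverges, set $u_0:=w$. If it converges, let $u_0$ be the solution of \eqref{10.1} with $W(u_0,w)=1$ normalized by $\lim_{x\uparrow b}u_0(x)/w(x)=0$; such a solution exists and is unique (adjust the free multiple of $w$), and integrating \eqref{10.3} gives the representation
\[
 u_0(x) = w(x)\int_x^b \frac{dt}{p(t)\,w(t)^2}.
\]
In both cases a direct computation shows $\int^b dt/(|p|\,u_0^2)=\infty$: this is immediate in the divergent case, while in the convergent case the substitution $F(x)=\int_x^b dt/(p w^2)$, for which $u_0=wF$ and $1/(|p|w^2)=|F'|$, turns the integrand into $|F'|/F^2=|(1/F)'|$, whose integral over $(x,b)$ diverges because $|1/F|\to\infty$ as $F\to0$. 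This yields the second relation in \eqref{10.6}.

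With $u_0$ fixed, I would derive (i), the remaining half of (ii), and (iii) from a single object: for any solution $u_1$ linearly independent of $u_0$, the quotient $g:=u_1/u_0$ is absolutely continuous near $b$ (where $u_0\neq0$) and, by \eqref{10.3}, satisfies $g'=W(u_0,u_1)/(p\,u_0^2)$. Writing $c:=W(u_0,u_1)\neq0$ and using sign-definiteness of $p$, the derivative $g'$ has constant sign, so $g$ is strictly monotonic; since $\int^b dt/(|p|\,u_0^2)=\infty$, integrating $g'$ forces $|g|\to\infty$, whence $u_0/u_1=1/g\to0$, which is \eqref{10.5} and proves (i). For the convergence half of (ii), the identity $1/(|p|\,u_1^2)=|g'|/(|c|\,g^2)=|c|^{-1}|(1/g)'|$ together with monotonicity of $1/g$ gives $\int_x^b dt/(|p|\,u_1^2)=|c|^{-1}/|g(x)|<\infty$. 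Finally, for (iii) the zeros of $u_1$ in $(x_0,b)$ coincide with the zeros of the strictly monotonic $g$ on $[x_0,b)$; since the monotonicity direction is fixed by the sign of $c/p$, the endpoint value is $g(x_0)=u_1(x_0)/u_0(x_0)$, and $g\to\pm\infty$, so a short case check on these three signs reproduces exactly the four alternatives tabulated in the statement.

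The main obstacle is the construction step and the proof that the constructed $u_0$ satisfies $\int^b dt/(|p|\,u_0^2)=\infty$: this divergence is precisely what singles out $u_0$ as the distinguished (principal) solution and simultaneously drives both (i) and the convergence half of (ii). Everything downstream is bookkeeping with \eqref{10.3}, whose validity in the present distributional setting is already guaranteed by the computation preceding \eqref{10.3}. One point requiring care throughout is to phrase every integral via $|p|$, so that only sign-definiteness of $p$ — and not positivity — is used.
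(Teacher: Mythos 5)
Your proposal is correct, but it constructs the principal solution by a different route than the paper. The paper's proof starts from an arbitrary pair $u,v$ of linearly independent solutions, uses \eqref{10.7} to see that $u/v$ is monotone past the last zero of $v$, lets $C=\lim_{x\uparrow b}u(x)/v(x)\in[-\infty,\infty]$, and normalizes (swapping $u$ and $v$ if $C=\pm\infty$, replacing $u$ by $u-Cv$ if $0<|C|<\infty$) so that property $(i)$ is built into the construction; item $(ii)$ is then extracted from the integrated identity \eqref{10.10} using $(i)$. You instead take a solution $w$ non-vanishing near $b$, dichotomize on whether $\int^b dt/(|p|w^2)$ converges, and in the convergent case produce $u_0=w\int_x^b dt/(pw^2)$ by the reduction-of-order formula --- essentially importing the mechanism the paper defers to Theorem \ref{t10.5} --- so that the divergence half of $(ii)$ is built in, and $(i)$ together with the convergence half of $(ii)$ then follow from the monotone quotient $g=u_1/u_0$ and $|g|\to\infty$. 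Both arguments rest entirely on the same identity $(u/\tilde u)'=-W(u,\tilde u)/(p\,\tilde u^2)$ and the sign-definiteness of $p$; what your version buys is an explicit formula for $u_0$ and a cleaner logical ordering in which the divergence of $\int^b dt/(|p|u_0^2)$ is the single engine driving $(i)$, $(ii)$, and $(iii)$, at the cost of a two-case construction, while the paper's limit-normalization is shorter at the construction stage. Your treatment of $(iii)$ agrees with the paper's (monotone $g$ with prescribed endpoint sign and infinite limit), and the preliminary observation that non-oscillation yields a zero-free interval $[c_1,b)$ is justified as you indicate --- indeed, for that specific claim non-accumulation of zeros at $b$ already suffices, so the appeal to Lemma \ref{l10.1a} for interior points is not strictly needed.
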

\begin{proof}
Let $u$ and $v$ denote a pair of linearly independent real-valued solutions of \eqref{10.1}.  Then their Wronskian is a nonzero constant, say $c\in \bbR\backslash\{0\}$.  If $x_0\in (c,b)$ strictly exceeds the largest zero, if any, of $v$, then $u/v\in AC_{\loc}((x_0,b))$, and one verifies (as in \eqref{10.3}) that 
\begin{equation}\lb{10.7}
\bigg(\frac{u}{v} \bigg)'(x)=-\frac{c}{p(x)v(x)^2} \; \text{ for a.e.\ $x\in (x_0,b)$.}
\end{equation}  
In particular, since $p$ is sign definite a.e.\ on $(x_0,b)$, the right-hand side of equation \eqref{10.7} is sign definite a.e.\ on the same interval; therefore, the function $u/v$ is monotone on $(x_0,b)$.  Consequently, 
\begin{equation}\lb{10.8}
C=\lim_{x\uparrow b}\frac{u(x)}{v(x)}
\end{equation}
exists, where $C=\pm \infty$ is permitted.  By renaming $u$ and $v$, if necessary, one may take $C=0$.  Indeed, in the case $C=\pm\infty$ in \eqref{10.8}, one simply interchanges the roles of the functions $u$ and $v$.  If $0<|C|<\infty$, then one replaces the solution $u$ by the linear combination $u-Cv$.     Choosing $u_0=u$, a real-valued solution $u_1$ of \eqref{10.1} is linearly independent of $u_0$ if and only if it is of the form $u_1=c_0u_0+c_1v$ with $c_1\neq0$.  In this case, $C=0$ implies
\begin{equation}\lb{10.9}
u_1(x)\underset{x\uparrow b}{=}[c_1+\oh(1)]v(x),
\end{equation}
and, consequently, \eqref{10.5}.  This proves item $(i)$.

In order to prove item $(ii)$, we first note a useful consequence of \eqref{10.7}.  To this end, suppose $u$ and $v$ are real-valued solutions of \eqref{10.1} and that $x_0'$ strictly exceeds the largest zero of $v$, so that \eqref{10.7} holds as before.  Integrating \eqref{10.7} from $x_0'$ to $x\in (x_0',b)$ and using sign-definiteness of $p$ yields 
\begin{equation}\lb{10.10}
\int_{x_0'}^{x}\frac{dt}{|p(t)|v(t)^2}=\frac{1}{|c|}\bigg|\frac{u(x)}{v(x)}-\frac{u(x_0')}{v(x_0')}\bigg|, \quad x\in (x_0',b).
\end{equation}
To prove item $(ii)$, let $u_1$ denote a real-valued solution linearly independent of $u_0$ (with $u_0$ the solution constructed in item $(i)$) and choose $x_0\in (c,b)$ strictly exceeding the largest zero of $u_0$ and the largest zero of $u_1$.  Choosing $u=u_0$ and $v=u_1$ (resp., $u=u_1$ and $v=u_0$) in \eqref{10.10}, taking the limit $x\uparrow b$, and applying \eqref{10.5} establishes convergence (resp., divergence) of the first (resp., second) integral appearing in \eqref{10.6}.  This completes the proof of item $(ii)$.

To prove item $(iii)$, we assume the case $p>0$ a.e.\ on $(c,b)$ for simplicity; the case $p<0$ a.e.\ on $(c,b)$ is handled similarly.  One infers from \eqref{10.5} and \eqref{10.7} (with $u=u_1$ and $v=u_0$) that $u_1/u_0$ is monotonic on $(x_0,b)$ and that
\begin{equation}\lb{10.11}
\lim_{x\uparrow b}\frac{u_1(x)}{u_0(x)}=\pm\infty, \, \text{ depending on whether } \, 
W(u_0,u_1) \gtrless 0
\end{equation}
As a result, if $u_1(x_0)/u_0(x_0)> 0$ then $u_1/u_0$ has no (resp., exactly one) zero in 
$(x_0,b)$ in the case $W(u_0,u_1) > 0$ (resp., $W(u_0,u_1) < 0$).  On the other hand, if 
$u_1(x_0)/u_0(x_0)<0$, then $u_1/u_0$ has no (resp., exactly one) zero in $(x_0,b)$ in the 
case $W(u_0,u_1) < 0$ (resp., $W(u_0,u_1) > 0$). (All Wronskians are of course constant, hence 
we evaluate them at $x_0$.) Item $(iii)$ now follows since the zeros of $u_1$ 
in $(x_0,b)$ are precisely the zeros of $u_1/u_0$.
\end{proof}

Evidently, a result analogous to Theorem \ref{t10.3} holds if $\tau-\lambda$ is non-oscillatory at $a$.  More specifically, one can establish the existence of a distinguished real-valued solution $v_0\neq 0$ of \eqref{10.1} which satisfies the following analogue to \eqref{10.5}:  If $v_1$ is any real-valued solution of \eqref{10.1} linearly independent of $v_0$, then
\begin{equation}\lb{10.12}
\lim_{x\downarrow a}\frac{v_0(x)}{v_1(x)}=0.
\end{equation}
Analogues of item $(ii)$ and $(iii)$ of Theorem \ref{t10.5} subsequently hold for $v_0$ and any real-valued solution $v_1$ linearly independent of $v_0$.

\begin{definition} \lb{d10.4} 
Assume Hypothesis \ref{h2.1} and suppose that $\lambda \in \bbR$.  If $\tau-\lambda$ is non-oscillatory at $c\in \{a,b\}$, then a nontrivial real-valued solution $u_0$ of \eqref{10.1} which satisfies 
\begin{equation}
\lim_{\substack{x\rightarrow c \\ x\in (a,b)}} \frac{u_0(x)}{u_1(x)}=0
\end{equation}
 for any other linearly independent real-valued solution $u_1$ of \eqref{10.1} is called a {\it principal solution} of \eqref{10.1} at $c$.  A real-valued solution of \eqref{10.1} linearly independent of a principal solution at $c$ is called a {\it non-principal solution} of \eqref{10.1} at $c$.
\end{definition}

If $\tau-\lambda$ is non-oscillatory at $c\in \{a,b\}$, one verifies that a principal solution at $c$ is unique up to constant multiples.  The main ideas for the proof of Theorem \ref{t10.3} presented above are taken from 
\cite[Theorem\ 11.6.4]{Ha02}; the notion of (non-)principal solutions dates back at least to 
Hartman \cite{Ha48} and was subsequently also used by Rellich \cite{Re51}.

If the differential expression $\tau-\lambda$ is non-oscillatory at $c\in\{a,b\}$, one can use any nonzero real-valued solution to construct a non-principal solution in a neighborhood of $c$.  The procedure for doing so is the content of our next result.  For simplicity, we consider only the case when $\tau-\lambda$ is non-oscillatory at $b$.  An analogous technique allows one to construct (non-)principal solutions near $a$ when $\tau-\lambda$ is non-oscillatory at $a$.

\begin{theorem}\lb{t10.5}
Assume Hypothesis \ref{h2.1} and suppose that $\tau-\lambda$ is non-oscillatory at $b$.  In addition, suppose that there exists $c\in (a,b)$ such that $p$ is sign-definite a.e.\ on $(c,b)$.  Let $u\neq 0$ be a real-valued solution of \eqref{10.1} and let $x_0\in (c,b)$ strictly exceed its last zero.  Then 
\begin{equation}\lb{10.13}
u_1(x)=u(x)\int_{x_0}^x \frac{dx'}{p(x')u(x')^2}, \quad x\in (x_0,b),
\end{equation}
is a non-principal solution of \eqref{10.1} on $(x_0,b)$.  If, on the other hand, $u$ is a non-principal solution of \eqref{10.1}, then
\begin{equation}\lb{10.14}
u_0(x)=u(x)\int_{x}^b \frac{dx'}{p(x')u(x')^2}, \quad x\in (x_0,b),
\end{equation}
is a principal solution of \eqref{10.1} on $(x_0,b)$.  Analogous results hold at $a$.
\end{theorem}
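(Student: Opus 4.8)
The plan is to recognize both displays as the classical reduction-of-order construction of a second solution, carried out in the quasi-derivative formalism of Section~\ref{s2}, and then to read off the (non-)principal character from the monotonicity of the ratio $u_1/u$ (resp.\ $u_0/u$) together with the integral dichotomy \eqref{10.6} of Theorem~\ref{t10.3}. Throughout, I work on the interval $(x_0,b)$, where $u$ is continuous and nonvanishing and $p$ is sign-definite.

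First I would treat \eqref{10.13}. Writing $I(x)=\int_{x_0}^x dx'/(p(x')u(x')^2)$ so that $u_1=uI$, I would compute $u_1'=u'I+u I'=u'I+(pu)^{-1}$ and hence the quasi-derivative
\[
u_1^\qd = p[u_1'+\foco u_1] = I\,p[u'+\foco u] + \frac1u = I\,u^\qd + \frac1u .
\]
The step that genuinely uses the distributional framework is the regularity check: since $p^{-1}\in L^1_{\loc}((a,b);dx)$ and $u$ is continuous and nonzero on $(x_0,b)$, the integrand $(pu^2)^{-1}$ lies in $L^1_{\loc}((x_0,b);dx)$, so $I\in AC_{\loc}((x_0,b))$; combined with $u^\qd\in AC_{\loc}$ and $1/u\in AC_{\loc}$, this gives $u_1^\qd\in AC_{\loc}((x_0,b))$ and therefore $u_1\in\Deftau$. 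A direct computation then yields the constant Wronskian $W(u,u_1)=u\,u_1^\qd-u^\qd u_1=1$, so the differential form of the Lagrange identity stated after Lemma~\ref{propLagrange}, $W(u,u_1)'=[u_1(\tau u)-u(\tau u_1)]\,r$, gives $0=[u_1\lambda u-u(\tau u_1)]\,r$, whence $(\tau-\lambda)u_1=0$ a.e.\ on $(x_0,b)$ because $ur\neq0$ there. Thus $u_1$ is a solution of \eqref{10.1} linearly independent of $u$. To identify it as non-principal, I note that $I'=(pu^2)^{-1}$ has constant sign, so $|p|^{-1}u_1^{-2}=|p|^{-1}u^{-2}I^{-2}=\pm(1/I)'$; integrating from a fixed $x_1\in(x_0,b)$ produces a telescoping difference that stays bounded as $x\uparrow b$ (as $|I|$ increases away from $0$), so $\int^b|p|^{-1}u_1^{-2}\,dx<\infty$, which by the dichotomy \eqref{10.6} marks $u_1$ as non-principal. (Equivalently, $u_1/u=I$ is monotone with $I(x_0)=0$, hence does not tend to $0$, so $u_1$ cannot be the principal solution.)

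For \eqref{10.14} the same computation with $J(x)=\int_x^b dx'/(p(x')u(x')^2)$ gives $u_0=uJ$, $u_0^\qd=J\,u^\qd-1/u$, and $W(u,u_0)=-1$, so $u_0\in\Deftau$ solves \eqref{10.1} exactly as above. The one new point is that the improper integral defining $J$ converges precisely because $u$ is now assumed non-principal, so that $\int^b|p|^{-1}u^{-2}\,dx<\infty$ by \eqref{10.6}. Then $u_0/u=J\to0$ as $x\uparrow b$; since any real-valued solution linearly independent of $u_0$ has the form $\alpha u_0+\beta u$ with $\beta\neq0$, one gets $u_0/(\alpha u_0+\beta u)=(u_0/u)/(\alpha\,u_0/u+\beta)\to0$ as well, so by Definition~\ref{d10.4} $u_0$ is a principal solution at $b$. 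The statements at $a$ follow from the reflection already invoked for \eqref{10.12}.

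I expect the main obstacle to be the distributional bookkeeping in the solution-verification step, namely confirming $u_1^\qd,u_0^\qd\in AC_{\loc}$ so that $u_1,u_0\in\Deftau$ and the Lagrange identity is legitimately applicable, together with the justification that $J$ is finite, which is exactly where the non-principal hypothesis on $u$ enters. Once these are in place the classification is a short monotonicity argument resting on \eqref{10.6}.
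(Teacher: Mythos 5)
Your proposal is correct and follows essentially the same route as the paper: the same quasi-derivative formula $u_1^{[1]}=Iu^{[1]}+1/u$, the Wronskian values $W(u,u_1)=1$ and $W(u,u_0)=-1$, and the identification of (non-)principality from the limits of $u_1/u=I$ and $u_0/u=J$ at $b$. The only cosmetic differences are that you verify $\tau u_1=\lambda u_1$ via constancy of the Wronskian rather than direct substitution, and you check principality of $u_0$ directly against all solutions $\alpha u_0+\beta u$ instead of decomposing $u_0$ along a known principal solution as the paper does; both variants are sound.
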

\begin{proof}
Suppose that $u\neq 0$ is a real-valued solution of \eqref{10.1} and define $u_1$ by \eqref{10.13}.  Evidently, $u_1$ is real-valued and $u_1\in AC_{\loc}((x_0,b))$.  In addition, $u_1\in \Deftau$ since
\begin{equation}\lb{10.15}
u_1^{[1]}(x)=\frac{1}{u(x)}+u^{[1]}(x)\int_{x_0}^x \frac{dx'}{p(x')u(x')^2}\in AC_{\loc}((x_0,b)),
\end{equation}
and one verifies $\tau u_1=\lambda u_1$ on $(x_0,b)$.  Moreover, $u_1$ is linearly independent of $u$ since $W(u,u_1)=1$, and $u_1$ is not a principal solution on $(x_0,b)$ because
\begin{equation}\lb{10.17}
\lim_{x\uparrow b}\frac{u_1(x)}{u(x)}=\lim_{x\uparrow b}\int_{x_0}^x \frac{dx'}{p(x')u(x')^2}\neq 0.
\end{equation}
It follows that $u_1$ is a non-principal solution on $(x_0,b)$.

Under the additional assumption that $u$ is a non-principal solution, one again readily verifies that $u_0$ defined by \eqref{10.14} is a solution on $(x_0,b)$, and that $u_0$ is linearly independent of $u$. Next, 
we write $u_0= c_0 \widetilde u_0 + c_1 u$ on $(x_0,b)$, where $\widetilde u_0$ is a principal solution 
on $(x_0,b)$ and $c_0, c_1 \in \bbR$.  Then after dividing through by $u$, one computes
\begin{equation}\lb{10.18}
0=\lim_{x\uparrow b}\int_{x}^b \frac{dx'}{p(x')u(x')^2}=c_0\lim_{x\uparrow b}\frac{\widetilde u_0(x)}{u(x)}+c_1=c_1,
\end{equation}
and it follows that $u_0=c_0\widetilde u_0$ is a principal solution on $(x_0,b)$.
\end{proof}

The following result establishes an intimate connection between non-oscillatory behavior and the l.p.\ case for $\tau$ at an endpoint.  More specifically, we derive a criterion for concluding that $\tau$ is in the l.p.\ case at an endpoint in the situation where $\tau-\lambda$ is non-oscillatory at the endpoint and $p$ has fixed sign in a neighborhood of the endpoint.  The proof of this result relies on the existence of principal solutions, as established in Theorem \ref{t10.3}, as well as the technique for constructing non-principal solutions described in Theorem \ref{t10.5}.  This condition is well-known within the context of traditional three-term Sturm--Liouville differential expressions of the form $\tau_0 u = r^{-1}[-(pu')'+qu]$, where $p>0, r>0$ a.e.\ and  
$p^{-1}, r, q \in L^1_{\loc}((a,b))$, etc. It was first derived by Hartman \cite{Ha48} in the particular 
case $p=r=1$ in 1948.  Three years later, Rellich \cite{Re51} extended the result to the general three-term case under some additional smoothness assumptions on $p, r$, and $q$. These smoothness restrictions, however, are inessential (see also \cite[Lemma C.1]{Ge93}). The following result extends this l.p.\ criterion to the general case governed by Hypothesis \ref{h2.1}. 

\begin{theorem}\lb{t10.6}
Assume Hypothesis \ref{h2.1} and suppose that there exists $c\in (a,b)$ such that $p$ is sign-definite a.e.\ on $(c,b)$.  In addition, suppose that $\tau-\lambda$ is non-oscillatory at $b$ for some $\lambda \in \bbR$.  If $\int^b |r(x)/p(x)|^{1/2} dx=\infty$, then $\tau$ is in the l.p.\ case at $b$.  An analogous result holds at $a$.
\end{theorem}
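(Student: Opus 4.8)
The plan is to argue by contraposition through Weyl's alternative. I assume $\tau$ is \emph{not} in the limit-point case at $b$; by Lemma \ref{lemweylaltLC} it is then in the limit-circle case, so in particular every solution of $(\tau-\lambda)u=0$ lies in $\Lr$ near $b$. From this I will deduce $\int^b|r(x)/p(x)|^{1/2}\,dx<\infty$, contradicting the hypothesis and thereby forcing the limit-point case.

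First I would use the non-oscillatory hypothesis together with the sign-definiteness of $p$ near $b$ to produce, via Theorem \ref{t10.3}, a principal solution $u_0$ of \eqref{10.1}. Choosing $x_0\in(c,b)$ beyond the (finitely many) zeros of $u_0$ gives $u_0\neq 0$ on $(x_0,b)$, and Theorem \ref{t10.5} furnishes the linearly independent non-principal solution
\begin{equation*}
u_1(x)=u_0(x)\,F(x),\qquad F(x)=\int_{x_0}^x\frac{dx'}{p(x')\,u_0(x')^2},\quad x\in(x_0,b).
\end{equation*}
Since $F'=1/(p\,u_0^2)$ is sign-definite a.e., $F$ is strictly monotone with $F(x_0)=0$, and the divergence statement in \eqref{10.6} gives $|F(x)|\to\infty$ as $x\uparrow b$; hence $F$ is bounded away from $0$ on any subinterval $(x_1,b)$ with $x_1\in(x_0,b)$.

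The heart of the argument is the pointwise identity $|r/p|=r\,u_0^2\,|F'|$ (valid because $r>0$), which lets me rewrite the integrand, after multiplying and dividing by $|F|$ and recalling $u_1=u_0F$, as $|r/p|^{1/2}=(r\,u_1^2)^{1/2}\,|F'|^{1/2}/|F|$. Applying Cauchy--Schwarz on $(x_1,x)$ then yields
\begin{equation*}
\int_{x_1}^x|r/p|^{1/2}\,dt\le\Big(\int_{x_1}^x r\,u_1^2\,dt\Big)^{1/2}\Big(\int_{x_1}^x\frac{|F'|}{F^2}\,dt\Big)^{1/2}.
\end{equation*}
The first factor is bounded by $\|u_1\|_{L^2((x_1,b);r\,dx)}^2<\infty$, which is precisely where the limit-circle assumption enters, while the second equals $|F(x_1)^{-1}-F(x)^{-1}|\le|F(x_1)|^{-1}$ because $1/F$ is monotone on $(x_1,b)$ with $1/F\to 0$. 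Letting $x\uparrow b$ gives $\int_{x_1}^b|r/p|^{1/2}\,dt<\infty$; since $\int_{x_0}^{x_1}|r/p|^{1/2}\,dt\le(\int_{x_0}^{x_1}r)^{1/2}(\int_{x_0}^{x_1}|p|^{-1})^{1/2}<\infty$ by the local integrability of $r$ and $p^{-1}$, the integral near $b$ is finite—the desired contradiction. The statement at $a$ follows from the symmetric construction indicated after Theorem \ref{t10.3}.

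I expect the main obstacle to be the bookkeeping of the two competing asymptotics—the \emph{divergence} $\int^b(|p|u_0^2)^{-1}=\infty$ forcing $|F|\to\infty$ (the principal-solution property) played off against the \emph{convergence} $\int^b r\,u_1^2<\infty$ (limit-circle)—and in particular inserting the weight $|F|$ in exactly the right place so that \emph{both} Cauchy--Schwarz factors remain finite. That single algebraic splitting is the only delicate point; everything else is routine once $u_0\neq 0$ near $b$ and the monotonicity of $F$ are secured.
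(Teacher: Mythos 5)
Your proposal is correct and follows essentially the same route as the paper: construct the non-principal solution $u_1=u_0\int_{x_0}^{\cdot}\,dx'/(p\,u_0^2)$ via Theorem \ref{t10.5}, invoke the l.c.\ assumption to get $u_1\in\Lr$ near $b$, and apply Cauchy--Schwarz to the factorization $|r/p|^{1/2}=(r u_1^2)^{1/2}\big(|p|u_1^2\big)^{-1/2}$ (your weight $|F|$ is exactly this splitting, since $u_1=u_0F$ and $|F'|/F^2=1/(|p|u_1^2)$). The only difference is cosmetic: you compute $\int|F'|/F^2$ explicitly and start the integration at $x_1>x_0$, which is a slightly more careful treatment of the lower endpoint where $u_1$ vanishes.
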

\begin{proof}
Since $\tau-\lambda$ is non-oscillatory at $b$, there exists a principal solution, say $u_0$, of \eqref{10.1} by Theorem \ref{t10.3}.  If $x_0$ strictly exceeds the largest zero of $u_0$ in $(c,b)$, then by Theorem \ref{t10.5}, $u_1$ defined by
\begin{equation}\lb{10.19}
u_1(x)=u_0(x)\int_{x_0}^x \frac{dx'}{p(x')u_0(x')^2}, \quad x\in (x_0,b),
\end{equation}
is a non-principal solution on $(x_0,b)$, and as a result,
\begin{equation}\lb{10.20}
\int_{x_0}^b \frac{dx}{|p(x)|u_1(x)^2}<\infty.
\end{equation}
Assuming $\tau$ to be in the l.c.\ case at $b$, one concludes that 
\begin{equation}\lb{10.21}
\int_{x_0}^b u_1(x)^2 r(x)dx<\infty.
\end{equation}
Consequently, H\"older's inequality yields the contradiction, 
\begin{align}\lb{10.22}
\int_{x_0}^b |r(x)/p(x)|^{1/2} dx \leq \bigg|\int_{x_0}^b  u_1(x)^2 r(x)dx \bigg|^{1/2}
\bigg|\int_{x_0}^b \frac{dx}{|p(x)|u_1(x)^2}\bigg|^{1/2}<\infty.
\end{align}
\end{proof}

\begin{corollary} \lb{c10.7} 
Assume Hypothesis \ref{h2.1}.  Suppose $\tau-\lambda_a$ is non-oscillatory at $a$ for some 
$\lambda_a\in \bbR$ and that $\tau-\lambda_b$ is non-oscillatory at $b$ for some 
$\lambda_b \in \bbR$.  If $p$ is sign-definite in neighborhoods of $a$ and $b$ $($the sign 
of $p$ may be different in the two neighborhoods$)$, and 
\begin{equation}\lb{10.23}
\int_a |r(x)/p(x)|^{1/2} dx=\infty, \quad \int^b |r(x)/p(x)|^{1/2} dx=\infty,
\end{equation}
then $\Tmin=\Tmax$ is a self-adjoint operator.
\end{corollary}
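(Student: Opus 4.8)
The plan is to deduce this statement almost immediately from the limit-point criterion of Theorem \ref{t10.6} by applying it separately at each endpoint and then invoking the self-adjointness result of Theorem \ref{t5.2}. Since all of the hypotheses of Corollary \ref{c10.7} are symmetric in the two endpoints, the argument splits into two parallel applications followed by a one-line conclusion.

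First I would verify that the hypotheses needed to apply Theorem \ref{t10.6} at the endpoint $b$ are exactly those assumed here. By assumption, $p$ is sign-definite in a neighborhood of $b$, so there exists $c\in(a,b)$ with $p$ sign-definite a.e.\ on $(c,b)$; moreover $\tau-\lambda_b$ is non-oscillatory at $b$ for the given $\lambda_b\in\bbR$, and the second integral condition in \eqref{10.23} supplies $\int^b |r(x)/p(x)|^{1/2}\,dx=\infty$. Theorem \ref{t10.6} then yields directly that $\tau$ is in the limit-point case at $b$. Repeating this verbatim at the endpoint $a$ — using the sign-definiteness of $p$ near $a$, the non-oscillation of $\tau-\lambda_a$ at $a$, and the first integral condition in \eqref{10.23} — and invoking the analogous form of Theorem \ref{t10.6} at $a$, I conclude that $\tau$ is in the limit-point case at $a$ as well.

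Having established that $\tau$ is in the limit-point case at both endpoints, I would simply appeal to Theorem \ref{t5.2}, which asserts that in precisely this situation $\Tmin=\Tmax$ and that this common operator is self-adjoint. This completes the argument.

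There is essentially no obstacle to overcome here: the substantive analytic work — producing a non-principal solution via Theorem \ref{t10.5}, using the square-integrability forced by the limit-circle assumption, and deriving a contradiction through the Hölder estimate \eqref{10.22} — has already been carried out inside the proof of Theorem \ref{t10.6}. The only point requiring (trivial) care is checking that the differently signed neighborhoods of $p$ at $a$ and $b$ cause no difficulty, which is immediate because Theorem \ref{t10.6} is applied independently at each endpoint and only requires sign-definiteness (of either sign) locally near that endpoint.
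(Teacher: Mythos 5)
Your proof is correct and follows exactly the paper's own argument: apply Theorem \ref{t10.6} at each endpoint to conclude the limit-point case there, then invoke Theorem \ref{t5.2}. The paper's version is simply a two-sentence compression of the same reasoning.
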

\begin{proof}
By Theorem \ref{t10.6}, $\tau$ is in the l.p.\ case at $a$ and $b$.  The result now follows from 
Theorem \ref{t5.2}.
\end{proof}

\begin{theorem} \lb{t10.8} 
Assume Hypothesis \ref{h2.1} and that $p>0$ a.e.\ on $(a,b)$.  Suppose there exist $\lambda_a,\lambda_b\in \bbR$ such that $\tau-\lambda_a$ is non-oscillatory at $a$ and $\tau-\lambda_b$ is non-oscillatory at $b$.\ Then $T_0$ and hence any 
self-adjoint extension $S$ of the minimal operator $T_{\min}$ is bounded from below.  That is, there exists $\gamma_{{}_S} \in \bbR$, such that
\begin{equation}
\langle u,S u\rangle_r \geq \gamma_{{}_S} \langle u,u\rangle_r, \quad u\in \dom{S}. \lb{10.24}
\end{equation}
\end{theorem}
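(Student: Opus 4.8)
The plan is to reduce semiboundedness to a Picone-type factorization of the quadratic form and then to localize near the two endpoints, where non-oscillation furnishes sign-definite solutions. It suffices to treat real-valued $f$ (the general case follows by splitting into $\re f$ and $\im f$), and since $\Tmin=\overline{\Tpre}$ it suffices to establish \eqref{10.24} for $S=\Tpre$ on $\dom{\Tpre}$; the bound then passes to $\dom{\Tmin}$ by taking limits, and, because $\Tmin$ has at most two deficiency indices, every self-adjoint extension of $\Tmin$ is automatically semibounded (its essential spectrum is common and bounded below, and only finitely many eigenvalues can lie beneath it).

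First I would record that an integration by parts (as in Lemma \ref{propLagrange}) gives, for real $f\in\Deftau$ of compact support,
\be
\spr{\tau f}{f}_r = \int_a^b \Big[\tfrac{1}{p}\big(f^\qd\big)^2 + q f^2\Big]\,dx.
\ee
If $w$ is a real solution of $(\tau-\lambda)w=0$ that is strictly positive on an open subinterval $J\subseteq(a,b)$, then inserting $f^\qd = p\,w\,(f/w)' + (f/w)\,w^\qd$ and using $(\tau-\lambda)w=0$ yields, after a short computation, the factorization
\be\label{picone}
\tfrac{1}{p}\big(f^\qd\big)^2 + (q-\lambda r)f^2 = p\,w^2\Big[\big(f/w\big)'\Big]^2 + \Big(\tfrac{f^2}{w}\,w^\qd\Big)'.
\ee
Integrating \eqref{picone} over any $(\alpha,\beta)\subseteq J$ and using $p>0$, the first term on the right is nonnegative while the second contributes only boundary values $\big[f^2 w^\qd/w\big]_\alpha^\beta$. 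This is the engine of the proof: a strictly positive solution controls the form up to explicit boundary terms.

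Next I would localize. Since $\tau-\lambda_a$ is non-oscillatory at $a$, a real solution $u_a$ of $(\tau-\lambda_a)u=0$ has only finitely many zeros near $a$, so there is $a'\in(a,b)$ with $u_a>0$ on $(a,a']$; similarly there is $b'\in(a,b)$ with a real solution $u_b>0$ on $[b',b)$ of $(\tau-\lambda_b)u=0$, and by choosing $a'$ close to $a$ and $b'$ close to $b$ we may assume $a'<b'$. Splitting $\spr{\tau f}{f}_r = Q_a + Q_{\mathrm{mid}} + Q_b$ according to $(a,a')$, $(a',b')$, $(b',b)$, and applying \eqref{picone} with $w=u_a$ on $(a,a')$ (where the boundary term at $a$ vanishes because $f$ has compact support) and with $w=u_b$ on $(b',b)$, one obtains
\begin{align}
\spr{\tau f}{f}_r &\ge \lambda_a\!\int_a^{a'}\! r f^2\,dx + \lambda_b\!\int_{b'}^{b}\! r f^2\,dx \no\\
&\quad + \Big[Q_{\mathrm{mid}} + \tfrac{u_a^\qd(a')}{u_a(a')}f(a')^2 - \tfrac{u_b^\qd(b')}{u_b(b')}f(b')^2\Big].
\end{align}
The bracketed functional is exactly the quadratic form of the regular Sturm--Liouville differential expression $\tau$ on the compact interval $[a',b']$ equipped with the separated (Robin-type) self-adjoint boundary conditions $f^\qd(a')=\frac{u_a^\qd(a')}{u_a(a')}f(a')$ and $f^\qd(b')=\frac{u_b^\qd(b')}{u_b(b')}f(b')$; since $\tau$ is regular on $[a',b']$ and $p>0$ there, this form is bounded below, say by $\gamma_{\mathrm{mid}}\in\bbR$, relative to $\int_{a'}^{b'} r f^2\,dx$. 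Taking $\gamma=\min(\lambda_a,\gamma_{\mathrm{mid}},\lambda_b)$ then gives $\spr{\tau f}{f}_r\ge\gamma\|f\|_{2,r}^2$, which is \eqref{10.24}.

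The main obstacle is not the endpoints but the middle: one must recognize the leftover expression $Q_{\mathrm{mid}}+\big[\text{junction terms}\big]$ as the form of a genuine self-adjoint regular realization and invoke its semiboundedness. The crucial point that makes the junction terms harmless is that they are generated exactly by the boundary values in \eqref{picone}, so they assemble into admissible (real, separated) boundary conditions rather than remaining as uncontrolled point evaluations. I expect the remaining care to go into verifying \eqref{picone} and into the elementary but necessary fact that a regular Sturm--Liouville form with $p>0$ and $L^1$ coefficients is bounded below on $[a',b']$ (which follows from relative form-boundedness of $q$ and of the boundary terms with respect to the nonnegative leading part $\int_{a'}^{b'} p^{-1}(f^\qd)^2\,dx$).
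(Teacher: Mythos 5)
Your proof is correct, and its endpoint mechanism coincides with the paper's: both arguments rest on the same Jacobi/Picone-type factorization (your identity is, after substituting $f^\qd = p[f'+\foco f]$, exactly the pointwise identity \eqref{10.36} in the paper's proof, where the integrand becomes $p\,|u'-u f_a'/f_a|^2\ge 0$ plus an exact derivative), and both pass from semiboundedness of $T_0$ to semiboundedness of every self-adjoint extension via the finite-deficiency-index theorem cited from Weidmann. Where you genuinely diverge is in the treatment of the interior. The paper first shows, via the Sturm separation Theorem \ref{t10.1}, that the solution $f_a$ has only finitely many zeros in the middle region, then decomposes $(a,b)$ into finitely many subintervals on each of which $f_a$ (resp.\ $f_b$) is nonvanishing, applies the factorization separately on each piece for compactly supported functions, and invokes the finite-deficiency-index result for the direct sum $T_{0,\oplus}$. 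You instead keep the middle interval $[a',b']$ in one piece and absorb the junction terms $\tfrac{u_a^\qd(a')}{u_a(a')}|f(a')|^2-\tfrac{u_b^\qd(b')}{u_b(b')}|f(b')|^2$ produced by the factorization into a regular Robin-type quadratic form, whose lower semiboundedness follows from relative form-boundedness of $q$ and of the point evaluations with respect to the nonnegative leading term $\int_{a'}^{b'}p^{-1}|f^\qd|^2dx$ --- precisely the content of Lemma \ref{lA.3} in Appendix \ref{sA}, which is proved independently of Theorem \ref{t10.8}, so there is no circularity. Your route buys a direct explicit lower bound $\spr{T_0 f}{f}_{r}\ge\gamma\|f\|_{2,r}^2$ on all of $\dom{T_0}$ without any zero-counting or direct-sum bookkeeping (and without a second appeal to the extension theorem), at the cost of importing the semiboundedness of regular Robin forms; the paper's route stays entirely within the factorization identity plus abstract extension theory. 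Both arguments are complete; the only cosmetic imprecision on your side is the informal parenthetical justifying why finite deficiency indices propagate semiboundedness to all self-adjoint extensions, which is exactly the standard result the paper cites.
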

\begin{proof}
Since $\tau-\lambda_a$ is non-oscillatory at $a$ and  $\tau-\lambda_b$ is non-oscillatory at $b$, there exist real-valued solutions $f_a,f_b\in \Deftau\backslash \{0\}$ satisfying 
\begin{equation}
(\tau - \lambda_a)f_a=0, \; (\tau - \lambda_b)f_b=0 \, \text{ a.e.\ on $(a,b)$},
\end{equation}
such that $f_a$ does not vanish in a neighborhood, say $(a,c)$ of $a$, and $f_b$ does not vanish in a neighborhood, say $(d,b)$, of $b$.  We may assume that $c<d$.  Note that the solution $f_a$ can have at most finitely many (distinct) zeros in the interval $(c,d)$.  For if $f_a$ has infinitely many zeros in $(c,d)$, then zeros of $f_a$ must accumulate at some point in $[c,d]$.  Let $\{c_n\}_{n=1}^{\infty}\subset (c,d)$ denote such a sequence of zeros and $c_{\infty} \in [c,d]$ with $\lim_{n\rightarrow \infty}c_n=c_{\infty}$.  Since $f_a$ is continuous on $[c,d]$, the accumulation point $c_{\infty}$ is also a zero of $f_a$, that is, 
\begin{equation}\lb{10.26a}
f_a(c_{\infty})=0.
\end{equation}
Let $f$ denote a real-valued solution of $(\tau-\lambda_a)f=0$ linearly independent of $f_a$ so that the Wronskian of $f$ and $f_a$ is a nonzero constant
\begin{equation}\lb{10.27a}
W(f,f_a)(c_{\infty})\in \bbR\backslash\{0\}.
\end{equation}
By the Sturm separation Theorem \ref{t10.1}, the zeros of $f_a$ and $f$ intertwine.  In particular, $c_{\infty}$ must also be a limit point of zeros of $f$, and by continuity of $f$ on $[c,d]$,
\begin{equation}\lb{10.28a}
f(c_{\infty})=0.
\end{equation}
However, \eqref{10.26a} and \eqref{10.28a} are a contradiction to \eqref{10.27a}, and it follows that $f_a$ has only finitely many zeros in $(c,d)$.  

Let $\{c_n\}_{n=2}^{N-1}\subset (c,d)$, $N\in \bbN$ chosen appropriately, denote a listing of the finitely many (distinct) zeros of $f_a$ in $(c,d)$ with $c_{n}<c_{n+1}$, $2\leq n \leq N-2$, and set $c_1=c$ and $c_N=d$.  Define the operators $T_{0,(a,c)}$, $T_{0,(c_n,c_{n+1})}$, $1\leq n\leq N-1$, and $T_{0,(d,b)}$ in the following manner: 
\begin{align}
&T_{0,(a,c)}f_1=\tau f_1,\lb{10.26}\\
&f_1\in \dom {T_{0,(a,c)}}=\big\{g|_{(a,c)}\, \big|\, g\in \dom {T_{\max}},\, \text{$g$ has compact support in $(a,c)$}\big\},\no\\[1mm]
&T_{0,(d,b)}f_2=\tau f_2,\lb{10.28}\\
&f_2\in \dom {T_{0,(d,b)}}=\big\{g|_{(d,b)}\, \big|\, g\in \dom {T_{\max}},\, \text{$g$ has compact support in $(d,b)$}\big\},\no\\[1mm]
&T_{0,(c_n,c_{n+1})}f_3=\tau f_3,\lb{10.27}\\
&f_3\in \dom {T_{0,(c_n,c_{n+1})}}=\big\{g|_{(c_n,c_{n+1})}\, \big|\, g\in \dom {T_{\max}},\, \text{ $\supp(g)\subset(c_n,c_{n+1})$}\big\},\no\\
&\hspace*{9.65cm} 1\leq n\leq N-1.\no
\end{align}
Obviously, $T_0$ defined by \eqref{3.3} is an extension of the direct sum $T_{0,\oplus}$ defined by 
\begin{equation}
T_{0,\oplus} =T_{0,(a,c)}\oplus T_{0,(c_1,c_2)}\oplus \cdots \oplus T_{0,(c_{N-1},c_N)}\oplus T_{0,(d,b)}.
\end{equation}
Moreover, $T_{0,\oplus}\subset \ol {T_{0,\oplus}}\subset T_{\min}$, and any self-adjoint extension of $T_{\min}$ is a self-adjoint extension of $T_{0,\oplus}$.  
Since the deficiency indices of $T_{\min}$ are at most $2$, it suffices to show that 
\begin{equation}\lb{10.30}
\text{$T_{0,\oplus}$ is bounded from below}.
\end{equation}
Subsequently, by \cite[Corollary\ 2, p.\ 247]{We80}, \eqref{10.30} implies that any self-adjoint extension 
of $T_{0,\oplus}$ (hence, any self-adjoint extension of $T_{\min}$) is bounded from below since the deficiency indices of $\ol{T_{0,\oplus}}$ are finite (in fact, they are at most $2N+2$).  It suffices to show that the symmetric operators \eqref{10.26}--\eqref{10.27} are separately bounded from below; a lower bound for 
$T_{0,\oplus}$ is then taken to be the smallest of the lower bounds for \eqref{10.26}--\eqref{10.27}.

The proof that $T_{0,(a,c)}$ and $T_{0,(d,b)}$ are bounded from below relies on the non-oscillatory assumptions on $\tau-\lambda_a$ and $\tau-\lambda_b$.  Since $(\tau-\lambda_a)f_a=0$ a.e.\ on 
$(a,b)$ and $f_a$ does not vanish on $(a,c)$, one can recover $q$ pointwise a.e.\ on $(a,c)$ by 
\begin{equation}\lb{10.31}
q(x)=\lambda_a r(x)-\foco(x)\frac{f_a^{[1]}(x)}{f_a(x)}+\frac{\big(f_a^{[1]}\big)'(x)}{f_a(x)} 
\, \text{ for a.e.\ $x \in (a,c)$.}
\end{equation}
Let $u\in \dom{T_{0,(a,c)}}$ be fixed.  Using \eqref{10.31} in conjunction with the fact that functions in $\dom{T_{0,(a,c)}}$ vanish in neighborhoods of $a$ and $c$ (to freely perform integration by parts), one computes
\begin{align}
&\langle u, T_{0,(a,c)}u\rangle_{L^2((a,c);r(x)dx)}-\lambda_a \langle u,u\rangle_{L^2((a,c);r(x)dx)}   \no \\
&\quad =\int_{(a,c)}  \bigg\{u'(x)\ol{u^{[1]}(x)}+u(x)\foco(x)\ol{u^{[1]}(x)}-\foco(x)|u(x)|^2\frac{f_a^{[1]}(x)}{f_a(x)} \lb{10.32} \\
&\hspace*{2.1cm}-\frac{f_a^{[1]}(x)}{f_a}\big(u'(x)\ol{u(x)}+u(x)\ol{u'(x)} \big)+\frac{f_a^{[1]}(x)f_a'(x)|u(x)|^2}{f_a(x)^2} \bigg\}dx.\no
\end{align}
Denoting the integrand on the right-hand side of \eqref{10.32} by $F_u(x)$ a.e.\ in $(a,c)$, algebraic manipulations using the definition of the quasi-derivative yield
\begin{equation}\lb{10.36}
F_u(x)=p(x)\bigg|u'(x)-u(x)\frac{f_a'(x)}{f_a(x)} \bigg|^2\geq 0 \, \text{ for a.e.\ $x\in (a,c)$.}
\end{equation}
Therefore, the integral appearing in the right-hand side of \eqref{10.32} is nonnegative.  Since 
$u\in \dom{T_{0,(a,c)}}$ is arbitrary, one obtains the lower bound
\begin{equation}\lb{10.34}
\langle u, T_{0,(a,c)}u\rangle_{L^2((a,c);r(x)dx)}\geq \lambda_a \langle u,u\rangle_{L^2((a,c);r(x)dx)}, \quad u\in {\rm dom}(T_{0,(a,c)}).
\end{equation}
The analogous strategy, using the solution $f_b$, establishes the lower bound for $T_{0,(d,b)}$, 
\begin{equation}\lb{10.35}
\langle u, T_{0,(d,b)}u\rangle_{L^2((d,b);r(x)dx)}\geq \lambda_b \langle u,u\rangle_{L^2((d,b);r(x)dx)}, \quad u\in {\rm dom}(T_{0,(d,b)}).
\end{equation}
To show that each $T_{0,(c_n,c_{n+1})}$, $1\leq n \leq N-1$, is semi-bounded from below, one closely follows the strategy used above to prove semi-boundedness of $T_{0,(a,c)}$, noting that since $f_a$ is nonvanishing on $(c_n,c_{n+1})$, $q$ can be solved for a.e.\ on the interval $(c_n,c_{n+1})$ in the same manner as in \eqref{10.31}. Then if $u\in \dom{T_{0,(c_n,c_{n+1})}}$, one obtains an identity which formally reads like \eqref{10.32} with the interval $(a,c)$ everywhere replaced by $(c_n,c_{n+1})$.  Factoring the integrand according to the factorization appearing on the right-hand side of the equality in \eqref{10.36} (this time a.e.\ on $(c_n,c_{n+1})$), one infers that
\begin{equation}\lb{10.39}
\begin{split}
\langle u, T_{0,(c_n,c_{n+1})}u\rangle_{L^2((c_n,c_{n+1});r(x)dx)}\geq \lambda_a \langle u, u \rangle_{L^2((c_n,c_{n+1});r(x)dx)},&\\
u\in \dom{T_{0,(c_n,c_{n+1})}},\, 1\leq n \leq N-1.&
\end{split}
\end{equation}
Together, \eqref{10.34}, \eqref{10.35}, and \eqref{10.39}, yield \eqref{10.30}, and hence \eqref{10.24}.
\end{proof}

\begin{corollary} \lb{c10.9}
Assume Hypothesis \ref{h2.1} and suppose that $p>0$ a.e.\ on $(a,b)$.  If $\tau$ is regular 
on $(a,b)$, then $T_0$ and hence every self-adjoint extension of $T_{\min}$ 
is bounded from below.
\end{corollary}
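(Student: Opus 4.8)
The plan is to reduce the claim to Theorem \ref{t10.8}, whose hypotheses require $p>0$ a.e.\ on $(a,b)$ (which is assumed here) together with the existence of parameters $\lambda_a,\lambda_b\in\bbR$ rendering $\tau-\lambda_a$ non-oscillatory at $a$ and $\tau-\lambda_b$ non-oscillatory at $b$. Thus the entire task reduces to verifying that regularity of $\tau$ at an endpoint forces $\tau-\lambda$ to be non-oscillatory there; in fact I would show this for \emph{every} $\lambda\in\bbR$, so that one may simply take $\lambda_a=\lambda_b=0$ and invoke Theorem \ref{t10.8}.

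The heart of the matter is therefore the following assertion: if $\tau$ is regular at $a$ (say) and $p>0$ a.e.\ near $a$, then no nontrivial real-valued solution $u$ of \eqref{10.1} can have zeros accumulating at $a$, so that $\tau-\lambda$ is non-oscillatory at $a$ in the sense of Definition \ref{d10.2}. First I would use regularity together with Theorem \ref{thm:EEreg} to guarantee that the limits $u(a)$ and $u^\qd(a)$ exist and are finite and that $\foco$, $p^{-1}$ are integrable near $a$. Regarding $u'=p^{-1}u^\qd-\foco u$ as a first-order linear equation for $u$ (as in the proof of Lemma \ref{l10.1a}) and integrating with the integrating factor $\E^{S}$, where $S(x)=\int_a^x\foco(t)\,dt$, yields
\begin{equation}
 u(x)=\E^{-S(x)}\bigg(u(a)+\int_a^x \E^{S(t)}p(t)^{-1}u^\qd(t)\,dt\bigg),\quad x\in(a,c],
\end{equation}
for some $c\in(a,b)$ close to $a$.

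Suppose, for contradiction, that there is a sequence of zeros $x_n\downarrow a$ of $u$. Continuity of $u$ up to $a$ then forces $u(a)=0$, and the displayed identity gives $\int_a^{x_n}\E^{S(t)}p(t)^{-1}u^\qd(t)\,dt=0$ for all $n$. Since $u^\qd$ is continuous up to $a$ with limiting value $u^\qd(a)$, while $\E^{S(t)}p(t)^{-1}>0$ near $a$ (this is where $p>0$ enters decisively), a nonzero value of $u^\qd(a)$ would make the integrand of one sign on a right neighborhood of $a$, rendering these integrals nonzero for large $n$ — a contradiction. Hence $u^\qd(a)=0$ as well, and together with $u(a)=0$ the uniqueness part of Theorem \ref{thm:exisuniq} (equivalently Theorem \ref{thm:EEreg}) forces $u\equiv 0$, contradicting nontriviality. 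The same argument applies verbatim at $b$.

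Consequently $\tau-\lambda$ is non-oscillatory at both endpoints for, e.g., $\lambda=0$, and Theorem \ref{t10.8} then yields that $T_0$, and hence every self-adjoint extension of $\Tmin$, is bounded from below. I expect the only genuinely delicate point to be the exclusion of accumulating zeros at the regular endpoint; once sign-definiteness of $p$ is used to convert the vanishing of the above integrals into the vanishing of $u^\qd(a)$, the remainder is a direct appeal to the already established Theorem \ref{t10.8}.
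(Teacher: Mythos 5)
Your proposal is correct, and its overall skeleton coincides with the paper's: both reduce the corollary to Theorem \ref{t10.8} with $\lambda_a=\lambda_b=0$ after showing that regularity of $\tau$ at an endpoint (with $p>0$ a.e.) rules out oscillation there. Where you differ is in the mechanism for that non-oscillation step. The paper argues with \emph{two} solutions: if $u_a$ had zeros accumulating at $a$, the Sturm separation theorem (Theorem \ref{t10.1}) forces any linearly independent real solution $v$ to have zeros accumulating at $a$ as well; by Theorem \ref{thm:EEreg} both $u_a(a)=v(a)=0$, so $W(u_a,v)(a)=0$, contradicting the constancy and nonvanishing of the Wronskian. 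You instead work with a \emph{single} solution, deriving the integral representation $u(x)=\E^{-S(x)}\big(u(a)+\int_a^x\E^{S}p^{-1}u^{[1]}\big)$ and using positivity of $p$ to force $u^{[1]}(a)=0$ from the vanishing of the integrals at the zeros $x_n\downarrow a$, whence $u\equiv 0$ by the uniqueness assertion of Theorem \ref{thm:EEreg}. Your route avoids the separation theorem entirely and is slightly more self-contained; the paper's route is shorter given that Theorem \ref{t10.1} is already in place, and it packages the sign-definiteness of $p$ inside that theorem rather than in a direct estimate. One small point worth making explicit in your write-up: the strict positivity of $\int_a^{x_n}\E^{S}p^{-1}u^{[1]}\,dt$ for large $n$ uses that $p^{-1}>0$ a.e.\ on a set of positive measure, which is exactly where $p>0$ (or merely sign-definiteness of $p$ near $a$) enters; as stated this is covered by the corollary's hypotheses, so there is no gap.
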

\begin{proof}
We claim that the differential expression $\tau$ is non-oscillatory at $a$. Indeed, if $\tau$ were oscillatory 
at $a$, then $\tau u=0$ has a nontrivial, real-valued solution $u_a$ with zeros accumulating at $a$.  
Let $v$ denote a nontrivial, real-valued solution of $\tau u =0$ linearly independent of $u_a$. Then 
Theorem \ref{t10.1} implies that $v$ also has zeros accumulating at $a$. By Theorem \ref{thm:EEreg}, 
$u_a, v$, and their quasi-derivatives have limits at $a$; by continuity,
\begin{equation}
\lim_{x\downarrow a}u_a(x)=\lim_{x\downarrow a}v(x)=0.
\end{equation}
As a result, the Wronskian of $u_a$ and $v$ must satisfy
\begin{equation}
\lim_{x\downarrow a}W(u_a,v)(x)=0,
\end{equation}
which yields a contradiction since the Wronskian of $u_a$ and $v$ equals a fixed, nonzero constant everywhere in $(a,b)$.  Similarly, one shows that $\tau$ is non-oscillatory at $b$.  The result now follows 
by applying Theorem \ref{t10.8}, with, say, $\lambda_a=\lambda_b=0$.
\end{proof}

Corollary \ref{c10.9}, under our present general assumptions, has originally been proved by M\"oller and Zettl \cite{MZ95} using a different approach (and for the general even-order case considered in \cite{We87} with  a positive leading coefficient).  

\begin{corollary} \lb{c10.10}
Assume Hypothesis \ref{h2.1} and suppose $p$ is sign-definite a.e.\ in $(a,b)$.  If $\tau$ is regular on $(a,b)$ and 
$\lambda \in \bbR$, then any nontrivial, real-valued solution of $\tau u=\lambda u$ has only finitely many zeros in $(a,b)$.
\end{corollary}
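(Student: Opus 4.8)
The plan is to show that the zero set $Z = \{x \in (a,b) : u(x) = 0\}$ of a nontrivial real-valued solution $u$ of \eqref{10.1} is a subset of $(a,b)$ consisting only of isolated points and having no limit point anywhere in the closure of $(a,b)$; the Bolzano--Weierstrass theorem then forces $Z$ to be finite.

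First I would establish that the zeros of $u$ are isolated. At any zero $x_0 \in Z$ one has $u^\qd(x_0) \neq 0$, since otherwise $u$ would solve \eqref{10.1} with vanishing initial data at $x_0$ and hence vanish identically by the uniqueness assertion of Theorem \ref{thm:exisuniq}. Because $p$ is sign-definite near $x_0$, Lemma \ref{l10.1a} shows that $u$ changes sign at $x_0$; indeed, inspecting the representation used in its proof, $u^\qd$ retains a fixed sign on a neighborhood of $x_0$, so that $u$ is nonzero on both sides of $x_0$. Thus each point of $Z$ is isolated, and in particular $Z$ has no limit point in the open interval $(a,b)$: any such limit point $\ell$ would satisfy $u(\ell)=0$ by continuity and hence be a non-isolated zero.

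Next I would rule out accumulation of zeros at the endpoints by proving that $\tau - \lambda$ is non-oscillatory at both $a$ and $b$ in the sense of Definition \ref{d10.2}. This is exactly the argument already carried out in Corollary \ref{c10.9}, now applied to $\tau - \lambda$ in place of $\tau$, with sign-definiteness of $p$ supplying the hypothesis of the Sturm separation Theorem \ref{t10.1}: were $\tau - \lambda$ oscillatory at $a$, some solution $w$ of \eqref{10.1} would have zeros accumulating at $a$, Theorem \ref{t10.1} would force any linearly independent solution $\widetilde w$ to have zeros accumulating at $a$ as well, and regularity of $\tau$ at $a$ (Theorem \ref{thm:EEreg}) together with continuity would yield $w(a) = \widetilde w(a) = 0$. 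This makes the constant Wronskian $W(w, \widetilde w)$ vanish at $a$, contradicting linear independence. The same reasoning applies at $b$.

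Finally I would assemble the pieces. Viewing $(a,b)$ inside the compact space $[a,b]$ (finite endpoints in the standard reading, with $\pm\infty$ causing no difficulty since regularity still guarantees the finite boundary limits of Theorem \ref{thm:EEreg}), suppose $Z$ were infinite. Then $Z$ would have a limit point $\ell \in [a,b]$. If $\ell \in (a,b)$, the previous paragraph is contradicted; if $\ell = a$ or $\ell = b$, the limit point yields a sequence of zeros of $u$ accumulating at that endpoint, contradicting non-oscillation there. Hence $Z$ is finite. The main obstacle is the middle step: one must verify that the non-oscillation proof of Corollary \ref{c10.9}, stated there for $p > 0$ and $\lambda = 0$, genuinely transfers to arbitrary $\lambda \in \bbR$ and merely sign-definite $p$. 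This hinges only on the facts that the Wronskian of two linearly independent solutions of \eqref{10.1} is a nonzero constant and that the boundary limits of Theorem \ref{thm:EEreg} exist, both of which are insensitive to the value of $\lambda$ and to the sign of $p$.
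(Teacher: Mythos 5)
Your proof is correct and follows essentially the same route as the paper's: the paper reduces to $\lambda=0$ by absorbing $\lambda$ into $q$ and then invokes the accumulation arguments already carried out in the proofs of Theorem \ref{t10.8} (no interior accumulation point, via the constant nonvanishing Wronskian) and Corollary \ref{c10.9} (no accumulation at a regular endpoint, via Theorem \ref{thm:EEreg}). Your only deviation is cosmetic: you rule out interior accumulation by showing zeros are isolated through Lemma \ref{l10.1a} rather than by evaluating the Wronskian of two intertwining solutions at the putative accumulation point, and you correctly observe that the endpoint argument of Corollary \ref{c10.9} transfers verbatim to sign-definite $p$ and arbitrary $\lambda\in\bbR$.
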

\begin{proof}
By absorbing $\lambda$ into $\tau$, it suffices to consider the case $\lambda=0$.  A nontrivial, real-valued function $u$ satisfying $\tau u=0$ cannot have zeros accumulating at a point in $[a,b]$.
\end{proof}

\begin{definition} \lb{d10.11} 
Assume Hypothesis \ref{h2.1}.  The operator $T_0$ (defined by \eqref{3.3}) is said to be 
{\it bounded from below at $a$} if there exists a $c\in (a,b)$ and a $\lambda_a\in \bbR$ such that 
\begin{equation}
\langle u, T_0u\rangle_r\geq \lambda_a \langle u, u\rangle_r, \quad u\in \dom {T_0} \, \text{such that $u\equiv 0$ on $(c,b)$}.
\end{equation}
Similarly, $T_0$ is said to be {\it bounded from below at $b$} if there exists a $d\in (a,b)$ and a $\lambda_b\in \bbR$ such that 
\begin{equation}
\langle u, T_0u\rangle_r\geq \lambda_b \langle u, u\rangle_r, \quad u\in \dom {T_0} \, \text{such that $u\equiv 0$ on $(a,d)$}.
\end{equation}
\end{definition}

\begin{theorem} \lb{t10.12}
Assume Hypothesis \ref{h2.1}.  If $T_0$ is bounded from below at $a$ and $p$ is sign-definite a.e.\ 
near $a$, then there exists an $\alpha\in \bbR$ such that for all $\lambda < \alpha$, 
$\tau-\lambda$ is non-oscillatory at $a$.  A similar result holds if $T_0$ is bounded from below at $b$.
\end{theorem}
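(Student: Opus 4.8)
The plan is to set $\alpha = \lambda_a$, where $c\in(a,b)$ and $\lambda_a\in\bbR$ are the endpoint and constant furnished by the hypothesis that $\Tpre$ is bounded from below at $a$ (Definition \ref{d10.11}); shrinking $c$ if necessary, I may also assume that $p$ is sign-definite a.e.\ on $(a,c)$. The whole argument hinges on the quadratic form identity
\begin{equation}\lb{qf1012}
\langle u, \Tpre u\rangle_r = \int_a^b \big[p(x)\,|u'(x)+\foco(x)u(x)|^2 + q(x)\,|u(x)|^2\big]\,dx, \quad u\in\dom{\Tpre},
\end{equation}
which follows from a single integration by parts (the boundary terms vanishing by compact support) after writing $u^\qd = p(u'+\foco u)$ and using $r\tau u = -(u^\qd)'+\foco u^\qd + qu$.

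First I would argue by contradiction, fixing some $\lambda<\alpha=\lambda_a$ and assuming that $\tau-\lambda$ is oscillatory at $a$. By Definition \ref{d10.2} there is then a nontrivial real-valued solution $u$ of \eqref{10.1} whose zeros, necessarily isolated in $(a,b)$ by Theorem \ref{thm:exisuniq}, accumulate at $a$; since all but finitely many of them lie in $(a,c)$, I may choose two consecutive zeros $a<x_1<x_2<c$ with $u\neq0$ on $(x_1,x_2)$. I then set $w=u$ on $[x_1,x_2]$ and $w=0$ on $(a,b)\backslash[x_1,x_2]$; this $w$ is continuous, belongs to $\Lr$, vanishes on $(c,b)$, and has $\|w\|_{2,r}^2=\int_{x_1}^{x_2}|u|^2\,r\,dx>0$. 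Repeating the integration by parts behind \eqref{qf1012} on $[x_1,x_2]$ — where the boundary terms again vanish, now because $u(x_1)=u(x_2)=0$ — and inserting $-(u^\qd)'+\foco u^\qd+qu=\lambda r u$, I obtain
\begin{equation}\lb{qf1012b}
\int_a^b \big[p\,|w'+\foco w|^2 + q\,|w|^2\big]\,dx = \lambda \int_a^b |w|^2\,r\,dx,
\end{equation}
so that the form in \eqref{qf1012}, evaluated on $w$, equals $\lambda\|w\|_{2,r}^2<\lambda_a\|w\|_{2,r}^2$.

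To close the argument I must contradict the defining inequality of Definition \ref{d10.11}. This would be immediate if $w$ belonged to $\dom{\Tpre}$, but it does not: since $u(x_j)=0$ while $u\not\equiv0$, Theorem \ref{thm:exisuniq} forces $u^\qd(x_j)\neq0$, so $w^\qd$ jumps at $x_1$ and $x_2$ and $w\notin\Deftau$. The remedy is to approximate $w$, in the sense of the form \eqref{qf1012}, by genuine elements $w_\eps\in\dom{\Tpre}$ supported in $(a,c)$: one rounds off the two corners of $w$ over intervals of length $\eps$ by prescribing an absolutely continuous quasi-derivative $w_\eps^\qd$ that interpolates between $0$ and $u^\qd$ there, recovering $w_\eps$ from the first-order system \eqref{eqn:system}. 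Writing $\int p\,|u'+\foco u|^2 = \int p^{-1}|u^\qd|^2$ and invoking $p^{-1}\in L^1_{\loc}((a,b);dx)$ shows that the form contributions of both $w$ and each $w_\eps$ over the two shrinking corner intervals tend to $0$ as $\eps\downarrow0$; hence $\langle w_\eps,\Tpre w_\eps\rangle_r-\lambda_a\|w_\eps\|_{2,r}^2\to(\lambda-\lambda_a)\|w\|_{2,r}^2<0$, so for small $\eps$ the function $w_\eps$ violates Definition \ref{d10.11}. This contradiction shows that $\tau-\lambda$ is non-oscillatory at $a$ for every $\lambda<\alpha$, and the statement at $b$ follows by the symmetric argument.

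I expect this corner-rounding step to be the main obstacle. Conceptually the content is the standard min--max fact that $u|_{(x_1,x_2)}$ is the Dirichlet ground state on the regular interval $(x_1,x_2)$ with eigenvalue $\lambda$, so $\lambda$ cannot drop below the form bound $\lambda_a$; the sign-definiteness of $p$ enters precisely to guarantee this clean oscillation picture via Lemma \ref{l10.1a} and Theorem \ref{t10.1}. Under Hypothesis \ref{h2.1}, however, $p$ carries no pointwise regularity, so one cannot simply multiply $u$ by a smooth cutoff — this would destroy membership in $\Deftau$. The delicate point is therefore to perform the approximation at the level of the quasi-derivative, extracting all the needed control from the local integrability of $p^{-1}$ alone.
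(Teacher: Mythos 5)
Your route is genuinely different from the paper's. The paper never performs test-function surgery: it transfers the semiboundedness at $a$ to a self-adjoint realization $S_{(a,c)}$ of $\tau|_{(a,c)}$ (using the finite deficiency indices and \cite[Corollary\ 2, p.\ 247]{We80}), observes that $(S_{(a,c)}-\lambda)^{-1}\geq 0$ forces the Green's function to be nonnegative along the diagonal, writes that diagonal as $W^{-1}u_a(x)u_c(x)$, and concludes from Lemma \ref{l10.1a} that $u_a$ has no zero in $(a,c)$. Your variational argument --- the solution truncated between two consecutive zeros has Rayleigh quotient exactly $\lambda$ --- is the classical alternative, and your computation of the form at $w$ is correct. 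But the entire proof rests on the corner-rounding step, which you have only sketched, and as described it does not go through.

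Two concrete obstructions. First, the gluing: if on $[x_1,x_1+\eps]$ you prescribe $w_\eps^\qd$ and integrate $w_\eps'=-\foco w_\eps+p^{-1}w_\eps^\qd$ from $w_\eps(x_1)=0$, then $w_\eps(x_1+\eps)=\E^{-S(x_1+\eps)}\int_{x_1}^{x_1+\eps}\E^{S(y)}p(y)^{-1}w_\eps^\qd(y)\,dy$ is forced; for any interpolation with $0\le w_\eps^\qd\le u^\qd$ pointwise (and $\E^{S}p^{-1}u^\qd$ sign-definite near $x_1$, precisely because $p$ is sign-definite and $u^\qd(x_1)\neq 0$) this is \emph{strictly} smaller in modulus than $u(x_1+\eps)$, so either $w_\eps$ is discontinuous at $x_1+\eps$ or its quasi-derivative is not the function you prescribed. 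Repairing this requires the matching constraint $\int_{x_1}^{x_1+\eps}\E^{S}p^{-1}\big(w_\eps^\qd-u^\qd\big)\,dy=0$, which forces the ramp to overshoot $u^\qd$; that construction is absent. Second, and more seriously: Definition \ref{d10.11} tests against $\dom{\Tpre}$, so you need $\tau w_\eps\in\Lr$. On the corner interval $r\tau w_\eps=-(w_\eps^\qd)'+\foco w_\eps^\qd+qw_\eps$, and $\tau w_\eps\in\Lr$ means $\int r^{-1}\big|-(w_\eps^\qd)'+\foco w_\eps^\qd+qw_\eps\big|^2dx<\infty$; under Hypothesis \ref{h2.1} neither $r^{-1}$ nor $q^2/r$ is locally integrable, so a generic absolutely continuous interpolation of the quasi-derivative takes you out of $\dom{\Tmax}$ altogether. (For $u$ itself this combination collapses to $\lambda r u$, which is why $u$ is admissible; your perturbation destroys that cancellation.) Both defects are repairable --- most cleanly by replacing the pointwise surgery with the closed quadratic form of the Dirichlet realization on the regular subinterval $(x_1-\delta,x_2+\delta)$, whose form domain (cf.\ Appendix \ref{sA}) visibly contains $w$ and whose lower bound is inherited from $\Tpre$ --- but as written the decisive step of your proof is missing.
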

\begin{proof}
By assumption, there exists a $c\in(a,b)$ such that each self-adjoint extension $S_{(a,c)}$ of 
$\tau_{(a,c)}$ with separated boundary conditions in $L^2((a,c);r(x)dx)$ is bounded from below by some $\alpha\in\R$. More precisely, this follows from Definition \ref{d10.11} and 
\cite[Corollary\ 2 on p.\ 247]{We80}. Then for each $\lambda<\alpha$, the diagonal of the 
corresponding Green's function $G_{(a,c),\lambda}(x,x)$, $x\in(a,c)$ is nonnegative (cf.\ 
\cite[Lemma on p.\ 195]{Jo82}). In fact, since $G_{(a,c),\lambda}$ is continuous on $(a,c) \times (a,c)$ one has
\begin{align}
 G_{(a,c),\lambda}(x,x) = \lim_{\eps\rightarrow0} \spr{(S_{(a,c)}-\lambda)^{-1}f_{x,\eps}}{f_{x,\eps}}_{L^2((a,c);r(x)dx)} \geq 0 \lb{Ques1}
\end{align}
for each $x\in(a,c)$, where
\begin{align}
 f_{x,\eps}(y) = \left(\int_{x-\eps}^{x+\eps} r(t)dt\right)^{-1} \chi_{(x-\eps,x+\eps)}(y), \quad y\in(a,c), 
 \; \eps>0.\lb{Ques2}
\end{align}
Indeed, if $x\in (a,c)$, then by continuity along the diagonal, for any $\delta>0$, there exists an $\varepsilon(\delta)>0$ such that
\begin{equation}
\begin{split}
G_{(a,c),\lambda}(x,x) - \delta \leq G_{(a,c),\lambda}(s,t) \leq G_{(a,c),\lambda}(x,x) + \delta,&\\
(s,t)\in(x-\eps,x+\eps)\times(x-\eps,x+\eps), \quad \varepsilon<\varepsilon(\delta).&
\end{split}
\end{equation}
As a result,
\begin{equation}
\begin{split}
G_{(a,c),\lambda}(x,x) - \delta  \leq \spr{(S_{(a,c)}-\lambda)^{-1} f_{x,\eps}}{f_{x,\eps}} \leq G_{(a,c),\lambda}(x,x) + \delta,&\\
\varepsilon< \varepsilon(\delta), \; \delta>0.&
\end{split}
\end{equation}
Therefore, one obtains
\begin{equation}
G_{(a,c),\lambda}(x,x) - \delta  \leq \liminf_{\varepsilon\downarrow 0}
\spr{(S_{(a,c)}-\lambda)^{-1} f_{x,\eps}}{f_{x,\eps}} \leq G_{(a,c),\lambda}(x,x) + \delta, \quad 
\delta>0,
\end{equation}
and the analogous inequality with ``$\liminf$'' replaced by ``$\limsup$.''  Subsequently taking $\delta \downarrow 0$ yields \eqref{Ques1}.

Now let $u_a$ and $u_c$ be solutions of $(\tau-\lambda) u = 0$ lying in $L^2((a,c);r(x)dx)$ near $a$ and $c$ respectively and satisfying the boundary conditions there (if any). If $u_a$ had a zero $x$ in $(a,c)$, then $y\mapsto G_{(a,c),\lambda}(y,y)$ would change sign there (note that $u_c$ is nonzero in $x$ since otherwise $\lambda$ would be an eigenvalue of $S_{(a,c)}$). Hence $u_a$ cannot have a zero in $(a,c)$ which shows that $\tau-\lambda$ is non-oscillatory at $a$. 
\end{proof}

\begin{corollary}\lb{c10.13}
Assume Hypothesis \ref{h2.1} and suppose $p>0$ a.e.\ on $(a,b)$.  Then $T_0$ is bounded from below if and only if there exist $\mu\in \bbR$ and functions $g_a, g_b\in AC_{\loc}((a,b))$ such that $g_a^{[1]}, g_b^{[1]}\in AC_{\loc}((a,b))$, $g_a>0$ near $a$, $g_b>0$ near $b$, 
\begin{equation}\lb{10.55}
\begin{split}
q&\geq \mu r-\foco\frac{g_a^{[1]}}{g_a}+\big(g_a^{[1]}\big)'  \, \text{ a.e.\ near $a$},\\
q&\geq \mu r-\foco\frac{g_b^{[1]}}{g_b}+\big(g_b^{[1]}\big)'  \, \text{ a.e.\ near $b$}.
\end{split}
\end{equation}
\end{corollary}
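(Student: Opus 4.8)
The plan is to prove both implications by reducing, in each direction, to the completion-of-square computation already carried out in the proof of Theorem \ref{t10.8}. The organizing observation is that the inequality \eqref{10.55} says precisely that $g_a$ (resp.\ $g_b$) is a strictly positive \emph{supersolution} of $(\tau-\mu)u=0$ near the endpoint $a$ (resp.\ $b$): multiplying \eqref{10.55} by $g_a>0$ gives $-(g_a^{\qd})' + \foco\, g_a^{\qd} + q g_a \ge \mu r g_a$, which is the inhomogeneous (inequality) analogue of the solution identity \eqref{10.31}. Thus the role played by a genuine solution $f_a$ in Theorem \ref{t10.8} will be taken over by the supersolution $g_a$, with equality relaxed to an inequality pointing the correct way.

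For the direction that assumes \eqref{10.55}, I would fix $c,d\in(a,b)$ with $c<d$ so that $g_a>0$ on $(a,c)$ and $g_b>0$ on $(d,b)$, and then show that $T_0$ is bounded from below at $a$ and at $b$ in the sense of Definition \ref{d10.11}. For $u\in\dom{T_0}$ supported in $(a,c)$, I would expand $\spr{u}{(T_0-\mu)u}_r$ exactly as in \eqref{10.32}, bound $q$ from below using \eqref{10.55}, and then perform the same integration by parts and algebraic rearrangement that produced \eqref{10.36}. Because $g_a$ only satisfies an inequality, the reconstruction of $q$ is no longer exact, but the $\foco$-dependent cross terms cancel just as before, leaving
\[
 \spr{u}{(T_0-\mu)u}_r \ge \int_a^c p(x)\,\Big|\,u'(x)-u(x)\tfrac{g_a'(x)}{g_a(x)}\,\Big|^2\,dx \ge 0,
\]
where positivity of $p$ is used in the final step. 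The analogous computation at $b$ using $g_b$ yields boundedness from below at $b$. On the compact middle interval $[c,d]$ the expression $\tau$ is regular, so the associated minimal operator is automatically bounded from below; assembling the three pieces into the direct sum $T_{0,(a,c)}\oplus T_{0,(c,d)}\oplus T_{0,(d,b)}$ and invoking \cite[Corollary\ 2, p.\ 247]{We80} exactly as at the close of the proof of Theorem \ref{t10.8} shows that $T_0$, being an extension of this semibounded operator with finite deficiency indices, is bounded from below.

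For the converse, suppose $T_0$ is bounded from below; then it is in particular bounded from below at $a$ and at $b$ in the sense of Definition \ref{d10.11}, with a common lower bound. Since $p>0$ is sign-definite near each endpoint, Theorem \ref{t10.12} supplies $\alpha_a,\alpha_b\in\bbR$ such that $\tau-\lambda$ is non-oscillatory at $a$ for all $\lambda<\alpha_a$ and at $b$ for all $\lambda<\alpha_b$. Fixing $\lambda_a<\alpha_a$, non-oscillation at $a$ forces any real-valued solution $f_a$ of $(\tau-\lambda_a)u=0$ to have its zeros bounded away from $a$, hence $f_a$ is of constant sign on some $(a,c)$; after a sign change I take $f_a>0$ there and set $g_a:=f_a$, so that \eqref{10.31} holds with equality. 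Proceeding identically at $b$ with some $\lambda_b<\alpha_b$ produces $g_b:=f_b>0$ near $b$. Choosing the single constant $\mu:=\min(\lambda_a,\lambda_b)$ and using $r>0$, the two equalities of type \eqref{10.31} weaken to the two inequalities in \eqref{10.55}.

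I expect the main obstacle to lie in the first implication: verifying that when $q$ is only \emph{estimated} from below rather than reconstructed exactly, the $\foco$-dependent terms still cancel so that the quadratic form collapses to the manifestly nonnegative expression $\int_a^c p\,|u'-u\,g_a'/g_a|^2\,dx$. This is exactly the point at which positivity (and not merely sign-definiteness) of $p$ is essential, and where the passage from the supersolution inequality \eqref{10.55} to the factorized form must be tracked carefully. The remaining ingredients—the splitting into three subintervals and the transfer of semiboundedness through finite deficiency indices—are routine once Theorem \ref{t10.8} and \cite[Corollary\ 2, p.\ 247]{We80} are invoked.
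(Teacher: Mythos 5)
Your proposal is correct and follows essentially the same route as the paper: sufficiency by substituting the supersolution $g_a$ (resp.\ $g_b$) for $f_a$ (resp.\ $f_b$) and ``$\geq$'' for ``$=$'' in \eqref{10.31}--\eqref{10.32} so that the completed square \eqref{10.36} still yields the lower bound, and necessity by invoking Theorem \ref{t10.12} and taking $g_a$, $g_b$ to be positive solutions of $(\tau-\lambda)u=0$ near the endpoints. Your explicit handling of the compact middle interval via regularity of $\tau|_{(c,d)}$ is a sensible way to fill in a step the paper leaves implicit, since $g_a$, $g_b$ are only given near the endpoints and the zero-splitting device from Theorem \ref{t10.8} is not directly available.
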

\begin{proof}
We first assume in addition that 
\begin{equation}\lb{10.54}
\int_a\frac{dx}{p(x)g_a(x)^2}=\int^b\frac{dx}{p(x)g_b(x)^2}=\infty. 
\end{equation}
Then for the necessity part of the corollary, Theorem \ref{t10.12} permits one to choose $g_a$ and 
$g_b$ as principal solutions of $(\tau-\mu)u=0$ at $a$ and $b$, respectively, for $\mu$ less than a lower bound of $T_0$.  For the sufficiency part, one replaces $\lambda_a$ by $\mu$, ``$=$'' by ``$\geq$'', 
and $f_a$ by $g_a$ in \eqref{10.31} and \eqref{10.32}.  The endpoint $b$ is handled analogously. 

As originally pointed out in \cite[Sect.\ 3]{Ka78} in the context of traditional Sturm--Liouville 
operators (i.e., those 
without distributional potentials), one may replace condition \eqref{10.54} by the condition that one (resp., both) of the integrals appearing in \eqref{10.54} is (resp., are) convergent.  Indeed, the sufficiency 
proof of Corollary \ref{c10.13} is carried out independent of the condition in \eqref{10.54}.  For necessity, Theorem \ref{t10.12} permits one to choose $g_a$ or $g_b$ as a non-principal solution, yielding 
equality in \eqref{10.55}. 
\end{proof}

\begin{definition} \lb{d10.15} 
Assume Hypothesis \ref{h2.1} and let $\lambda \in \bbR$.  Two points $x_1,x_2\in (a,b)$, $x_1\neq x_2$, are called {\it conjugate points with respect to $\tau-\lambda$} if there is some nontrivial, real-valued solution $u$ of $(\tau-\lambda)u=0$ satisfying $u(x_1)=u(x_2)=0$.  If no pair of conjugate points with respect to $\tau-\lambda$ exists, then the differential expression $\tau-\lambda$ is called {\it disconjugate}.
\end{definition}

The disconjugacy property has been extensively studied for Sturm--Liouville expressions with 
standard $L^1_{\loc}$-coefficients, and in this connection we refer to the monograph by 
Coppel \cite{Co71}.  The proof of Theorem \ref{t10.12} immediately yields the following 
disconjugacy result for the distributional Sturm--Liouville expressions studied throughout this 
manuscript. 

\begin{corollary} \lb{c10.16} 
Assume Hypothesis \ref{h2.1}, and suppose $p>0$ a.e.\ on $(a,b)$.  If $T_0$ is bounded from below, then there is an $\alpha\in\R$ such that $(\tau -\lambda)$ is disconjugate for every $\lambda<\alpha$.  If $\tau$ is regular on $(a,b)$, then there exists a $\alpha_0\in \bbR$, such that for $\lambda<\alpha_0$, each 
nontrivial solution to $(\tau-\lambda)u=0$ has at most one zero in the closed interval $[a,b]$.
\end{corollary}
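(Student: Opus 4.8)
The plan is to reduce both assertions to a single statement about Dirichlet problems on compact subintervals: for $\lambda$ below a lower bound $\gamma$ of $T_0$, the number $\lambda$ cannot be a Dirichlet eigenvalue of $\tau$ on any compact subinterval of $(a,b)$. This is the natural analogue, for a solution with two zeros, of the positivity of the diagonal Green's function exploited in the proof of Theorem \ref{t10.12}. Concretely, under the hypothesis that $T_0$ is bounded from below I fix $\gamma\in\bbR$ with $\langle u,T_0 u\rangle_r\geq\gamma\langle u,u\rangle_r$ for all $u\in\dom{T_0}$ and set $\alpha=\gamma$; in the regular case, Corollary \ref{c10.9} (together with $p>0$ a.e.) guarantees that such a $\gamma$ exists, and I set $\alpha_0=\gamma$.

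The key step is a uniform lower bound for the Dirichlet operators on subintervals. Fix $x_1,x_2$ with $a\leq x_1<x_2\leq b$ and $[x_1,x_2]\subset(a,b)$ in the first case (resp.\ $[x_1,x_2]\subseteq[a,b]$ when $\tau$ is regular on $(a,b)$); on such a compact interval $\tau$ is regular, so the self-adjoint restriction $S^D_{(x_1,x_2)}$ determined by the Dirichlet conditions $f(x_1)=f(x_2)=0$ is well defined. Every $f\in\dom{T_{0,(x_1,x_2)}}$ extends by zero to an element of $\dom{T_0}$, whence $\langle T_{0,(x_1,x_2)}f,f\rangle_r\geq\gamma\|f\|_{2,r}^2$; that is, $T_{0,(x_1,x_2)}\geq\gamma$. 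Since $S^D_{(x_1,x_2)}$ is precisely the Friedrichs extension of the regular minimal operator $T_{0,(x_1,x_2)}$ (at a regular endpoint the Dirichlet condition is the Friedrichs boundary condition, as one sees from the form identity $\langle\tau f,f\rangle_r=\int p|f'+\foco f|^2+q|f|^2$ valid for $f$ vanishing at $x_1,x_2$), and the Friedrichs extension preserves the lower bound, one obtains $S^D_{(x_1,x_2)}\geq\gamma$, hence $\sigma\big(S^D_{(x_1,x_2)}\big)\subseteq[\gamma,\infty)$.

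I then argue by contraposition. Suppose $\lambda<\alpha=\gamma$ and that $\tau-\lambda$ admits a pair of conjugate points $x_1<x_2$ (in $(a,b)$ for the first assertion, in $[a,b]$ for the second), so that some nontrivial real-valued solution $u$ of \eqref{10.1} satisfies $u(x_1)=u(x_2)=0$. By Theorem \ref{thm:exisuniq} the restriction $u|_{(x_1,x_2)}$ is not identically zero, and being continuous it lies in $L^2((x_1,x_2);r(x)dx)$; moreover it solves $\tau u=\lambda u$ and satisfies the Dirichlet conditions at $x_1$ and $x_2$, so it is an eigenvector of $S^D_{(x_1,x_2)}$ with eigenvalue $\lambda$. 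Thus $\lambda\in\sigma\big(S^D_{(x_1,x_2)}\big)\subseteq[\gamma,\infty)$, contradicting $\lambda<\gamma$. Hence no conjugate points exist, i.e.\ $\tau-\lambda$ is disconjugate for every $\lambda<\alpha$; in the regular case the same argument, now with endpoints admitted since $\tau$ is regular at $a$ and $b$, shows that every nontrivial solution of $(\tau-\lambda)u=0$ has at most one zero in $[a,b]$ whenever $\lambda<\alpha_0$.

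The main obstacle is the \emph{uniform} lower bound $S^D_{(x_1,x_2)}\geq\gamma$, valid simultaneously for all subintervals. Weidmann's theorem only yields that each individual self-adjoint extension is semibounded, with an a priori interval-dependent bound, so the crucial input is the identification of the Dirichlet extension with the Friedrichs extension of the regular minimal operator, for which the lower bound $\gamma$ inherited from $T_0$ is automatically preserved. This is exactly the place where semiboundedness of $T_0$ (rather than mere non-oscillation) enters, in the same spirit as the resolvent positivity $(S_{(a,c)}-\lambda)^{-1}\geq 0$ underlying the proof of Theorem \ref{t10.12}.
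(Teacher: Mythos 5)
Your proof is correct, but it follows a genuinely different route from the paper's. The paper produces, for $\lambda$ below a suitable $\alpha$, a \emph{zero-free} solution of $(\tau-\lambda)u=0$ on $(a,b)$ by repeating the Green's-function positivity argument from the proof of Theorem \ref{t10.12} with $c=b$ (nonnegativity of $G_{(a,b),\lambda}$ on the diagonal forces the Weyl solution at $a$ to be nonvanishing), and then invokes the Sturm separation theorem (Theorem \ref{t10.1}) to conclude that every other solution has at most one zero; the regular case is finished by additionally requiring $\lambda<\inf(\sigma(S_{0,0}))$. You instead establish a \emph{uniform} lower bound $S^D_{(x_1,x_2)}\geq\gamma$ over all compact subintervals, by extending test functions by zero to get $T_{0,(x_1,x_2)}\geq\gamma$ and then using that the Dirichlet extension is the Friedrichs extension of the regular minimal operator (Corollary \ref{c10.19}, together with \eqref{Fr-4}), so that a solution with two zeros would be a Dirichlet eigenfunction with eigenvalue below the bottom of the spectrum. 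Your identification of the uniformity over subintervals as the crux is exactly right, and your route buys the explicit value $\alpha=\gamma$ (the lower bound of $T_0$ itself) and avoids both the Green's-function continuity/approximate-identity argument and the separation theorem; the paper's route yields the somewhat stronger intermediate fact that a positive solution exists for $\lambda<\alpha$, which ties in with the principal-solution theory used elsewhere in Section \ref{s11}. One small presentational point: your parenthetical justification that ``the Dirichlet condition is the Friedrichs boundary condition'' via the quadratic-form identity is only a heuristic as written; you should cite Corollary \ref{c10.19} (or Theorem \ref{tA.4}) for the rigorous statement, which is available once $T_{0,(x_1,x_2)}\geq\gamma$ is in hand.
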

\begin{proof} 
Repeating the proof of Theorem \ref{t10.12} with $c=b$ shows that there is an $\alpha\in\R$ such that for each $\lambda<\alpha$ there is a solution of $(\tau-\lambda)u=0$ which has no zero in $(a,b)$. Now the claim follows immediately from Theorem \ref{t10.1}.  To prove the final statement, let $\alpha$ denote a real number (shown to exist in the first part of the corollary) such that for every $\lambda<\alpha$ there is a solution of $(\tau-\lambda)u=0$ which has no zeros in $(a,b)$.  Now, let $\alpha_0=\min\{ \alpha, \inf(\sigma(S_{0,0}))\}$, where $S_{0,0}$ denotes the Dirichlet extension of $\Tmin$ defined by \eqref{eqn:SRLCLCsep} with $\varphi_a=\varphi_b=0$ and the functionals $BC_a^1$ and $BC_b^1$ chosen such that (cf.\ Lemma \ref{prop:PointEvalBC})
\begin{equation}
BC_a^1(g)=g(a), \quad BC_b^1(g)=g(b), \quad g\in \text{dom}\big(\Tmax \big).
\end{equation}
If for some $\lambda<\lambda_{\min}$ a solution to $(\tau-\lambda)u=0$, call it $u_0$, has more than one zero, then necessarily $u_0(a)=u_0(b)=0$, as $u$ has no zeros in $(a,b)$ because $\lambda<\alpha$.  Consequently, $u_0$ is an eigenfunction of $S_{0,0}$ with eigenvalue $\lambda<\inf \sigma\big(S_{0,0}\big)$, an obvious contradiction. 
\end{proof}

We conclude this section with an explicit characterization of the Friedrichs extension \cite{Fr34} of $T_0$ 
(assuming the latter to be bounded from below). Before proceeding with this characterization, we recall the 
intrinsic description of the Friedrichs extension $S_F$ of a densely defined, symmetric operator $S_0$ in a complex, separable Hilbert space $\cH$ (with scalar product denoted by $(\cdot, \cdot)_{\cH}$), bounded from below, due to Freudenthal \cite{Fr36} in 1936. Assuming that $S_0 \geq \gamma_{{}_{S_0}} I_{\cH}$,  Freudenthal's characterization  describes $S_F$ by    
\begin{align}
& S_F u=S_0^*u,   \no \\
& u \in \dom{S_F} = \Big\{v\in\dom{S_0^*}\,\Big|\,  \mbox{there exists} \, 
\{v_j\}_{j\in\bbN}\subset \dom{S_0},    \label{Fr-2} \\
& \hspace*{4.5mm} \mbox{with} \, \lim_{j\to\infty}\|v_j-v\|_{\cH}=0  
\mbox{ and } ((v_j-v_k),S_0 (v_j-v_k))_\cH 
\underset{ j,k\to\infty}{\longrightarrow} 0\Big\}.     \no 
\end{align}
Then, as is well-known,  
\begin{align}
& S_F \geq \gamma_{{}_{S_0}} I_{\cH},  \label{Fr-4}  \\
& {\rm dom} \big((S_F-\gamma_{{}_{S_0}} I_{\cH})^{1/2}\big)=\Big\{v\in\cH\,\Big|\, \mbox{there exists} \, 
\{v_j\}_{j\in\bbN}\subset \dom{S_0},   \label{Fr-4J} \\
& \hspace*{7mm} \mbox{with} \lim_{j\to\infty}\|v_j-v\|_{\cH}=0  
\mbox{ and } ((v_j-v_k),S_0(v_j-v_k))_\cH 
\underset{ j,k\to\infty}{\longrightarrow} 0\Big\},  \no 
\end{align}
and
\begin{equation}\label{Fr-4H}
S_F=S_0^*|_{\dom{S_0^*}\cap {\rm dom} ((S_F - \gamma_{{}_{S_0}} I_{\cH})^{1/2})}.
\end{equation}

Equations \eqref{Fr-4J} and \eqref{Fr-4H} are intimately related to the definition of $S_F$ via (the closure of) the sesquilinear form generated by $S_0$ as follows: One introduces the sesquilinear form
\begin{equation}
\gq_{S_0}(f,g)=(f,S_0g)_{\cH}, \quad f, g \in \dom{\gq_{S_0}}=\dom{S_0}. 
\end{equation}
Since $S_0\geq \gamma_{{}_{S_0}} I_{\cH}$, the form $q_{S_0}$ is closable and we denote by 
$\ol{\gq_{S_0}}$ the closure of $\gq_{S_0}$. Then $\ol{\gq_{S_0}} \geq \gamma_{{}_{S_0}}$ is densely defined and closed. By the first and second representation theorem for forms (cf., e.g., \cite[Sect.\ 6.2]{Ka80}), $\ol{\gq_{S_0}}$ is uniquely associated with a self-adjoint operator in $\cH$. This operator is precisely the Friedrichs extension, $S_F \geq \gamma_{{}_{S_0}} I_{\cH}$, of $S_0$, and hence,
\begin{equation}
\ol{\gq_{S_0}} (f,g)=(f,S_F g)_{\cH}, \quad f \in \dom{\ol{\gq_{S_0}}} 
= {\rm dom} \big((S_F- \gamma_{{}_{S_0}} I_{\cH})^{1/2}\big), \, 
g \in \dom{S_F}.  \lb{Fr-Q}     
\end{equation}

The following result describes the Friedrichs extension of $T_0$ (assumed to be bounded from below) in terms of functions that mimic the behavior of principal solutions near an endpoint. The proof closely follows  
the treatment by Kalf \cite{Ka78} in the special case $\foco=0$ a.e.\ on $(a,b)$.\ (For more recent results on the Friedrichs extension of ordinary differential operators we also refer to \cite{MZ00}, \cite{MZ95}, \cite{MZ95a}, \cite{NZ90}, \cite{NZ92}, \cite{Ro85}, and \cite{Ze98}.)  

\begin{theorem}\lb{t10.17}
Assume Hypothesis \ref{h2.1} and suppose $p>0$ a.e.\ on $(a,b)$.  If $T_0$ is bounded from below by $\gamma_0 \in \bbR$, $T_0 \geq \gamma_0 I_r$, which by Corollary \ref{c10.13} is equivalent to the existence of $\mu\in \bbR$ and functions $g_a$ and $g_b$ satisfying $g_a,g_b,g_a^{[1]},g_b^{[1]}\in AC_{\loc}((a,b))$, $g_a>0$ a.e.\ near $a$, $g_b>0$ a.e.\ near $b$,
\begin{equation}\lb{10.56}
\int_a\frac{dx}{p(x)g_a(x)^2}=\int^b\frac{dx}{p(x)g_b(x)^2}=\infty,
\end{equation}
and
\begin{equation}\lb{10.57}
\begin{split}
q&\geq \mu r-\foco\frac{g_a^{[1]}}{g_a}+\frac{\big(g_a^{[1]}\big)'}{g_a} \, \text{ a.e.\ near $a$},\\
q&\geq \mu r-\foco\frac{g_b^{[1]}}{g_b}+\frac{\big(g_b^{[1]}\big)'}{g_b} \, \text{ a.e.\ near $b$},
\end{split}
\end{equation}
then the Friedrichs extension $S_F$ of $T_0$ is characterized by
\begin{align}
&S_Ff=\tau f,\no\\
&f\in \dom{S_F}=\bigg\{g\in \dom{\Tmax} \, \bigg|\, \int_a  pg_a^2\bigg|\bigg(\frac{g}{g_a}\bigg)' \bigg|^2 dx<\infty, \lb{10.58}\\
&\hspace*{6.6cm} \int^b pg_b^2\bigg|\bigg(\frac{g}{g_b}\bigg)' \bigg|^2 dx <\infty  \bigg\}.\no
\end{align}
In particular, 
\begin{equation}\lb{10.59a}
\begin{split}
\int_a \bigg|q-\frac{\big(g_a^{[1]}\big)'}{g_a}+\foco\frac{g_a^{[1]}}{g_a} \bigg||f|^2 dx <\infty, \, \int^b \bigg|q-\frac{\big(g_b^{[1]}\big)'}{g_b}+\foco\frac{g_b^{[1]}}{g_b} \bigg| |f|^2 dx <\infty,&\\
 f\in \dom {S_F}.&
\end{split}
\end{equation}
\end{theorem}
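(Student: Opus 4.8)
The plan is to realize $S_F$ through its sesquilinear form. Since $T_0^\ast=\Tmax$, Freudenthal's description \eqref{Fr-4H} gives $S_F=\Tmax|_{\dom{\Tmax}\cap\cQ}$, where $\cQ:=\dom{\ol{\gq_{T_0}}}=\dom{(S_F-\gamma_0 I_r)^{1/2}}$ is the form domain, i.e.\ the completion of $\dom{T_0}$ in the norm $u\mapsto\big(\gq_{T_0}(u,u)+(1-\gamma_0)\|u\|_{2,r}^2\big)^{1/2}$ (after shifting so that $1-\gamma_0>0$). Everything thus reduces to identifying $\dom{\Tmax}\cap\cQ$ with the right-hand side of \eqref{10.58}, whose conditions only constrain $f$ near the two endpoints. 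For $u\in\dom{T_0}$ an integration by parts (boundary terms vanishing by compact support) yields $\gq_{T_0}(u,u)=\int_a^b[p|u'+\foco u|^2+q|u|^2]\,dx$, and I would then record the pointwise factorization, valid a.e.\ near $a$ for every $f\in\Deftau$,
\[
p|f'+\foco f|^2+(q-\mu r)|f|^2 = p g_a^2\big|(f/g_a)'\big|^2+\Big(\tfrac{g_a^{[1]}}{g_a}|f|^2\Big)'+(Q_a-\mu r)|f|^2,
\]
where $Q_a:=q-(g_a^{[1]})'/g_a+\foco\, g_a^{[1]}/g_a$ satisfies $Q_a-\mu r\ge0$ by \eqref{10.57}. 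This is precisely the computation behind \eqref{10.36} in Theorem \ref{t10.8}, now with an inequality rather than an equality for $q$, and it uses only $g_a'/g_a=g_a^{[1]}/(pg_a)-\foco$; the same holds near $b$.

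The characterization \eqref{10.58} then splits into two inclusions, and for the bulk of the argument I take $g_a,g_b$ to be principal solutions of $(\tau-\mu)u=0$ (available since $T_0$ is bounded below, by Theorem \ref{t10.12} and Corollary \ref{c10.13}), so that $Q_a-\mu r=0$ and the factorization is exact. For $\dom{\Tmax}\cap\cQ\subseteq$ RHS: if $f\in\cQ$, pick $u_j\in\dom{T_0}$ with $u_j\to f$ in form norm; a lower bound of the shape $\int_a pg_a^2|(v/g_a)'|^2\,dx\lesssim\gq_{T_0}(v,v)+\|v\|_{2,r}^2$ (obtained by combining the endpoint factorizations with the regularity of the coefficients on the compact middle interval) applied to $v=u_j-u_k$ shows $\{(u_j/g_a)'\}$ is Cauchy in $L^2((a,c);pg_a^2\,dx)$, and completeness together with identification of the limit gives finiteness of the integral. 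For the reverse inclusion—given $f\in\dom{\Tmax}$ with both integrals finite, produce compactly supported approximants—I pass to the variable $\eta(x)=\int_x^c dx'/(pg_a^2)$, which increases to $+\infty$ as $x\downarrow a$ by \eqref{10.56} and in which, writing $w:=f/g_a$, the energy becomes simply $\int|dw/d\eta|^2\,d\eta<\infty$, so that $w$ grows at most like $O(\eta^{1/2})$. Cutting off by $\chi_n:=\zeta(\log\eta/\log n)$ for a fixed $\zeta\in C^\infty$ equal to $1$ on $[0,1]$ and $0$ on $[2,\infty)$, one checks that $\chi_n f\in\Deftau$ (the potentially dangerous term $p\chi_n'$ equals $-\zeta'(\cdot)/(g_a^2\eta\log n)$, the factor $p$ cancelling against $\eta'=-1/(pg_a^2)$, so no $p^{-1}$ survives), that $\chi_n w'\to w'$ by dominated convergence, and that the cross-term energy $\int pg_a^2|\chi_n'|^2|w|^2\,dx\lesssim(\log n)^{-1}\to0$—this last estimate, which forces the wide logarithmic transition, is exactly where the divergence \eqref{10.56} is used. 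Splicing endpoint cutoffs and invoking Lemma \ref{lem:funcdomtmax} gives $\chi_n f\to f$ in form norm, i.e.\ $f\in\cQ$.

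To pass from principal solutions to arbitrary admissible $g_a,g_b$ and to obtain \eqref{10.59a}, I use the identity $pg_a^2(f/g_a)'=f^{[1]}g_a-g_a^{[1]}f=-W(f,g_a)$, so that $\int_a pg_a^2|(f/g_a)'|^2\,dx=\int_a|W(f,g_a)|^2/(pg_a^2)\,dx$. Since $\int_a dx/(pg_a^2)=\infty$, finiteness of this integral forces $\liminf_{x\downarrow a}|W(f,g_a)(x)|=0$, and when $g_a$ is a principal solution (hence in $\dom{\Tmax}$ near $a$, so that $W(f,g_a)(a)$ exists by Lemma \ref{lem:l2lagrange}) it forces $W(f,g_a)(a)=0$; a comparison of Wronskians then shows the finiteness condition, and the space in \eqref{10.58}, to be independent of the admissible choice of $g_a$, so the general statement follows from the principal-solution case. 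For \eqref{10.59a}, fix $f\in\dom{S_F}$ and integrate the general factorization over $(\alpha,c)$, combining it with the Lagrange identity to obtain
\[
\int_\alpha^c(Q_a-\mu r)|f|^2\,dx=\int_\alpha^c\overline{f}\,r(\tau-\mu)f\,dx-\int_\alpha^c pg_a^2|(f/g_a)'|^2\,dx-\big[\,\overline{(f/g_a)}\,W(f,g_a)\,\big]_\alpha^c;
\]
as $\alpha\downarrow a$ the two integrals on the right converge and the boundary contribution tends to a finite limit, the finiteness of the energy integral together with \eqref{10.56} controlling $W(f,g_a)$ near $a$, whence $\int_a(Q_a-\mu r)|f|^2\,dx<\infty$. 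Since $Q_a\ge\mu r$ and $f\in\Lr$, the pointwise bound $|Q_a|\le(Q_a-\mu r)+3|\mu|r$ then yields $\int_a|Q_a|\,|f|^2\,dx<\infty$, and the same reasoning at $b$ completes \eqref{10.59a}.

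The main obstacle is the reverse inclusion in the form-domain characterization: the approximation must be engineered in the variable $\eta$ so that simultaneously $\chi_n f$ stays in $\Deftau$ and the cutoff-derivative energy tends to $0$, the latter being possible only through the logarithmic transition afforded by the divergence \eqref{10.56}. The second delicate point is the control of the boundary term $\overline{(f/g_a)}\,W(f,g_a)$ at the singular endpoints in the proof of \eqref{10.59a}, which again rests on the vanishing Wronskian $W(f,g_a)(a)=0$ forced by \eqref{10.56}. Both are the quasi-derivative incarnation of Kalf's \cite{Ka78} method, and it is there that the present distributional setting demands genuinely new bookkeeping relative to the classical three-term case.
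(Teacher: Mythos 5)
Your route differs from the paper's in its second half. The forward inclusion (your step (a), that $\dom{\Tmax}\cap\cQ$ is contained in the set \eqref{10.58}) is essentially the paper's argument \eqref{10.68}--\eqref{10.75}, and your factorization is the paper's \eqref{10.69}. But where you attack the reverse inclusion head-on by a cutoff construction, the paper instead proves that the operator $S$ defined by \eqref{10.58} is \emph{symmetric} (splicing $g_a$ and $g_b$ into a single globally positive $h$ via an interior ground state, then using the Jacobi factorization \eqref{10.61} and the Kalf--Walter Lemma \ref{lKalf} to kill the boundary terms along subsequences), and then concludes $S=S_F$ from $S_F\subseteq S$ with $S_F$ self-adjoint. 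This sidesteps exactly the step where your argument has a genuine gap: you show $\chi_n f\to f$ in form norm, but you never show $\chi_n f\in\cQ$. Since $\cQ$ is by definition the completion of $\dom{T_0}$ in the form norm, convergence of a sequence not known to lie in $\cQ$ gives nothing. And in general $\chi_n f\notin\dom{T_0}$: computing $\tau(\chi_n f)$ produces the terms $r^{-1}\chi_n' f^{[1]}$ and $r^{-1}(p\chi_n')'f$, and although $p\chi_n'$ is harmless, $\chi_n'$ itself carries a factor $p^{-1}$, which is only $L^1_{\loc}$; so $\tau(\chi_n f)$ need not lie in $\Lr$ and $\chi_n f$ need not be in $\dom{T_0}$. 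To close this you would additionally have to show that a compactly supported $AC_{\loc}$ function with finite form energy lies in the form closure of $\dom{T_0}$ --- which under Hypothesis \ref{h2.1} is itself a nontrivial statement, essentially the form-domain identification of the regular Dirichlet problem on a compact subinterval carried out in Appendix \ref{sA}. The logarithmic cutoff in the variable $\eta$ and the $\Oh(\eta^{1/2})$ growth bound are correct and are indeed where \eqref{10.56} enters, but they only get you to ``approximable by compactly supported finite-energy functions,'' not to membership in $\cQ$.

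Two secondary points. First, your reduction of general admissible $g_a$ to principal solutions asserts parenthetically that a principal solution of $(\tau-\mu)u=0$ lies in $\dom{\Tmax}$ near $a$, so that $W(f,g_a)(a)$ exists by Lemma \ref{lem:l2lagrange}; but principal solutions need not lie in $\Lr$ near the endpoint, so this limit need not exist and that reduction is not justified as stated. The paper avoids the issue entirely by working directly with the given $g_a,g_b$. Second, in your proof of \eqref{10.59a} the boundary term $\ol{(f/g_a)}\,W(f,g_a)(\alpha)$ does not ``tend to a finite limit'' as $\alpha\downarrow a$; what one gets (from $|f/g_a|=\Oh(\eta^{1/2})$ and $W(f,g_a)\in L^2(d\eta)$, i.e.\ from Lemma \ref{lKalf}) is $\liminf=0$ along a subsequence, which still suffices because $(Q_a-\mu r)|f|^2\ge 0$ allows monotone convergence. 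So the mechanism is right but the stated limit claim is not. Overall: your step (a), the factorization identities, and the Wronskian reformulation $pg_a^2(f/g_a)'=-W(f,g_a)$ are all sound and match the paper; the missing idea is how to certify that your approximants (or the limit $f$) actually belong to the form closure of $\dom{T_0}$, and the paper's symmetry-of-$S$ argument is precisely the device that makes this unnecessary.
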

\begin{proof}
Let $S$ denote the operator defined by \eqref{10.58} and $S_F$ the Friedrichs extension of $T_0$.  We begin by showing $S$ is symmetric.  In order to do this, it suffices to prove $S$ is densely defined and 
\begin{equation}\lb{10.59}
\langle u,Su\rangle_r\in \bbR,\quad u\in \dom S.
\end{equation}  
Since functions in $\dom{T_0}$ are compactly supported one has $\dom{T_0}\subset \dom{S}$, which guarantees that $S$ is densely defined. Hence it remains to show \eqref{10.59}.  
 To this end, let $a<c_0<d_0<b$ such that $g_a>0$ on $(a,c_0]$, $g_b>0$ on $[d_0,b)$ and consider the self-adjoint operator $S_{(c_0,d_0)}$ on $L^2((c_0,d_0);r(x)dx)$ induced by $\tau$ with the boundary conditions
\begin{align}
 f(c_0)g_a^\qd(c_0) - f^\qd(c_0) g_a(c_0) = f(d_0) g_b^\qd(d_0) - f^\qd(d_0) g_b(d_0) = 0.
\end{align} 
 The proof of Theorem \ref{t10.12} shows that the solutions $u_\lambda$ of $(\tau-\lambda)u=0$, 
 $\lambda\in\R$, satisfying the initial conditions $u_\lambda(c_0) = g_a(c_0)$ and 
 $u_\lambda^\qd(c_0) = g_a^\qd(c_0)$, 
 are positive as long as $\lambda$ lies below the smallest eigenvalue $\lambda_0$ of 
 $S_{(c_0,d_0)}$ (which is bounded from below by assumption).
 In particular, this guarantees that the eigenfunction $u_{\lambda_0}$ is nonnegative on $[c_0,d_0]$ and hence even positive since it would change sign at a zero. 
 As a consequence, the function $h$ defined by
 \begin{align}
  h(x) = \begin{cases}
    g_a(x), & x\in(a,c_0), \\
    u_{\lambda_0}(x), & x\in[c_0,d_0], \\
    u_{\lambda_0}(d_0)g_b(d_0)^{-1} g_b(x), & x\in(d_0,b)
  \end{cases}
 \end{align} 
 is positive on $(a,b)$ and satisfies $h\in AC_{\loc}((a,b))$, $h^{[1]}\in AC_{\loc}((a,b))$. Note that in particular $h$ is a scalar multiple of $g_b$ near $b$ and hence \eqref{10.56} and \eqref{10.57} hold with $g_b$ replaced by $h$. 
 Now fix some $f\in \dom S$ and let $a<c<d<b$.  In light of the following analog of Jacobi's factorization identity,
\begin{equation}\lb{10.61}
-\big(f^{[1]} \big)'+\foco f^{[1]}+\frac{(h^{[1]})'}{h}f-\foco\frac{h^{[1]}}{h}f=-\frac{1}{h}\bigg[ph^2\bigg(\frac{f}{h} \bigg)'\bigg]'  \, \text{ a.e.\ in $(a,b)$},
\end{equation}
one computes
\begin{align}
&\int_c^d f(x) \, \ol{Sf(x)} r(x) dx \lb{10.62}\\
&\quad=-\bigg[ph^2\bigg(\ol{\frac{f}{h}\bigg)'}\frac{{f}}{h}\bigg]\bigg|_c^d+\int_c^d \bigg\{ph^2\bigg|\bigg(\frac{f}{h} \bigg)' \bigg|^2+|f|^2\bigg(q-\frac{\big(h^{[1]} \big)'}{h}+\foco \frac{h^{[1]}}{h} \bigg) \bigg\} dx,\no\\
&\hspace*{9.5cm}a<c<d<b,\no
\end{align}
so that 
\begin{equation}\lb{10.63}
\Im\bigg(\int_c^d f(x) \, \ol{Sf(x)}  r(x)dx \bigg)
=\Im\bigg(-\bigg[ph^2\ol{\bigg(\frac{f}{h}\bigg)'}\frac{{f}}{h}\bigg]\bigg|_c^d \bigg),\quad a<c<d<b.
\end{equation} 
Taking $P=ph^2$ and $v=f/h$ in the subsequent Lemma \ref{lKalf}, one infers that
\begin{equation}\lb{RC1}
\int_a\frac{|v(x)|^2}{P(x) H_{\gamma}(x)}dx<\infty,\quad \gamma \in (a,b),
\end{equation}
where the function $H_{\gamma}$ is defined as in \eqref{RC-1}. We note that $H_{\gamma}$ is 
well-defined for any $\gamma\in (a,b)$ in light of the fact that $1/p\in L^1_{\loc}((a,b);dx)$ and the 
function $h\in AC_{\loc}((a,b))$ is strictly positive on any compact subinterval of $(a,b)$. 
Subsequently, an application of H\"older's inequality yields
\begin{equation}\lb{RC3}
\int_a\frac{P(x)\big|\ol{v(x)}v'(x)\big|}{P(x) H_{\gamma}(x)}dx <\infty,\quad \gamma\in (a,b),
\end{equation}
noting that square integrability of $P^{1/2} v'$ near $x=a$ is guaranteed by the condition $f\in \dom S$.  Moreover, the integral
\begin{equation}\lb{RC4}
\int_a\frac{dx}{P(x) H_{\gamma}(x)},\quad \gamma\in (a,b),
\end{equation}
diverges logarithmically to infinity, so \eqref{RC3} implies
\begin{equation}\lb{RC5}
\liminf_{x\downarrow a}\big|P\ol{v}v'\big|(x)
=\liminf_{x\downarrow a}\bigg|ph^2\ol{\bigg(\frac{f}{h}\bigg)'}\frac{{f}}{h}\bigg|(x)=0.
\end{equation}
An analogous argument at $x=b$ can be used to show
\begin{equation}\lb{10.64}
\liminf_{x\uparrow b}\bigg|ph^2\ol{\bigg(\frac{f}{h}\bigg)'}\frac{{f}}{h}\bigg|(x)=0.
\end{equation}
Equations \eqref{10.63}, \eqref{RC5}, and \eqref{10.64} show that one can choose sequences 
$\{c_n\}_{n\in\bbN}$ and $\{d_n\}_{n\in\bbN}$ with $a<c_n<d_n<b$, $n\in \bbN$, with 
$c_n\downarrow a$, $d_n\uparrow b$, such that
\begin{equation}\lb{10.65}
\lim_{n\rightarrow \infty}\Im\bigg(\int_{c_n}^{d_n} f(x) \, \ol{Sf(x)}  r(x) dx \bigg)=0.
\end{equation}
On the other hand
\begin{equation}\lb{10.66}
\lim_{\substack{c\downarrow a \\ d\uparrow b}}\Im\bigg(\int_{c}^{d} f(x) \, \ol{Sf(x)}  r(x) dx \bigg)
\end{equation}
exists.  Consequently, \eqref{10.65} implies
\begin{equation}\lb{10.67}
\Im\bigg(\int_{a}^{b} f(x) \, \ol{Sf(x)}  r(x) dx \bigg)=\lim_{\substack{c\downarrow a \\ 
d\uparrow b}} \Im\bigg(\int_{c}^{d} f(x) \, \ol{Sf(x)}  r(x) dx \bigg)=0.
\end{equation}
Since $f\in \dom S$ was arbitrary, \eqref{10.59} follows.

We now show that $S$ coincides with $S_F$, the Friedrichs extension of $T_0$.  It suffices to show $S_F\subset S$; self-adjointness of $S_F$ and symmetry of $S$ then yield $S_F = S$.  In turn, since $S_F$ is a restriction of $T_{\max}$ (because the self-adjoint extensions of $T_0$ are precisely the self-adjoint extensions of $T_{\min}$, and the latter are self-adjoint restrictions of $T_{\max}$), it suffices to verify the two integral conditions appearing in \eqref{10.58} are satisfied for elements of $\dom {S_F}$.  Freudenthal's characterization of the domain of the Friedrichs extension for the present setting is
\begin{align}
&\dom {S_F} =\Big\{f\in \dom{\Tmax} \, \Big| \, \text{there exists $\{f_j\}_{j=1}^{\infty}\subset \dom{T_0}$} \text{ such} \lb{10.68}\\
&\hspace*{2.1cm} \text{that $\lim_{j\rightarrow \infty}\|f_j-f\|_{2,r}=0$ and 
$\lim_{j,k\rightarrow \infty}\langle f_j-f_k, T_0(f_j-f_k) \rangle_r=0$}\Big\}.\no 
\end{align}
Let $f\in \dom{S_F}$ and $\{f_j\}_{j=1}^{\infty}$ a sequence with the properties in \eqref{10.68}.  Define $f_{j,k}=f_j-f_k$, $j,k\in \bbN$, and choose numbers $c$ and $d$ in the interval $(a,b)$ such that $g_a$ and $g_b$ are positive on $(a,c]$ and $[d,b)$, respectively.  Then using the identities
\begin{align}
&\int_{\alpha}^{c} \big\{p^{-1} \big|u^{[1]}\big|^2+q|u|^2\big\} dx\lb{10.69}\\
&\quad =\frac{g_a^{[1]}}{g_a}|u|^2\bigg|_{\alpha}^c+\int_{\alpha}^c \bigg\{pg_a^2\bigg|\bigg(\frac{u}{g_a}\bigg)' \bigg|^2+|u|^2\bigg[q+\foco\frac{g_a^{[1]}}{g_a}-\frac{\big(g_a^{[1]}\big)'}{g_a}\bigg] \bigg\}dx,\quad \alpha \in [a,c],\no\\
&\int_d^{\beta} \big\{p^{-1} \big|u^{[1]}\big|^2+q|u|^2\big\}dx \lb{10.70}\\
&\quad =\frac{g_b^{[1]}}{g_b}|u|^2\bigg|_d^{\beta}+\int_d^{\beta} \bigg\{pg_b^2\bigg|\bigg(\frac{u}{g_b}\bigg)' \bigg|^2+|u|^2\bigg[q+\foco\frac{g_b^{[1]}}{g_b}-\frac{\big(g_b^{[1]}\big)'}{g_b}\bigg] \bigg\}dx,\quad \beta \in [d,b],\no\\
&\hspace*{10.2cm}u\in \dom{T_0},\no
\end{align}
one computes 
\begin{align}
\langle f_{j,k},T_0f_{j,k}\rangle_r&=\int_a^c \bigg\{pg_a^2\bigg|\bigg(\frac{f_{j,k}}{g_a}\bigg)' \bigg|^2+|f_{j,k}|^2\bigg[q+\foco\frac{g_a^{[1]}}{g_a}-\frac{\big(g_a^{[1]}\big)'}{g_a}\bigg] \bigg\}dx\no\\
&\quad+\int_d^{b} \bigg\{pg_b^2\bigg|\bigg(\frac{f_{j,k}}{g_b}\bigg)' \bigg|^2+|f_{j,k}|^2\bigg[q+\foco\frac{g_b^{[1]}}{g_b}-\frac{\big(g_b^{[1]}\big)'}{g_b}\bigg] \bigg\}dx\no\\
&\quad +\bigg(\frac{g_a^{[1]}}{g_a}|f_{j,k}|^2\bigg)(c)-\bigg(\frac{g_b^{[1]}}{g_b}|f_{j,k}|^2\bigg)(d)\no\\
&\quad +\int_c^d \big\{p^{-1}\big|f_{j,k}^{[1]}\big|^2+q|f_{j,k}|^2\big\}dx, \quad j,k\in \bbN.\lb{10.71}
\end{align}
On the other hand, choosing $\nu\in \bbR$ such that
\begin{align}
\begin{split} 
\nu \int_c^d  r\big|f_{j,k}\big|^2 dx &\leq\bigg(\frac{g_a^{[1]}}{g_a}|f_{j,k}|^2\bigg)(c)-\bigg(\frac{g_b^{[1]}}{g_b}|f_{j,k}|^2\bigg)(d)\lb{10.72}\\
&\quad +\int_c^d \big\{p^{-1} \big|f_{j,k}^{[1]}\big|^2+q|f_{j,k}|^2\big\}dx, \quad j,k\in \bbN, 
\end{split} 
\end{align}
\noindent
the existence of such a $\nu$ being guaranteed by Lemma \ref{lA.3} (cf., in particular, \eqref{A.15}),  
and taking $\kappa=|\mu|+|\nu|$, one obtains
\begin{equation}\lb{10.73}
\begin{split}
\langle f_{j,k},T_0f_{j,k}\rangle_r+\kappa \big\|f_{j,k} \big\|_{2,r}^2\geq \int_a^c pg_a^2\bigg|\bigg(\frac{f_{j,k}}{g_a}\bigg)' \bigg|^2 dx +\int_d^b pg_b^2\bigg|\bigg(\frac{f_{j,k}}{g_b}\bigg)' \bigg|^2 dx,&\\
j,k\in \bbN.\\
\end{split}
\end{equation}
Moreover, the left-hand side of \eqref{10.73} goes to zero as $j,k\rightarrow \infty$, and as a result, there exist functions $f_a$ and $f_b$ such that
\begin{align}
\lim_{j\rightarrow \infty}\int_a^c pg_a^2\bigg|\bigg(\frac{f_{j}}{g_a}\bigg)'-f_a \bigg|^2 dx=\lim_{j\rightarrow \infty }\int_d^b pg_b^2\bigg|\bigg(\frac{f_{j}}{g_b}\bigg)' -f_b\bigg|^2 dx =0,\lb{10.74}
\end{align}
implying, $f_a=(g_a^{-1}f)'$, $f_b=(g_b^{-1}f)'$ a.e.\ on $(a,c)$ and $(d,b)$, respectively.  Consequently, one infers that
\begin{equation}\lb{10.75}
\int_a pg_a^2\bigg|\bigg(\frac{f}{g_a}\bigg)' \bigg|^2 dx <\infty,\,\quad  \int^b  pg_b^2\bigg|\bigg(\frac{f}{g_b}\bigg)' \bigg|^2 dx <\infty, 
\end{equation}
and it follows that $f\in \dom S$.  This completes the proof that $S_F\subseteq S$ and hence, $S_F=S$.

To prove \eqref{10.59a}, note that in light of the inequalities in \eqref{10.57}, it suffices to prove that the positive part of $\big[q-\big(h^{[1]}\big)'/h+\foco h^{[1]}/h\big]$ times $|f|^2$ is integrable near $a$ and $b$ for each $f\in \dom{S_F}$.  This follows immediately from \eqref{10.62} and \eqref{10.64}.
\end{proof}

The proof of Theorem \ref{t10.17} relied on the following result:

\begin{lemma}\lb{lKalf} $($\cite[Lemma~1]{KW72}, \cite{Ka78}$)$
Let $P>0$, $1/P\in L^1_{\loc}((a,b);dx)$, and 
\begin{equation}\lb{RC-1}
H_{\gamma}(x)=\bigg|\int_{\gamma}^x\frac{dt}{P(t)} \bigg|,\quad x\in (a,b),\, \gamma\in [a,b].
\end{equation}
In addition, suppose that $v\in AC_{\loc}((a,b))$ satisfies $P^{1/2}v'\in L^2((a,b);dx)$.  If $H_a=\infty$, then
\begin{equation}\lb{RC0}
\int_a\frac{|v(x)|^2}{P(x)H_{\gamma}^2(x)}dx<\infty,\quad \gamma \in (a,b),
\end{equation}
the choice $\gamma=b$ being also possible if $H_b<\infty$.\\
\end{lemma}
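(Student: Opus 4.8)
The plan is to observe that the integrand is, up to the favorable sign coming from $P>0$, an exact derivative, and then to run a self-improving integration-by-parts estimate that absorbs the target integral into itself. Throughout I work on a one-sided neighborhood of $a$ on which $H_\gamma$ is strictly positive: for $\gamma\in(a,b)$ I fix $c\in(a,\gamma)$, while for the case $\gamma=b$ with $H_b<\infty$ any $c\in(a,b)$ will do. In both cases $H_\gamma(x)=\int_x^\gamma dt/P(t)>0$ for $x\in(a,c]$, with $H_\gamma(x)\to H_a=\infty$ as $x\downarrow a$, and $1/H_\gamma\in AC_{\loc}((a,c])$ because $1/P\in L^1_{\loc}$ and $H_\gamma$ is bounded away from $0$ there.

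First I would record the identity
\begin{equation*}
\frac{1}{P(x)\,H_\gamma(x)^2}=\frac{d}{dx}\Big(\frac{1}{H_\gamma(x)}\Big),\qquad x\in(a,c),
\end{equation*}
which holds since $H_\gamma'(x)=-1/P(x)$ on $(a,c)$. For $a<\alpha<c$ this turns the truncated integral into a boundary term plus a cross term after integration by parts,
\begin{equation*}
\int_\alpha^c \frac{|v|^2}{P\,H_\gamma^2}\,dx=\frac{|v(c)|^2}{H_\gamma(c)}-\frac{|v(\alpha)|^2}{H_\gamma(\alpha)}-\int_\alpha^c \frac{(|v|^2)'}{H_\gamma}\,dx,
\end{equation*}
where $(|v|^2)'=2\Re(\overline{v}\,v')$ so that $|(|v|^2)'|\le 2|v|\,|v'|$. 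The decisive structural feature is that the boundary term at $\alpha$ appears with a minus sign and is nonnegative; since I only seek an upper bound, I simply discard it. For the remaining cross term I would write $2|v|\,|v'|/H_\gamma=2\big(|v|/(P^{1/2}H_\gamma)\big)\big(P^{1/2}|v'|\big)$ and apply the weighted Young inequality $2AB\le\tfrac12 A^2+2B^2$, obtaining
\begin{equation*}
\int_\alpha^c \frac{2|v|\,|v'|}{H_\gamma}\,dx\le\frac12\int_\alpha^c \frac{|v|^2}{P\,H_\gamma^2}\,dx+2\int_\alpha^c P|v'|^2\,dx.
\end{equation*}

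Combining the last three displays yields $\int_\alpha^c |v|^2/(P H_\gamma^2)\,dx\le |v(c)|^2/H_\gamma(c)+\tfrac12\int_\alpha^c |v|^2/(P H_\gamma^2)\,dx+2\int_a^c P|v'|^2\,dx$. Here is the single point needing care: to absorb the term $\tfrac12\int_\alpha^c |v|^2/(P H_\gamma^2)\,dx$ into the left-hand side I must first know this truncated integral is finite, which it is for each fixed $\alpha>a$ because on $[\alpha,c]$ the function $v$ is continuous, $H_\gamma$ is bounded away from $0$, and $1/P\in L^1_{\loc}$. After subtracting I obtain
\begin{equation*}
\int_\alpha^c \frac{|v|^2}{P\,H_\gamma^2}\,dx\le\frac{2|v(c)|^2}{H_\gamma(c)}+4\int_a^c P|v'|^2\,dx,
\end{equation*}
with a right-hand side independent of $\alpha$ and finite by the hypothesis $P^{1/2}v'\in L^2((a,b);dx)$. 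Letting $\alpha\downarrow a$ and invoking monotone convergence then gives \eqref{RC0}. The case $\gamma=b$ with $H_b<\infty$ is identical, since the identity $H_b'(x)=-1/P(x)$ and the positivity of $H_b$ on all of $(a,b)$ are all that the argument uses.

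I expect the main obstacle to be precisely the handling of the endpoint $a$: the naive hope that the boundary term $|v(\alpha)|^2/H_\gamma(\alpha)$ tends to $0$ is not available, because the Cauchy--Schwarz bound $|v(x)|=O\big(H_\gamma(x)^{1/2}\big)$ only makes this quotient bounded, not vanishing. The resolution is that one never needs the limit: the term carries the correct sign to be dropped, and the genuine content is in the self-improving absorption step, whose legitimacy rests on the a priori finiteness of the truncated integral for $\alpha>a$.
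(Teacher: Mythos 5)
Your proof is correct. Note that the paper itself offers no proof of this lemma --- it is quoted from \cite[Lemma~1]{KW72} and \cite{Ka78} --- so there is nothing internal to compare against; your argument (integrate $|v|^2\,d(1/H_\gamma)$ by parts, discard the correctly signed boundary term at $\alpha$, split the cross term by a weighted Young inequality, and absorb using the a priori finiteness of the truncated integral) is precisely the standard weighted Hardy-inequality proof used in those references, and your remark that the absorption step, not a vanishing boundary term, carries the content is the right diagnosis of where care is needed.
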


The conditions on $g_a$ and $g_b$ in \eqref{10.56} are reminiscent of the integral conditions satisfied by principal solutions to the equation $(\tau-\lambda)u=0$, assuming the latter is non-oscillatory.  One can just as well characterize the Friedrichs extension of $T_0$ in terms of functions $g_a$ and $g_b$ satisfying the assumptions of Theorem \ref{t10.17} but for which one (or both) of the integrals in \eqref{10.56} is 
convergent (these conditions are equivalent to $T_0$ being bounded from below, see the proof of 
Corollary \ref{c10.13}).  In these cases, the characterization requires a certain boundary condition 
as our next result shows.  

\begin{theorem} \lb{t10.18} 
Assume Hypothesis \ref{h2.1} and suppose $p>0$ a.e.\ on $(a,b)$.  If $T_0$ is bounded from below by $\gamma_0 \in \bbR$, $T_0 \geq \gamma_0 I_r$, which by Corollary \ref{c10.13} is equivalent to the existence of $\mu\in \bbR$ and functions $g_a$ and $g_b$ satisfying $g_a,g_b,g_a^{[1]},g_b^{[1]}\in AC_{\loc}((a,b))$, $g_a>0$ a.e.\ near $a$, $g_b>0$ a.e.\ near $b$,
\begin{equation}\lb{10.77}
\int_a\frac{dx}{p(x)g_a(x)^2}<\infty, \quad \int^b\frac{dx}{p(x)g_b(x)^2}=\infty,
\end{equation}
and
\begin{equation}\lb{10.78}
\begin{split}
q&\geq \mu r-\foco\frac{g_a^{[1]}}{g_a}+\frac{\big(g_a^{[1]}\big)'}{g_a}  \, \text{ a.e.\ near $a$},\\
q&\geq \mu r-\foco\frac{g_b^{[1]}}{g_b}+\frac{\big(g_b^{[1]}\big)'}{g_b}  \, \text{ a.e.\ near $b$},
\end{split}
\end{equation}
then the Friedrichs extension $S_F$ of $T_0$ is characterized by
\begin{align}
&S_Ff=\tau f,\no\\
&f\in \dom{S_F}=\bigg\{g\in \dom{\Tmax} \, \bigg|\, \int^b  pg_b^2\bigg|\bigg(\frac{g}{g_b}\bigg)' \bigg|^2 dx <\infty,\lb{10.79}\\
&\hspace*{4.1cm} \int_a  pg_a^2\bigg|\bigg(\frac{g}{g_a}\bigg)' \bigg|^2 dx <\infty,\, \lim_{x\downarrow a}\frac{g(x)}{g_a(x)}=0\bigg\}.\no
\end{align}
In particular,
\begin{equation}\lb{10.79a}
\begin{split}
\int_a \bigg|q-\frac{\big(g_a^{[1]}\big)'}{g_a}+\foco\frac{g_a^{[1]}}{g_a} \bigg||f|^2 dx <\infty, \, \int^b \bigg|q-\frac{\big(g_b^{[1]}\big)'}{g_b}+\foco\frac{g_b^{[1]}}{g_b} \bigg| |f|^2 dx<\infty,&\\
 f\in \dom {S_F}.&
\end{split}
\end{equation}
We omit the obvious case where the roles of $a$ and $b$ are interchanged, but note that if \eqref{10.77} is replaced by 
\begin{equation}\lb{10.79b}
\int_a\frac{dx}{p(x)g_a(x)^2}<\infty, \quad \int^b\frac{dx}{p(x)g_b(x)^2}<\infty,
\end{equation}
one obtains 
\begin{align}
&S_Ff=\tau f,  \no \\
&f\in \dom{S_F}=\bigg\{g\in \dom{\Tmax} \, \bigg|\, \int_a  pg_a^2\bigg|\bigg(\frac{g}{g_a}\bigg)' \bigg|^2 dx <\infty,    \lb{10.79c} \\ 
& \hspace*{2.1cm}  \int^b pg_b^2\bigg|\bigg(\frac{g}{g_b}\bigg)' \bigg|^2 dx<\infty, \, \lim_{x\downarrow a}\frac{g(x)}{g_a(x)}=0, \, 
\lim_{x\uparrow b}\frac{g(x)}{g_b(x)}=0 \bigg\}.\no
\end{align}
\end{theorem}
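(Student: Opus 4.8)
The plan is to follow the proof of Theorem \ref{t10.17} essentially verbatim, inserting two new arguments to account for the fact that both integrals in \eqref{10.79b} now converge, so that the reference function behaves like a non-principal solution at \emph{both} endpoints and the two extra boundary conditions in \eqref{10.79c} become active. Write $S$ for the operator defined by \eqref{10.79c}. First I would reproduce the construction of a strictly positive $h$ on $(a,b)$ with $h, h^\qd \in AC_{\loc}((a,b))$ that coincides with $g_a$ near $a$ and with a scalar multiple of $g_b$ near $b$ (solving $(\tau-\lambda_0)u=0$ on a compact middle subinterval, with $\lambda_0$ the smallest eigenvalue of the associated $S_{(c_0,d_0)}$, exactly as in Theorem \ref{t10.17}; this uses only boundedness from below of $T_0$, which is assumed). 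Since $h$ is proportional to $g_a$ near $a$ and to $g_b$ near $b$, both the convergence conditions \eqref{10.79b} and the sign conditions \eqref{10.78} persist with $g_a,g_b$ replaced by $h$, and membership in $\dom{S}$ is unaffected by replacing $g_a,g_b$ by $h$ in the two integrals and the two limits.

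The first new point is \textbf{symmetry of $S$}. As $\dom{T_0}\subset\dom{S}$, the operator $S$ is densely defined, so it remains to verify $\langle f,Sf\rangle_r\in\bbR$ for $f\in\dom{S}$. The Jacobi-type factorization \eqref{10.61} still yields, as in \eqref{10.63},
\[
\Im\bigg(\int_c^d f(x)\,\ol{Sf(x)}\,r(x)\,dx\bigg)=\Im\bigg(-\bigg[ph^2\ol{\bigg(\frac{f}{h}\bigg)'}\frac{f}{h}\bigg]\bigg|_c^d\bigg),\quad a<c<d<b.
\]
The obstacle is that, unlike in Theorem \ref{t10.17}, one now has $\int_a dx/(ph^2)<\infty$, so Lemma \ref{lKalf} is unavailable at $a$ (and, symmetrically, at $b$). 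Instead I would argue directly: with $P=ph^2$ and $v=f/h$, the domain condition gives $P^{1/2}v'\in L^2$ near $a$, while the boundary condition gives $v(x)\to 0$ as $x\downarrow a$, whence $v(x)=\int_a^x v'$ and Cauchy--Schwarz yields $|v(x)|^2\le H_a(x)\int_a^x P|v'|^2$ with $H_a(x)=\int_a^x dt/P\to 0$. If $|Pv\ol{v'}|$ were bounded below by some $\delta>0$ near $a$, then $|v'|\ge\delta/(P|v|)$, so $\int_a^{c}P|v'|^2\,dt\ge\delta^2\int_a^{c}dt/(P|v|^2)$; bounding $|v|^2\le H_a\int_a^{c}P|v'|^2$ and using that $\int_a dt/(PH_a)$ diverges logarithmically (since $dt/(PH_a)=d(\ln H_a)$ and $H_a\to 0$) forces the right-hand side to be infinite, a contradiction. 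Hence $\liminf_{x\downarrow a}|Pv\ol{v'}|=0$, and likewise at $b$; choosing $c_n\downarrow a$, $d_n\uparrow b$ realizing these lower limits makes the boundary terms vanish, and since $f\,\ol{Sf}\,r\in L^1((a,b))$ the integrals $\int_{c_n}^{d_n}$ converge to $\int_a^b$, giving $\Im\langle f,Sf\rangle_r=0$.

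The second and main new point is the inclusion \textbf{$S_F\subseteq S$} (after which self-adjointness of $S_F$ and symmetry of $S$ force $S_F=S$). Given $f\in\dom{S_F}$ with a Freudenthal approximating sequence $\{f_j\}\subset\dom{T_0}$ as in \eqref{10.68}, the two integral conditions $\int_a ph^2|(f/h)'|^2\,dx<\infty$ and $\int^b ph^2|(f/h)'|^2\,dx<\infty$ follow exactly as in Theorem \ref{t10.17}, via the identities \eqref{10.69}, \eqref{10.70} and the estimate \eqref{10.73}, using $\langle f_{j,k},T_0 f_{j,k}\rangle_r\to 0$. The genuinely new task is to extract the limit condition $\lim_{x\downarrow a}f/g_a=0$ (and its analogue at $b$). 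Writing $v_j=f_j/g_a$, each $v_j$ vanishes near $a$, so $v_j(x)=\int_a^x (pg_a^2)^{-1/2}\,[(pg_a^2)^{1/2}v_j']\,dt$; since $(pg_a^2)^{1/2}v_j'\to(pg_a^2)^{1/2}(f/g_a)'$ in $L^2(a,c)$ by the analogue of \eqref{10.74}, and $(pg_a^2)^{-1/2}\in L^2(a,c)$ (here the convergence $\int_a dt/(pg_a^2)<\infty$ is precisely what is \emph{used}), Cauchy--Schwarz gives $v_j(x)\to\int_a^x(f/g_a)'\,dt$ for every $x\in(a,c)$. On the other hand $f_j\to f$ in $\Lr$ forces $v_j\to f/g_a$ in $L^2((a,c);g_a^2 r\,dx)$, so a subsequence converges a.e.; comparing the two limits yields $(f/g_a)(x)=\int_a^x(f/g_a)'\,dt$ for all $x\in(a,c)$, hence $\lim_{x\downarrow a}f/g_a=0$, and the same argument at $b$ gives $\lim_{x\uparrow b}f/g_b=0$, so $f\in\dom{S}$. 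Finally, \eqref{10.79a} follows, exactly as the ``in particular'' clause of Theorem \ref{t10.17}, from \eqref{10.62} and \eqref{10.64} together with the sign conditions \eqref{10.78} and the vanishing boundary terms just established. The main obstacle throughout is the replacement of Lemma \ref{lKalf} by these two direct arguments in the non-principal (convergent-integral) regime: one for the vanishing of the symmetry boundary term, and one for the emergence of the limit boundary conditions from Freudenthal's description of $\dom{S_F}$.
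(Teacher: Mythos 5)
Your proposal is correct and follows the paper's overall architecture exactly: construct the globally positive reference function $h$, prove symmetry of $S$ by controlling the boundary terms in \eqref{10.63}, and then prove $S_F\subseteq S$ by carrying over \eqref{10.68}--\eqref{10.75} verbatim and extracting the extra limit condition(s) from the Freudenthal sequence. The differences are local and lie precisely in the two spots you flag. For the symmetry boundary term at a convergent-integral endpoint, the paper simply cites the beginning of the proof of Remark~3 in \cite{Ka78}; your explicit contradiction argument (using the boundary condition to get $|v|^2\le H_a\int_a^{\cdot}P|v'|^2$ with $H_a\to 0$, and the logarithmic divergence of $\int_a dt/(PH_a)$) is a legitimate self-contained substitute and correctly identifies that the boundary condition $\lim_{x\downarrow a}f/g_a=0$ must be used there, since Lemma \ref{lKalf} is unavailable. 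For the emergence of \eqref{10.80}, the paper argues via a $k$-uniform pointwise bound
\begin{equation}
\bigg|\frac{f_{n_k}(x)}{g_a(x)}\bigg|^2 \leq C\int_a^x\frac{dx'}{p\,g_a^2},
\end{equation}
obtained from H\"older and \eqref{10.73}, combined with pointwise convergence $f_{n_k}\to f$; you instead use the $L^2$-convergence $(pg_a^2)^{1/2}(f_j/g_a)'\to(pg_a^2)^{1/2}(f/g_a)'$ from \eqref{10.74} together with $(pg_a^2)^{-1/2}\in L^2$ near $a$ to identify $f/g_a$ with $\int_a^x(f/g_a)'\,dt$ directly. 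Both routes are sound and use the convergence of $\int_a dx/(pg_a^2)$ in the same essential way; yours trades the uniform bound for an identification of the limit function, which is marginally cleaner. One presentational caveat: you frame everything around the case \eqref{10.79b}--\eqref{10.79c}, whereas the theorem's main displayed case is \eqref{10.77}--\eqref{10.79} (convergent at $a$, divergent at $b$); since your two new arguments are endpoint-local and the divergent endpoint is handled by Lemma \ref{lKalf} exactly as in Theorem \ref{t10.17}, this covers all cases, but you should say so explicitly.
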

\begin{proof}
Let $S$ denote the operator defined by \eqref{10.79} and $S_F$ the Friedrichs extension of $T_0$.  To show that $S$ is symmetric, one can follow line-by-line the argument for \eqref{10.59}--\eqref{10.63}, so that \eqref{10.63} remains valid.  One can then show that \eqref{10.64} continues to hold under the finiteness assumption in \eqref{10.77} (cf., the beginning of the proof of \cite[Remark 3]{Ka78}).  Repeating the argument \eqref{10.65}--\eqref{10.67} then shows that $S$ is symmetric.  In order to conclude $S=S_F$, it suffices to prove $S_F\subseteq S$.  In turn, it is enough to prove $\dom{S_F}\subseteq \dom S$.  To this end, let $f\in \dom{S_F}$.  Since \eqref{10.68}--\eqref{10.75} can be repeated without alteration, the problem reduces to proving
\begin{equation}\lb{10.80}
\lim_{x\downarrow a}\frac{|f(x)|}{g_a(x)}=0.
\end{equation}
One takes a sequence $\{f_n\}_{n=1}^{\infty}\subset \dom{T_0}$ with the properties 
\begin{equation}
\text{$\lim_{n\rightarrow \infty}\|f_n-f\|_{2,r}=0$ and 
$\lim_{n,m\rightarrow \infty}\langle f_n-f_m, T_0(f_n-f_m) \rangle_r=0$},  \lb{10.90}
\end{equation}
and let $\{f_{n_k}\}_{k=1}^{\infty}$ denote a subsequence converging to $f$ pointwise a.e.\ in $(a,b)$ as $k\to\infty$. Since $f_{n_k}, f$ are continuous on $(a,b)$, $f_{n_k}$ actually converge pointwise everywhere to $f$ on $(a,b)$ as $k\to\infty$. 

Then the proof of \eqref{10.79a} is exactly the same as the corresponding fact \eqref{10.59a} in Theorem \ref{t10.17}.

Next, one chooses $c\in (a,b)$ such that $g_a>0$ on $(a,c)$. Using H\"older's inequality and \eqref{10.73}, one obtains the estimate
\begin{align}
& \bigg|\frac{f_{n_k}(x)}{g_a(x)}\bigg|^2=\bigg|\int_a^x\frac{1}{p^{1/2}g_a}p^{1/2}g_a \bigg(\frac{f_{n_k}}{g_a}\bigg)' dx'\bigg|^2\leq \int_{a}^x\frac{dx'}{pg_a^2}\int_a^x pg_a^2\bigg|\bigg(\frac{f_{n_k}}{g_a^2} \bigg)' \bigg|^2 dx',  \no \\
& \quad \leq \int_{a}^x\frac{dx'}{pg_a^2} \, 
\Big[(f_{n_k},(T_0 - \gamma_0 I_r) f_{n_k})_{r}  + (|\gamma_0| + \kappa)\|f_{n_k}\big\|^2_{2,r}\Big], 
\quad x\in (a,c),\; k\in \bbN. 
\end{align}
Because of \eqref{10.90}, one obtains
\begin{equation}
\bigg|\frac{f_{n_k}(x)}{g_a(x)}\bigg|^2 \leq C \int_{a}^x\frac{dx'}{pg_a^2}, 
\quad x \in (a, c), \; k \in \bbN, 
\end{equation} 
with $C>0$ a $k$-independent constant. Writing
\begin{equation}
\bigg|\frac{f(x)}{g_a(x)}\bigg| \leq \bigg|\frac{f(x) - f_{n_k}(x)}{g_a(x)}\bigg| + 
\bigg|\frac{f_{n_k}(x)}{g_a(x)}\bigg|,
\end{equation}
and given $\varepsilon > 0$, one first chooses an $x(\varepsilon) \in (a,c)$ such that 
$|f_{n_k}(x)/g_a(x)| \leq \varepsilon/2$ for all $x \in (a,x(\varepsilon))$, and then 
for $x \in (a,x(\varepsilon))$ one chooses a $k(x,\varepsilon) \in \bbN$ such that
for all $k \geq k(x,\varepsilon)$, $|[f(x) - f_{n_k}(x)]/g_a(x)| \leq \varepsilon/2$, resulting in 
\begin{equation}
\bigg|\frac{f(x)}{g_a(x)}\bigg| \leq \varepsilon   \lb{10.94}
\end{equation}
whenever $x \in (a,x(\varepsilon))$ and $k \geq k(x,\varepsilon)$. Since the left-hand side of \eqref{10.94} is $k$-independent, \eqref{10.80} follows.
\end{proof}

\begin{corollary} \lb{c10.19} 
Assume Hypothesis \ref{h2.1} and suppose $p>0$ a.e.\ on $(a,b)$.  If $\tau$ is 
regular on $(a,b)$, then the Friedrichs extension $S_F$ of $T_0$ is of the form  
\begin{align}
\begin{split} 
&S_Ff=\tau f,   \\
&f\in \dom{S_F}=\left\{g\in \dom{\Tmax} \, \big|\, g(a) = g(b) =0 \right\}. \lb{10.101}
\end{split} 
\end{align}
\end{corollary}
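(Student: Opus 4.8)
The plan is to deduce Corollary \ref{c10.19} from the general characterization of the Friedrichs extension in Theorem \ref{t10.18}, specifically from its last part governed by \eqref{10.79b}--\eqref{10.79c}, by making a convenient choice of the auxiliary functions $g_a$ and $g_b$ and then showing that in the regular case all of the conditions collapse to the Dirichlet conditions $g(a)=g(b)=0$.

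First I would note that, since $\tau$ is regular on $(a,b)$ and $p>0$ a.e., Corollary \ref{c10.9} guarantees that $T_0$ is bounded from below; fix a lower bound $\mu\in\bbR$. I would then take $g_a$ to be the solution of $(\tau-\mu)u=0$ with $g_a(a)=1$, $g_a^{[1]}(a)=0$, and $g_b$ the solution with $g_b(b)=1$, $g_b^{[1]}(b)=0$; these are well defined with finite boundary values by Theorem \ref{thm:EEreg}, and by continuity each is strictly positive near its endpoint. Because $g_a$ solves $(\tau-\mu)u=0$, dividing the equation by $g_a$ shows that the inequalities \eqref{10.78} hold with equality near $a$, and similarly for $g_b$ near $b$. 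Since $\tau$ is regular, $1/p\in L^1$ near each endpoint while $g_a,g_b$ are continuous and bounded away from zero there, so both integrals $\int_a dx/(p g_a^2)$ and $\int^b dx/(p g_b^2)$ converge. This places us exactly in the situation \eqref{10.79b}, and Theorem \ref{t10.18} then yields the characterization \eqref{10.79c} of $\dom{S_F}$.

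The remaining task is to simplify the four conditions in \eqref{10.79c}. The key computation is the identity
\[
p\,g_a^2\,\bigg|\bigg(\frac{g}{g_a}\bigg)'\bigg|^2 = \frac{|W(g,g_a)|^2}{p\,g_a^2} \quad \text{a.e. near } a,
\]
obtained by writing $g'=p^{-1}g^{[1]}-\foco g$ (and the analogue for $g_a$), so that the numerator $g'g_a-g g_a'$ collapses to $-p^{-1}W(g,g_a)$. For any $g\in\dom{\Tmax}$, regularity forces $g,g^{[1]},g_a,g_a^{[1]}$ to have finite limits at $a$, hence $W(g,g_a)$ is bounded near $a$; together with $g_a$ bounded away from $0$ and $1/p\in L^1$ near $a$, this makes the first integral in \eqref{10.79c} converge automatically, and likewise at $b$. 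Thus the two integrability conditions impose no restriction in the regular case. Finally, since $g_a$ is continuous with $g_a(a)=1>0$, I would evaluate $\lim_{x\downarrow a} g(x)/g_a(x)=g(a)$, so the boundary condition at $a$ reduces to $g(a)=0$, and symmetrically $\lim_{x\uparrow b} g(x)/g_b(x)=g(b)$ reduces the condition at $b$ to $g(b)=0$; this gives precisely \eqref{10.101}. I expect the only genuinely nontrivial point to be the Wronskian identity and the verification that it renders the two integral conditions vacuous in the regular case, everything else being bookkeeping built on Theorem \ref{t10.18} and the regularity of the endpoints.
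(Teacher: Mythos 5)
Your proposal is correct and follows essentially the same route as the paper's own proof: the paper likewise chooses $g_a$, $g_b$ as solutions with Neumann-type data $g_a(a)=g_b(b)=1$, $g_a^{[1]}(a)=g_b^{[1]}(b)=0$ (using $\tau u=0$, i.e.\ $\mu=0$, rather than $(\tau-\mu)u=0$, an immaterial difference), and uses exactly your Wronskian identity $p g_a^2\big|(g/g_a)'\big|^2=|W(g,g_a)|^2/(p g_a^2)$ together with regularity to show the integral conditions in \eqref{10.79c} are automatic. Your write-up is in fact slightly more complete than the paper's, which leaves implicit both the appeal to semiboundedness (your citation of Corollary \ref{c10.9}) and the reduction of the limit conditions $\lim g/g_a=0$, $\lim g/g_b=0$ to $g(a)=g(b)=0$.
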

\begin{proof}
Let $g_a$, $g_b$ be the solutions of $\tau u = 0$ with the initial conditions $g_a(a) = g_b(b) = 1$ and $g_a^\qd(a) = g_b^\qd(b) = 0$.
Since $\tau$ is regular on $(a,b)$ we have for each $g\in\dom{\Tmax}$ 
\begin{align}
\int_a pg_a^2\bigg|\bigg(\frac{g}{g_a}\bigg)' \bigg|^2 dx 
= \int_a pg_a^2\bigg|\frac{g_a g' - g g_a'}{g_a^2} \bigg|^2 dx  
= \int_a \frac{1}{p} \bigg|\frac{g^{[1]} g_a - g g_a^{[1]}}{g_a^2}\bigg| dx
<\infty,
\end{align}
and similarly for the endpoint $b$. 
Now the result follows from Theorem \ref{t10.18} and in particular \eqref{10.79c}. 
\end{proof}

\section{The Krein--von Neumann Extension in the Regular Case} \lb{s12}

In this section, we consider the Krein--von Neumann extension $S_K$ of $T_0 \geq \varepsilon I_r$, 
$\varepsilon > 0$. The operator $S_K$, like the Friedrichs extension $S_F$ of $T_0$, is a distinguished, in fact, extremal nonnegative extension of $T_0$. 

Temporarily returning to the abstract considerations \eqref{Fr-2}--\eqref{Fr-Q} in connection with the 
Friedrichs extension of $S_0$, an intrinsic description of the Krein--von Neumann extension $S_K$ of 
$S_0\geq 0$ has been given by Ando and Nishio \cite{AN70} in 1970, where $S_K$ has been 
characterized by
\begin{align} 
& S_Ku=S_0^*u,   \no \\
& u \in \dom{S_K}=\big\{v\in\dom{S_0^*}\,\big|\,\mbox{there exists} \, 
\{v_j\}_{j\in\bbN}\subset \dom{S_0},    \label{Fr-2X}  \\ 
& \quad \mbox{with} \, \lim_{j\to\infty} \|S_0 v_j-S_0^* v\|_{\cH}=0  
\mbox{ and } ((v_j-v_k),S_0 (v_j-v_k))_\cH\to 0 \mbox{ as } j,k\to\infty\big\}.  \no
\end{align}

We recall that $A \leq B$ for two self-adjoint operators in $\cH$ if 
\begin{align}
\begin{split}
& {\rm dom}\big(|A|^{1/2}\big) \supseteq {\rm dom}\big(|B|^{1/2}\big) \, \text{ and } \\ 
& \big(|A|^{1/2}u, U_A |A|^{1/2}u\big)_{\cH} \leq \big(|B|^{1/2}u, U_B |B|^{1/2}u\big)_{\cH}, \quad  
u \in {\rm dom}\big(|B|^{1/2}\big),      \lb{AleqB} 
\end{split}
\end{align}
where $U_C$ denotes the partial isometry in $\cH$ in the polar decomposition of 
a densely defined closed operator $C$ in $\cH$, $C=U_C |C|$, $|C|=(C^* C)^{1/2}$.

The following is a fundamental result to be found in M.\ Krein's celebrated 1947 paper
 \cite{Kr47} (cf.\ also Theorems\ 2 and 5--7 in the English summary on page 492): 
 
\begin{theorem}\label{T-kkrr}
Assume that $S_0$ is a densely defined, nonnegative operator in $\cH$. Then, among all 
nonnegative self-adjoint extensions of $S_0$, there exist two distinguished ones, $S_K$ and $S_F$, 
which are, respectively, the smallest and largest
$($in the sense of order between self-adjoint operators, cf.\ \eqref{AleqB}$)$ such extensions. 
Furthermore, a nonnegative self-adjoint operator $\widetilde{S}$ is a self-adjoint extension of 
$S_0$ if and only if $\widetilde{S}$ satisfies 
\begin{equation}\label{Fr-Sa}
S_K\leq\widetilde{S}\leq S_F.
\end{equation}
In particular, \eqref{Fr-Sa} determines $S_K$ and $S_F$ uniquely. \\

In addition,  if $S_0 \geq \varepsilon I_{\cH}$ for some $\varepsilon >0$, one has 
$S_F \geq \varepsilon I_{\cH}$, and 
\begin{align}
\dom{S_F} &= \dom{S_0} \dotplus (S_F)^{-1} \ker (S_0^*),     \lb{SF}  \\
\dom{S_K} & = \dom{S_0} \dotplus \ker (S_0^*),    \lb{SK}   \\
\dom{S^*} & = \dom{S_0} \dotplus (S_F)^{-1} \ker (S_0^*) \dotplus \ker (S_0^*)  \no \\
& = \dom{S_F} \dotplus \ker (S_0^*),    \lb{S*} 
\end{align}
in particular, 
\begin{equation} \label{Fr-4Tf}
\ker(S_K)= \ker\big((S_K)^{1/2}\big)= \ker(S_0^*) = \ran(S_0)^{\bot}.
\end{equation} 
\end{theorem}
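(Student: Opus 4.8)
The plan is to base everything on the Friedrichs extension together with the quadratic-form calculus, and then to realize the Krein--von Neumann extension through the boundary form of $S_0^*$. First I would construct $S_F$ as the self-adjoint operator associated with the closure $\ol{\gq_{S_0}}$ of the nonnegative form $\gq_{S_0}(f,g)=(f,S_0 g)_\cH$; the first and second representation theorems give $S_F \geq \gamma_{{}_{S_0}} I_\cH$, and when $S_0 \geq \varepsilon I_\cH$ the lower bound $\ol{\gq_{S_0}} \geq \varepsilon \|\cdot\|_\cH^2$ is inherited by the form closure, so $S_F \geq \varepsilon I_\cH$ and $0 \in \rho(S_F)$. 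Replacing $S_0$ by its closure (which alters none of $S_0^*$, $S_F$, $S_K$) I may assume $S_0$ closed, whence $S_0 \geq \varepsilon I_\cH$ forces $\ran(S_0)$ to be closed, i.e.\ $\ran(S_0) = \ker(S_0^*)^\perp$. Since $0 \in \rho(S_F)$, the resolvent decomposition $\dom{S_0^*} = \dom{S_F} \dotplus \ker(S_0^*)$ holds --- for $u \in \dom{S_0^*}$ split $u = S_F^{-1} S_0^* u + \big(u - S_F^{-1} S_0^* u\big)$, the second summand lying in $\ker(S_0^*)$ --- and inverting $S_0$ on its closed range refines $\dom{S_F}$ to give \eqref{SF}, and hence \eqref{S*}.

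Next I would \emph{define} $S_K$ by \eqref{SK}, that is, $\dom{S_K} = \dom{S_0} \dotplus \ker(S_0^*)$ with $S_K = S_0^*|_{\dom{S_K}}$; the sum is direct because strict positivity gives $\dom{S_0} \cap \ker(S_0^*) = \ker(S_0) = \{0\}$. Self-adjointness I would verify through the boundary form $\Gamma(u,v) = (S_0^* u, v)_\cH - (u, S_0^* v)_\cH$. Writing $u = u_0 + u_1$, $v = v_0 + v_1$ with $u_0, v_0 \in \dom{S_0}$ and $u_1, v_1 \in \ker(S_0^*)$, one has $S_0^* u = S_0 u_0$, and the defining property of the adjoint makes every resulting term vanish, so $\Gamma(u,v) = 0$; thus $S_K$ is symmetric, and $(S_K u, u)_\cH = (S_0 u_0, u_0)_\cH \geq 0$ shows it is nonnegative. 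For self-adjointness, take $w \in \dom{S_K^*} \subseteq \dom{S_0^*}$, expand $w = w_0 + S_F^{-1}\eta + \zeta$ via \eqref{S*}, and impose $\Gamma(u_1, w) = 0$ for all $u_1 \in \ker(S_0^*)$: a short computation collapses this to $(u_1, \eta)_\cH = 0$ for every such $u_1$, forcing $\eta = 0$ and hence $w \in \dom{S_K}$. Therefore $S_K^* = S_K$.

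The heart of the theorem is the order characterization \eqref{Fr-Sa}. That $S_F$ is the \emph{largest} nonnegative self-adjoint extension is the easy half: any such $\widetilde{S}$ carries a closed form extending $\gq_{S_0}$, hence extending its closure, so $\dom{\ol{\gq_{S_0}}} \subseteq \dom{\widetilde{S}^{1/2}}$ with the two forms agreeing on the smaller domain, which is precisely $\widetilde{S} \leq S_F$ in the sense of \eqref{AleqB}. The difficult half --- that $S_K$ is the \emph{smallest}, and conversely that every $\widetilde{S}$ with $S_K \leq \widetilde{S} \leq S_F$ extends $S_0$ --- is where I expect the real work. I would parametrize all nonnegative self-adjoint extensions in the manner of Birman--Krein--Vishik by nonnegative self-adjoint parameters $B$ in (subspaces of) $\ker(S_0^*)$, read off from the boundary form and \eqref{S*}, and show that the order of extensions corresponds monotonically to the order of the parameters, with $S_K$ the value $B=0$ and $S_F$ the value $B=\infty$; the order-reversing map $\widetilde{S} \mapsto (\widetilde{S} + I_\cH)^{-1}$ onto bounded nonnegative operators, combined with Krein's theory of the shorted operator, then delivers both the existence of extremal elements (for general $S_0 \geq 0$, after the reduction $S_0 \rightsquigarrow S_0 + \delta I_\cH$ and $\delta \downarrow 0$) and the equivalence \eqref{Fr-Sa}. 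This monotone correspondence is the main obstacle and the step requiring the most care.

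Finally, the concluding identity \eqref{Fr-4Tf} follows at once. The equality $\ker(S_0^*) = \ran(S_0)^\perp$ is the standard orthogonality relation for the adjoint of a densely defined operator, and $\ker(S_K) = \ker\big((S_K)^{1/2}\big)$ is the spectral-theorem fact that a nonnegative self-adjoint operator shares its kernel with its square root. The remaining equality $\ker(S_K) = \ker(S_0^*)$ is immediate from the construction: by \eqref{SK} one has $\ker(S_0^*) \subseteq \dom{S_K}$ with $S_K$ vanishing there, so $\ker(S_0^*) \subseteq \ker(S_K)$, while any $u \in \ker(S_K) \subseteq \dom{S_0^*}$ satisfies $S_0^* u = S_K u = 0$. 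This closes the argument.
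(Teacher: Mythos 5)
The paper does not prove this theorem at all: it is quoted verbatim from Krein's 1947 paper \cite{Kr47} (with pointers to \cite{AS80}, \cite{AN70}, \cite{AT03}--\cite{AT05} for modern treatments), so there is no in-paper argument to compare yours against. Judged on its own, your proposal gets the peripheral parts right. The form-theoretic construction of $S_F$, the decompositions \eqref{SF} and \eqref{S*} (your splitting $u = S_F^{-1}S_0^*u + (u - S_F^{-1}S_0^*u)$ and the refinement via $\cH = \ran(S_0)\oplus\ker(S_0^*)$ are exactly right under $S_0 \geq \varepsilon I_{\cH}$), the verification that the operator defined by \eqref{SK} is symmetric, nonnegative and self-adjoint via the boundary form, the maximality of $S_F$ among nonnegative self-adjoint extensions, and \eqref{Fr-4Tf} are all sound. (One small caveat: as you implicitly note, \eqref{SK} with a non-closed $S_0$ defines a non-closed operator, so the formula must be read with $\dom{\ol{S_0}}$; your reduction to closed $S_0$ handles this.)

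The genuine gap is that the heart of the theorem --- the two-sided order characterization \eqref{Fr-Sa} --- is announced but not proved. You establish $\widetilde{S}\leq S_F$ for every nonnegative self-adjoint extension $\widetilde{S}$, but the minimality of $S_K$ (that $S_K\leq\widetilde{S}$ for \emph{every} such $\widetilde{S}$) and the converse implication (that any nonnegative self-adjoint $\widetilde{S}$ with $S_K\leq\widetilde{S}\leq S_F$ necessarily extends $S_0$) are deferred to a Birman--Krein--Vishik parametrization whose monotonicity you explicitly flag as ``the main obstacle and the step requiring the most care'' without carrying it out. This is precisely the nontrivial content of Krein's theorem: the standard routes run either through the order-reversing Cayley-type correspondence between nonnegative self-adjoint extensions of $S_0$ and self-adjoint contractive extensions of a non-densely defined contraction (Krein's original argument, where extremal contractive extensions must be produced), or through the form inequality $\ol{\gq_{S_0}}(u+v,u+v)\geq$ the shorted form for $u\in\dom{S_0}$, $v\in\ker(S_0^*)$, together with a polarization argument showing that if the forms of $S_K$ and $\widetilde S$ agree on $\dom{S_0}$ and are sandwiched, then $\widetilde S\supseteq S_0$. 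Neither is executed, and the general case $S_0\geq 0$ (your limit $\delta\downarrow 0$ after $S_0\rightsquigarrow S_0+\delta I_{\cH}$) adds a further convergence argument you do not supply. As it stands, the proposal proves the existence of \emph{a} distinguished nonnegative extension with domain \eqref{SK} and the maximality of $S_F$, but not the extremality of $S_K$ nor the characterization \eqref{Fr-Sa}.
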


Here the symbol $\dotplus$ represents the direct (though, not direct orthogonal) sum of subspaces, 
and the operator inequalities in \eqref{Fr-Sa} are understood in the sense of 
\eqref{AleqB} and hence they can  equivalently be written as
\begin{equation}
(S_F + a I_{\cH})^{-1} \le \big(\wti S + a I_{\cH}\big)^{-1} \le (S_K + a I_{\cH})^{-1} 
\, \text{ for some (and hence for all\,) $a > 0$.}    \lb{Res}
\end{equation}

In addition to Krein's fundamental paper \cite{Kr47}, we refer to the discussions in \cite{AS80}, 
\cite{AT03}, \cite{AT05}, \cite{Gr83}. It should be noted that the Krein--von Neumann extension was first 
considered by von Neumann \cite{Ne29} in 1929 in the case where $S_0$ is strictly positive, that is, if 
$S_0 \geq \varepsilon I_{\cH}$ for some $\varepsilon >0$. (His construction appears in the proof of Theorem 42 on pages 102--103.) However, von Neumann did not isolate the extremal property of this extension as described in \eqref{Fr-Sa} and \eqref{Res}. M.\ Krein \cite{Kr47}, \cite{Kr47a} was the first to systematically treat the general case $S_0 \geq 0$ and to study all nonnegative self-adjoint extensions of $S_0$, 
illustrating the special role of the {\it Friedrichs extension} $S_F$ and the Krein--von Neumann extension 
$S_K$ of $S_0$ as extremal cases when considering all nonnegative extensions of $S_0$. For a recent exhaustive treatment of self-adjoint extensions of semibounded operators we refer to 
\cite{AT02}--\cite{AGMT10}. For classical references on the subject of self-adjoint extensions of 
semibounded operators (not necessarily restricted to the Krein--von Neumann extension) we refer 
to Birman \cite{Bi56}, \cite{Bi08}, Freudenthal \cite{Fr36}, Friedrichs \cite{Fr34}, Grubb \cite{Gr68}, 
\cite{Gr06}, Krein \cite{Kr47a}, {\u S}traus \cite{St73}, and Vi{\u s}ik \cite{Vi63} (see also the monographs 
by Akhiezer and Glazman \cite[Sect.\ 109]{AG81a}, Faris \cite[Part III]{Fa75}, and 
Grubb \cite[Sect.\ 13.2]{Gr09}).  

\medskip

Throughout the remainder of this section, we assume that $\tau$ is regular on $(a,b)$ and that the 
coefficient $p$ is positive a.e.\ on $(a,b)$.  That is, we shall make the following assumptions:  

\begin{hypothesis}\lb{h11.1}
Assume Hypothesis \ref{h2.1} holds with $p>0$ a.e.\ on $(a,b)$ and that $\tau$ is regular on $(a,b)$. 
Equivalently, we suppose that 
$p$, $q$, $r$, $\foco$ are Lebesgue measurable on $(a,b)$ with $p^{-1}$, $q$, $r$, $\foco\in L^1((a,b);dx)$ and real-valued a.e.\ on $(a,b)$ with $p$, $r>0$ a.e.\ on $(a,b)$.
\end{hypothesis}

Assuming Hypothesis \ref{h11.1}, we now provide a characterization of the Krein--von Neumann extension, 
$S_K$ of $T_0$ (resp., $\Tmin$), in the situation where $T_0$ is strictly positive (in the operator sense).  An elucidation along these lines for the case $s=0$ a.e.\ on $(a,b)$ was set forth in \cite{CGNZ12}.

\begin{theorem}\lb{t11.2}
Assume Hypothesis \ref{h11.1} and suppose that the associated minimal operator $\Tmin$ is strictly positive in the sense that there exists $\varepsilon>0$ such that
\begin{equation}\lb{10.147}
\langle \Tmin f, f\rangle_r\geq \varepsilon \langle f, f \rangle_r,\quad f\in {\rm dom}\big(\Tmin\big).
\end{equation}
Then the Krein--von Neumann extension $S_K$ of $\Tmin$ is given by $($cf.\ \eqref{eqn:SRLCLCcoup}$)$
 \begin{align} \lb{SK1}
 \begin{split} 
 & S_K f = \tau f,     \\
& f \in \dom{S_K} = \bigg\{g \in\dom{\Tmax} \, \bigg| \, \begin{pmatrix} g(b) \\ g^{[1]}(b)\end{pmatrix} 
      = R_K \begin{pmatrix} g(a) \\ g^{[1]}(a)\end{pmatrix}\bigg\},    \\
\end{split} 
\end{align} 
where
\begin{equation}\lb{RK}
R_K = \frac{1}{u_1^{[1]}(a)}
\begin{pmatrix}
-u_2^{[1]}(a) & 1 \\
u_1^{[1]}(a)u_2^{[1]}(b)-u_1^{[1]}(b)u_2^{[1]}(a) & u_1^{[1]}(b)
\end{pmatrix} \in \SL_2(\bbR).
\end{equation}
Here $\big\{u_j(\cdot) \big\}_{j=1,2}$ are positive solutions of $\tau u = 0$ determined by the conditions
\begin{equation}\lb{10.150}
\begin{split}
&u_1(a)=0, \quad u_1(b)=1,\\
&u_2(a)=1, \quad u_2(b)=0.
\end{split}
\end{equation} 
\end{theorem}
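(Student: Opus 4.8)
The plan is to reduce the statement to the abstract description of the Krein--von Neumann extension in Theorem~\ref{T-kkrr} and then translate the resulting abstract decomposition of $\dom{S_K}$ into the concrete coupled boundary condition \eqref{SK1}. Since $\Tmin=\overline{T_0}$ and $T_0^\ast=\Tmax$, the Krein--von Neumann extension of $\Tmin$ coincides with that of $T_0$, and the strict positivity hypothesis \eqref{10.147} places us in the situation $S_0=\Tmin\geq\varepsilon I_r$ with $\varepsilon>0$, so that \eqref{SK} applies and gives $\dom{S_K}=\dom{\Tmin}\dotplus\ker(\Tmax)$. Everything then comes down to computing $\ker(\Tmax)$ and $\dom{\Tmin}$ explicitly in terms of endpoint data.

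First I would pin down $\ker(\Tmax)$. Because $\tau$ is regular on $(a,b)$, it is in the l.c.\ case at both endpoints, so every solution of $\tau u=0$ lies in $\Lr$ and $\ker(\Tmax)$ is precisely the two-dimensional solution space of $\tau u=0$. To produce the specific basis $u_1,u_2$ normalized by \eqref{10.150}, I would observe that the Dirichlet (= Friedrichs, by Corollary~\ref{c10.19}) extension $S_{0,0}$ satisfies $\inf\sigma(S_{0,0})\geq\varepsilon>0$, so $0$ is not a Dirichlet eigenvalue; hence the solution vanishing at $a$ (unique up to scalars) cannot vanish at $b$ and may be normalized to $u_1(a)=0$, $u_1(b)=1$, and symmetrically for $u_2$. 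These solutions are real by Theorem~\ref{thm:exisuniq}, and $u_1^\qd(a)\neq 0$ by unique solvability (otherwise $u_1\equiv 0$), so the matrix $R_K$ in \eqref{RK} is a well-defined real matrix. That $u_1,u_2$ are in fact positive follows from disconjugacy of $\tau$ on $[a,b]$, a consequence of strict positivity via Corollary~\ref{c10.16} (since $0$ lies below $\inf\sigma(S_{0,0})$, no nontrivial solution can vanish twice in $[a,b]$).

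The core computation is then routine linear algebra. Using Theorem~\ref{thm:Tmin} together with the corollary immediately following it, in the regular case $\dom{\Tmin}=\{g\in\dom{\Tmax}\mid g(a)=g^\qd(a)=g(b)=g^\qd(b)=0\}$. I would write a general $g\in\dom{S_K}$ as $g=f+c_1u_1+c_2u_2$ with $f\in\dom{\Tmin}$; since $f$ contributes no endpoint data, the normalization \eqref{10.150} yields $g(a)=c_2$ and $g(b)=c_1$, together with $g^\qd(a)=c_1u_1^\qd(a)+c_2u_2^\qd(a)$ and $g^\qd(b)=c_1u_1^\qd(b)+c_2u_2^\qd(b)$. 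Eliminating $c_1=g(b)$ and $c_2=g(a)$ from these two relations reproduces exactly the coupled condition $\big(g(b),g^\qd(b)\big)^\top=R_K\big(g(a),g^\qd(a)\big)^\top$ with $R_K$ as in \eqref{RK}; conversely, for any $g\in\dom{\Tmax}$ satisfying this condition the function $f:=g-g(b)u_1-g(a)u_2$ has all four endpoint values equal to zero, hence $f\in\dom{\Tmin}$ and $g\in\dom{\Tmin}\dotplus\ker(\Tmax)=\dom{S_K}$. Recalling Lemma~\ref{prop:PointEvalBC} and \eqref{eqn:SRLCLCcoup}, this is precisely the asserted form of $S_K$.

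It remains to confirm $R_K\in\SL_2(\bbR)$. Reality was already noted; for the determinant I would invoke constancy of the Wronskian $W(u_1,u_2)$ (cf.\ the discussion after Lemma~\ref{propLagrange}): evaluating at $a$ and $b$ with \eqref{10.150} gives $W(u_1,u_2)(a)=-u_1^\qd(a)$ and $W(u_1,u_2)(b)=u_2^\qd(b)$, whence $u_2^\qd(b)=-u_1^\qd(a)$, and a direct expansion of $\det(R_K)$ from \eqref{RK} collapses, using this identity, to $-u_2^\qd(b)/u_1^\qd(a)=1$. The main obstacle is not any single hard estimate but rather the careful bookkeeping linking the abstract formula \eqref{SK} to the endpoint data, together with the supporting existence and positivity argument for $u_1,u_2$; the determinant computation is the one place where the Wronskian identity does the essential work.
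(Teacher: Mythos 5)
Your proposal is correct and follows essentially the same route as the paper: the abstract Krein formula $\dom{S_K}=\dom{\Tmin}\dotplus\ker(\Tmax)$, the endpoint bookkeeping $g(a)=c_2$, $g(b)=c_1$ leading to \eqref{RK}, and the Wronskian identity $u_2^{[1]}(b)=-u_1^{[1]}(a)$ for $\det(R_K)=1$. The only (harmless) deviations are that you verify the reverse inclusion directly by exhibiting $f=g-g(b)u_1-g(a)u_2\in\dom{\Tmin}$, whereas the paper deduces equality from self-adjointness of both operators, and you supply the existence/positivity argument for $u_1,u_2$ that the paper only remarks on parenthetically.
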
 
\begin{proof}
The assumption that $\Tmin$ is strictly positive implies that $0$ is a regular point of $\Tmin$ (cf.\ the paragraph preceding Lemma \ref{lemWeylRegType}), and since the deficiency indices of $\Tmin$ are 
equal to two (one notes that it is this fact that actually implies the existence of solutions $u_j$, $j=1,2$, satisfying the properties \eqref{10.150}), it follows that 
\begin{equation}\lb{10.148}
\text{dim}\big(\text{ker}\big(\Tmax \big) \big)=2
\end{equation}
and a basis for $\text{ker}\big(\Tmax \big)$ is given by $\big\{u_j(\cdot) \big\}_{j=1,2}$.
In this situation, the Krein--von Neumann extension $S_K$ of $\Tmin$ is given by (cf.\ \eqref{SK}),
\begin{equation}\lb{10.149}
\text{dom}\big(S_K \big)=\text{dom}\big( \Tmin\big) \dotplus \text{ker}\big(\Tmax \big).
\end{equation}
Alternatively, since $S_K$ is a self-adjoint extension of $\Tmin$, its domain can also be specified by boundary conditions at the endpoint of $(a,b)$ which we characterize next.
If $u\in \text{dom}\big( S_K \big)$, then in accordance with \eqref{10.149}, 
\begin{equation}\lb{10.151}
u(x)=f(x)+c_1u_1(x)+c_2u_2(x), \quad x\in [a,b], 
\end{equation}
for certain functions $f\in \text{dom}\big(\Tmin \big)$ and $c_1,c_2\in\C$. 
Since $f\in \text{dom}\big(\Tmin \big)$ satisfies
\begin{equation}\lb{10.152}
f(a)=f^{[1]}(a)=f(b)=f^{[1]}(b)=0,
\end{equation}
one infers that
\begin{equation}\lb{10.153}
u(a)=c_2\quad \text{and}\quad u(b)=c_1.
\end{equation}
Consequently
\begin{equation}\lb{10.157}
u^{[1]}(x)=f^{[1]}(x)+u(b)u_1^{[1]}(x)+u(a)u_2^{[1]}(x),\quad x\in [a,b].
\end{equation}
Evaluating separately at $x=a$ and $x=b$, yields the (non-separated) boundary conditions that $u$ must satisfy;
\begin{equation}\lb{10.158}
\begin{split}
u^{[1]}(a)&=u(b)u_1^{[1]}(a)+u(a)u_2^{[1]}(a),\\
u^{[1]}(b)&=u(b)u_1^{[1]}(b)+u(a)u_2^{[1]}(b).
\end{split}
\end{equation}
Since $u_1^{[1]}(a)\neq 0$ (otherwise, $u_1(\cdot)\equiv 0$ on $[a,b]$), the boundary condition 
in \eqref{10.158} may be recast as
\begin{equation}\lb{10.159}
\begin{pmatrix}
u(b) \\ u^{[1]}(b)
\end{pmatrix}=R_K 
\begin{pmatrix}
u(a) \\ u^{[1]}(a)
\end{pmatrix},
\end{equation}
with $R_K$ given by \eqref{RK}. Moreover, $R_K\in \SL_2(\bbR)$.  To see this, first note that the 
entries of $R_K$ are real-valued.  Additionally, the fact that 
\begin{equation}\lb{10.161}
-u_1^{[1]}(a)=W\big(u_1(\cdot),u_2(\cdot) \big)=u_2^{[1]}(b)
\end{equation}
implies $\text{det}\big(R_K\big)=1$. As a result, we have shown $S_K\subseteq S_{R=R_K,\phi=0}$, where $S_{R=R_K,\phi=0}$ is the self-adjoint restriction of $\Tmax$ corresponding to non-separated boundary conditions generated by the matrix $R_K$ and angle $\phi=0$ (cf.\ \eqref{eqn:SRLCLCcoup}).  On the other hand, since $S_K$ and $S_{R=R_K,\phi=0}$ are self-adjoint, one obtains the equality $S_K=S_{R=R_K,\phi=0}$.  That is to say, the Krein--von Neumann extension of $\Tmin$ is the self-adjoint extension corresponding to non-separated boundary conditions generated by $R=R_K$ and $\phi=0$.
\end{proof}

\begin{example} \lb{e10.3}
In the special case when $q=0$ a.e.\ on $(a,b)$, the above calculations become even more explicit.  
In this case, we denote the Krein--von Neumann restriction by $S_K^{(0)}$ (the superscript $(0)$ 
indicating that $q$ vanishes a.e.\ in $(a,b)$).  One may choose explicit basis vectors 
$\big\{u_j^{(0)}(\cdot)\big\}_{j=1,2}$ for $\text{ker}\big(\Tmin^* \big)$:
\begin{equation}\lb{10.162}
\begin{split}
u_1^{(0)}(x)&=C_0 e^{-\int_a^x s(t)dt} \int_a^x p(t)^{-1}e^{2\int_a^t s(t')dt'} dt,\\
u_2^{(0)}(x)&=e^{-\int_a^x s(t) dt}-e^{-\int_a^b s(t) dt}u_1^{(0)}(x),\quad x\in [a,b],
\end{split}
\end{equation}
where 
\begin{equation}\lb{10.163a}
C_0:=e^{\int_a^b s(t) dt}\bigg[ \int_a^b p(t)^{-1}e^{2\int_a^t s(t') dt'} dt \bigg]^{-1}>0.
\end{equation}
One computes
\begin{equation}\lb{10.163}
\begin{split}
\big(u_1^{(0)}(\cdot)\big)^{[1]}(x)&=C_0e^{\int_a^x s(t) dt},\\
\big(u_2^{(0)}(\cdot)\big)^{[1]}(x)&=-e^{-\int_a^b s(t) dt}\big(u_1^{(0)}(\cdot)\big)^{[1]}(x), \quad x\in [a,b],
\end{split}
\end{equation}
and 
\begin{equation}\lb{10.164}
\big[\tau^{(0)}u_j^{(0)}(\cdot) \big](x)=0\ \text{a.e.\ in $(a,b)$, $j=1,2$},
\end{equation}
where $\tau^{(0)}$ denotes the differential expression of \eqref{2.2} in the present special case $q=0$ 
a.e.\ in $(a,b)$.  It follows that $\big\{u_j^{(0)}(\cdot) \big\}_{j=1,2}\subset \text{dom}\big(\Tmin^* \big)$ 
forms a basis for $\text{ker}\big(\Tmin^* \big)=\text{ker}\big(\Tmax \big)$.  In addition, the equalities 
in \eqref{10.150} are satisfied.  With this pair of basis vectors, one infers that the matrix $R=R_K^{(0)}$ 
which parameterizes the (non-separated) boundary conditions for the Krein--von Neumann extension is 
\begin{equation}\lb{10.165}
R_K^{(0)}=
\begin{pmatrix}
e^{-\int_a^b s(t) dt} & e^{-\int_a^b s(t) dt}\int_a^b p(t)^{-1}e^{2\int_a^t s(t')dt'}dt \\
0 & e^{\int_a^b s(t) dt}
\end{pmatrix}.
\end{equation}
Explicitly, the boundary conditions corresponding to $S_K^{(0)}$ read:
\begin{align}
u^{[1]}(b)&=e^{\int_a^b s(t) dt}u^{[1]}(a)\no\\
&=e^{2\int_a^b s(t) dt}\bigg[ \int_a^b p(t)^{-1}e^{2\int_a^t s(t') dt'}dt\bigg]^{-1}\Big(u(b)-e^{-\int_a^b s(t) dt}u(a) \Big), \lb{10.166}\\
&\hspace*{7.6cm} u\in \text{dom}\big(S_K^{(0)} \big).\no
\end{align}
\end{example}

\section{Positivity Preserving and Improving Resolvents and Semigroups in~the Regular Case} 
\lb{s13}

In our final section, we prove a criterion for a self-adjoint extension of $\Tmin$ to generate a positivity improving resolvent or, equivalently, semigroup.  The notion of a positivity improving resolvent or semigroup proves critical in a study of the smallest eigenvalue of a self-adjoint restriction, as it guarantees that the lowest eigenvalue is non-degenerate and possesses a nonnegative eigenfunction.  
In fact, we will go a step further and prove that the notions of positivity preserving and positivity improving are equivalent in the regular case. 

The self-adjoint restrictions of $\Tmax$ are characterized in terms of the functionals $BC_a^j$ and 
$BC_b^j$, $j=1,2$, in Section \ref{s6} (cf.\ \eqref{eqn:ufuncBCa} 
and \eqref{eqn:ufuncBCb}), and assuming Hypothesis \ref{h11.1} throughout this section, the functionals $BC_a^j$ and $BC_b^j$, $j=1,2$ take the form of point evaluations of functions and their quasi-derivatives at the boundary points of $(a,b)$ as in Lemma \ref{prop:PointEvalBC}, that is, 
$BC_a^1(f)=f(a)$, $BC_a^2(f)=f^{[1]}(a)$, $BC_b^1(f)=f(b)$, $BC_b^2(f)=f^{[1]}(b)$, 
$f\in \text{dom}\big(T_{\max}\big)$. Since under the assumption of Hypothesis \ref{h11.1}, $\tau$ is in 
the l.c.\ case at both endpoints of the interval $(a,b)$, all real self-adjoint restrictions of $\Tmax$ are parametrized as described in 
Theorem \ref{thm:SRLCLCsepcoup} with $\phi = 0$. Hence, we adopt the following notational convention:
$S_{\varphi_a,\varphi_b}$ denote the (real) self-adjoint restrictions of $\Tmax$ corresponding to the 
separated boundary conditions \eqref{eqn:SRLCLCsep} in Theorem \ref{thm:SRLCLCsepcoup}, that is,
\begin{align}\lb{10.103c}
& S_{\varphi_a,\varphi_b} f = \tau f, \\
& f \in \dom{S_{\varphi_a,\varphi_b}} = \bigg\{g\in\dom{\Tmax} \, \bigg| 
\begin{array}{l} g(a)\cos(\varphi_a)-g^{[1]}(a)\sin(\varphi_a)=0, \\
g(b)\cos(\varphi_b)-g^{[1]}(b)\sin(\varphi_b)=0 \end{array}\bigg\},     \no 
\end{align}
and $S_{R}$ denote the real self-adjoint restrictions of $\Tmax$ corresponding to the coupled boundary conditions \eqref{eqn:SRLCLCcoup} with $\phi = 0$ in Theorem \ref{thm:SRLCLCsepcoup}, that is,
 \begin{align}\label{10.103d}
 \begin{split} 
 & S_R f = \tau f,     \\
& f \in \dom{S_R} 
= \bigg\{g \in\dom{\Tmax} \, \bigg| \, \begin{pmatrix} g(b) \\ g^{[1]}(b)\end{pmatrix}
      = R \begin{pmatrix} g(a) \\ g^{[1]}(a)\end{pmatrix}\bigg\}.    \\
\end{split} 
 \end{align} 

Following \cite{CGNZ12} and \cite{GZ12}, the sesquilinear forms associated to \eqref{10.103c} and 
\eqref{10.103d} are readily written down and read (cf.\ Appendix\ \ref{sA}) 
\begin{align}
& \mathfrak{Q}_{S_{\varphi_a,\varphi_b}}(f,g) = \int_a^b \big[p(x)^{-1} \ol{f^{[1]}(x)} g^{[1]}(x) + q(x) \ol{f(x)} g(x)\big]dx\no\\
& \hspace*{2.4cm} + \cot(\varphi_a) \ol{f(a)} g(a) -  \cot(\varphi_b) \ol{f(b)} g(b),    \no \\
& f, g \in \text{dom}(\mathfrak{Q}_{S_{\varphi_a,\varphi_b}}) 
= \big\{h\in L^2((a,b); r(x)dx) \, \big| \, h \in AC ([a,b]),       \lb{10.110} \\
& \hspace*{3.8cm} (rp)^{-1/2} h^{[1]} \in L^2((a,b); r(x)dx)\big\},  \quad 
\varphi_a,\varphi_b \in (0,\pi),  \no \\
& \mathfrak{Q}_{S_{0,\varphi_b}}(f,g) = \int_a^b \big[p(x)^{-1} \ol{f^{[1]}(x)} g^{[1]}(x) 
 + q(x) \ol{f(x)} g(x)\big]dx  -  \cot(\varphi_b) \ol{f(b)} g(b),     \no\\
& f, g \in \text{dom}(\mathfrak{Q}_{S_{0,\varphi_b}}) 
= \big\{h\in L^2((a,b); r(x)dx) \, \big| \, h \in AC ([a,b]), \, h(a) = 0, \,      \lb{10.110a} \\
& \hspace*{3.6cm} (rp)^{-1/2} h^{[1]} \in L^2((a,b); r(x)dx)\big\}, 
\quad \varphi_b \in (0,\pi),  \no \\
& \mathfrak{Q}_{S_{\varphi_a,0}}(f,g) 
= \int_a^b \big[p(x)^{-1} \ol{f^{[1]}(x)} g^{[1]}(x) + q(x) \ol{f(x)} g(x)\big]dx 
 + \cot(\varphi_a) \ol{f(a)} g(a),      \no\\ 
& f, g \in \text{dom}(\mathfrak{Q}_{S_{\varphi_a,0}}) 
= \big\{h\in L^2((a,b); r(x)dx) \, \big| \, h \in AC ([a,b]), \, h(b) = 0,       \lb{10.110b} \\
& \hspace*{3.6cm}  (rp)^{-1/2} h^{[1]} \in L^2((a,b); r(x)dx)\big\}, 
\quad \varphi_a \in (0,\pi),     \no \\ 
& \mathfrak{Q}_{S_{0,0}}(f,g) = \int_a^b \big[p(x)^{-1} \ol{f^{[1]}(x)} g^{[1]}(x) + q(x) \ol{f(x)} g(x)\big]dx, 
 \no\\
& f, g \in \text{dom}(\mathfrak{Q}_{S_{\varphi_a,0}}) 
= \big\{h\in L^2((a,b); r(x)dx) \, \big| \, h \in AC ([a,b]), \, h(a) = h(b) = 0,     \no \\
& \hspace*{3.6cm} (rp)^{-1/2} h^{[1]} \in L^2((a,b); r(x)dx)\big\},   \lb{10.110c} 
\end{align}
and 
\begin{align} 
& \mathfrak{Q}_{S_R}(f,g) = \int_a^b \big[p(x)^{-1} \ol{f^{[1]}(x)} g^{[1]}(x) + q(x) \ol{f(x)} g(x)\big]dx\no\\
& \hspace*{1.9cm} - \f{1}{R_{1,2}} \Big\{R_{1,1} \ol{f(a)} g(a) - \big[\ol{f(a)} g(b) + \ol{f(b)} g(a)\big]  
+ R_{2,2} \ol{f(b)} g(b)\Big\},    \no \\
& f, g \in \text{dom}(\mathfrak{Q}_{S_R}) 
= \big\{h\in L^2((a,b); r(x)dx) \, \big|\, h \in AC ([a,b]),        \lb{10.110d} \\
& \hspace*{3.3cm} (rp)^{-1/2} h^{[1]} \in L^2((a,b); r(x)dx)\big\}, \quad R_{1,2} \neq 0,     \no \\
& \mathfrak{Q}_{S_R}(f,g) = \int_a^b \big[p(x)^{-1} \ol{f^{[1]}(x)} g^{[1]}(x) + q(x) \ol{f(x)} g(x)\big]dx\no\\
& \hspace*{1.9cm} - R_{2,1} R_{1,1} \ol{f(a)} g(a),    \no \\
& f, g \in \text{dom}(\mathfrak{Q}_{S_R}) 
= \big\{h\in L^2((a,b); r(x)dx) \, \big|\, h \in AC ([a,b]), \, h(b) = R_{1,1} h(a),      \no \\
& \hspace*{3.3cm} (rp)^{-1/2} h^{[1]} \in L^2((a,b); r(x)dx)\big\}, \quad R_{1,2} = 0.     \lb{10.110e} 
\end{align}
To verify \eqref{10.110}--\eqref{10.110e}, it suffices to perform an appropriate integration by parts in each 
of these cases (noting that $R_{1,1} R_{2,2} =1$ if $R_{1,2} = 0$).

With the sesquilinear forms in hand, we are now prepared to characterize when self-adjoint restrictions 
of $\Tmax$ generate positivity preserving resolvents and semigroups. For background literature on positivity preserving semigroups and resolvents, we refer, for instance, to the monographs \cite[Ch.\ 7]{Da80}, 
\cite[Ch.\ 13]{Da07}, \cite[Sects.\ 8, 10]{Fa75}, \cite[Sect.\ 3.3]{GJ81}, \cite[Chs.\ 2, 3]{Ou05}, 
\cite[Sect.\ XIII.12]{RS78}, \cite[Sect.\ 10.5]{We80}, and to the extensive list of references in \cite{GMN12}. 

Let $(M,\cM,\mu)$ denote a $\sigma$-finite, separable measure space associated with a nontrivial 
measure $($i.e., $0 < \mu(M) \leq \infty$$)$ and $L^2(M;d\mu)$ the associated complex, separable Hilbert space (cf.\ \cite[Sect.\ 1.5]{BS88} and \cite[p.\ 262--263]{Jo82} for additional facts in this context).  Then the set of nonnegative elements $0 \leq f \in L^2(M; d\mu)$ (i.e., $f(x) \geq 0$ $\mu$-a.e.) is a cone 
in $L^2(M; d\mu)$, closed in the norm and weak topologies. 

\begin{definition} \lb{d12.1} 
A bounded operator $A$ defined on $L^2(M; d\mu)$ is called {\it positivity preserving}
$($resp., {\it positivity improving}$)$ if 
\begin{equation}\lb{10.120}
0 \neq f \in L^2(M; d\mu), \, f \geq 0 \text{ $\mu$-a.e.\ implies } \, A f \geq 0 \, 
\text{ $($resp., $Af > 0$$)$ $\mu$-a.e.}
\end{equation}
\end{definition}

In the special case where $A$ is a bounded integral operator in $L^2((a,b); r(x)dx)$ with integral kernel 
denoted by $A(\cdot, \cdot)$, it is well-known that 
\begin{align} 
\text{$A$ is positivity preserving if and only if $A(\cdot,\cdot) \geq 0$ $dx \otimes dx$-a.e.\ on $(a,b) \times (a,b)$}    \lb{12.11} 
\end{align} 
(we recall that $r>0$ a.e.\ by Hypothesis \ref{h11.1}). For an extension of this result to $\sigma$-finite, separable measure spaces we refer to \cite[Theorem\ 2.3]{GMN12}. Moreover,  
\begin{equation} 
\text{if $A(\cdot,\cdot)>0$ $\prodm$-a.e., then $A$ is positivity improving.}   \lb{12.13}
\end{equation} 
(The converse to \eqref{12.13}, however, is false, cf.\ \cite[Example\ 2.6]{GMN12}.)

The following result is fundamental to the theory of positivity preserving operators.

\begin{theorem}[\cite{RS78}, p.\ 204, 209]\lb{t12.2}
Suppose that $S$ is a semibounded self-adjoint operator in $L^2(M;d\mu)$ with 
$\lambda_0 = \inf(\sigma(S))$.  Then the following conditions, $(i)$--$(iii)$, are equivalent: \\[1mm]
$(i)$ \hspace*{1mm} $e^{-tS}$ is positivity preserving for all $t\geq 0$.\\[1mm] 
$(ii)$  $\big(S-\lambda I_{L^2(M;d\mu)}\big)^{-1}$ is positivity preserving for all $\lambda < \lambda_0$. 
\\[1mm] 
$(iii)$ The Beurling--Deny criterion: $f\in {\rm dom}\big(|S|^{1/2}\big)$ implies $|f| \in {\rm dom}\big(|S|^{1/2}\big)$ and \\[1mm]
\hspace*{7mm} $\big\| (S - \lambda_0 I_{L^2(M;d\mu)})^{1/2} |f|\big\|_{L^2(M;d\mu)} 
\leq \big\| (S - \lambda_0 I_{L^2(M;d\mu)})^{1/2} f \big\|_{L^2(M;d\mu)}$.
\end{theorem}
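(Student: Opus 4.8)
The plan is to normalize and then treat the implications $(i)\Leftrightarrow(ii)$, $(ii)\Rightarrow(iii)$, and $(iii)\Rightarrow(i)$ in turn. Without loss of generality I assume $\lambda_0 = 0$, i.e.\ $S \geq 0$, upon replacing $S$ by $S - \lambda_0 I_{\cH}$ with $\cH = L^2(M;d\mu)$; this shift changes neither the notion of positivity preservation nor the form domain $\dom{S^{1/2}}$. I record the elementary fact that the positivity preserving operators form a cone in $\cB(\cH)$ that is closed under composition, under nonnegative linear combinations, and under strong limits, because the set $\{0 \leq f \in \cH\}$ is convex and norm-closed. For $(i)\Leftrightarrow(ii)$ I would use, for $\lambda < 0$, the Laplace representation
\[
(S - \lambda I_{\cH})^{-1} = \int_0^\infty e^{\lambda t} \, e^{-tS} \, dt,
\]
which converges in operator norm since $\|e^{-tS}\| \leq 1$. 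If $(i)$ holds, this exhibits the resolvent as a strong limit of Riemann sums with nonnegative weights $e^{\lambda t}$, hence positivity preserving, giving $(ii)$. Conversely, the exponential formula
\[
e^{-tS} = \slim_{n\to\infty} \Big(\tfrac{n}{t}\Big)^{n} \Big(S + \tfrac{n}{t} I_{\cH}\Big)^{-n}
\]
realizes the semigroup as a strong limit of nonnegative multiples of powers of the positivity preserving operators $(S + (n/t) I_{\cH})^{-1}$, so $(ii)\Rightarrow(i)$ by the same closure properties.

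For $(ii)\Rightarrow(iii)$ I set $A_\beta = \beta (S + \beta I_{\cH})^{-1}$, a self-adjoint contraction that is positivity preserving by $(ii)$ and satisfies $A_\beta \to I_{\cH}$ strongly. The decisive pointwise bound is $|A_\beta f| \leq A_\beta |f|$ $\mu$-a.e., valid for any positivity preserving operator (for real $f$ split $f = f_+ - f_-$ and use $A_\beta f_\pm \geq 0$; the complex case follows by a phase argument). Since $A_\beta$ is self-adjoint, $\spr{A_\beta f}{f}$ is real, and integrating the bound against $|f| \geq 0$ yields
\[
\spr{A_\beta f}{f} \leq |\spr{A_\beta f}{f}| \leq \spr{A_\beta |f|}{|f|}.
\]
Invoking the monotone representation $\|S^{1/2} g\|^2 = \lim_{\beta\to\infty} \beta \big(\|g\|^2 - \spr{A_\beta g}{g}\big)$, whose limit is finite exactly when $g \in \dom{S^{1/2}}$, together with $\||f|\| = \|f\|$, gives $\|S^{1/2}|f|\|^2 \leq \|S^{1/2} f\|^2$ whenever the right-hand side is finite; in particular $|f| \in \dom{S^{1/2}}$. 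This is precisely $(iii)$.

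The implication $(iii)\Rightarrow(i)$ is the heart of the matter. First I would translate $(iii)$ into a sign condition on the closed nonnegative form $\gq(f,g) = \spr{S^{1/2}f}{S^{1/2}g}$: for real-valued $f$ the identities $f_\pm = (|f| \pm f)/2$ and polarization give $\gq(|f|) - \gq(f) = 4\,\gq(f_+, f_-)$, so $(iii)$ is equivalent to $f_\pm \in \dom{\gq}$ together with
\[
\gq(f_+, f_-) \leq 0 .
\]
The remaining task is to pass from this off-diagonal form inequality back to invariance of the cone $C = \{0 \leq f \in \cH\}$ under $e^{-tS}$. Here I would invoke the invariance criterion for closed convex sets under symmetric contraction semigroups (the Beurling--Deny, resp.\ Ouhabaz, criterion): denoting by $P$ the metric projection onto $C$, which in the present setting is $Pf = (\Re f)_+$, the semigroup satisfies $e^{-tS} C \subseteq C$ for all $t \geq 0$ if and only if $P\,\dom{\gq} \subseteq \dom{\gq}$ and $\Re\,\gq(Pf, f - Pf) \geq 0$ for all $f \in \dom{\gq}$. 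A direct computation shows that, for a real symmetric form, the latter reduces exactly to $\gq((\Re f)_+, (\Re f)_-) \leq 0$, the inequality just obtained.

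The main obstacle is thus the proof of this last invariance criterion, i.e.\ deducing $e^{-tS} C \subseteq C$ from the single form inequality $\gq(f_+, f_-) \leq 0$. The natural route is to show first that each resolvent power $(I_{\cH} + (t/n) S)^{-1}$ contracts toward $C$ by analyzing the variational/projection geometry of the form norm, and then to recover the semigroup through the exponential formula already used for $(i)\Leftrightarrow(ii)$; carrying out the projection estimate rigorously is the one step I expect to require real work, and failing a self-contained argument I would cite it as in \cite{RS78}. The three implications then close the cycle and establish the stated equivalences.
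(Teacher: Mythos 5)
The paper offers no proof of this statement: it is quoted directly from Reed--Simon \cite{RS78} (pp.\ 204, 209), so there is no in-paper argument to compare yours against. Judged on its own terms, your treatment of $(i)\Leftrightarrow(ii)$ (the Laplace representation of the resolvent in one direction, the exponential formula $e^{-tS}=\slim_{n\to\infty}(I+\tfrac{t}{n}S)^{-n}$ in the other, combined with closure of the positivity-preserving cone under composition, nonnegative combinations, and strong limits) and of $(ii)\Rightarrow(iii)$ (the domination $|A_\beta f|\le A_\beta|f|$ fed into the monotone representation $\|S^{1/2}g\|^2=\lim_{\beta\uparrow\infty}\beta\big(\|g\|^2-\spr{A_\beta g}{g}\big)$) is correct and is essentially the textbook argument.

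The gap is exactly where you locate it: $(iii)\Rightarrow(i)$. Your reduction of $(iii)$ to the off-diagonal inequality $\gq(f_+,f_-)\le 0$ is right, but resting the conclusion on the Ouhabaz convex-set invariance criterion, whose proof you concede is ``the one step requiring real work'' and which you would otherwise cite from \cite{RS78}, leaves the substantive direction of the theorem unestablished --- at that point your argument collapses back into the very citation the paper already makes. The criterion is not needed here; the direction $(iii)\Rightarrow(ii)$ admits a short variational proof. After normalizing $S\ge 0$, fix $\beta>0$ and $0\le g\in\cH$, and observe that $u_0=(S+\beta I_{\cH})^{-1}g$ is the unique minimizer over $\dom{\gq}$ of $J(u)=\gq(u,u)+\beta\|u\|^2-2\Re\spr{g}{u}$ (complete the square: $J(u)=\big\|(S+\beta I_{\cH})^{1/2}u-(S+\beta I_{\cH})^{-1/2}g\big\|^2-\big\|(S+\beta I_{\cH})^{-1/2}g\big\|^2$). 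Condition $(iii)$ gives $\gq(|u_0|,|u_0|)\le\gq(u_0,u_0)$, while $\||u_0|\|=\|u_0\|$ and $\Re\spr{g}{u_0}\le\spr{g}{|u_0|}$ because $g\ge 0$; hence $J(|u_0|)\le J(u_0)$, and uniqueness of the minimizer forces $u_0=|u_0|\ge 0$. This yields $(ii)$, and you have already shown $(ii)\Rightarrow(i)$, closing the cycle without any appeal to the general invariance machinery.
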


The next and principal result of this section provides a necessary and sufficient condition for a (necessarily real) self-adjoint restriction of $\Tmax$ (resp., extension of $\Tmin$) to generate a positivity preserving resolvent and semigroup. We recall that positivity preserving requires reality preserving and hence it suffices to consider real self-adjoint extensions of $\Tmin$. In fact, we will prove more and show that the notions of positivity preserving and positivity improving are, in fact, equivalent in the regular case.

\begin{theorem}\lb{t12.3}
Assume Hypothesis \ref{h11.1}. \\
$(i)$ In the case of separated boundary conditions, all self-adjoint extensions of $\Tmin$ lead to positivity improving semigroups and resolvents. More precisely, for all $\varphi_a, \varphi_b \in [0,\pi)$, 
$e^{-tS_{\varphi_a,\varphi_b}}$ is positivity improving for all $t\geq 0$, equivalently, 
$(S_{\varphi_a,\varphi_b} - \lambda I_r)^{-1}$ is positivity improving for all 
$\lambda<\inf (\sigma(S_{\varphi_a,\varphi_b}))$. In addition,
\begin{equation}\lb{13.46A}
(S_{\varphi_a,\varphi_b}-\lambda I_r)^{-1}-(S_{0,0}-\lambda I_r)^{-1},   \quad 
\lambda < \inf(\sigma(S_{\varphi_a,\varphi_b})),
\end{equation}  
is positivity improving, implying the inequality
\begin{equation}\lb{10.142A}
G_{\lambda,\varphi_a,\varphi_b}(x,x') \geq G_{\lambda,0,0}(x,x') \geq 0,\quad x,x'\in[a,b],\; 
\lambda < \inf(\sigma(S_{\varphi_a,\varphi_b})). 
\end{equation}
In particular,
\begin{equation}
G_{\lambda,0,0}(x,x') > 0,\quad x,x'\in(a,b),\;  \lambda < \inf(\sigma(S_{0,0})). 
\end{equation}
Here $G_{z,\varphi_a,\varphi_b}(\cdot, \cdot)$, $z\in \rho(S_{\varphi_a,\varphi_b})$ 
$($resp., $G_{z,0,0}(\cdot , \cdot)$, 
$z\in\rho(S_{0,0})$$)$, denotes the Green's function $($i.e., the integral kernel of the resolvent\,$)$ 
of $S_{\varphi_a,\varphi_b}$ $($resp., of $S_{0,0}$$)$.  \\[1mm]
$(ii)$ In the case of $($necessarily real\,$)$ coupled boundary conditions, $e^{-t S_R}$ is positivity preserving for all $t\geq 0$, equivalently, $(S_R - \lambda I_r)^{-1}$ is positivity preserving for all 
$\lambda<\inf (\sigma(S_R))$, if and only if  
\begin{equation}
\text{either $R_{1,2} < 0$, or $R_{1,2} =0$ and $R_{1,1} > 0$ $($equivalently, $R_{2,2} > 0$$)$.} 
\lb{12.12}
\end{equation} 
Moreover, $e^{-t S_R}$ is positivity improving for all $t\geq 0$ if and only if it is positivity preserving 
for all $t\geq 0$. Equivalently, $(S_R - \lambda I_r)^{-1}$ is positivity improving for all 
$\lambda<\inf (\sigma(S_R))$ if and only if it is positivity preserving for all 
$\lambda<\inf (\sigma(S_R))$. In addition,
\begin{equation}\lb{13.46a}
(S_R-\lambda I_r)^{-1}-(S_{0,0}-\lambda I_r)^{-1},   \quad 
\lambda < \inf (\sigma(S_R)),
\end{equation}  
is positivity improving, implying the inequality
\begin{equation}\lb{10.142a}
G_{\lambda,R}(x,x') \geq G_{\lambda,0,0}(x,x') \geq 0,  \quad x,x'\in[a,b],\; 
\lambda<\inf(\sigma(S_R)). 
\end{equation}
Here $G_{z,R}(\cdot, \cdot)$, $z\in \rho(S_R)$, denotes the Green's function of $S_R$.  
\end{theorem}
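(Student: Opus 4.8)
The plan is to read off positivity from the Beurling--Deny criterion of Theorem \ref{t12.2}, reducing each assertion about preserving/improving to an inequality between the sesquilinear forms \eqref{10.110}--\eqref{10.110e} evaluated at $f$ and at $|f|$, and then to upgrade ``preserving'' to ``improving'' using the structure of the resolvent kernel. The single computational input used throughout is the behaviour of the quasi-derivative under $f\mapsto|f|$. Writing $f=\E^{-S}g$ with $S(x)=\int_c^x \foco(t)\,dt$ one finds $f^\qd=p\,\E^{-S}g'$ and $(|f|)^\qd=p\,\E^{-S}(|g|)'$, so the elementary a.e.\ relation $\big|(|g|)'\big|\le|g'|$ (with equality a.e.\ when $g$ is real-valued) gives
\[
\int_a^b p^{-1}\big|(|f|)^\qd\big|^2\,dx \le \int_a^b p^{-1}\big|f^\qd\big|^2\,dx,
\]
with equality for real-valued $f$. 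Positivity of $p$ (Hypothesis \ref{h11.1}) enters precisely here, and $\||f|\|_{2,r}=\|f\|_{2,r}$ always.

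For the separated case, item $(i)$, the boundary contributions in \eqref{10.110}--\eqref{10.110c} depend only on $|f(a)|$ and $|f(b)|$ and are therefore unchanged under $f\mapsto|f|$; combined with the displayed inequality this yields $\mathfrak{Q}_{S_{\varphi_a,\varphi_b}}(|f|,|f|)\le\mathfrak{Q}_{S_{\varphi_a,\varphi_b}}(f,f)$, so Theorem \ref{t12.2} gives a positivity preserving resolvent and semigroup. To obtain positivity improving I would use the explicit kernel \eqref{7.4}: for $\lambda<\inf(\sigma(S_{\varphi_a,\varphi_b}))$, disconjugacy of $\tau-\lambda$ (Corollary \ref{c10.16}) forces the solutions $u_a$, $u_b$ obeying the respective boundary conditions to be non-vanishing, hence sign-definite, on $(a,b)$; normalizing them positive makes $G_{\lambda,\varphi_a,\varphi_b}(x,x')>0$ on $(a,b)\times(a,b)$, so \eqref{12.13} applies and in particular $G_{\lambda,0,0}>0$ in the interior. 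For the comparison \eqref{10.142A} I would change one boundary condition at a time: fixing the condition at $b$ and expanding the $a$-solution $u_a$ in the basis $\{u_a^D,u_b\}$ (Dirichlet solution at $a$ and the $b$-solution) collapses $G_{\lambda,\varphi_a,\varphi_b}-G_{\lambda,0,0}$ to a rank-one kernel proportional to $u_b(x)u_b(x')$, whose coefficient is positive below the spectrum.

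For the coupled case, item $(ii)$, sufficiency of \eqref{12.12} again follows from Beurling--Deny. Using \eqref{10.110d}, passing from $f$ to $|f|$ changes the boundary term by $-\tfrac{2}{R_{1,2}}\big[\re(\ol{f(a)}f(b))-|f(a)||f(b)|\big]$, which is $\le 0$ exactly when $R_{1,2}<0$ (the bracket being $\le 0$), while for $R_{1,2}=0$ equation \eqref{10.110e} shows the form domain is invariant under $f\mapsto|f|$ precisely when $R_{1,1}>0$, the boundary term then being unaltered. For necessity I would test with a real-valued $f$ satisfying $f(a)f(b)<0$ (admissible in the relevant form domain): by the equality case the interior terms are unchanged, so the Beurling--Deny inequality is strictly violated when $R_{1,2}>0$, whereas when $R_{1,2}=0$, $R_{1,1}<0$ the function $|f|$ fails the constraint $h(b)=R_{1,1}h(a)$; in either case Theorem \ref{t12.2} rules out positivity preservation. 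The equivalence of preserving and improving is then structural: a positivity preserving $(S_R-\lambda I_r)^{-1}$ is an integral operator with $G_{\lambda,R}\ge 0$ (by \eqref{12.11} and joint continuity), and for fixed $x$ the section $G_{\lambda,R}(x,\cdot)$ is, on each side of $x$, a nontrivial solution of $(\tau-\lambda)u=0$, which by Corollary \ref{c10.10} has only finitely many zeros; hence $G_{\lambda,R}(x,\cdot)>0$ off a finite set, giving positivity improving.

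The remaining assertions \eqref{13.46a}--\eqref{10.142a} require the \emph{pointwise} inequality $G_{\lambda,R}\ge G_{\lambda,0,0}$, and I expect this to be the main obstacle. Form monotonicity (the Dirichlet form is the restriction of $\mathfrak{Q}_{S_R}$ to $\{f(a)=f(b)=0\}$) yields only $S_R\le S_{0,0}$, hence nonnegativity of the resolvent difference \emph{as an operator}, not of its kernel. To close the gap I would compute the difference explicitly from \eqref{7.0} and \eqref{7.4}: the jump contributions at $y=x$ cancel, leaving a smooth kernel $\sum_{i,j}c_{ij}u_i(x)u_j(x')$ of rank at most two assembled from positive solutions, and I would verify pointwise positivity of the coefficients $c_{ij}$ directly, their signs being governed by \eqref{12.12} together with the signs of the relevant Wronskians for $\lambda$ below $\sigma(S_R)$. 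Positivity improving of the difference then follows as in item $(i)$, and $G_{\lambda,R}\ge G_{\lambda,0,0}\ge 0$ is immediate.
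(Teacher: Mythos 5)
Your Beurling--Deny analysis of when the resolvents are positivity preserving is sound and, for item $(ii)$, coincides with the paper's argument: the identity $f^{[1]}=p\,\E^{-S}\big(\E^{S}f\big)'$ gives $\big||f|^{[1]}\big|\le\big|f^{[1]}\big|$ a.e.\ (the paper derives the same inequality \eqref{10.134} from the explicit formula \eqref{10.132}), and your necessity test with a real-valued $f$ obeying $f(a)f(b)<0$ is exactly the one used there. One caveat in item $(i)$: Corollary \ref{c10.16} only yields disconjugacy of $\tau-\lambda$ for $\lambda$ below \emph{some} threshold $\alpha$, which need not exceed $\inf(\sigma(S_{\varphi_a,\varphi_b}))$, so it does not by itself make $u_a,u_b$ zero-free for \emph{all} $\lambda$ below the spectrum. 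You must either argue as the paper does --- nonnegativity of $G_{\lambda,\varphi_a,\varphi_b}$ on the diagonal (via the approximate identities \eqref{Ques2}) forces the Weyl solutions to be sign-definite by Lemma \ref{l10.1a} --- or establish improving for one very negative $\lambda$ and propagate it with \cite[Theorem XIII.44]{RS78}.

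The substantive gap is the one you flag yourself: the pointwise inequalities \eqref{10.142A} and \eqref{10.142a}. Writing the resolvent difference as a rank-$\le 2$ kernel $\sum_{i,j}c_{ij}\,u_i(\lambda,x)u_j(\lambda,x')$ is correct, but ``verify pointwise positivity of the coefficients directly, their signs being governed by \eqref{12.12} together with the signs of the relevant Wronskians'' is a plan, not a proof, and it is unlikely to go through as stated: the $c_{ij}$ are the entries of the negative inverse of a $2\times 2$ matrix (\eqref{13.34A}, \eqref{13.34}) whose entries involve quantities such as $R_{1,1}/R_{1,2}+u_2^{[1]}(\lambda,a)$, whose signs do not follow from \eqref{12.12} and Wronskian identities alone. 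The missing idea is to normalize $u_1,u_2$ by \eqref{13.33}, so that $G_{\lambda,0,0}$ vanishes at the four corners of $[a,b]\times[a,b]$; the corner values of the difference kernel are then precisely the corner values of $G_{\lambda,R}$ (resp.\ $G_{\lambda,\varphi_a,\varphi_b}$), which are $\ge 0$ because positivity preserving --- already established --- plus continuity gives nonnegativity of the kernel on all of $[a,b]^2$. This is what yields $Q_R(\lambda)^{-1}_{j,k}\le 0$ entrywise in \eqref{13.37} and hence \eqref{13.38}; the same corner evaluation is also what justifies the unproven sign claim in your one-boundary-condition-at-a-time step for \eqref{10.142A}. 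Finally, your ``sections of a nonnegative kernel are nonnegative solutions, hence zero-free'' upgrade from preserving to improving is repairable (you must rule out a section vanishing identically on one side of $x$), but it does not deliver positivity \emph{improving} of the differences \eqref{13.46A} and \eqref{13.46a}, which is part of the assertion; the paper obtains all the improving statements at once from the preserving differences together with improving of $(S_{0,0}-\lambda I_r)^{-1}$ via \cite[Corollary\ 9]{KR80}.
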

\begin{proof} 
Case $(i)$.\ {\it $($Real\,$)$ Separated Boundary Conditions:} Let 
$G_{z,\varphi_a,\varphi_b}(\cdot , \cdot)$, 
$z\in \bbC \backslash \sigma( S_{\varphi_a,\varphi_b})$, denote the Green's function for the 
resolvent of $S_{\varphi_a,\varphi_b}$. To demonstrate positivity improving, it suffices to show that 
\begin{equation}\lb{10.142}
G_{\lambda,\varphi_a,\varphi_b}(x,x') > 0 \, \text{ for all } \, (x,x')\in (a,b)\times (a,b), \;  
\lambda<\inf (\sigma(S_{\varphi_a,\varphi_b})), 
\end{equation}
employing the fact \eqref{12.13}. 
In this context, we note that $G_{z,\varphi_a,\varphi_b}(\cdot , \cdot)$ is continuous on 
$[a,b] \times [a,b]$. To this end, let 
$\lambda <\inf (\sigma(S_{\varphi_a,\varphi_b}))$ and let 
$f_{c,\theta_c}(\lambda, \cdot\,)$, 
$c\in \{a,b\}$, denote Weyl--Titchmarsh solutions of $(\tau-\lambda)u=0$ at $a$ and $b$, respectively, so that
\begin{equation}\lb{10.143a}
\begin{split}
&(\tau-\lambda)f_{c,\theta_c}(\lambda, \cdot \,)=0\ \text{a.e.\ in $(a,b)$},\\
&f_{c,\theta_c}(\lambda,c)\cos(\theta_c)-f_{c,\theta_c}^{[1]}(\lambda,c)\sin(\theta_c)=0, \quad c\in\{a,b\}.
\end{split}
\end{equation}
Then, by Theorem \ref{thm:ressep}, one obtains the representation
\begin{equation}\lb{10.143}
G_{\lambda,\varphi_a,\varphi_b}(x,x')=W_{\varphi_b,\varphi_a}^{-1}
\begin{cases}
f_{a,\varphi_a}(\lambda,x)f_{b,\varphi_b}(\lambda,x'), & a \leq x \leq x' \leq b, \\
f_{a,\varphi_a}(\lambda,x')f_{b,\varphi_b}(\lambda,x), & a \leq x' \leq x \leq b, 
\end{cases}
\end{equation}
where $W_{\varphi_b,\varphi_a}=W\big(f_{b,\varphi_b}(\lambda, \cdot \,), 
f_{a,\varphi_a}(\lambda, \cdot \,)\big)$ abbreviates the Wronskian of 
$f_{b,\varphi_b}(\lambda, \cdot \,)$ 
and $f_{a,\varphi_a}(\lambda, \cdot \,)$. We claim that both $f_{b,\varphi_b}(\lambda, \cdot \,)$ 
and $f_{a,\varphi_a}(\lambda, \cdot \,)$ are sign-definite on $(a,b)$.  In order to see this, 
one observes that the Green's function is nonnegative along the diagonal:
\begin{equation}\lb{10.144}
G_{\lambda,\varphi_a,\varphi_b}(x,x)\geq 0, \quad x\in (a,b),
\end{equation}
a fact that has already been used in the proof of Theorem \ref{t10.12}: Indeed, if \eqref{10.144} fails to hold, then there exists an $x_0\in (a,b)$ such that the inequality 
$G_{\lambda,\varphi_a,\varphi_b}(x_0,x_0)<0$ holds.  Since 
$G_{\lambda,\varphi_a,\varphi_b}(\cdot , \cdot)$ is continuous at the point $(x_0,x_0)$, there exists $\delta>0$ such that
\begin{equation}\lb{10.145}
G_{\lambda,\varphi_a,\varphi_b}(x,x')<0, \quad 
(x,x')\in (x_0-\delta,x_0+\delta)\times (x_0-\delta,x_0+\delta),
\end{equation}
and one obtains
\begin{equation}\lb{10.146}
\big\langle \big(S_{\varphi_a,\varphi_b}-\lambda I_r \big)^{-1}\chi_{(x_0-\delta,x_0+\delta)}, \chi_{(x_0-\delta,x_0+\delta)}  \big\rangle_r<0.
\end{equation}
However, \eqref{10.146} contradicts the fact that 
$\big(S_{\varphi_a,\varphi_b}-\lambda I_r \big)^{-1}\geq 0$.  Therefore, inequality \eqref{10.144} has been established.  

Since a nontrivial solution of $(\tau-\lambda)u=0$ must change signs at a zero in $(a,b)$ 
(cf.\ Lemma \ref{l10.1a}), and linearly independent solutions do not have common zeros, \eqref{10.144} implies that $f_{a,\varphi_a}(\lambda,\cdot \,)$ and $f_{b,\varphi_b}(\lambda, \cdot \,)$ are sign-definite (i.e., strictly negative or positive) on $(a,b)$. In particular, since $W_{\varphi_b,\varphi_a}$ is a constant, $G_{\lambda,\varphi_a,\varphi_b}(\cdot, \cdot)$ is sign-definite, and the inequality in \eqref{10.142} follows from the structure of the Green's function in \eqref{10.143}. 

To go beyond mere positivity improving and actually show \eqref{13.46A} and hence \eqref{10.142A} requires additional arguments: For each $z \in \rho(S_{0,0})$, 
let $u_j(z,\cdot \,)$, $j=1,2$, denote solutions to $\tau u = z u$ satisfying the conditions
\begin{equation}\lb{13.33}
\begin{split}
&u_1(z,a)=0, \quad u_1(z,b)=1,\\
&u_2(z,a)=1, \quad u_2(z,b)=0.
\end{split}
\end{equation} 
We note that for $\lambda < \inf(\sigma(S_{0,0}))$, 
$u_j(\lambda, \cdot \,)$, $j=1,2$, are nonnegative on $[a,b]$. 

Then, mimicking the proof 
of \cite[Theorem\ 3.1$(i)$]{CGNZ12} line by line, and assuming that 
$\varphi_a\neq 0$ and $\varphi_b\neq 0$, one infers that the matrix
\begin{equation}\lb{13.34A}
D_{\varphi_a,\varphi_b}(z)= \begin{pmatrix}
\cot(\varphi_b) - u_1^{[1]}(z,b) & - u_2^{[1]}(z,b)\\
u_1^{[1]}(z,a) & \cot(\varphi_a) + u_2^{[1]}(z,a)
\end{pmatrix},  \quad z\in \rho(S_{\varphi_a,\varphi_b})\cap\rho(S_{0,0}), 
\end{equation}
is invertible and one obtains the following Krein-type resolvent identity, 
\begin{align}\lb{13.35A}
& (S_{\varphi_a,\varphi_b} - z I_r)^{-1}-(S_{0,0} -z I_r)^{-1}
= - \sum_{j,k=1}^2D_{\varphi_a,\varphi_b}(\lambda)^{-1}_{j,k} \, 
\langle u_k(\ol z, \cdot \,), \cdot \, \rangle_r \, u_j(z, \cdot \,),     \no \\
& \hspace*{7.2cm} z \in \rho(S_{\varphi_a,\varphi_b}) \cap \rho(S_{0,0}).&
\end{align}
If $\varphi_a \neq 0$, $\varphi_b = 0$, one gets analogously to \cite[Theorem\ 3.1$(ii)$]{CGNZ12} that 
\begin{equation}\lb{3.19AA}
d_{\varphi_a,0}(z)= \cot(\varphi_a) + u_2^{[1]}(z,a), \quad
z\in \rho(S_{\varphi_a,0})\cap \rho(S_{0,0}),
\end{equation}
is nonzero and
\begin{align}\lb{3.20AA}
\begin{split}
(S_{\varphi_a,0} - z I_r)^{-1} = (S_{0,0}-z I_r)^{-1} - d_{\varphi_a,0}(z)^{-1} 
\langle u_2(\overline{z},\cdot \,),\cdot \, \rangle_r \, u_2(z,\cdot \,),&   \\
z\in \rho(S_{\varphi_a,0})\cap \rho(S_{0,0}).&
\end{split}
\end{align}
Similarly, if $\varphi_a = 0$, $\varphi_b \neq 0$, one obtains as in 
\cite[Theorem\ 3.1$(iii)$]{CGNZ12} that 
\begin{equation}\lb{3.22AA}
d_{0,\varphi_b}(z)= \cot(\varphi_b) - u_1^{[1]}(z,b), \quad
z\in \rho(S_{0,\varphi_b})\cap \rho(S_{0,0}),
\end{equation}
is nonzero and
\begin{align}\lb{3.23AA}
\begin{split}
(S_{0,\varphi_b} - z I_r)^{-1} = (S_{0,0} - zI_r)^{-1} - d_{0,\varphi_b}(z)^{-1}
\langle u_1(\overline{z},\cdot \,), \cdot \, \rangle_r \, u_1(z,\cdot \,),&  \\
z\in \rho(S_{0,\varphi_b})\cap \rho(S_{0,0}).&
\end{split}
\end{align}
Next, one observes that \eqref{13.33}, \eqref{13.35A}, \eqref{3.20AA}, and \eqref{3.23AA} imply
\begin{align}
G_{z,\varphi_a,\varphi_b}(a,a) &= - D_{\varphi_a,\varphi_b}(z)_{2,2}^{-1}, \quad 
G_{z,\varphi_a,\varphi_b}(b,b) = - D_{\varphi_a,\varphi_b}(z)_{1,1}^{-1},    \no \\
G_{z,\varphi_a,\varphi_b}(a,b) & = G_{z,\varphi_a,\varphi_b}(b,a)
= - D_{\varphi_a,\varphi_b}(z)_{2,1}^{-1} = - D_{\varphi_a,\varphi_b}(z)_{1,2}^{-1},   \lb{13.31B} \\
& \hspace*{5.55cm} \varphi_a \neq 0, \, \varphi_b \neq 0,   \no \\ 
G_{z,\varphi_a,0}(a,a) &= - d_{\varphi_a,0}(z)^{-1},   \no \\
G_{z,\varphi_a,0}(b,b) &= G_{z,\varphi_a,0}(a,b) = G_{z,\varphi_a,0}(b,a) = 0,  
\quad \varphi_a \neq 0, \; \varphi_b = 0, \\
G_{z,0,\varphi_b}(b,b) &= - d_{0,\varphi_b}(z)^{-1},   \no \\
G_{z,0,\varphi_b}(a,a) &= G_{z,0,\varphi_b}(a,b) = G_{z,0,\varphi_b}(b,a) = 0,  
\quad \varphi_a = 0, \; \varphi_b \neq 0.
\end{align}
Since $(S_{\varphi_a,\varphi_b} - \lambda I_r)^{-1}$, 
$\lambda<\inf(\sigma(S_{\varphi_a,\varphi_b}))$, is positivity preserving, its integral kernel is nonnegative a.e.~ in $[a,b]\times[a,b]$ by 
\eqref{12.11}. In fact, by continuity, it is nonnegative everywhere in 
$[a,b]\times[a,b]$. As a result, returning to the case $\varphi_a \neq 0$, $\varphi_b \neq 0$, 
\eqref{13.31B} implies
\begin{equation}\lb{13.37AA}
D_{\varphi_a,\varphi_b}(\lambda)_{j,k}^{-1}\leq 0,\quad j,k\in \{1,2\},  \; 
\lambda < \inf (\sigma(S_{\varphi_a,\varphi_b})),
\end{equation}
so that the matrix $-D_{\varphi_a,\varphi_b}(\lambda)^{-1}$ is actually positivity preserving as an operator on $\bbC^2$ for each $\lambda < \inf (\sigma(S_{\varphi_a,\varphi_b}))$.  Thus, 
\eqref{13.35A} and \eqref{13.37AA} immediately yield the following inequality for Green's functions:
\begin{align}\lb{13.38AA}
& G_{\lambda,\varphi_a,\varphi_b}(x,x')-G_{\lambda,0,0}(x,x') 
= - \sum_{j,k=1}^2D_{\varphi_a,\varphi_b}(\lambda)_{j,k}^{-1} \, 
u_j(\lambda,x)u_k(\lambda,x')\geq 0,     \no \\
& \hspace*{5.4cm} 
x,x'\in [a,b],\; \lambda < \inf (\sigma(S_{\varphi_a,\varphi_b})).
\end{align}
We note that the final inequality in \eqref{13.38AA} makes use of \eqref{13.37AA} as well as 
nonnegativity of the functions $u_j(\lambda,\cdot \,)$, $j=1,2$, on the interval $[a,b]$. Another 
application of \eqref{12.11} then implies that the resolvent difference
\begin{equation}\lb{13.39AA}
(S_{\varphi_a,\varphi_b} - \lambda I_r)^{-1}-(S_{0,0}-\lambda I_r)^{-1},\quad 
\lambda < \inf (\sigma(S_{\varphi_a,\varphi_b})),
\end{equation}
is positivity preserving.  Since $(S_{0,0}-\lambda I_r)^{-1}$ is positivity improving for all 
$\lambda < \inf(\sigma(S_{0,0}))$ by what was just shown at the beginning of this proof, and 
$S_{\varphi_a,\varphi_b} \neq S_{0,0}$, \cite[Corollary\ 9]{KR80} implies that the resolvent difference in \eqref{13.39AA} is actually positivity improving. In addition, it 
also implies that $(S_{\varphi_a,\varphi_b} - \lambda I_r)^{-1}$ is positivity improving for all 
$\lambda<\inf(\sigma(S_{\varphi_a,\varphi_b}))$. Inequality \eqref{10.142A} now directly follows 
from \eqref{13.38AA}. This completes the case where $\varphi_a\neq 0$ and $\varphi_b\neq 0$.

If $\varphi_a \neq 0$, $\varphi_b = 0$, the resolvent identity \eqref{3.20AA}, implies the 
following Green's function relation,
\begin{equation}\lb{13.43AA}
\begin{split}
G_{z,\varphi_a,0}(x,x') - G_{z,0,0}(x,x')= - d_{\varphi_a,0}(z)^{-1}
u_2(z,x) u_2(z,x'),&\\
z \in \rho(S_{\varphi_a,0}) \cap \rho(S_{0,0}),&
\end{split}
\end{equation}
and consequently,
\begin{equation}\lb{13.44AA}
0\leq G_{\lambda,\varphi_a,0}(b,b)=-q_{\varphi_a,0}(\lambda)^{-1}, \quad 
\lambda < \inf (\sigma(S_{\varphi_a,0})).
\end{equation}
Nonnegativity of $u_2(\lambda, \cdot \,)$ and \eqref{13.43AA}, \eqref{13.44AA} imply 
\begin{equation}\lb{13.45AA}
G_{\lambda,\varphi_a,0}(x,x')-G_{\lambda,0,0}(x,x')\geq 0,\quad x,x'\in[a,b], \; 
\lambda < \inf (\sigma(S_{\varphi_a,0})),
\end{equation}
which by \eqref{12.11} is equivalent to the fact that the resolvent difference,
\begin{equation}\lb{13.46AA}
(S_{\varphi_a,0} - \lambda I_r)^{-1}-(S_{0,0}-\lambda I_r)^{-1},\quad 
\lambda < \inf (\sigma(S_{\varphi_a,0})),
\end{equation}  
is positivity preserving.  Applying \cite[Corollary\ 9]{KR80} once again, one obtains the 
stronger result that the resolvent difference in \eqref{13.46AA} is positivity improving, and that 
$(S_{\varphi_a,0} - \lambda I_r)^{-1}$, $\lambda<\inf(\sigma(S_{\varphi_a,0}))$, is positivity 
improving as well. Inequality \eqref{10.142A} is just a restatement of \eqref{13.45AA}. 
This completes the case $\varphi_a \neq 0$, $\varphi_b = 0$.

The case $\varphi_a = 0$, $\varphi_b \neq 0$ is completely analogous and hence we skip it. 

\smallskip

Case $(ii)$.\ {\it $($Real\,$)$ Coupled Boundary Conditions:} 
First, we show the conditions in \eqref{12.12} are necessary and sufficient for positivity preserving of $e^{-t S_R}$ for all $t\geq 0$, or equivalently, positivity preserving of $(S_R - \lambda I_r)^{-1}$ for all
$\lambda<\inf (\sigma(S_R))$.
We begin with the proof of sufficiency.  To this end, suppose that either $R_{1,2} < 0$ or 
$R_{1,2} = 0$ and $R_{1,1} > 0$.  In order to show that 
$e^{-tS_R}$ is positivity preserving for all $t\geq 0$, we will verify the Beurling--Deny criterion 
Theorem \ref{t12.2}\,$(iii)$.  Therefore, we must show the following condition holds:
\begin{equation}\lb{10.126}
\begin{split}
&\text{$f\in \text{dom}(\mathfrak{Q}_{S_R})$ implies $|f|\in 
\text{dom}(\mathfrak{Q}_{S_R})$ and}\\
&\quad \text{ $\mathfrak{Q}_{S_R}(|f|,|f|)-\lambda_{S_R}\langle |f| , 
|f| \rangle_r\leq \mathfrak{Q}_{S_R}(f,f)-\lambda_{S_R}\langle f , f \rangle_r$,}
\end{split}
\end{equation}
where we have set $\lambda_{S_R}=\inf (\sigma(S_R))$.

First, we claim that 
\begin{equation}\lb{10.129}
\text{$f\in \text{dom}(\mathfrak{Q}_{S_R})$ implies 
$|f|\in \text{dom}(\mathfrak{Q}_{S_R})$ if $R_{1,2} \neq 0$.}
\end{equation}
Indeed, if $f\in \text{dom}(\mathfrak{Q}_{S_R})$ is fixed, then
\begin{equation}\lb{10.130}
\text{$f\in AC([a,b])$ and $(rp)^{-1/2}f^{[1]}\in L^2((a,b);r(x)dx)$},
\end{equation}
and it follows that $|f|\in AC([a,b])$.  Moreover, since $|f|'$ coincides a.e.\ in $(a,b)$ with the 
function (cf., e.g., \cite[Theorem 6.17]{LL01})
\begin{equation}\lb{10.131}
d_f(x)=\begin{cases}
|f(x)|^{-1}\big[\Re(f)(x) \Re(f)'(x)+\Im(f)(x) \Im(f)'(x) \big], & f(x)\neq 0, \\
0, & f(x)=0,
\end{cases} 
\end{equation}
one verifies that $|f|^{[1]}$ coincides a.e.\ in $(a,b)$ with the function
\begin{equation}\lb{10.132}
\widetilde{d}_f(x)=\begin{cases}
|f(x)|^{-1}\big[\Re(f)(x) \Re\big(f^{[1]}\big)(x)+\Im(f)(x) \Im\big(f^{[1]}\big)(x) \big], & f(x)\neq 0, \\
0, & f(x)=0,
\end{cases} 
\end{equation}
and, subsequently, the inequality
\begin{align}
&\Big||f(x)|^{-1}\big[\Re(f)(x) \Re\big(f^{[1]}\big)(x)+\Im(f)(x) \Im\big(f^{[1]}\big)(x) \big] \Big|^2\no\\
&\quad \leq \Re\big(f^{[1]}(x)\big)^2+\Im\big(f^{[1]}(x) \big)^2 \, 
\text{ for a.e.\ $x\in \{x'\in (a,b)\, |\, f(x')\neq 0\}$},\lb{10.133}
\end{align}
implies
\begin{equation}\lb{10.134}
\big||f|^{[1]} \big|\leq \big|f^{[1]} \big| \, \text{ a.e.\ in $(a,b)$},\, f\in AC([a,b]).
\end{equation}
The second containment in \eqref{10.130} then implies $(rp)^{-1/2}|f|^{[1]}\in L^2((a,b);r(x)dx)$, 
establishing \eqref{10.129} (cf.\ \eqref{10.110d}).  Thus, it remains to verify inequality \eqref{10.126}.  
Since the terms containing $\lambda_{S_R}$ in the inequality in \eqref{10.126} are equal, it suffices to 
establish the following inequality:
\begin{equation}\lb{10.135}
\mathfrak{Q}_{S_R}(|f|,|f|)\leq \mathfrak{Q}_{S_R}(f,f).
\end{equation}
On the other hand, \eqref{10.134} implies
\begin{equation}\lb{10.136}
\int_a^b p(x)^{-1}\big||f|^{[1]}(x) \big|^2dx\leq \int_a^b p(x)^{-1}\big|f^{[1]}(x) \big|^2dx,
\end{equation}
and hence by \eqref{10.110d} when $R_{1,2} < 0$, it suffices to verify the simpler inequality
\begin{equation}\lb{10.137}
\f{1}{R_{1,2}} \Big\{2 |f(a)| |f(b)| - \big[f(a)\ol{f(b)} + \ol{f(a)} f(b) \big]\Big\} \leq 0.
\end{equation}
One computes for the difference in \eqref{10.137}:
\begin{equation}\lb{10.138}
\f{2}{R_{1,2}} \big[|\ol{f(a)}f(b)|-\Re\big(\ol{f(a)}f(b)\big) \big] \leq 0,
\end{equation}
since $R_{1,2} < 0$, by assumption. If $R_{1,2} = 0$ and $R_{1,1} > 0$, then by \eqref{10.110e} 
it only remains to show that $f \in \text{dom}(\mathfrak{Q}_{S_R}) $ implies 
$|f| \in \text{dom}(\mathfrak{Q}_{S_R}) $, which is indeed guaranteed since $R_{j,j} > 0$, 
$j=1,2$, completing the proof of sufficiency.

In order to establish necessity of the conditions $R_{1,2} < 0$ or $R_{1,2} = 0$ and $R_{1,1} > 0$, 
suppose that $e^{-tS_R}$ is positivity preserving for all $t\geq 0$.  Then by the Beurling--Deny criterion, Theorem \ref{t12.2}\,$(iii)$, condition \eqref{10.126} holds.  In particular, for $R_{1,2} \neq 0$, 
equation \eqref{10.110d} and inequality \eqref{10.126} imply
\begin{equation}\lb{10.139}
\begin{split}
&\int_a^b p(x)^{-1}\Big[\big||f|^{[1]}(x) \big|^2 - \big|f^{[1]}(x) \big|^2 \Big]dx\\
&\quad + \f{2}{R_{1,2}} \big[|\ol{f(a)}f(b)|-\Re\big(\ol{f(a)}f(b)\big) \big]\leq 0, 
\quad f\in \text{dom}( \mathfrak{Q}_{S_R}).
\end{split}
\end{equation}
If $f\in \text{dom}( \mathfrak{Q}_{S_R})$ is real-valued, then one verifies that 
$|f|^{[1]}=\sgn(f)f^{[1]}$ a.e.\ in $(a,b)$, where $\sgn(f)$ equals $f/|f|$ if $f\neq 0$ and is zero 
otherwise, as a special case of \eqref{10.132}.  Consequently, in the case where $f$ is real-valued, 
the integral appearing in \eqref{10.139} vanishes, and the inequality reduces to
\begin{equation}\lb{10.140}
\f{2}{R_{1,2}} \big[|f(a)f(b)|-f(a)f(b) \big]\leq 0,
 \quad f\in \text{dom}( \mathfrak{Q}_{S_R})\ \text{and $f$ real-valued}.
\end{equation}
Choosing a real-valued function $f_0\in AC([a,b])$ such that $f_0^{[1]}\in AC([a,b])$ and 
$f_0(a) f_0(b)<0$, one infers that $f_0 \in \text{dom}( \mathfrak{Q}_{S_R})$. Taking $f_0$ 
as a test function in \eqref{10.140}, one concludes that $R_{1,2} < 0$. On the other hand, if 
$R_{1,2} = 0$, equation \eqref{10.110e} yields that the implication and the inequality \eqref{10.126} 
are satisfied provided  
the boundary condition $h(b) = R_{1,1} h(a)$ in $\text{dom}(\mathfrak{Q}_{S_R})$ holds. This 
necessitates the condition $R_{1,1} > 0$.   

The statement concerning positivity preserving of the resolvents follows from 
Theorem \ref{t12.2}\,$(iii)$.  This completes the proof that the conditions in \eqref{12.12} are necessary and sufficient for positivity preserving of $e^{-t S_R}$ for all $t\geq 0$, or equivalently, positivity preserving of $(S_R - \lambda I_r)^{-1}$ for all $\lambda<\inf (\sigma(S_R))$.  

It remains to prove the claim that positivity preserving is, in fact, equivalent to positivity improving 
in item $(ii)$. The sufficiency claim is clear since any bounded positivity improving operator is, 
of course, positivity preserving. Thus, it remains to prove the necessity claim. To this end, 
suppose that $R\in \SL_2(\bbR)$ is fixed and satisfies the conditions in \eqref{12.12}.  Then 
$(S_R-\lambda I_r)^{-1}$ is positivity preserving for all $\lambda < \inf(\sigma(S_R))$.  To 
establish the necessity claim, it is enough to show $(S_R-\lambda I_r)^{-1}$ is positivity 
improving for {\it some} $\lambda < \inf(\sigma(S_R))$, as positivity improving then extends 
to $(S_R-\lambda I_r)^{-1}$ for all $\lambda < \inf(\sigma(S_R))$ and to $e^{-tS_R}$ for all 
$t\geq 0$ by \cite[Theorem\ XIII.44]{RS78}. In order to do this, we consider separately the 
cases $R_{1,2}<0$ and $R_{1,2}=0$ (and therefore, $R_{2,2}>0$).  

First, we consider the case $R_{1,2}<0$.  
Then, mimicking the proof of \cite[Theorem\ 3.2\,$(i)$]{CGNZ12} line by line, one infers that the matrix
\begin{equation}\lb{13.34}
Q_{R}(z)=\begin{pmatrix}
\frac{R_{2,2}}{R_{1,2}} - u_1^{[1]}(z,b) & \frac{-1}{R_{1,2}} - u_2^{[1]}(z,b)\\
\frac{-1}{R_{1,2}} + u_1^{[1]}(z,a) & \frac{R_{1,1}}{R_{1,2}} + u_2^{[1]}(z,a)
\end{pmatrix}, \quad z \in \rho(S_R) \cap \rho(S_{0,0}),
\end{equation}
is invertible and one obtains the following Krein-type resolvent identity, 
\begin{align}\lb{13.35}
\begin{split}
(S_R - z I_r)^{-1}-(S_{0,0} -z I_r)^{-1}= - \sum_{j,k=1}^2Q_{R}(\lambda)^{-1}_{j,k} \, 
\langle u_k(\ol z, \cdot \,), \cdot \, \rangle_r \, u_j(z, \cdot \,),&  \\
z \in \rho(S_R) \cap \rho(S_{0,0}).&
\end{split}
\end{align}
Subsequently, \eqref{13.33} and \eqref{13.35} imply
\begin{align}
G_{z,R}(a,a) &= -Q_R(\lambda)_{2,2}^{-1},\quad G_{z,R}(b,b)=-Q_R(z)_{1,1}^{-1},  \no\\
G_{z,R}(a,b) &=G_{z,R}(b,a)=-Q_R(z)_{1,2}^{-1}=-Q_R(z)_{2,1}^{-1},   \lb{13.36}\\
& \hspace*{3.23cm} z \in \rho(S_R) \cap \rho(S_{0,0}).    \no
\end{align}
Since $(S_R-\lambda I_r)^{-1}$, $\lambda<\inf(\sigma(S_R))$, is positivity preserving, its integral kernel is nonnegative a.e.~ in $[a,b]\times[a,b]$ by \eqref{12.11}.  In fact, by continuity, it is nonnegative everywhere in $[a,b]\times[a,b]$.  As a result, \eqref{13.36} yields
\begin{equation}\lb{13.37}
Q_R(\lambda)_{j,k}^{-1}\leq 0,\quad j,k\in \{1,2\},  \; 
\lambda < \inf (\sigma(S_R)),
\end{equation}
so that the matrix $-Q_R(\lambda)^{-1}$ is actually positivity preserving as an operator on $\bbC^2$ for each $\lambda < \inf (\sigma(S_R))$.  Thus, \eqref{13.35} and \eqref{13.37} immediately yield the following inequality for Green's functions:
\begin{equation}\lb{13.38}
\begin{split}
G_{\lambda,R}(x,x')-G_{\lambda,0,0}(x,x')= 
- \sum_{j,k=1}^2Q_R(\lambda)_{j,k}^{-1} \, u_j(\lambda,x)u_k(\lambda,x')\geq 0,&\\
x,x'\in [a,b],\; \lambda < \inf (\sigma(S_R)).&
\end{split}
\end{equation}
We note that the final inequality in \eqref{13.38} makes use of \eqref{13.37} as well as 
nonnegativity of the functions $u_j(\lambda,\cdot \,)$, $j=1,2$, on the interval $[a,b]$.  Another application of \eqref{12.11} then implies that the resolvent difference
\begin{equation}\lb{13.39}
(S_R-\lambda I_r)^{-1}-(S_{0,0}-\lambda I_r)^{-1},\quad 
\lambda < \inf (\sigma(S_R)),
\end{equation}
is positivity preserving. Again, since $(S_{0,0}-\lambda I_r)^{-1}$ is positivity improving for all 
$\lambda < \inf(\sigma(S_{0,0}))$ by item $(i)$, and $S_R \neq S_{0,0}$, \cite[Corollary\ 9]{KR80} implies that the resolvent difference in \eqref{13.39} is actually positivity improving. In addition, it 
also implies that $(S_R-\lambda I_r)^{-1}$ is positivity improving for all 
$\lambda<\inf(\sigma(S_R))$. Inequality \eqref{10.142a} directly follows from \eqref{13.38}. 
This completes the case where $R_{1,2} < 0$.

The degenerate case where $R_{1,2}=0$ and $R_{2,2}>0$ is handled similarly. The primary difference is that in this case, the Krein-type resolvent identity reads, 
\begin{equation}\lb{13.40}
\begin{split}
(S_R - z I_r)^{-1} - (S_{0,0} - z I_r)^{-1} 
= -q_R(z)^{-1}\langle u_R(\ol z, \cdot \,), \cdot \, \rangle_r \, u_R(z, \cdot\,),&\\
z \in \rho(S_R) \cap \rho(S_{0,0}),&
\end{split}
\end{equation}
where
\begin{equation}\lb{13.41}
\begin{split}
q_R(z)&=R_{2,1}R_{2,2}+R_{2,2}^2u_2^{[1]}(z,a)+R_{2,2}u_1^{[1]}(z,a)\\
&\quad -R_{2,2}u_2^{[1]}(z,b)-u_1^{[1]}(z,b),   \quad 
z \in \rho(S_R) \cap \rho(S_{0,0}),
\end{split}
\end{equation}
is nonzero and
\begin{equation}\lb{13.42}
u_R(z,\cdot \,)=R_{2,2}u_2(z, \cdot \,)+u_1(z, \cdot \,),   \quad 
z \in \rho(S_R) \cap \rho(S_{0,0}).
\end{equation}
The proof of \eqref{13.40} follows the proof of \cite[Theorem\ 3.2\,$(ii)$]{CGNZ12} {\it mutatis mutandis}.  
As a result of the resolvent identity \eqref{13.40}, one obtains the following relation for Green's 
functions,
\begin{equation}\lb{13.43}
\begin{split}
G_{z,R}(x,x') - G_{z,0,0}(x,x')= - q_R(z)^{-1} u_R(z,x) u_R(z,x'),&\\
z \in \rho(S_R) \cap \rho(S_{0,0}),&
\end{split}
\end{equation}
and consequently,
\begin{equation}\lb{13.44}
0\leq G_{\lambda,R}(b,b)=-q_R(\lambda)^{-1}, \quad \lambda < \inf (\sigma(S_R)).
\end{equation}
Nonnegativity of the solutions $u_j(\lambda, \cdot \,)$, $j=1,2$, together with the condition $R_{2,2}>0$ guarantees that $u_R(\lambda, \cdot \,)$ is nonnegative on $[a,b]$.  Hence, \eqref{13.43} implies
\begin{equation}\lb{13.45}
G_{\lambda,R}(x,x')-G_{\lambda,0,0}(x,x')\geq 0,\quad x,x'\in[a,b], \; 
\lambda < \inf (\sigma(S_R)),
\end{equation}
which is equivalent to the fact that the resolvent difference,
\begin{equation}\lb{13.46}
(S_R-\lambda I_r)^{-1}-(S_{0,0}-\lambda I_r)^{-1},\quad 
\lambda < \inf (\sigma(S_R)),
\end{equation}  
is positivity preserving.  Applying \cite[Corollary\ 9]{KR80} once again, one obtains the 
stronger result that the resolvent difference in \eqref{13.46} is positivity improving, and that 
$(S_R-\lambda I_r)^{-1}$, $\lambda<\inf(\sigma(S_R))$, is positivity improving as well. 
Again, inequality \eqref{10.142a} is merely a restatement of \eqref{13.45}. This completes 
the case $R_{1,2} = 0$.
\end{proof}

We chose to rely on different strategies of proof of positivity preserving in the case of separated and coupled boundary 
conditions to illustrate the different possible approaches in this context. The principal observation in the 
proof of Theorem \ref{t12.3} in connection with separated boundary conditions is the statement in 
\eqref{10.144} that the corresponding Green's function is nonnegative along the diagonal, and follows 
from nonnegativity of the resolvent (in the operator sense) at points below the spectrum of 
$S_{\varphi_a,\varphi_b}$.  A much more general result regarding nonnegativity along the diagonal 
of the (continuous) integral kernel associated with a nonnegative integral operator may be found in 
\cite[Lemma on p.\ 195]{Jo82} in connection with Mercer's theorem \cite[Theorem 8.11]{Jo82}.  

In the particular case where $p=r=1$, $q=s=0$ a.e.\ on $(a,b)$ in Theorem \ref{t12.3}, the positivity preserving result has been derived by Feller \cite{Fe57} (see also \cite[p.\ 147]{FOT11}). In fact, he considered a more general situation involving a Radon--Nikodym derivative (i.e., he worked in the context of a measure-valued coefficient). We also mention that the sign of the Green's function associated with the periodic Hill equation has been studied in connection with the existence of so-called comparison principles in \cite{CC12} (and the references therein).

The fact that positivity preserving and positivity improving are equivalent notions in the regular case appears to be a new result. 

We conclude with some comments on the Krein--von Neumann extension of $\Tmin$: 

\begin{remark}\lb{r12.4}
Given Hypothesis \ref{h11.1} and assuming $\Tmin \geq \varepsilon I_r$ for some $\varepsilon > 0$, 
the fact \eqref{10.148}, that is, $\text{dim}\big(\text{ker}\big(\Tmin^* \big) \big)=2$, together with 
\eqref{Fr-4Tf}, yields a degenerate ground state $0 \in \sigma_{p} (S_K)$. Hence, $S_K$ cannot be positivity preserving (cf., e.g., \cite[Theorem\ XIII.44]{RS78}). This fact is known under more restrictive assumptions on the coefficients of $\tau$ (cf.\ \cite[p.\ 147]{FOT11}). In the particular case $q=0$ a.e.\ on $(a,b)$, this 
can directly be read off from Theorem \ref{t12.3} since 
\begin{equation}
R_{K,1,2}^{(0)} = e^{-\int_a^b s(t) dt}\int_a^b p(t)^{-1}e^{2\int_a^t s(t')dt'}dt > 0 
\end{equation}
violates condition \eqref{12.12}. (In the general case $q \neq 0$ a.e.\ on $(a,b)$ one also has 
$R_{K,1,2} > 0$ as $R_{K,1,2} \neq 0$ by \eqref{RK}, but now a direct proof of $u_1^{[1]}(0,a) > 0$
requires a lengthy disconjugacy argument).    
\end{remark}


\begin{appendix}

\section{Sesquilinear Forms in the Regular Case}    \lb{sA}

In this appendix we discuss the underlying sesquilinear forms associated with self-adjoint 
extensions of $T_{\min}$ in the regular case with separated boundary conditions, closely following the treatment in \cite[Appendix\ A]{GST96}.

The standing assumption throughout this appendix will be the following:

\begin{hypothesis}\lb{hA.1}
Assume Hypothesis \ref{h2.1} holds with $p>0$ a.e.\ on $(a,b)$ and that $\tau$ is regular on $(a,b)$. 
Equivalently, we suppose that 
$p$, $q$, $r$, $\foco$ are Lebesgue measurable on $(a,b)$ with $p^{-1}$, $q$, $r$, $\foco\in L^1((a,b);dx)$ and real-valued a.e.\ on $(a,b)$ with $p$, $r>0$ a.e.\ on $(a,b)$.
\end{hypothesis}

Our goal is to explore relative boundedness of certain sesquilinear forms in the Hilbert space $L^2((a,b);r(x)dx)$ defined in connection with $\tau$. 
Assuming Hypothesis \ref{hA.1}, one may use the function $q$ to define a sesquilinear form in $L^2((a,b);r(x)dx)$ as follows
\begin{align}
&\mathfrak{Q}_{q/r}(f,g)=\int_a^b \overline{f(x)}q(x)g(x) \, dx,     \lb{A.1}\\
& f,g\in {\rm dom}(\mathfrak{Q}_{q/r})=\big\{h\in L^2((a,b);r(x)dx)\, \big|\, 
(|q|/r)^{1/2}h\in L^2((a,b);r(x)dx) \big\}.\no
\end{align}
Evidently, $\mathfrak{Q}_{q/r}$ is densely defined and symmetric.  

In order to define other sesquilinear forms, we first define two families of operators indexed by 
$\alpha, \beta \in \{0,\infty\}$, in $L^2((a,b);r(x)dx)$, as follows
\begin{align}
&A_{\alpha,\beta}f=\upsilon f,    \no \\
& (\upsilon f)(x)=[p(x)r(x)]^{-1/2}f^{[1]}(x) \, \text{ for a.e.\ $x\in(a,b)$,}      \lb{A.3a} \\
&f\in \dom {A_{\alpha,\beta}}=\big\{g\in L^2((a,b);r(x)dx)\, \big|\,g\in AC([a,b]), \, 
\upsilon g\in L^2((a,b);r(x)dx),    \no \\
& \hspace*{6.65cm} g(a)=0\, \text{if $\alpha=\infty$},\, g(b)=0\, \text{if $\beta=\infty$}\big\},  \no\\
&A_{\alpha,\beta}^+f=\upsilon^+f,    \no \\
& (\upsilon^+f)(x)=-[p(x)r(x)]^{-1}\big([p(x)r(x)]^{1/2}f\big)^{\{1\}}(x) \, \text{ for a.e.\ $x\in (a,b)$,}  \lb{A.4a} \\
&f\in {\rm dom}\big(A_{\alpha,\beta}^+\big)=\big\{g\in L^2((a,b);r(x)dx)\, \big| \, (pr)^{1/2}g\in AC([a,b]),\no\\
& \hspace*{4mm} \upsilon^+g\in L^2((a,b);r(x)dx),\, \big((pr)^{1/2}g\big)(a)=0\, \text{if $\alpha=0$},\, \big((pr)^{1/2}g\big)(b)=0\, \text{if $\beta=0$}  \big\}.   \no
\end{align}
Here we recall that 
\begin{equation}
f^{[1]}(x) = p(x) \big[f'(x) + \foco(x) f(x)\big] \, \text{ for a.e.\ $x\in (a,b)$,} \; f\in AC([a,b]),    \lb{A.5A}
\end{equation}
denotes the {\it first quasi-derivative} of $f$, whereas the superscript $\{1\}$ denotes the modified 
quasi-derivative of functions in $AC([a,b])$,
\begin{equation}\lb{A.5aa}
f^{\{1\}}(x) = p(x) \big[f'(x) - \foco(x)f(x)\big] \, \text{ for a.e.\ $x\in (a,b)$}, \; f\in AC([a,b]).
\end{equation}

\begin{lemma} \lb{lA.2} 
Assume Hypothesis \ref{hA.1} with $q=0$ a.e.\ in $(a,b)$.  Then the following items $(i)$--$(iv)$ hold:\\
$(i)$ $A_{\alpha,\beta}$ and $A_{\alpha,\beta}^+$ are densely defined in $L^2((a,b);r(x)dx)$ for all $\alpha$, $\beta\in\lbrace 0,\infty\rbrace$.\\ 
$(ii)$ $A_{\alpha,\beta}^*=A_{\alpha,\beta}^+$ and $A_{\alpha,\beta}=(A_{\alpha,\beta}^+)^*$ for 
all $\alpha, \beta \in \{0,\infty\}$.  In particular, $A_{\alpha,\beta}$ and $A_{\alpha,\beta}^+$ are 
closed in $L^2((a,b);r(x)dx)$ for all $\alpha, \beta \in \{0,\infty\}$.\\
$(iii)$ $A_{\alpha,\beta}^*A_{\alpha,\beta}=S_{\alpha,\beta}^{(0)}$, $\alpha, \beta\in \{0,\infty\}$, 
where $S_{\alpha,\beta}^{(0)}$ in $L^2((a,b);r(x)dx)$ denotes the operator defined by
\begin{align}
&S_{\alpha,\beta}^{(0)}f = \tau^{(0)} f, \quad \alpha,\, \beta \in \{0,\infty\}, \no\\
&  f\in {\rm dom}\big(S_{\alpha,\beta}^{(0)}\big)=\big\{g\in L^2((a,b);r(x)dx) \, \big| \, g,g^{[1]}\in AC([a,b]),\lb{A.5a}\\
& \tau^{(0)} g\in L^2((a,b);r(x)dx), \, \big(g^{[1]} \big)(a)+\alpha g(a)=\big(g^{[1]} \big)(b)+\beta g(b)=0 \big\},\no
\end{align}
where, by convention, $\alpha=\infty$ $($resp., $\beta=\infty$$)$ corresponds to the Dirichlet 
boundary condition $g(a)=0$ $($resp., $g(b)=0$$)$ and $\tau^{(0)}$ is given by 
\begin{align}
& (\tau^{(0)} f)(x) = \frac{1}{r(x)} \left( - \big(p(x)[f'(x) + \foco(x) f(x)]\big)' + \foco(x) p(x)[f'(x) 
+ \foco(x) f(x)]\right)   \no \\ 
& \hspace*{5.4cm} \text{for a.e.\ $x\in(a,b)$,} \; f, f^{[1]}\in AC([a,b]). 
\end{align}
$(iv)$ The operator $S_{\alpha,\beta}^{(0)}$ is a self-adjoint restriction of $T_{\max}$ 
$($equivalently, a self-adjoint extension of $T_{\min}$$)$ for all 
$\alpha, \beta \in \{0,\infty\}$ for $q=0$ a.e.\ on $(a,b)$. 
In particular, $S_{\infty,\infty}^{(0)}$ is the Friedrichs extension of $T_{\min}$ 
for $q=0$ a.e.\ on $(a,b)$. 
\end{lemma}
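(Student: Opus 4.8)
The plan is to prove Lemma \ref{lA.2} by establishing items $(i)$--$(iv)$ in order, since each rests on the preceding one. For item $(i)$, the density of $A_{\alpha,\beta}$ and $A_{\alpha,\beta}^+$ follows because their domains contain all smooth functions with compact support in $(a,b)$ (which satisfy any boundary condition vacuously), and such functions are already dense in $L^2((a,b);r(x)dx)$; this is routine. For item $(ii)$, I would compute the adjoint directly from the definition. The key tool is the integration-by-parts identity relating $\upsilon$ and $\upsilon^+$: for $f \in \dom{A_{\alpha,\beta}}$ and $g \in \dom{A_{\alpha,\beta}^+}$, a computation using \eqref{A.5A} and \eqref{A.5aa} gives
\begin{equation}
\spr{\upsilon f}{g}_r - \spr{f}{\upsilon^+ g}_r = \big[f \, \ol{(pr)^{1/2}g}\big]\big|_a^b,
\end{equation}
where the boundary term vanishes precisely because of the complementary boundary conditions encoded by $\alpha,\beta \in \{0,\infty\}$ (Dirichlet for $A$ at an endpoint matched with the vanishing of $(pr)^{1/2}g$ for $A^+$, and vice versa). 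This shows $A_{\alpha,\beta}^+ \subseteq A_{\alpha,\beta}^*$; the reverse inclusion requires showing that any $g$ in $\dom{A_{\alpha,\beta}^*}$ has the claimed regularity and boundary behavior, which one extracts by testing against compactly supported functions to recover $(pr)^{1/2}g \in AC([a,b])$ and then against functions nonvanishing at an endpoint to force the boundary condition. Closedness is then automatic since adjoints are always closed.

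For item $(iii)$, I would form the composition $A_{\alpha,\beta}^* A_{\alpha,\beta} = A_{\alpha,\beta}^+ A_{\alpha,\beta}$ and compute it explicitly. Applying $\upsilon$ then $\upsilon^+$ to $f$ yields
\begin{equation}
(\upsilon^+ \upsilon f)(x) = -[pr]^{-1}\big([pr]^{1/2}[pr]^{-1/2}f^{[1]}\big)^{\{1\}}(x) = -r^{-1}\big(f^{[1]}\big)'(x) + r^{-1}\foco f^{[1]}(x) = (\tau^{(0)}f)(x),
\end{equation}
which matches $\tau^{(0)}$ since $q=0$. The domain identification requires care: membership in $\dom{A_{\alpha,\beta}^+ A_{\alpha,\beta}}$ forces $f \in \dom{A_{\alpha,\beta}}$ (hence the boundary condition at endpoints where $\alpha$ or $\beta$ equals $\infty$, and $f, f^{[1]} \in AC([a,b])$) together with $\upsilon f \in \dom{A_{\alpha,\beta}^+}$ (hence the complementary condition $f^{[1]}(a) = 0$ when $\alpha = 0$, matching the Robin-type condition $f^{[1]}(a) + \alpha f(a) = 0$ under the stated convention). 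I expect this domain bookkeeping—carefully tracking how the four combinations of $\alpha,\beta \in \{0,\infty\}$ translate into the separated boundary conditions in \eqref{A.5a}—to be the main obstacle, as it is easy to mismatch which factor ($A$ or $A^+$) enforces which endpoint condition.

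For item $(iv)$, self-adjointness of $S_{\alpha,\beta}^{(0)} = A_{\alpha,\beta}^* A_{\alpha,\beta}$ is immediate from the general fact that $T^*T$ is self-adjoint and nonnegative for any densely defined closed operator $T$ (von Neumann's theorem), using items $(i)$ and $(ii)$. That $S_{\alpha,\beta}^{(0)}$ is a restriction of $T_{\max}$ follows because its defining action is $\tau^{(0)}$ and its boundary conditions are among the separated self-adjoint boundary conditions classified in Theorem \ref{thm:SRLCLCsepcoup} (via Lemma \ref{prop:PointEvalBC}, the functionals are point evaluations $BC_a^1(f) = f(a)$, $BC_a^2(f) = f^{[1]}(a)$, etc.). Finally, to identify $S_{\infty,\infty}^{(0)}$ with the Friedrichs extension, I would invoke Corollary \ref{c10.19}: under Hypothesis \ref{hA.1} with regular $\tau$ and $p > 0$, the Friedrichs extension of $T_0$ is characterized by the Dirichlet conditions $g(a) = g(b) = 0$, which is exactly the boundary condition defining $S_{\infty,\infty}^{(0)}$. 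Alternatively, and perhaps more intrinsically, the Friedrichs extension is the one whose form domain is the form closure of $C_c^\infty$, which corresponds to the $A_{\infty,\infty}$ factorization; either route closes the argument.
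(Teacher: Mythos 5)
Your items $(ii)$--$(iv)$ follow essentially the paper's route (integration by parts plus identification of the adjoint, the von Neumann theorem that $A^*A$ is self-adjoint for densely defined closed $A$, and Corollary \ref{c10.19} for the Friedrichs identification), and the domain bookkeeping in $(iii)$ that you flag as the main obstacle is indeed handled correctly in your sketch. The genuine problem is item $(i)$.

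Your claim that the domains of $A_{\alpha,\beta}$ and $A_{\alpha,\beta}^+$ ``contain all smooth functions with compact support'' is false under Hypothesis \ref{hA.1}, because the coefficients are only measurable with $p^{-1},r\in L^1((a,b);dx)$. For $A_{\alpha,\beta}^+$ the domain requires $(pr)^{1/2}g\in AC([a,b])$; since $(pr)^{1/2}$ is merely measurable (it need not even be locally bounded or continuous), a smooth compactly supported $g$ will in general \emph{not} satisfy this, so $C_c^\infty((a,b))\not\subseteq{\rm dom}\big(A_{\alpha,\beta}^+\big)$. Even for $A_{\alpha,\beta}$ the condition $\upsilon g\in L^2((a,b);r(x)dx)$ amounts to $\int_a^b p(x)|g'(x)+\foco(x)g(x)|^2dx<\infty$, and $p$ is not assumed to be locally integrable, so smooth compactly supported functions need not lie in ${\rm dom}(A_{\alpha,\beta})$ either. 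The paper circumvents this as follows: for $A_{\alpha,\beta}$ it uses the inclusion ${\rm dom}\big(T_0^{(0)}\big)\subset{\rm dom}(A_{\alpha,\beta})$, which works because $f^{[1]}\in AC([a,b])$ is bounded and $p^{-1}\in L^1$, giving $\|\upsilon f\|_{2,r}^2\le \max_{[a,b]}|f^{[1]}|^2\int_a^b p^{-1}dx<\infty$. For $A_{\alpha,\beta}^+$ there is no comparably obvious dense subspace of its domain, and the paper instead argues by duality: given $f\perp{\rm dom}\big(A_{0,0}^+\big)$, one sets $g=Kf$ with $K$ the explicit right inverse of $\upsilon$ from \eqref{A.6aa}, shows via the integration by parts \eqref{eqnorth} that $g\perp\ran\big(A_{0,0}^+\big)=\big\{\E^{-\int_a^x \foco(t)dt}\big\}^{\perp}$, concludes $g=c\,\E^{-\int_a^x\foco(t)dt}$, and then $f=\upsilon g=0$. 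You need an argument of this kind; your proposed one does not close. A related caution applies to your sketch of the reverse inclusion in $(ii)$: ``testing against compactly supported functions'' runs into the same obstruction, and the paper again works with the explicit operators $K$, $\hatt K$ and the codimension-one range identifications rather than with smooth test functions.
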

\begin{proof}
First of all, define operators $K$ and $\hatt K$ as follows
\begin{align}
&K:L^2((a,b);r(x)dx)\rightarrow \dom{A_{\infty,0}},\no\\
&\quad  g\mapsto \E^{-\int_a^x \foco(t)dt}\int_a^x 
\frac{g(x')\E^{\int_a^{x'} \foco(t)dt}}{[p(x')r(x')]^{1/2}} \, r(x') dx',\lb{A.6aa}\\
&\hatt K:L^2((a,b);r(x)dx)\rightarrow \dom{A_{0,\infty}^+},\no\\
&\quad  g\mapsto -[p(x)r(x)]^{-1/2}\E^{\int_a^x \foco(t)dt}\int_a^x g(x') \E^{-\int_a^{x'}\foco(t)dt} \, r(x') dx'.
\lb{A.7aa}
\end{align}
With these definitions, one readily verifies by direct computation that 
\begin{equation}\lb{A.8aa}
\begin{split}
&(Kg)(a)=0,\quad\quad\quad\quad\ \  \, \upsilon Kg=g,\\
&\big((pr)^{1/2} \hatt Kg\big)(a)=0,\quad \upsilon^+ \hatt K g=g,
\end{split}
\qquad g\in L^2((a,b);r(x)dx).
\end{equation}
Furthermore, we denote by $T_0^{(0)}$ the minimal operator introduced in \eqref{3.3} with $q=0$ a.e.\ in $(a,b)$. 
Then 
\begin{equation}
{\rm dom}\big(T_0^{(0)}\big) \subset \dom {A_{\alpha,\beta}}, \quad \alpha, \beta \in \{0,\infty\}, 
\end{equation}
rendering $A_{\alpha,\beta}$ densely defined, since for $f \in {\rm dom}\big(T_0^{(0)}\big)$, 
\begin{equation}
\|v f\|_{2,r} =  \max_{x\in[a,b]} \big|f^{[1]}(x)\big|^2 \int_a^b \frac{dx}{p(x)} < \infty, 
\end{equation} 
employing $f^{[1]} \in AC([a,b])$. In order to prove that $A_{\alpha,\beta}^+$ is densely defined 
as well, let $f\in \dom{A_{0,0}^+}^\bot$ and set $g = K f$. 
Because of $v^+ \widehat{K} g_0 = g_0$ for all $g_0\in L^2((a,b);r(x)dx)$, one concludes that 
\begin{equation} 
g_0\in\ran(A_{0,0}^+) \, \text{ if and only if } \, \big((pr)^{1/2} \widehat{K}g_0\big)(b)=0.
\end{equation}  
As a result, one infers that  
\begin{equation}
 \ran(A_{0,0}^+) = \left\lbrace \E^{-\int_a^x s(t)dt} \right\rbrace^\bot. 
\end{equation}
Next, one computes for arbitrary $ h\in{\rm dom}(A_{0,0}^+)$,
\begin{align}\label{eqnorth}
\langle g, A^+_{0,0} h \rangle_r &= \int_a^b g(x) \overline{(A_{0,0}^+ h)(x)} \, r(x)dx  \no \\
&= - \int_a^b g(x) \ol{\big[\big([p(x) r(x)]^{1/2} h(x)\big)' - s(x) [p(x) r(x)]^{1/2} h(x)\big]} \, dx  \no \\
&= - g(x) \ol{\big((pr)^{1/2} h\big)(x)}\bigg|_a^b + \int_a^b g'(x) [p(x) r(x)]^{1/2} \ol{h(x)} \, dx  \no \\
& \quad + \int_a^b g(x) s(x) [p(x) r(x)]^{1/2} \ol{h(x)} \, dx   \no \\ 
&= \int_a^b [p(x) r(x)]^{-1/2} p(x)[g'(x) + s(x) g(x)] \ol{h(x)} \, r(x)dx   \no \\ 
&= \int_a^b (v g)(x) \ol{h(x)} \, r(x) dx = \int_a^b (v K f)(x) \ol{h(x)} \, r(x) dx   \no \\
&= \int_a^b f(x) \ol{h(x)} \, r(x)dx = \langle f, h\rangle_r = 0, 
\end{align}
since by hypothesis, $f\in \dom{A_{0,0}^+}^\bot$. Thus, we have $g \in  \ran(A_{0,0}^+)^{\bot}$, 
implying that $g= c \, \E^{-\int_a^x s(t)dt}$ for some constant $c \in \bbC$.
By the definition \eqref{A.3a} of $v$, it is readily verified that $f = v g = 0$ a.e.\ on $[a,b]$. 
Thus, ${\rm dom} (A_{0,0}^+)$, and hence 
${\rm dom}(A_{\alpha,\beta}^+) \supseteq {\rm dom} (A_{0,0}^+)$, 
$\alpha, \beta \in \{0, \infty\}$, is dense in $L^2((a,b); r(x)dx)$, completing the proof of item $(i)$. 

Regarding item $(ii)$, we only show $A_{\alpha,\beta}^*=A_{\alpha,\beta}^+$ as the case 
$\big(A_{\alpha,\beta}^+ \big)^*=A_{\alpha,\beta}$ is handled analogously.  Moreover, 
since $A_{\infty,\infty}\subseteq A_{\alpha,\beta}$ (this follows by definition of the operators) 
implies $A_{\alpha,\beta}^*\subseteq A_{\infty,\infty}^*$, we only prove 
$A_{\infty,\infty}^*=A_{\infty,\infty}^+$, the other cases follow from an additional integration by parts.  Therefore, first note that $A_{\infty,\infty}^+\subseteq A_{\infty,\infty}^*$ as an integration by parts shows
\begin{align}
\big\langle f, A_{\infty,\infty}^+g\big\rangle_r&=\int_a^b f(x)\overline{(\upsilon^+g)(x)} r(x)dx \no \\
&=-\int_a^b p(x)^{-1}f(x)\overline{\big((pr)^{1/2}g \big)^{\{1\}}(x)}dx\no\\
&=-\int_a^b f(x) \bigg[\overline{\big((pr)^{1/2}g\big)'(x)-\foco(x)\big((pr)^{1/2}g\big)(x)}\bigg]dx\no\\
&=-f(x)\overline{\big((pr)^{1/2}g\big)(x)}\big|_a^b+\int_a^b [p(x)r(x)]^{1/2}[f'(x)+\foco(x)f(x)]\overline{g(x)}dx\no\\
&=\int_a^b \frac{[p(x)r(x)]^{1/2}}{p(x)r(x)}p(x)[f'(x)+\foco(x)f(x)]\overline{g(x)}r(x)dx\no\\
&=\int_a^b [p(x)r(x)]^{-1/2}f^{[1]}(x)\overline{g(x)} r(x)dx\no\\
&=\big\langle A_{\infty,\infty}f,g \big\rangle_r,\quad f\in {\rm dom}\big(A_{\infty,\infty}\big),~ g\in {\rm dom}\big(A_{\infty,\infty}^+\big).   \lb{A.9aa}
\end{align}
Hence it remains to show ${\rm dom}\big(A_{\infty,\infty}^*\big)\subseteq {\rm dom}\big(A_{\infty,\infty}^+\big)$.  To this end, let $f\in {\rm dom}\big(A_{\infty,\infty}^*\big)$, and set $g= \hatt K A_{\infty,\infty}^*f$.  Then one computes
\begin{align}
\begin{split} 
&\int_a^b \overline{(f(x)-g(x))}(A_{\infty,\infty}h)(x)r(x)dx   \\
&\quad =\int_a^b \big[\big(A_{\infty,\infty}^*\overline{f}\big)(x)-(\upsilon^+\overline{g})(x) \big]h(x)r(x)dx=0, \quad h\in {\rm dom}\big(A_{\infty,\infty}\big).   \lb{A.10aa} 
\end{split} 
\end{align}
Consequently, $\ran\big(A_{\infty,\infty}\big)$ is contained in the kernel of the linear functional $k\mapsto \langle k , f-g \rangle_r$, $k\in L^2((a,b);r(x)dx)$.  On the other hand, since $\upsilon Kg_0=g_0$ for all $g_0\in L^2((a,b);r(x)dx)$, one infers that $g_0\in \ran\big(A_{\infty,\infty} \big)$ if and only if $(Kg_0)(b)=0$. 
As a result, 
\begin{equation}\lb{A.11aa}
\ran\big(A_{\infty,\infty}\big)=\Big\{(pr)^{-1/2}\E^{\int_a^x \foco(t)dt} \Big\}^{\perp}.
\end{equation}
On the other hand, \eqref{A.10aa} shows that $f-g$ is orthogonal to $\ran\big(A_{\infty,\infty}\big)$, and because of \eqref{A.11aa}, there exists a constant $c$ such that $f=g+c(pr)^{-1/2}\E^{\int_a^x \foco(t)dt}$.  It is a simple matter to check that $(pr)^{-1/2}\E^{\int_a^x \foco(t)dt}\in \dom {A_{\infty,\infty}^+}$  (in fact, $\upsilon^+$ applied to $(pr)^{-1/2}\E^{\int_a^x \foco(t)dt}$ is zero).  Therefore, by \eqref{A.11aa}, $f\in {\rm dom}\big(A_{\infty,\infty}^+\big)$, completing the proof of item $(ii)$.

To prove item $(iii)$, one notes that by item $(ii)$, 
\begin{equation}\lb{A.12aa}
{\rm dom}\big(A_{\alpha,\beta}^*A_{\alpha,\beta}\big)=\big\{g\in {\rm dom}\big(A_{\alpha,\beta}\big)\, 
\big| \, \upsilon g\in {\rm dom}\big(A_{\alpha,\beta}^+\big)\big\},
\end{equation}
so that, by inspection, one obtains ${\rm dom}\big(A_{\alpha,\beta}^*A_{\alpha,\beta}\big)
={\rm dom}\big(S_{\alpha,\beta}^{(0)}\big)$, $\alpha, \beta\in \{0,\infty\}$.  Then for 
$f\in {\rm dom}\big(S_{\alpha,\beta}^{(0)}\big)$, a simple computation shows 
$A_{\alpha,\beta}^*A_{\alpha,\beta}f=\upsilon^+(\upsilon f)=S_{\alpha,\beta}^{(0)}f$, $\alpha$, 
$\beta \in \{0,\infty\}$.  This completes the proof of item $(iii)$.

Since $A_{\alpha,\beta}$ is densely defined and closed for all $\alpha, \beta \in \{0,\infty\}$, the 
operator $S_{\alpha,\beta}^{(0)}=A_{\alpha,\beta}^*A_{\alpha,\beta}$ is self-adjoint and nonnegative 
(cf., e.g., \cite[Theorem V.3.24]{Ka80}). In addition, $S_{\alpha,\beta}^{(0)}$ is a restriction of 
$T_{\max}$, and that $S_{\infty,\infty}^{(0)}$ is the Friedrichs extension of $T_{\min}$ 
(for $q=0$ a.e.\ on $(a,b)$) follows from \eqref{10.101} and the assumed regularity of $\tau$ 
on $(a,b)$, proving item $(iv)$.
\end{proof}

With the operators $A_{\alpha,\beta}$, $\alpha, \beta\in \{0,\infty\}$, in hand, we define the densely defined, closed, nonnegative sesquilinear form by \\
\begin{equation}\lb{A.13aa}
\begin{split}
\mathfrak{Q}^{(0)}_{\alpha,\beta}(f,g)=\langle A_{\alpha,\beta}f,A_{\alpha,\beta}g\rangle_r,\quad f,\, g\in {\rm dom}\big(\mathfrak{Q}^{(0)}_{\alpha,\beta}\big) 
={\rm dom}\big(A_{\alpha,\beta}\big),&\\
 \alpha,\, \beta \in \{0,\infty\}.&
\end{split}
\end{equation}
The self-adjoint and nonnegative operator in $L^2((a,b);r(x)dx)$ uniquely associated 
with the sesquilinear form $\mathfrak{Q}^{(0)}_{\alpha,\beta}$, $\alpha, \beta \in \{0,\infty\}$,  
is then given by 
\begin{equation}\lb{A.14aa}
A_{\alpha,\beta}^* A_{\alpha,\beta} = S_{\alpha,\beta}^{(0)},  \quad \alpha,\, \beta \in \{0,\infty\},
\end{equation}
where $S_{\alpha,\beta}^{(0)}$ is the operator defined in \eqref{A.5a}.

Since functions in ${\rm dom}\big(\mathfrak{Q}^{(0)}_{\alpha,\beta}\big)$, 
$\alpha, \beta \in \{0,\infty\}$, are absolutely continuous on $[a,b]$, one infers
\begin{equation}\lb{A.4}
{\rm dom}\big(\mathfrak{Q}^{(0)}_{\alpha,\beta}\big) \subset \dom{\mathfrak{Q}_{q/r}}, \quad \alpha,\, \beta \in \{0,\infty\}.
\end{equation}
Finally, we define a family of sesquilinear forms, indexed by pairs of real numbers $\gamma, \nu \in \R$, 
as follows
\begin{equation}\lb{A.5}
\begin{split}
\mathfrak{Q}^{a,b}_{\gamma,\nu} (f,g)=\nu\overline{f(a)}g(a) - \gamma\overline{f(b)}g(b),
\quad f,\, g\in \dom{\mathfrak{Q}^{a,b}_{\gamma,\nu}}=AC([a,b]).\\
\end{split}
\end{equation}
In addition, we set 
\begin{equation} 
\mathfrak{Q}^{a,b}_{\infty,\nu} (f,g)=\mathfrak{Q}^{a,b}_{0,\nu} (f,g), \quad 
\mathfrak{Q}^{a,b}_{\gamma,\infty} (f,g)=\mathfrak{Q}^{a,b}_{\gamma,0} (f,g), \quad 
\mathfrak{Q}^{a,b}_{\infty,\infty} (f,g)=0.
\end{equation} 

\begin{lemma} \lb{lA.3}
Assume Hypothesis \ref{hA.1}.  Then the following items $(i)$ and $(ii)$ hold: \\
$(i)\,$ $\mathfrak{Q}_{q/r}$ and $\mathfrak{Q}_{|\foco|/r}$ are relatively form compact $($and hence infinitesimally bounded\,$)$ with respect to $\mathfrak{Q}^{(0)}_{\alpha,\beta}$ for all 
$\alpha, \beta \in \{0,\infty\}$, that is, 
\begin{equation}
|q/r|^{1/2} \big(S_{\alpha,\beta}^{(0)} + I_r \big)^{-1/2}, \, 
|s/r|^{1/2} \big(S_{\alpha,\beta}^{(0)} + I_r \big)^{-1/2}  \in \cB_{\infty} \big(L^2((a,b); r(x) dx)\big). 
\lb{A.5Aa} 
\end{equation}
In fact, compactness in \eqref{A.5Aa} can be replaced by the Hilbert--Schmidt property 
$($cf.\ \eqref{A.6}$)$. \\
$(ii)$ For each $\gamma$, $\nu\in \bbR$, the sesquilinear form $\mathfrak{Q}^{a,b}_{\gamma,\nu}$ is infinitesimally bounded with respect to $\mathfrak{Q}^{(0)}_{\alpha, \beta}$ for all $\alpha$, 
$\beta \in \{0,\infty\}$.
\end{lemma}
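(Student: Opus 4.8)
The plan is to derive both items from a single continuous (Sobolev-type) embedding of the form domain into $C([a,b])$. Concretely, I would first establish that there is a constant $C=C(p,\foco,r)>0$, independent of $\alpha,\beta\in\{0,\infty\}$, such that
\begin{equation}\lb{A.emb}
\sup_{x\in[a,b]}|h(x)|\le C\,\big\|\big(S_{\alpha,\beta}^{(0)}+I_r\big)^{1/2}h\big\|_{2,r},\quad h\in\dom{A_{\alpha,\beta}}.
\end{equation}
To prove \eqref{A.emb}, I would integrate the defining identity $h^{[1]}=p[h'+\foco h]$ with the integrating factor $\E^{\int_a^x\foco}$, exactly as in the construction of $K$ in \eqref{A.6aa}, obtaining for every $h\in AC([a,b])$ with $\upsilon h=(pr)^{-1/2}h^{[1]}\in L^2((a,b);r(x)dx)$,
\begin{equation}\lb{A.rep}
\E^{\int_a^x\foco(t)dt}h(x)-h(a)=\int_a^x(\upsilon h)(t)\,\E^{\int_a^t\foco(t')dt'}\big(r(t)/p(t)\big)^{1/2}\,dt,\quad x\in[a,b].
\end{equation}
Cauchy--Schwarz, together with boundedness of $\E^{\pm\int_a^x\foco}$ on $[a,b]$ (since $\foco\in L^1$) and $\int_a^bp^{-1}dx<\infty$ (since $p^{-1}\in L^1$), bounds the right-hand side of \eqref{A.rep} by a constant times $\|\upsilon h\|_{2,r}\,P(x)^{1/2}$, where $P(x)=\int_a^xp(t)^{-1}dt$; integrating the resulting pointwise inequality against $r(x)dx$ then controls $|h(a)|$ by $\|h\|_{2,r}+\|\upsilon h\|_{2,r}$, and reinserting this into \eqref{A.rep} yields \eqref{A.emb}, because $\|\upsilon h\|_{2,r}^2+\|h\|_{2,r}^2=\big\|\big(S_{\alpha,\beta}^{(0)}+I_r\big)^{1/2}h\big\|_{2,r}^2$ by Lemma \ref{lA.2}\,$(iii)$.

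Granting \eqref{A.emb}, item $(i)$ follows by a kernel argument. Fixing $x$, the embedding says the evaluation functional $f\mapsto\big(\big(S_{\alpha,\beta}^{(0)}+I_r\big)^{-1/2}f\big)(x)$ on $L^2((a,b);r(x)dx)$ is bounded with norm at most $C$, hence is represented by some $k_x$ with $\|k_x\|_{2,r}\le C$. Consequently, for any orthonormal basis $\{e_n\}$ one has $\sum_n\big|\big(\big(S_{\alpha,\beta}^{(0)}+I_r\big)^{-1/2}e_n\big)(x)\big|^2=\|k_x\|_{2,r}^2\le C^2$ for all $x$, and Tonelli's theorem gives
\begin{equation}\lb{A.6}
\big\||q/r|^{1/2}\big(S_{\alpha,\beta}^{(0)}+I_r\big)^{-1/2}\big\|_{\cB_2}^2=\int_a^b|q(x)|\sum_n\big|\big(\big(S_{\alpha,\beta}^{(0)}+I_r\big)^{-1/2}e_n\big)(x)\big|^2\,dx\le C^2\int_a^b|q(x)|\,dx<\infty,
\end{equation}
using $q\in L^1$. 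The identical computation with $q$ replaced by $\foco$ disposes of $|\foco/r|^{1/2}\big(S_{\alpha,\beta}^{(0)}+I_r\big)^{-1/2}$. Thus both operators lie in $\cB_2$, hence in $\cB_\infty$, so $\mathfrak{Q}_{q/r}$ and $\mathfrak{Q}_{|\foco|/r}$ are relatively form compact with respect to $\mathfrak{Q}^{(0)}_{\alpha,\beta}$; infinitesimal form boundedness then follows from the standard fact that a relatively form compact perturbation is infinitesimally form bounded (cf., e.g., \cite{RS78}).

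For item $(ii)$ I would sharpen the endpoint estimate into an infinitesimal one. Restricting \eqref{A.rep} to a right-neighborhood $(a,c)$ of $a$ and integrating against $r(x)dx$ over $(a,c)$ yields, after Cauchy--Schwarz, an inequality of the form
\begin{equation}\lb{A.bca}
|h(a)|^2\le\frac{C'}{\int_a^cr(x)dx}\,\|h\|_{2,r}^2+C'P(c)\,\|\upsilon h\|_{2,r}^2,\quad h\in\dom{A_{\alpha,\beta}}.
\end{equation}
Because $P(c)=\int_a^cp^{-1}dx\downarrow0$ as $c\downarrow a$, the coefficient of $\|\upsilon h\|_{2,r}^2=\mathfrak{Q}^{(0)}_{\alpha,\beta}(h,h)$ in \eqref{A.bca} can be made smaller than any prescribed $\eps>0$, giving $|h(a)|^2\le\eps\,\mathfrak{Q}^{(0)}_{\alpha,\beta}(h,h)+b_\eps\|h\|_{2,r}^2$; the analogous bound at $b$ follows by symmetry, while the cases $\alpha=\infty$ or $\beta=\infty$ are trivial since then $h(a)=0$ or $h(b)=0$ on the form domain. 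As $|\mathfrak{Q}^{a,b}_{\gamma,\nu}(h,h)|\le|\nu|\,|h(a)|^2+|\gamma|\,|h(b)|^2$, this is precisely the asserted infinitesimal form boundedness for all $\gamma,\nu\in\R$. I expect the main obstacle to be exactly this upgrade from a relative to an infinitesimal bound for the boundary terms: one must locate the small parameter, and it is the local integrability $p^{-1}\in L^1$ near each endpoint (forcing $\int_a^cp^{-1}dx\to0$) that supplies it --- the very same integrability underlying boundedness of the evaluation functionals used in \eqref{A.6}.
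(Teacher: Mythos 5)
Your proof is correct, but it follows a genuinely different route from the paper's in both items. For item $(i)$, the paper does not work with the square root of the resolvent at all: it takes the Green's function $G^{(0)}_{z,\alpha,\beta}(\cdot,\cdot)$ of $S^{(0)}_{\alpha,\beta}$, uses its uniform boundedness on $(a,b)\times(a,b)$ together with $q\in L^1((a,b);dx)$ to show that the symmetrically sandwiched resolvent $|q/r|^{1/2}\big(S^{(0)}_{\alpha,\beta}-zI_r\big)^{-1}|q/r|^{1/2}$ is Hilbert--Schmidt (see \eqref{A.7}), which is the classical criterion for relative form compactness; you instead prove a form-domain embedding into $C([a,b])$ via the integrating-factor representation (the same device as in the construction of $K$ in \eqref{A.6aa}) and deduce the Hilbert--Schmidt property of $|q/r|^{1/2}\big(S^{(0)}_{\alpha,\beta}+I_r\big)^{-1/2}$ itself, which is literally the operator appearing in \eqref{A.5Aa}. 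For item $(ii)$, the paper estimates $|f(c)|^2$ by writing $f(c)^2=f(x)^2-2\int_c^x ff'$, extracts the small parameter from an $\varepsilon$-weighted Cauchy--Schwarz inequality combined with the level sets $\{p<\delta_1\}$ and $\{r<\delta_2\}$ (see \eqref{A.8}--\eqref{A.12}), and must then invoke item $(i)$ to absorb a residual $\mathfrak{Q}_{|\foco|/r}(f,f)$ term; your integrating factor $\E^{\int_a^x \foco}$ removes $\foco$ from the outset, and your small parameter is simply $\int_a^c p^{-1}\,dx\downarrow 0$, so your proof of $(ii)$ is independent of $(i)$ and somewhat cleaner. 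Both arguments rest on the same hypotheses ($p^{-1},q,\foco\in L^1$, $r>0$ a.e.); what the paper's route buys is reuse of the Green's function machinery already developed in Section \ref{s7}, while yours is more self-contained and yields the stated operator bound \eqref{A.5Aa} directly rather than via the sandwiched resolvent. One small point worth making explicit in your write-up: in the final step of $(i)$ you should note that relative form compactness of the sandwiched form operator (and hence infinitesimal boundedness) follows from compactness of $|q/r|^{1/2}\big(S^{(0)}_{\alpha,\beta}+I_r\big)^{-1/2}$ by taking adjoints and products, but this is standard and does not affect the validity of the argument.
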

\begin{proof}
In item $(i)$, it clearly suffices to prove the claim for $\mathfrak{Q}_{q/r}$ only since $|\foco|$ and $q$ satisfy the same assumptions.  Let $G^{(0)}_{z,\alpha,\beta}(\cdot , \cdot)$, $z\in\bbC\backslash \bbR$ and 
$\alpha, \beta \in \{0,\infty\}$, denote the Green's function for the operator $S^{(0)}_{\alpha,\beta}$ in \eqref{A.14aa} (known to exist by Theorem \ref{thmSResolLCLC}).  Then 
\begin{equation}\lb{A.6}
\begin{split}
|q/r|^{1/2}\big(S^{(0)}_{\alpha,\beta}-zI_r\big)^{-1}|q/r|^{1/2}\in \cB_2\big(L^2((a,b);r(x)dx)\big), 
\quad z\in \bbC\backslash\bbR,&\\ 
\alpha,\, \beta \in\{0,\infty\},&
\end{split}
\end{equation}
since
\begin{align}
\int_a^b \int_a^b \frac{|q(x)|}{r(x)}\big|G^{(0)}_{z,\alpha,\beta}(x,x')\big|^2\frac{|q(x')|}{r(x')}r(x)dx\,r(x')dx'
\leq C(z,\alpha,\beta)\|q\|_{L^1((a,b);dx)}^2,   \lb{A.7}
\end{align}
for some constant $C(z,\alpha,\beta)$, because $G^{(0)}_{z,\alpha,\beta}(\cdot,\cdot)$ is uniformly bounded on $(a,b) \times (a,b)$ for all $\alpha, \beta \in \{0,\infty\}$ by \eqref{7.0} or \eqref{7.4}. This completes the 
proof of item $(i)$.

In order to prove item $(ii)$, fix $\alpha, \beta \in \{0,\infty\}$, and note that for arbitrary 
$c \in [a,b]$ and any function $f\in {\rm dom}\big(\mathfrak{Q}^{(0)}_{\alpha,\beta}\big)\subset 
\dom{\mathfrak{Q}_{\gamma,\nu}}$,
\begin{align}
|f(c)|^2 & = \biggl| f(x)^2 - 2 \int_c^x f(t) f'(t)dt\bigg|   \no \\
&\leq |f(x)|^2 + 2 \int_a^b \big|f(t) f'(t)+\foco(t)f(t)^2\big|dt\no\\
&\quad +2\int_a^b |\foco(t)||f(t)|^2 dt, \quad f \in {\rm dom}\big(\mathfrak{Q}^{(0)}_{\alpha,\beta}\big).   \lb{A.8}
\end{align}
One infers (after taking the supremum over all $c \in [a,b]$, multiplying by $r$, and integrating w.r.t. $x$ from $a$ to $b$) for 
any $\varepsilon>0$,
\begin{align}
&\| f \|_{L^\infty((a,b); dx)}^2 \no\\
&\quad \leq \|r\|^{-1}_{L^1((a,b); dx)} \| f \|^2_{2,r} + 2 \int_a^b
\frac{|f(t)|}{(\varepsilon p(t)/2)^{1/2}}\, (\varepsilon p(t)/2)^{1/2}
|f'(t)+\foco(t)f(t)| dt \no\\
&\quad\quad +2\mathfrak{Q}_{|\foco|/r}(f,f)    \no \\
&\quad \leq \|r\|^{-1}_{L^1((a,b); dx)} \| f \|^2_{2,r} + \int_a^b \biggl(\frac{2}
{\varepsilon}\, \frac{|f(t)|^2}{p(t)} + \frac{\varepsilon}{2}\, 
\frac{\big|f^{[1]}(t)\big|^2}{p(t)} \bigg)dt     \no \\
& \qquad +2\mathfrak{Q}_{|\foco|/r}(f,f), \quad f \in {\rm dom}\big(\mathfrak{Q}^{(0)}_{\alpha,\beta}\big). 
\lb{A.9}  
\end{align}
Since $0<p^{-1} \in L^1((a,b);dx)$, there exists a $\delta_1(\varepsilon) > 0$ such
that $\int_{I_1(\varepsilon)} p(t)^{-1}\, dt \leq
\frac{\varepsilon}{8}$ with $I_1(\varepsilon) = \{x\in (a,b)\, | \,
p(x)< \delta_1(\varepsilon)\}$. Thus,
\begin{equation}
\begin{split}
\int_a^b \frac{|f(t)|^2}{p(t)}dt &=
\int_{I_1(\varepsilon)} \frac{|f(t)|^2}{p(t)}dt +
\int_{(a,b)\backslash I_1(\varepsilon)}
\frac{|f(t)|^2}{p(t)}dt \\
&\leq \frac{\varepsilon}{8}\,
 \| f \|_{L^\infty((a,b);dx)}^2 + \frac{1}{\delta_1(\varepsilon)}\,  \int_a^b |f(t)|^2 dt, 
 \quad f \in {\rm dom}\big(\mathfrak{Q}^{(0)}_{\alpha,\beta}\big).
 \end{split}
\end{equation}
In addition, since $r > 0$ a.e.\ on $(a,b)$, there exists a $\delta_2(\varepsilon)>0$ such
that $|I_2(\varepsilon)| \leq \frac{\varepsilon \delta_1(\varepsilon)}{8}$ with $I_2(\varepsilon) = \{x\in (a,b)\, | \,
r(x)< \delta_2(\varepsilon)\}$. Thus,
\begin{equation}
\begin{split}
\int_a^b |f(t)|^2\, dt &=
\int_{I_2(\varepsilon)}|f(t)|^2\, dt +
\int_{(a,b)\backslash I_2(\varepsilon)}
|f(t)|^2\, dt \\
&\leq \frac{\varepsilon \delta_1(\varepsilon)}{8}\,
 \| f \|_{L^\infty((a,b);dx)}^2 + \frac{1}{\delta_2(\varepsilon)}\, \|f\|^2_{2,r}, 
 \quad f \in {\rm dom}\big(\mathfrak{Q}^{(0)}_{\alpha,\beta}\big).
 \end{split}
\end{equation}
Consequently, one obtains from \eqref{A.9},
\begin{align} 
\begin{split} 
 \| f \|_{L^\infty((a,b);dx)}^2 &\leq 2\big\{\|r\|^{-1}_{L^1((a,b); dx)} +
2[\varepsilon\delta_1(\varepsilon)\delta_2(\varepsilon)]^{-1}\big\} \| f \|_{2,r}^2     \\
&\quad + \varepsilon \mathfrak{Q}^{(0)}_{\alpha,\beta}(f,f) + 4\mathfrak{Q}_{|\foco|/r}(f,f), 
\quad f \in {\rm dom}\big(\mathfrak{Q}^{(0)}_{\alpha,\beta}\big).   \lb{A.12}
\end{split} 
\end{align}
By part $(i)$, $\mathfrak{Q}_{|\foco|/r}$ is infinitesimally bounded with respect to $\mathfrak{Q}^{(0)}_{\alpha,\beta}$.  Hence, there exists $\eta(\varepsilon)>0$ such that
\begin{equation}
\mathfrak{Q}_{|\foco|/r}(f,f)\leq \frac{\varepsilon}{4}\mathfrak{Q}^{(0)}_{\alpha,\beta}(f,f)+\eta(\varepsilon)\|f\|_{2,r}^2, 
\quad f \in {\rm dom}\big(\mathfrak{Q}^{(0)}_{\alpha,\beta}\big).
\end{equation}
As a result, \eqref{A.12} implies
\begin{align}
\begin{split} 
 \| f \|_{L^\infty((a,b);dx)}^2 & \leq 2\big\{\|r\|^{-1}_{L^1((a,b); dx)} +
2[\varepsilon\delta_1(\varepsilon)\delta_2(\varepsilon)]^{-1}+2\eta(\varepsilon)\big\} \| f \|_{2,r}^2   \\
& \quad + 2\varepsilon \mathfrak{Q}^{(0)}_{\alpha,\beta}(f,f), \quad f \in {\rm dom}\big(\mathfrak{Q}^{(0)}_{\alpha,\beta}\big).  \lb{A.15}
\end{split} 
\end{align}
Infinitesimal boundedness of $\mathfrak{Q}^{a,b}_{\gamma,\nu}$ with respect to 
$\mathfrak{Q}^{(0)}_{\alpha,\beta}$ follows since $\eps>0$ and 
$f\in {\rm dom}\big(\mathfrak{Q}^{(0)}_{\alpha,\beta}\big)$ were arbitrary. 
\end{proof}

Finally, introducing the densely defined, closed, and lower semibounded sesquilinear forms in 
$L^2((a,b);r(x)dx)$
\begin{align} 
& \mathfrak{Q}_{\alpha,\beta}(f,g) = \mathfrak{Q}^{(0)}_{0,0}(f,g)
+ \mathfrak{Q}_{q/r}(f,g) + \mathfrak{Q}^{a,b}_{\alpha,\beta} (f,g),  \lb{Q} \\
& \hspace*{10mm} f,\, g\in {\rm dom}\big(\mathfrak{Q}^{(0)}_{0,0}\big) 
={\rm dom}\big(A_{0,0}\big), \;  \alpha, \beta \in \R,     \no  \\
& \mathfrak{Q}_{\alpha,\infty}(f,g) = \mathfrak{Q}^{(0)}_{0,\infty}(f,g)
+ \mathfrak{Q}_{q/r}(f,g) + \mathfrak{Q}^{a,b}_{\alpha,0} (f,g),    \lb{Qainfty} \\
& \hspace*{15.5mm} f,\, g\in {\rm dom}\big(\mathfrak{Q}^{(0)}_{0,\infty}\big) 
={\rm dom}\big(A_{0,\infty}\big), \;  \alpha \in \R,     \no \\
& \mathfrak{Q}_{\infty,\beta}(f,g) = \mathfrak{Q}^{(0)}_{\infty,0}(f,g)
+ \mathfrak{Q}_{q/r}(f,g) + \mathfrak{Q}^{a,b}_{0,\beta} (f,g),    \lb{Qinftyb} \\
& \hspace*{15.3mm} f,\, g\in {\rm dom}\big(\mathfrak{Q}^{(0)}_{\infty,0}\big) 
={\rm dom}\big(A_{\infty,0}\big), \;  \beta \in \R,    \no  \\
& \mathfrak{Q}_{\infty,\infty}(f,g) = \mathfrak{Q}^{(0)}_{\infty,\infty}(f,g)
+ \mathfrak{Q}_{q/r}(f,g),    \lb{Qinfty} \\
& \hspace*{6.5mm} f,\, g\in {\rm dom}\big(\mathfrak{Q}^{(0)}_{\infty,\infty}\big) 
={\rm dom}\big(A_{\infty,\infty}\big),   \no
\end{align} 
and denoting the uniquely associated self-adjoint, and lower semibounded operator by 
$S_{\alpha,\beta}$, $\alpha, \beta \in \R \cup \{\infty\}$, the latter can be explicitly described as follows:

\begin{theorem} \lb{tA.4}
Define $ \mathfrak{Q}_{\alpha,\beta}$, $\alpha, \beta \in \R \cup \{\infty\}$, by   
\eqref{Q}--\eqref{Qinfty}. Then the uniquely associated 
self-adjoint, lower semibounded operator $S_{\alpha,\beta}$ in $L^2((a,b);r(x)dx)$ is given by 
\begin{align}
&S_{\alpha,\beta} f = \tau f, \quad \alpha,\, \beta \in \R\cup \{\infty\}, \no\\
&  f\in {\rm dom}\big(S_{\alpha,\beta}\big)=\big\{g\in L^2((a,b);r(x)dx) \, \big| \, g,g^{[1]}\in AC([a,b]), 
\lb{A.5b}\\
& g^{[1]}(a)+\alpha g(a) = g^{[1]}(b)+\beta g(b)=0, \, \tau g\in L^2((a,b);r(x)dx) \big\},\no
\end{align}
where, by convention, $\alpha=\infty$ $($resp., $\beta=\infty$$)$ corresponds to the Dirichlet 
boundary condition $g(a)=0$ $($resp., $g(b)=0$$)$. Moreover, the operator $S_{\alpha,\beta}$ is a 
self-adjoint restriction of $T_{\max}$ $($equivalently, a self-adjoint extension of $T_{\min}$$)$, in particular, $S_{\infty,\infty}$ is the Friedrichs extension $S_F$ of $T_{\min}$. 
\end{theorem}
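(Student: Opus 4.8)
The plan is to identify the operator $S_{\alpha,\beta}$ uniquely associated (via the first representation theorem for forms) with the lower-semibounded, closed sesquilinear form $\mathfrak{Q}_{\alpha,\beta}$ defined in \eqref{Q}--\eqref{Qinfty}, and to show it coincides with the differential operator in \eqref{A.5b}. First I would record that the form is indeed densely defined, closed, and lower semibounded: the leading term $\mathfrak{Q}^{(0)}_{\alpha',\beta'}$ (with $\alpha',\beta'\in\{0,\infty\}$ matching $\alpha,\beta$) is closed and nonnegative by Lemma~\ref{lA.2}(iii)--(iv) together with \eqref{A.13aa}--\eqref{A.14aa}, while the perturbations $\mathfrak{Q}_{q/r}$ and $\mathfrak{Q}^{a,b}_{\gamma,\nu}$ are infinitesimally form bounded with respect to it by Lemma~\ref{lA.3}(i)--(ii). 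Thus the KLMN-type perturbation theory applies and $\mathfrak{Q}_{\alpha,\beta}$ is closed and lower semibounded, so a unique self-adjoint lower-semibounded operator $S_{\alpha,\beta}$ is associated with it.

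Next I would carry out the standard computation identifying the operator from the form. Given $f\in\dom{S_{\alpha,\beta}}$ (in the operator-theoretic sense, i.e.\ $f\in\dom{\mathfrak{Q}_{\alpha,\beta}}$ with $\mathfrak{Q}_{\alpha,\beta}(f,g)=\langle h,g\rangle_r$ for some $h$ and all $g$ in the form domain), I would integrate by parts in the interior term $\mathfrak{Q}^{(0)}_{\alpha',\beta'}(f,g)=\langle A_{\alpha',\beta'}f,A_{\alpha',\beta'}g\rangle_r$, using the structure $A^*_{\alpha',\beta'}A_{\alpha',\beta'}=S^{(0)}_{\alpha',\beta'}$ from Lemma~\ref{lA.2}(iii). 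Combined with $\mathfrak{Q}_{q/r}$ this produces the full expression $\tau f$ in the interior, forcing $f,f^{[1]}\in AC([a,b])$ and $\tau f\in L^2((a,b);r(x)dx)$, hence $f\in\dom{\Tmax}$. The boundary terms surviving the integration by parts, together with the explicit boundary contributions from $\mathfrak{Q}^{a,b}_{\gamma,\nu}$ and the boundary conditions already built into $\dom{A_{\alpha',\beta'}}$, must be matched against the freedom in $g(a),g^{[1]}(a),g(b),g^{[1]}(b)$ to read off exactly the Robin-type conditions $f^{[1]}(a)+\alpha f(a)=f^{[1]}(b)+\beta f(b)=0$ (with $\alpha=\infty$ forcing $f(a)=0$, etc.). The reverse inclusion, that every $f$ satisfying \eqref{A.5b} lies in $\dom{S_{\alpha,\beta}}$ and is mapped to $\tau f$, follows by reversing the same integration by parts.

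For the final assertion that $S_{\infty,\infty}=S_F$, I would invoke Lemma~\ref{lA.2}(iv) in the case $q\equiv 0$, where $S^{(0)}_{\infty,\infty}$ is already identified as the Friedrichs extension, and then note that adding the infinitesimally bounded form $\mathfrak{Q}_{q/r}$ to $\mathfrak{Q}^{(0)}_{\infty,\infty}$ preserves the Friedrichs property; alternatively, and more cleanly, I would simply compare the domain \eqref{A.5b} with $\alpha=\beta=\infty$, which reads $f(a)=f(b)=0$, against the characterization of $S_F$ in the regular case given by \eqref{10.101} in Corollary~\ref{c10.19}, yielding immediate agreement.

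The main obstacle I anticipate is the careful bookkeeping of boundary terms in the integration by parts, particularly getting the correct correspondence between the angle/coupling constant $\gamma,\nu$ appearing in $\mathfrak{Q}^{a,b}_{\gamma,\nu}$ and the parameters $\alpha,\beta$ in the Robin conditions, and handling the four cases $\{0,\infty\}^2$ uniformly without sign errors (the Dirichlet cases $\alpha$ or $\beta=\infty$ degenerate differently since the corresponding boundary value is already frozen in the form domain $\dom{A_{\alpha',\beta'}}$ and the surface term simply drops out rather than producing a condition). Keeping track of which boundary data is free and which is constrained in each of the four form domains is where the argument requires genuine care rather than routine manipulation.
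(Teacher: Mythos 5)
Your proposal is correct in outline but proceeds in the opposite direction from the paper. You attack the hard inclusion first: starting from $f$ in the form-domain of the associated operator, you integrate by parts to conclude $f,f^{[1]}\in AC([a,b])$, $\tau f\in L^2((a,b);r(x)dx)$, and then extract the Robin conditions by varying the boundary data of test functions. The paper instead proves only the \emph{easy} inclusion: it takes $v$ in the explicitly given domain \eqref{A.5b} (where all the regularity is available for free), integrates by parts to obtain $\mathfrak{Q}_{\alpha,\beta}(u,v)=\langle u,\widehat S_{\alpha,\beta}v\rangle_r$ for all $u$ in the form domain, concludes $\widehat S_{\alpha,\beta}\subseteq S_{\alpha,\beta}$ via \cite[Corollary~VI.2.4]{Ka80}, and then finishes by maximality: $\widehat S_{\alpha,\beta}$ is already known to be a self-adjoint restriction of $T_{\max}$ (it is one of the separated-boundary-condition extensions of Section \ref{s6}, and equals $S_F$ when $\alpha=\beta=\infty$ by Corollary \ref{c10.19}), and a self-adjoint operator admits no proper self-adjoint extension. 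This maximality trick is precisely what lets the paper avoid the one step in your plan that is not routine, namely the weak-to-strong regularity argument showing that $\mathfrak{Q}_{\alpha,\beta}(f,g)=\langle h,g\rangle_r$ for all $g$ in the form domain forces $f^{[1]}$ to have an absolutely continuous representative with $\tau f=h$; you assert this ("forcing $f,f^{[1]}\in AC([a,b])$") but it is the genuine crux of your route, more so than the boundary-term bookkeeping you flag as the main obstacle, and it would need a du Bois-Reymond--type lemma adapted to the quasi-derivative. Your route is viable and self-contained, but the paper's buys a three-line proof at the cost of invoking the earlier classification of self-adjoint restrictions; yours would yield an independent re-derivation of that classification for Robin conditions at the price of the regularity lemma. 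Your identification of $S_{\infty,\infty}$ with $S_F$ via Corollary \ref{c10.19} coincides with the paper's.
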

\begin{proof}
It suffices to consider the Dirichlet case $\alpha=\beta=\infty$, the other cases being similar. 
We denote by $\widehat{S}_{\infty,\infty}$ the operator defined in \eqref{A.5b} for
$\alpha=\beta=\infty$ and by $S_{\infty,\infty}$ the unique operator
associated with $\mathfrak{Q}_{\infty,\infty}$. Choose $u \in \dom{\mathfrak{Q}_{\infty,\infty}}$ and 
$v \in {\rm dom}\big(\widehat{S}_{\infty,\infty}\big)$. Then an integration by parts yields
\begin{equation} 
\mathfrak{Q}_{\infty,\infty}(u,v) = \big\langle u, \widehat{S}_{\infty,\infty} v \big\rangle_r.
\end{equation}
Thus $\widehat{S}_{\infty,\infty} \subseteq S_{\infty,\infty}$ by
\cite[Corollary\ VI.2.4]{Ka80} and hence $\widehat{S}_{\infty,\infty} =
S_{\infty,\infty}$ since $\widehat{S}_{\infty,\infty} = S_F$ is self-adjoint. 
\end{proof}

\end{appendix}

\medskip
\noindent

\noindent 
{\bf Acknowledgments.} We are indebted to Rostyk Hryniv and Alexander Sakhnovich for very 
helpful discussions. We also sincerely thank the anonymous referee for the extraordinary 
efforts exerted in refereeing our manuscript, and for the numerous comments and suggestions  
kindly provided to us. G.T. gratefully acknowledges the stimulating
atmosphere at the Isaac Newton Institute for Mathematical Sciences in Cambridge
during October 2011 where parts of this paper were written as part of  the international
research program on Inverse Problems.


\end{document}